\documentclass[11pt]{article}
\usepackage[utf8]{inputenc}
\usepackage[T1]{fontenc}
\usepackage[english]{babel}
\usepackage{amsmath,amssymb,amsthm,mathtools,bm,array,booktabs,enumitem}
\usepackage[a4paper,margin=1in]{geometry}
\IfFileExists{lmodern.sty}{\usepackage{lmodern}}{}
\IfFileExists{microtype.sty}{\usepackage{microtype}}{}
\usepackage[hidelinks]{hyperref}
\hypersetup{
 pdftitle={Occupation-Based Propagation of Chaos for McKean--Vlasov Forward--Backward Systems with Jumps and Monotone Sources},
 pdfauthor={Kayembe Tshiswaka Tcheick; Mabela Matendo Rostin; Ntumba Badibanga Simon; Bosonga Bofeki Jean-Pierre},
 pdfkeywords={McKean--Vlasov equations, propagation of chaos, jump BSDEs, maximal monotone operators, occupation estimates, Yosida approximation}}
\allowdisplaybreaks
\numberwithin{equation}{section}
\theoremstyle{plain}
\newtheorem{theorem}{Theorem}[section]
\newtheorem{lemma}[theorem]{Lemma}
\newtheorem{proposition}[theorem]{Proposition}
\newtheorem{corollary}[theorem]{Corollary}
\theoremstyle{definition}
\newtheorem{definition}[theorem]{Definition}
\newtheorem{hypothesis}[theorem]{Assumption}
\newtheorem{remark}[theorem]{Remark}
\newcommand{\R}{\mathbb R}
\newcommand{\E}{\mathbb E}
\renewcommand{\P}{\mathbb P}
\newcommand{\Pcal}{\mathcal P}
\newcommand{\Lcal}{\mathcal L}
\newcommand{\F}{\mathbb F}
\newcommand{\Wtwo}{W_2}
\newcommand{\one}{\mathbf 1}

\begin{document}
\raggedbottom
\title{Occupation-Based Propagation of Chaos for McKean--Vlasov Forward--Backward Systems with Jumps and Monotone Sources}
\author{Kayembe Tshiswaka Tcheick$^{1}$ \and Mabela Matendo Rostin$^{1}$ \and Ntumba Badibanga Simon$^{1}$ \and Bosonga Bofeki Jean-Pierre$^{2}$}
\date{}
\maketitle

\begin{center}
\small
$^{1}$ Universit\'e de Kinshasa, Facult\'e des Sciences et Technologies, Mention Math\'ematiques, Statistique et Informatique, Kinshasa, Democratic Republic of the Congo.\\
$^{2}$ Universit\'e de Kinshasa, Facult\'e des Sciences \`Economiques et de Gestion, Mention \`Economie Math\'ematique, Kinshasa, Democratic Republic of the Congo.\\[0.4em]
Corresponding author: Kayembe Tshiswaka Tcheick; \texttt{tcheick.kayembe@unikin.ac.cd}
\end{center}

\begin{abstract}
We study propagation of chaos for triangular McKean--Vlasov forward--backward
stochastic differential equations with Brownian and Poisson noise when the
backward equation contains an accumulated law-dependent maximal monotone
source whose minimal-norm selection is discontinuous across switching
interfaces. The main difficulty is that a quadratic approximation of the
forward state does not directly yield a Lipschitz estimate for the selected
source. We address this difficulty through occupation estimates for the
limiting forward process. For the asymmetric loss-aversion (ALA) prototype, a
first-moment occupation condition of order $r^\theta$ yields the joint
squared-error bound
$C\{\tau_d(N)^{\theta/(\theta+2)}+\lambda^2+\lambda^\theta\}$.
Under a second-moment occupation condition, direct control of the accumulated
source improves the estimate to
$C\{\tau_d(N)^{\theta/(\theta+1)}+\lambda^2+\lambda^{2\theta}\}$.
No occupation estimate for the particle system is required. The error controls
the forward and backward states, the accumulated source, and all Brownian and
Poisson martingale integrands, uniformly over the model family in the stated
modelwise sense. We also obtain direct propagation for the minimal selection,
including the selected case $\lambda=0$, and verify the stronger occupation
condition for several jump-driven classes, including purely discontinuous and
nonlinear examples.
\end{abstract}

\noindent\textbf{Mathematics Subject Classification (2020).} 60H10, 60G55, 60J75, 49J40, 65C35.\\
\textbf{Keywords.} McKean--Vlasov equations; propagation of chaos; jump BSDEs; maximal monotone operators; occupation estimates; Yosida approximation.

\section*{Introduction}
\addcontentsline{toc}{section}{Introduction}

We consider a triangular McKean--Vlasov forward--backward system with Brownian
and Poisson noise in which a law-dependent maximal monotone operator is
evaluated along the forward state and enters the backward equation through the
cumulative finite-variation process
\[
\mathsf K_t^P=\int_0^t
\langle\varrho,A^0(X_{s-}^P,\mu_{s-}^P)\rangle\,ds,
\qquad \mu_s^P=\Lcal^P(X_s^P).
\]
Here $A^0$ is the minimal-norm selection. The central approximation problem is
to replace the McKean--Vlasov state by an interacting particle system while,
when needed, replacing $A^0$ by its Yosida approximation $A^\lambda$. The
obstruction is structural: $A^0$ is discontinuous across switching interfaces,
so a quadratic state error does not directly provide a Lipschitz source error.

The paper develops an occupation-based way of overcoming this obstruction. A
regularized particle comparison based only on the Lipschitz constant of
$A^\lambda$ produces the generic factor $1+\lambda^{-2}$. For the ALA
prototype, we instead separate the switching interfaces, transfer the bad set
to the limiting forward process, and estimate the time accumulated near those
interfaces. Under the first-moment condition \ref{HoccD}, this gives
\begin{equation}
\mathcal D_{N,\lambda}^{\rm reg}
\le C\{\tau_d(N)^{\theta/(\theta+2)}
+\lambda^2+\lambda^\theta\}.
\label{eq:intro-first}
\end{equation}
Under the stronger second-moment condition \ref{Hocc2}, direct control of the
cumulative source improves the estimate to
\begin{equation}
\mathcal D_{N,\lambda}^{\rm reg}
\le C\{\tau_d(N)^{\theta/(\theta+1)}
+\lambda^2+\lambda^{2\theta}\}.
\label{eq:intro-second}
\end{equation}
The full error $\mathcal D_{N,\lambda}^{\rm reg}$, defined in
\eqref{eq:regerrorfull}, contains the supremum norms of $X,Y,\mathsf K$ and
the energy norms of all Brownian and Poisson martingale components. No
occupation estimate for the particles is assumed. When $\theta=1$,
\eqref{eq:intro-second} becomes
$C\{\tau_d(N)^{1/2}+\lambda^2\}$, whereas the first-moment estimate gives
$C\{\tau_d(N)^{1/3}+\lambda\}$.

The strengthened estimate is obtained by controlling the accumulated source
itself rather than first estimating its instantaneous
$L^2(dt\otimes P)$ error. The argument isolates the singular interface of
$A^0$, compares particle and limiting forward processes, combines occupation
of the limiting process with the bad-coupling event, and transfers the
resulting bound to $(Y,Z,U,\mathsf K)$ through backward stability. This removes
the generic $\lambda^{-2}$ amplification from the joint estimate. The same
comparison also treats the selected system directly: the case $\lambda=0$ is
proved without a limiting substitution into an estimate containing
$\lambda^{-1}$.

McKean--Vlasov equations and their particle approximations go back to
McKean \cite{McKean1966} and Sznitman \cite{Sznitman1991}; forward--backward
mean-field systems are central in control and mean-field games
\cite{CarmonaDelarue2018}. For BSDEs and BSDEs with jumps we refer to
\cite{PardouxPeng1990,TangLi1994,BarlesBuckdahnPardoux1997}. Mean-field
systems with jumps have been studied, among others, in
\cite{Li2018,Erny2022,BaoLiuWang2026,ShenYu2026,QinEtAl2026}. A general
mean-field BSDE treatment under discontinuous martingales is available in the
preprint of Papapantoleon, Saplaouras and Theodorakopoulos
(arXiv:2408.13758). Coupled McKean--Vlasov FBSDEs with jumps appear in
\cite{LiMin2021}; recent preprints include Liu and Zhang (arXiv:2601.19084)
and Feng and Garry (arXiv:2608.23203).

The variational component is related to stochastic variational inequalities.
Jump-driven SVIs were considered in \cite{Zalinescu2014,
MaticiucRascanuSlominski2017}. Propagation of chaos for diffusive
McKean--Vlasov SVIs, including oblique subgradients, is available in
\cite{NingWu2026,DuanWu2026}, and mean-field forward--backward stochastic
variational inequalities with oblique subgradients are studied in
\cite{DuanWuFBSVI2026}. Backward propagation of chaos with reflection,
including jump settings, is also known; see
\cite{LinXu2025,DjehicheDumitrescuZeng2025,LiNing2026}. In the present paper,
however, $\mathsf K$ is not a Skorokhod regulator: it accumulates a nonsmooth
selection depending on the law of the forward process.

Occupation estimates provide the specific mechanism used here. Threshold and
integral-functional estimates have substantial precedents
\cite{Avikainen2007,GanychenkoKnopovaKulik2015,AltmeyerChorowski2018}; a
generalized Avikainen estimate is also developed in Taguchi's preprint
(arXiv:2001.05608). Particularly relevant is
\cite{LeobacherReisingerStockinger2022}, where occupation near a
discontinuity is used for McKean--Vlasov particle and numerical
approximations. There the discontinuity is in the forward drift; here
occupation of the limiting forward process controls the error of a
law-dependent monotone source accumulated in the backward equation. Recent
occupation-measure viewpoints include \cite{TissotDaguette2026,Friesen2026}.
To the best of our knowledge, the cited literature does not provide the
cumulative-source mechanism and the joint estimate \eqref{eq:intro-second} for
the McKean--Vlasov jump forward--backward setting considered here.

The scope of the claims is deliberately limited. We do not claim the first
McKean--Vlasov stochastic variational inequality, the first mean-field
forward--backward variational inequality, the first McKean--Vlasov FBSDE with
jumps, or the first backward propagation-of-chaos result, and no optimality of
the rates is asserted. The probabilistic family may be non-dominated through
the quadratic variation of the continuous martingale, but the jump compensator
$\Pi(de)dt$ is common to all models and the solutions are constructed model by
model. A model-dependent compensator would require a different framework,
such as the 2BSDE-with-jumps setting of
\cite{KaziTaniPossamaiZhou2015a,KaziTaniPossamaiZhou2015b}.

The earlier Brownian preprint by Kayembe, Mabela, Bosonga and Mbuyi
(arXiv:2606.30526) treats a related law-dependent nonsmooth structure. The present proofs are independent of that
paper. In particular, the earlier work does not establish the direct
selected-particle estimate or the cumulative occupation argument yielding a
joint $N$--$\lambda$ bound without the generic $\lambda^{-2}$ amplification.

The paper is organized as follows. Section~1 introduces the Brownian--Poisson
framework, the law-dependent operator and the ALA prototype. Sections~2--4
establish modelwise well-posedness, stability and Yosida convergence.
Section~5 treats the particle system and the regularized propagation-of-chaos
estimate. Section~6 records the generic joint particle--Yosida route, where
the $\lambda^{-2}$ amplification remains visible. Section~7 develops direct
propagation for the selected source and the occupation-based cumulative
estimate that removes this amplification. Section~8 verifies the occupation
conditions for several jump-driven classes. The final section states the
scope and limitations of the results.

\section{Brownian--Poisson framework}

\subsection{Family of models and functional spaces}

Fix $T>0$, $d,m,r\ge1$, $E=\R^r\setminus\{0\}$, and a Lévy measure $\Pi$
such that
\begin{equation}
\int_E|e|^2\Pi(de)<\infty.
\label{eq:Pi2}
\end{equation}
This second-moment assumption is maintained throughout the paper; our
conventions for Lévy measures and compensated integrals are those of
\cite{Applebaum2009}.

For $p\ge1$, let $\mathcal P_p(\R^n)$ denote the set of probability measures
$\mu$ on $\R^n$ such that
\[
M_p(\mu)^p:=\int_{\R^n}|x|^p\mu(dx)<\infty.
\]
The space $L_\Pi^p(E;\R^n)$ is endowed with the norm
\[
\|u\|_{L_\Pi^p}^p=\int_E|u(e)|^p\Pi(de);
\]
we write $\|u\|_\Pi=\|u\|_{L_\Pi^2}$. Matrix norms are Hilbert--Schmidt
norms. The letter $C$ denotes a deterministic constant that may change from
line to line; unless stated otherwise, it depends only on
\[
T,d,m,r,q,\underline a,\overline a,L_F,C_F,L_B,C_B,
L_K,C_K,C_A,|\varrho|,
\]
and never on $P$, $N$, or $\lambda$. Moment estimates and convergence rates
also depend on the uniform moment bounds and on the parameters appearing in
the occupation or regularity assumptions that are invoked. For an estimate
at fixed $\lambda$, $C_\lambda$ denotes a constant that may depend on
$\lambda$. Thus $C_\lambda=C(1+\lambda^{-2})$ is admissible notation; in the
estimates used to balance the limits $N\to\infty$ and $\lambda\downarrow0$,
we keep the factor $1+\lambda^{-2}$ explicit. The multiplicative constant
$C$ remains uniform in $\lambda$.

More precisely, we may take
\[
\Omega=\R^d\times C_0([0,T];\R^m)\times\mathcal M,
\]
where $C_0$ denotes continuous paths starting from zero and $\mathcal M$ the
integer-valued measures on $(0,T]\times E$ that are finite on
$(0,T]\times\{|e|>1/n\}$ for every $n\ge1$. The sigma-field on $\mathcal M$
is generated by the evaluation maps. The coordinates are $\xi$, $B$, and
$N$. The raw canonical filtration is
\[
\mathcal F_t^0=\sigma\!\left(\xi,B_s,N((0,s]\times D):
0\le s\le t,\ D\in\mathcal B(E),\ \operatorname{dist}(D,0)>0\right).
\]
For each probability measure $P$ in the nonempty family $\Pcal$, let
$\mathcal N^P$ be the ideal of all subsets of $P$-null sets in
$\mathcal F_T^0$. Extend $P$ to its completion and define
\[
\mathcal F_T^P=\mathcal F_T^0\vee\mathcal N^P,\qquad
\mathcal F_t^P=\bigcap_{t<u\le T}(\mathcal F_u^0\vee\mathcal N^P)
\quad(t<T),\qquad \F^P=(\mathcal F_t^P)_{0\le t\le T}.
\]
The filtration is complete and right-continuous, and $\xi$ is
$\mathcal F_0^P$-measurable. Stochastic integrands are predictable with
respect to $\F^P$ (and with respect to
$\mathcal P(\F^P)\otimes\mathcal B(E)$ for $U$). All martingale statements
below refer to this augmented filtration, separately under each $P$. We assume
that $B$ is a continuous local martingale and that
\begin{align}
\langle B\rangle_t&=\int_0^t a_s^Pds,\qquad
\underline a\le a_s^P\le\overline a,\quad dt\otimes P\text{-a.e.},
\label{eq:qv}\\
\widetilde N^P(dt,de)&:=N(dt,de)-\Pi(de)dt,
\label{eq:compensated}
\end{align}
where $\underline a,\overline a\in\mathbb S_m^{>0}$ do not depend on $P$,
and the matrix inequalities are understood in the Loewner order. Under each
$P$, $N$ is a Poisson random measure relative to $\F^P$ with compensator
exactly $\Pi(de)dt$. Finally, every square-integrable local martingale under
$P$ admits the representation
\begin{equation}
M_t=M_0+\int_0^t Z_s\,dB_s+
\int_0^t\int_EU_s(e)\widetilde N^P(ds,de).
\label{eq:PRP}
\end{equation}
This representation property is an additional assumption on the admissible
models; it does not follow from the usual augmentation alone. Without it, a
BSDE on a more general filtration may require an additional orthogonal
martingale. Our results concern the representation \eqref{eq:PRP}, without
such a component.

A family $\Pcal$ is called dominated if there exists a probability measure
$Q$ such that $P\ll Q$ for all $P\in\Pcal$, and non-dominated otherwise.
Variation of $a^P$ indeed allows non-domination, but does not imply it by
itself. Here is a precise example. Let $\mathfrak A$ be an uncountable set of
positive definite matrices contained in $[\underline a,\overline a]$. For
$a\in\mathfrak A$, let $P^a$ be the canonical law under which $\xi=0$,
$B=a^{1/2}W$, while $N$ is an independent Poisson random measure with
compensator $\Pi(de)dt$. Define
\[
Q_n(B):=\sum_{j=1}^{2^n}
(B_{jT2^{-n}}-B_{(j-1)T2^{-n}})
(B_{jT2^{-n}}-B_{(j-1)T2^{-n}})^\top,
\qquad E_a:=\{\lim_nQ_n(B)=Ta\}.
\]
These sets are defined by the same Borel maps under all models. Under $P^a$,
independent Gaussian increments yield
$\E^{P^a}\|Q_n-Ta\|^2\le C_aT^22^{-n}$. For every $\epsilon>0$,
\[
\sum_nP^a(\|Q_n-Ta\|>\epsilon)
\le C_aT^2\epsilon^{-2}\sum_n2^{-n}<\infty.
\]
Applying Borel--Cantelli to $\epsilon=1/j$ gives $P^a(E_a)=1$. Since $T>0$,
the sets $E_a$ are pairwise disjoint. If a probability measure $Q$ dominated
all $P^a$, every such set would have strictly positive $Q$-mass, which is
impossible for an uncountable family of disjoint measurable sets. Hence
$\{P^a:a\in\mathfrak A\}$ is non-dominated. Throughout the paper,
\eqref{eq:PRP} is imposed model by model, and no common aggregator is
constructed.

The canonical process $B$ is therefore not assumed to be a standard Brownian
motion. For each $P$, the matrix $(a_t^P)^{-1/2}$ is well defined
$dt\otimes P$-a.e. and
\[
W_t^P:=\int_0^t(a_s^P)^{-1/2}\,dB_s
\]
is a continuous local martingale with
$\langle W^P\rangle_t=tI_m$. Lévy's characterization theorem shows that
$W^P$ is a Brownian motion under $P$; conversely,
$B_t=\int_0^t(a_s^P)^{1/2}dW_s^P$. This identification is strictly model by
model.

For $z\in\R^{1\times m}$ and $u:E\to\R$, set
\[
|z|_{a_t^P}^2:=za_t^Pz^\top,\qquad
\|u\|_\Pi^2:=\int_E|u(e)|^2\Pi(de).
\]
Under $P$, define
\begin{align*}
\mathbb S^2(P)&=\{Y\text{ adapted c\`adl\`ag}:
\E^P\sup_{t\le T}|Y_t|^2<\infty\},\\
\mathbb H_B^2(P)&=\{Z\text{ predictable}:
\E^P\int_0^T|Z_t|_{a_t^P}^2dt<\infty\},\\
\mathbb H_\Pi^2(P)&=\{U\text{ predictable}:
\E^P\int_0^T\|U_t\|_\Pi^2dt<\infty\}.
\end{align*}
For $p\ge2$, $\mathbb S^p(P;\R^n)$ is defined similarly, with norm
$(\E^P\sup_{t\le T}|V_t|^p)^{1/p}$ and values in $\R^n$. The expression
``uniform in the model'' means that the supremum of the corresponding norms
over $P\in\Pcal$ is finite. It does not mean that the processes admit a
version common to all models.

A strong solution under $P$ means here a solution adapted to the completed
filtration generated by $(\xi,B,N)$ and constructed from these given
coordinates. All stochastic identities are understood up to indistinguishability
under the fixed model.

Families of solutions may be collected in the spaces
\[
\mathfrak S^2(\Pcal)=\{(V^P)_P:\sup_P\|V^P\|_{\mathbb S^2(P)}<\infty\},
\]
with analogous definitions for $\mathfrak H_B^2$ and
$\mathfrak H_\Pi^2$. The norm is the supremum of the modelwise norms. These
spaces are complete: a Cauchy sequence converges in each complete space for
fixed $P$, the uniform Cauchy inequality then passes to the limit for each
$P$, and finally to the supremum. This construction requires neither
measurability in $P$ nor a common version of the processes.

\begin{lemma}[Uniform martingale estimates]\label{lem:mart}
There exists $C<\infty$, independent of $P$, such that
\begin{align}
\E^P\sup_{t\le T}\left|\int_0^tZ_s\,dB_s\right|^2
&\le C\E^P\int_0^T|Z_s|_{a_s^P}^2ds,\label{eq:BDGB}\\
\E^P\sup_{t\le T}\left|\int_0^t\int_EU_s(e)
\widetilde N^P(ds,de)\right|^2
&\le C\E^P\int_0^T\|U_s\|_\Pi^2ds.\label{eq:BDGN}
\end{align}
The two stochastic integrals are orthogonal in $L^2(P)$.
\end{lemma}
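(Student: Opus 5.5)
The plan is to obtain both estimates from Doob's $L^2$ maximal inequality combined with the Itô isometry, applied separately under each fixed model $P$. We then check that the resulting constant does not depend on $P$. Only the quadratic-variation structure \eqref{eq:qv} and the fact that the compensator of $N$ is exactly $\Pi(de)dt$ under every $P$ will be used.

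For \eqref{eq:BDGB}, fix $P$ and a predictable $Z$ with $\E^P\int_0^T|Z_s|_{a_s^P}^2ds<\infty$. Since $B$ is a continuous local martingale with $\langle B\rangle_t=\int_0^t a_s^P ds$, the stochastic integral $M_t=\int_0^t Z_s\,dB_s$ is a continuous local martingale with
\[
\langle M\rangle_t=\int_0^t Z_s a_s^P Z_s^\top ds=\int_0^t|Z_s|_{a_s^P}^2ds.
\]
We first localize by a sequence of stopping times $\tau_n\uparrow T$. The stopped process $M^{\tau_n}$ is then an $L^2$-bounded martingale satisfying $\E^P|M_{\tau_n}|^2=\E^P\langle M\rangle_{\tau_n}$, and Doob's inequality gives
\[
\E^P\sup_{t\le\tau_n}|M_t|^2\le 4\E^P\int_0^{T}|Z_s|_{a_s^P}^2ds.
\]
Monotone convergence then yields \eqref{eq:BDGB} with $C=4$. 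This step does not even use the bounds $\underline a,\overline a$. Alternatively, one can write $B=\int (a^P)^{1/2}dW^P$ and apply the Brownian isometry to $Z(a^P)^{1/2}$.

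For \eqref{eq:BDGN}, fix $P$ and a predictable $U$ in $\mathbb H_\Pi^2(P)$. The integral is first built for simple predictable integrands supported on sets $\{|e|>1/n\}$. For these, the compensated Poisson isometry under $P$ (valid because the $\F^P$-compensator is $\Pi(de)dt$) gives $\E^P|\int_0^t\int_E U\,d\widetilde N^P|^2=\E^P\int_0^t\|U_s\|_\Pi^2ds$. By density, the integral extends to all of $\mathbb H_\Pi^2(P)$ as a square-integrable martingale with the same isometry. Doob's inequality again gives the bound with $C=4$. This constant is absolute, so it is uniform in $P$; together with $C=4$ from the Brownian part, $C=4$ works for both estimates.

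For orthogonality, note that $\int Z\,dB$ is continuous and $\int\int U\,d\widetilde N^P$ is a purely discontinuous square-integrable martingale. Hence their predictable covariation vanishes, $\langle \int Z\,dB,\int\int U\,d\widetilde N^P\rangle=0$. Their product is therefore a martingale starting at $0$, which gives $\E^P[\int_0^T Z\,dB\cdot\int_0^T\int_E U\,d\widetilde N^P]=0$.

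The only delicate point is the localization and extension step. One must make sure that the integrals are genuinely square-integrable martingales under each $P$, and not only local martingales, before applying Doob's inequality. One must also make sure that nothing in the argument depends on a common version of the processes across models. Both are handled by working entirely under one fixed $P$ in the augmented filtration $\F^P$.
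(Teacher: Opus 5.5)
Your proposal is correct and follows essentially the paper's route: Doob's inequality combined with the compensated Poisson isometry for \eqref{eq:BDGN} (constant $4$), and vanishing covariation between a continuous and a purely discontinuous square-integrable martingale for orthogonality. The only difference is cosmetic. For the Brownian part you use Doob plus the It\^o isometry where the paper cites BDG, which gives the explicit constant $4$ and, as you note, does not need the bounds $\underline a,\overline a$.
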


\begin{proof}
The bracket of the first martingale is
$\int_0^\cdot|Z_s|_{a_s^P}^2ds$. The Burkholder--Davis--Gundy inequality
gives \eqref{eq:BDGB}. For the second martingale, the isometry for
compensated integrals followed by Doob's inequality gives
\[
\E^P\sup_{t\le T}\left|\int_0^t\int_EU_s(e)\widetilde N^P(ds,de)\right|^2
\le4\E^P\int_0^T\int_E|U_s(e)|^2\Pi(de)ds.
\]
A continuous martingale and a purely discontinuous martingale have zero
quadratic covariation; orthogonality follows by localization and then by
$L^2(P)$ convergence.
\end{proof}

\begin{lemma}[Martingale inequality of order $p$]\label{lem:martp}
Let $p\ge2$ and
\[
M_t=\int_0^t Z_s\,dB_s+
\int_0^t\int_E U_s(e)\widetilde N^P(ds,de).
\]
There exist $0<c_p\le C_p<\infty$, independent of $P$, such that
\begin{align}
c_p\E^P\Bigg[&\left(\int_0^T|Z_s|_{a_s^P}^2ds\right)^{p/2}
+\left(\int_0^T\|U_s\|_\Pi^2ds\right)^{p/2}
+\int_0^T\|U_s\|_{L_\Pi^p}^pds\Bigg]
&\le \E^P\sup_{t\le T}|M_t|^p,\label{eq:martplower}\\
\E^P\sup_{t\le T}|M_t|^p
&\le C_p\E^P\Bigg[
\left(\int_0^T|Z_s|_{a_s^P}^2ds\right)^{p/2}
+\left(\int_0^T\|U_s\|_\Pi^2ds\right)^{p/2}
+\int_0^T\|U_s\|_{L_\Pi^p}^pds\Bigg].\label{eq:martpupper}
\end{align}
The inequalities hold first for simple integrands and, by completion,
whenever one of the corresponding sides is finite.
\end{lemma}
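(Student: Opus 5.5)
The plan is to reduce both inequalities to the Burkholder--Davis--Gundy inequality for general càdlàg local martingales, expressed through the quadratic variation
\[
[M]_T=\int_0^T|Z_s|_{a_s^P}^2ds+\int_0^T\!\!\int_E|U_s(e)|^2N(ds,de),
\]
and then to compare $\E^P[M]_T^{p/2}$ with the three predictable quantities on the right. The continuous part contributes $\int_0^T|Z_s|^2_{a_s^P}ds$ because $d\langle B\rangle_s=a_s^Pds$ by \eqref{eq:qv}. The jump part contributes $\sum_{s\le T}|\Delta M_s|^2$, which is the integral of $|U|^2$ against $N$. The mixed covariation vanishes since the first integral is continuous and the second is purely discontinuous, as already used in Lemma~\ref{lem:mart}. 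The BDG constants for $p\ge2$ are universal, so they do not depend on $P$. The only model-dependent object is $a^P$, and it enters only through the norm $|\cdot|_{a^P}$ that appears on both sides. The jump compensator $\Pi(de)dt$ is common to all models. Hence every constant below is independent of $P$.

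\textbf{Key step (upper bound).} I write $[M]_T\le 2^{0}\bigl(A+J\bigr)$ with $A=\int_0^T|Z|^2_{a^P}ds$ and $J=\int\!\!\int|U|^2dN$. This gives $\E[M]_T^{p/2}\le C_p(\E A^{p/2}+\E J^{p/2})$. To control $\E J^{p/2}$ with $q=p/2\ge1$ I use the standard inequality for an increasing process against its compensator, the Lenglart--Burkholder (Novikov) inequality. It says that for $V\ge0$ predictable,
\[
\E\Bigl(\int\!\!\int V\,dN\Bigr)^{q}\le C_q\,\E\Bigl[\Bigl(\int\!\!\int V\,\Pi(de)ds\Bigr)^{q}+\int\!\!\int V^{q}\,\Pi(de)ds\Bigr].
\]
Applied with $V=|U|^2$, this yields exactly $(\int\|U\|_\Pi^2ds)^{p/2}+\int\|U\|^p_{L^p_\Pi}ds$. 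To prove that inequality rather than cite it, I would argue by induction on dyadic $q$ (or via Itô's formula for $x^q$ applied to $X_t=\int_0^t\!\int V\,dN$), using $(x+v)^q-x^q\le C_q(x^{q-1}v+v^q)$ and Young's inequality. Combining with BDG gives \eqref{eq:martpupper}.

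\textbf{Lower bound.} BDG gives $\E[M]_T^{p/2}\le C\E\sup|M|^p$. We have $A\le[M]_T$, so $\E A^{p/2}\le\E[M]_T^{p/2}$. For the jump terms, $J\le[M]_T$ controls $\E J^{p/2}$. Since $q\ge1$, the reverse inequalities for an increasing process hold:
\[
\E\Bigl(\int\!\!\int V\,\Pi ds\Bigr)^q\le q^q\,\E\Bigl(\int\!\!\int V\,dN\Bigr)^q
\]
(dual predictable projection; Lenglart/Garsia), and
\[
\E\int\!\!\int V^q\,\Pi ds=\E\sum_s|\Delta M_s|^{p}\le\E\Bigl(\sum_s|\Delta M_s|^2\Bigr)^{p/2}.
\]
The last step uses the superadditivity of $x\mapsto x^{p/2}$. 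Summing these gives \eqref{eq:martplower} with some $c_p>0$.

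\textbf{Extension to general integrands.} I would first work with simple (or bounded, localized) integrands, where all quantities are finite. I then pass to the limit by monotone convergence along localizing sequences and by completion in the norms, as the statement indicates. Finiteness of either side makes the other finite, so the limit is legitimate.

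I expect the main obstacle to be the compensator comparison $\E(\int\!\!\int V\,dN)^q$ versus its predictable counterparts. This is the step where the $L^p_\Pi$ term is unavoidable, and where care is needed to keep the constants depending on $p$ only. I would first try Itô's formula with stopping times making $X$ bounded, then absorb the $\E X^{q}$ term via Young's inequality.
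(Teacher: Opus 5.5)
Your proof is correct, but it takes a different route from the paper.

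\textbf{The paper's route.} The paper splits $M=M^c+M^d$. It applies the two-sided BDG inequality to $M^c$ and quotes the two-sided Bichteler--Jacod inequality (Dirksen, Theorem~1.1) for $M^d$. The upper bound then follows from the triangle inequality. The lower bound is obtained indirectly: first $\E^P\sup|M^c|^p\le C_p\E^P[M]_T^{p/2}\le C_p\E^P\sup|M|^p$, then $M^d=M-M^c$ is controlled, and only then is the lower half of Bichteler--Jacod invoked.

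\textbf{Your route.} You apply BDG once to the whole $M$, using $[M]_T=\int_0^T|Z_s|^2_{a_s^P}ds+\int_0^T\!\int_E|U_s(e)|^2N(ds,de)$. Everything then reduces to comparing the increasing process $\int\!\!\int|U|^2dN$ with its compensator. For the upper bound you use the Novikov/Lenglart inequality. For the lower bound you use the Lenglart--L\'epingle--Pratelli (Garsia) estimate $\E\tilde A_T^{q}\le q^q\E A_T^{q}$, $q\ge1$, together with superadditivity of $x\mapsto x^{p/2}$ for the $L^p_\Pi$ term.

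\textbf{Comparison.} Your argument is in effect a proof of the Bichteler--Jacod inequality that the paper only cites, so it is more self-contained. It also yields the lower bound in a single step, without the detour through $\sup|M^d|$. Since BDG and both compensator inequalities hold in $[0,\infty]$ for arbitrary nonnegative predictable integrands (after localization), your route gives the statement directly, without approximation by simple integrands. The paper's proof is shorter but rests entirely on the citation. In both, the constants depend only on $p$, because the inequalities used are universal and the compensator $\Pi(de)ds$ is common to all models.

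\textbf{One detail to supply.} If you prove the Novikov inequality by dyadic induction rather than by the It\^o/Young absorption argument, the inductive step produces $\E(\int\!\!\int V^2d\nu)^{q/2}$. You must return from this to the two target quantities via the interpolation $\int V^2d\nu\le(\int V\,d\nu)^{(q-2)/(q-1)}(\int V^q\,d\nu)^{1/(q-1)}$ followed by Young's inequality. The It\^o-formula route avoids this step, since the absorbed term $\E X^q$ is made finite by stopping before $X_{s-}$ exceeds a level.
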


\begin{proof}
Write $M^c=\int Z\,dB$ and $M^d=\int U\,d\widetilde N^P$. For the
continuous part, the two-sided BDG inequality gives
\[
\E^P\sup_{t\le T}|M_t^c|^p
\asymp_p
\E^P\left(\int_0^T|Z_s|_{a_s^P}^2ds\right)^{p/2}.
\]
For the purely discontinuous part, the Bichteler--Jacod inequality (see, e.g., \cite{Dirksen2014}) gives
\begin{align*}
\E^P\sup_{t\le T}|M_t^d|^p
\asymp_p{}&
\E^P\left(\int_0^T\|U_s\|_\Pi^2ds\right)^{p/2}
+\E^P\int_0^T\|U_s\|_{L_\Pi^p}^pds;
\end{align*}
see \cite[Theorem~1.1]{Dirksen2014}. The constants depend only on $p$
because the compensator $\Pi(de)dt$ is identical under all models.

The upper bound for $M=M^c+M^d$ follows from the triangle inequality. For the
lower bound, BDG applied to $M$ and the pathwise inequality
$[M^c]_T\le[M]_T$ give
\[
\E^P\sup_{t\le T}|M_t^c|^p
\le C_p\E^P[M^c]_T^{p/2}
\le C_p\E^P[M]_T^{p/2}
\le C_p\E^P\sup_{t\le T}|M_t|^p.
\]
Since $M^d=M-M^c$, we also obtain
$\E^P\sup|M^d|^p\le C_p\E^P\sup|M|^p$. The two preceding equivalences then
yield \eqref{eq:martplower}. The uniform ellipticity bound
$\underline a\le a^P\le\overline a$ ensures that passage between $|Z|$ and
$|Z|_{a^P}$ is uniform in $P$. Finally, approximation by simple integrands,
Fatou's lemma for lower bounds, and completeness for upper bounds give the
claimed extension.
\end{proof}

\subsection{Wasserstein distance and the law-dependent operator}

For $\mu,\nu\in\mathcal P_2(\R^n)$, let $\operatorname{Cpl}(\mu,\nu)$
denote the set of probability measures on $\R^n\times\R^n$ with marginals
$\mu,\nu$. We use the distance
\[
W_2(\mu,\nu)^2=\inf_{\pi\in\operatorname{Cpl}(\mu,\nu)}
\int|x-y|^2\pi(dx,dy).
\]

For $\mu\in\mathcal P_2(\R^q)$, write
$M_2(\mu)^2=\int|x|^2\mu(dx)$. If $V,V'\in L^2(P)$, then
\begin{equation}
\Wtwo^2(\Lcal^P(V),\Lcal^P(V'))\le\E^P|V-V'|^2.
\label{eq:coupling}
\end{equation}

\begin{lemma}[Law flows and representatives]\label{lem:lawmeas}
For a jointly measurable $V\in L^2(dt\otimes P;\R^n)$, there exists a Borel
flow $m^V:[0,T]\to\mathcal P_2(\R^n)$ equal to $\Lcal^P(V_t)$ for almost
every $t$. Its equivalence class does not depend on the representative of
$V$, and
\[
\int_0^T W_2^2(m_t^V,m_t^{V'})dt
\le\E^P\int_0^T|V_t-V_t'|^2dt.
\]
\end{lemma}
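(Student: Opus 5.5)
The plan is to define $m^V$ pointwise through the law of $V_t$ wherever this makes sense, repair it on a Lebesgue-null set of times, and then check Borel measurability and the integrated bound separately.

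\emph{Construction.} Since $V$ is jointly measurable and $\E^P\int_0^T|V_t|^2dt<\infty$, Fubini's theorem shows that $t\mapsto \E^P|V_t|^2$ is Borel and finite for almost every $t$. Let $G=\{t:\E^P|V_t|^2<\infty\}$, which is a Borel set of full Lebesgue measure. Set $m_t^V=\Lcal^P(V_t)$ for $t\in G$ and $m_t^V=\delta_0$ otherwise. Then $m^V_t\in\mathcal P_2(\R^n)$ for every $t$.

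\emph{Borel measurability.} The Borel $\sigma$-field of $(\mathcal P_2(\R^n),W_2)$ is generated by the evaluation maps $\mu\mapsto\mu(A)$ for Borel $A$; equivalently, by $\mu\mapsto\int f\,d\mu$ for bounded continuous $f$ together with $\mu\mapsto\int|x|^2d\mu$. This holds because $\mathcal P_2$ is Polish and $W_2$-convergence is weak convergence plus convergence of second moments. So it suffices to show that $t\mapsto\one_G(t)\E^P f(V_t)+\one_{G^c}(t)f(0)$ and $t\mapsto \E^P|V_t|^2\one_G(t)$ are Borel. Both follow from Fubini--Tonelli applied to the jointly measurable maps $(t,\omega)\mapsto f(V_t(\omega))$ and $(t,\omega)\mapsto|V_t(\omega)|^2$. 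The one point I expect to need care is the identification of the Borel structure on $\mathcal P_2$. I would cite it as standard: measurability with respect to the weak topology plus measurability of the second moment gives measurability in $W_2$, since $W_2$-open sets are countable unions of sets defined by these functionals, using separability.

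\emph{Independence of the representative.} If $V'$ is another representative, so $V=V'$ $dt\otimes P$-a.e., then by Fubini $V_t=V'_t$ $P$-a.s. for almost every $t$. Hence $\Lcal^P(V_t)=\Lcal^P(V'_t)$ for almost every $t$, and the two flows coincide $dt$-a.e.

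\emph{The integrated bound.} For $t\in G\cap G'$, which is a set of full measure, \eqref{eq:coupling} gives $W_2^2(m_t^V,m_t^{V'})\le\E^P|V_t-V'_t|^2$. The left-hand side is Borel in $t$: it is the composition of the Borel flow $t\mapsto(m^V_t,m^{V'}_t)$ with the continuous map $W_2$ on the product space. Integrating over $[0,T]$ and applying Tonelli to the right-hand side yields the stated inequality.
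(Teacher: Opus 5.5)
Your proposal is correct and follows essentially the paper's route. Like the paper, you replace the law by $\delta_0$ on the null set $\{\E^P|V_t|^2=\infty\}$, use Fubini--Tonelli to get measurability of $t\mapsto\E^P f(V_t)$ and of the second moment, and integrate the pointwise coupling bound; the only minor variation is that you get independence of the representative directly from Fubini, whereas the paper reads it off the integrated inequality once its right-hand side vanishes.
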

\begin{proof}
Tonelli's theorem gives measurability of $q(t)=\E^P|V_t|^2$ and
$\int_0^Tq(t)dt<\infty$. On the null set $\{q=\infty\}$, set
$m_t^V=\delta_0$; elsewhere take the law of $V_t$. For every bounded
continuous function $h$, Fubini's theorem gives measurability of
$t\mapsto\E^Ph(V_t)$. A countable determining family for probability
measures on $\R^n$ therefore yields measurability for the weak topology. The
second moment is also measurable, hence the flow is Borel for $W_2$. For
almost every $t$, the joint law of $(V_t,V_t')$ is a coupling, so
$W_2^2(m_t^V,m_t^{V'})\le\E^P|V_t-V_t'|^2$. Tonelli's theorem yields the
integrated inequality. If $V=V'$ in $L^2(dt\otimes P)$, the right-hand side
vanishes, and the two flows coincide almost everywhere. In particular, the
backward fixed-point map involving $\Lcal^P(y_t)$ is well defined on
$L^2$-equivalence classes.
\end{proof}

Let $\phi:\R^d\to\R$ be convex, finite, lower semicontinuous, and
$\alpha_A$-strongly convex. Let $K:\R^d\times\R^d\to\R$ be such that
$K(\cdot,y)$ is convex and $C^1$, with
\begin{align}
|\nabla_xK(x,y)-\nabla_xK(x',y')|
&\le L_K(|x-x'|+|y-y'|),\label{eq:Klip}\\
|\nabla_xK(x,y)|&\le C_K(1+|x|+|y|).\label{eq:Kgrowth}
\end{align}
Set
\begin{equation}
F_\mu(x):=\int\nabla_xK(x,y)\mu(dy),\qquad
A(x,\mu):=\partial\phi(x)+F_\mu(x).
\label{eq:A}
\end{equation}

\paragraph{Associated potential and minimal-norm selection.}
To make the potential explicit without imposing an additional bound on
$K(0,y)$, define
\[
\widehat K(x,y)=K(x,y)-K(0,y),\qquad
\Phi(x,\mu)=\phi(x)+\int\widehat K(x,y)\mu(dy).
\]
By the fundamental theorem of calculus,
\[
\widehat K(x,y)=\int_0^1\langle\nabla_xK(tx,y),x\rangle dt,
\quad
|\widehat K(x,y)|\le C_K|x|(1+|x|+|y|).
\]
Hence the integral is finite for $\mu\in\mathcal P_2$. On each ball
$|x|\le R$, the derivatives are dominated by $C_K(1+R+|y|)$, which is
integrable. Passing to the limit in the difference quotients by dominated
convergence gives
\[
\nabla_x\int\widehat K(x,y)\mu(dy)=F_\mu(x),\qquad
\partial_x\Phi(x,\mu)=\partial\phi(x)+F_\mu(x)=A(x,\mu).
\]
The finite convex function $\phi$ has a subgradient at every interior point
of its domain, here all of $\R^d$. Thus each $A(x,\mu)$ is nonempty, closed,
and convex. If $(v_n)$ minimizes the norm on this set, it is bounded; a
subsequence converges in $\R^d$, and its limit belongs to $A(x,\mu)$ and
attains the minimum. Two distinct minimizers $v,w$ are impossible because
\[
\left|\frac{v+w}{2}\right|^2
=\frac{|v|^2+|w|^2}{2}-\frac{|v-w|^2}{4}.
\]
This justifies the definition of $A^0$. Adding a finite function
$\mathcal R(\mu)$ to $\Phi$ does not change $\partial_x\Phi$.

\begin{proposition}[Resolvent and Yosida approximation]\label{prop:Yosida}
For each $\mu$, $A(\cdot,\mu)$ is maximal and
$\alpha_A$-strongly monotone. If
\[
J_\lambda(x,\mu)=(I+\lambda A(\cdot,\mu))^{-1}(x),\qquad
A^\lambda(x,\mu)=\lambda^{-1}(x-J_\lambda(x,\mu)),
\]
then
\begin{align}
|J_\lambda(x,\mu)-J_\lambda(x',\mu')|
&\le |x-x'|+\lambda L_K\Wtwo(\mu,\mu'),\label{eq:Jlip}\\
|A^\lambda(x,\mu)-A^\lambda(x',\mu')|
&\le\lambda^{-1}|x-x'|+L_K\Wtwo(\mu,\mu'),\label{eq:AYlip}\\
|A^\lambda(x,\mu)|&\le|A^0(x,\mu)|,\qquad
A^\lambda(x,\mu)\to A^0(x,\mu).\label{eq:Aconv}
\end{align}
\end{proposition}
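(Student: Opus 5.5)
The plan is to work at fixed $\mu$ through the convex potential $\Phi(\cdot,\mu)$ constructed just before the statement, and then to compare two resolvents by a single monotonicity inequality that separates the $x$-variation from the $\mu$-variation.

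\emph{Step 1: maximality and strong monotonicity.} Since $K(\cdot,y)$ is convex, $x\mapsto\int\widehat K(x,y)\mu(dy)$ is convex and $C^1$ with gradient $F_\mu$. Hence $\Phi(\cdot,\mu)$ is finite, convex and $\alpha_A$-strongly convex, and $A(\cdot,\mu)=\partial_x\Phi(\cdot,\mu)$ as computed above. By Rockafellar's theorem, the subdifferential of a proper lower semicontinuous convex function is maximal monotone. Strong monotonicity follows from two facts: $\partial\phi$ is $\alpha_A$-strongly monotone by strong convexity, and $F_\mu$ is monotone as the gradient of a convex function. The resolvent is then well defined. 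Concretely, $J_\lambda(x,\mu)$ is the unique minimizer of the strongly convex coercive function $z\mapsto\Phi(z,\mu)+|z-x|^2/(2\lambda)$, and its optimality condition is exactly $x-J_\lambda\in\lambda A(J_\lambda,\mu)$.

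\emph{Step 2: a Lipschitz bound for $F$ in the measure.} For any coupling $\pi\in\operatorname{Cpl}(\mu,\mu')$, \eqref{eq:Klip} gives
\[
|F_\mu(z)-F_{\mu'}(z)|\le L_K\int|y-y'|\,\pi(dy,dy').
\]
Taking the infimum over couplings and applying Cauchy--Schwarz yields $|F_\mu(z)-F_{\mu'}(z)|\le L_K\Wtwo(\mu,\mu')$ for every $z$.

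\emph{Step 3: the resolvent estimates \eqref{eq:Jlip} and \eqref{eq:AYlip}.} Write $J=J_\lambda(x,\mu)$, $J'=J_\lambda(x',\mu')$, $a=x-x'$, $b=J-J'$ and $W=\Wtwo(\mu,\mu')$. The optimality conditions read $x-J-\lambda F_\mu(J)\in\lambda\partial\phi(J)$ and $x'-J'-\lambda F_{\mu'}(J')\in\lambda\partial\phi(J')$. Monotonicity of $\partial\phi$ applied to these two elements gives a lower bound on their difference paired with $b$. In that difference, split $F_\mu(J)-F_{\mu'}(J')=[F_\mu(J)-F_\mu(J')]+[F_\mu(J')-F_{\mu'}(J')]$. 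The first bracket pairs nonnegatively with $b$ by monotonicity of $F_\mu$, and the second is bounded by Step 2. The result is the key inequality
\[
\langle a-b,b\rangle\ge-\lambda L_KW|b|.
\]
Using $|b|^2\le\langle a,b\rangle+\lambda L_KW|b|\le(|a|+\lambda L_KW)|b|$ then gives \eqref{eq:Jlip}. For \eqref{eq:AYlip}, expand the square:
\[
|a-b|^2=|a|^2-2\langle a-b,b\rangle-|b|^2
\le|a|^2+2\lambda L_KW|b|-|b|^2
\le|a|^2+\lambda^2L_K^2W^2\le(|a|+\lambda L_KW)^2.
\]
Dividing by $\lambda$ gives \eqref{eq:AYlip}, because $A^\lambda(x,\mu)-A^\lambda(x',\mu')=\lambda^{-1}(a-b)$. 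This firm-nonexpansiveness-type computation is the step I expect to need the most care. The naive triangle bound only gives $2\lambda^{-1}|x-x'|$, and the sharp constant $\lambda^{-1}$ comes precisely from keeping the $-|b|^2$ term.

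\emph{Step 4: \eqref{eq:Aconv}, following Brézis's classical argument.} Fix $\mu$. Since $A^\lambda(x,\mu)\in A(J_\lambda,\mu)$ and $A^0(x,\mu)\in A(x,\mu)$, monotonicity gives $\langle A^\lambda-A^0,J_\lambda-x\rangle\ge0$. Because $J_\lambda-x=-\lambda A^\lambda$, this says $|A^\lambda|^2\le\langle A^0,A^\lambda\rangle$, hence $|A^\lambda|\le|A^0|$. Consequently $|x-J_\lambda|=\lambda|A^\lambda|\le\lambda|A^0|\to0$ as $\lambda\downarrow0$. The family $(A^\lambda)$ is bounded in $\R^d$, so it has cluster points. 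Since the graph of a maximal monotone operator is closed, any cluster point $v$ lies in $A(x,\mu)$ and satisfies $|v|\le|A^0|$. By uniqueness of the minimal-norm element, $v=A^0(x,\mu)$, and therefore the whole family converges to $A^0(x,\mu)$.
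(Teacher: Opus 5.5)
Your proof is correct. It coincides with the paper's in three places: the resolvent is obtained by minimizing $\Phi(\cdot,\mu)+|\cdot-x|^2/(2\lambda)$; \eqref{eq:Jlip} comes from subtracting the two resolvent identities and pairing with $J-J'$; and \eqref{eq:Aconv} is the same Br\'ezis argument. Your explicit splitting of $F_\mu(J)-F_{\mu'}(J')$ is precisely the step the paper leaves implicit in ``the monotone terms are nonnegative.''

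You differ from the paper in two places.

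\emph{Maximality.} The paper does not cite Rockafellar's theorem. Having shown that $I+\lambda A(\cdot,\mu)$ is onto, it checks directly, in Minty style, that any $(x,v)$ monotonically related to the graph satisfies $0\le\langle v-w,x-y\rangle=-\lambda^{-1}|x-y|^2$ with $y=J_\lambda(x+\lambda v,\mu)$. Since you also construct the resolvent by minimization, this one-line argument is available to you and makes the citation unnecessary.

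\emph{The estimate \eqref{eq:AYlip}.} The paper separates the two variables. At fixed law it uses the cocoercivity $\lambda|A^\lambda(x,\mu)-A^\lambda(x',\mu)|^2\le\langle A^\lambda(x,\mu)-A^\lambda(x',\mu),x-x'\rangle$. At fixed state it uses \eqref{eq:Jlip} with $x=x'$. It then concludes by the triangle inequality. You instead extract both estimates from the single inequality $\langle a-b,b\rangle\ge-\lambda L_K\Wtwo(\mu,\mu')|b|$ by expanding $|a-b|^2$. Your route is more economical and gives a slightly sharper bound, $\lambda|A^\lambda(x,\mu)-A^\lambda(x',\mu')|\le(|x-x'|^2+\lambda^2L_K^2\Wtwo^2(\mu,\mu'))^{1/2}$. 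The paper's route has the advantage of isolating the two mechanisms, firm nonexpansiveness in $x$ and Lipschitz dependence of the resolvent on $\mu$, using only standard Yosida facts.
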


\begin{proof}
The potential $\Phi(\cdot,\mu)$ defined above is convex, finite, and
$\alpha_A$-strongly convex. For $x\in\R^d$, $\lambda>0$, consider
\[
Q_{x,\lambda}(y)=\Phi(y,\mu)+\frac{|x-y|^2}{2\lambda}.
\]
If $v_0\in\partial\Phi(0,\mu)$, the inequality
$\Phi(y,\mu)\ge\Phi(0,\mu)+\langle v_0,y\rangle$ shows that
$Q_{x,\lambda}(y)\to\infty$ as $|y|\to\infty$. Continuity and strict
convexity give a unique minimizer $y$. The minimum condition for a convex
function reads
\[
0\in\partial\Phi(y,\mu)+\lambda^{-1}(y-x),\qquad
x-y\in\lambda A(y,\mu).
\]
Thus the resolvent is defined everywhere. To verify maximality, let $(x,v)$
be monotone with respect to the whole graph of $A(\cdot,\mu)$. Take
$y=J_\lambda(x+\lambda v,\mu)$ and
$w=(x+\lambda v-y)/\lambda\in A(y,\mu)$. Then
\[
0\le\langle v-w,x-y\rangle=-\lambda^{-1}|x-y|^2.
\]
Hence $y=x$ and $v=w\in A(x,\mu)$, so the graph admits no proper monotone
extension. Strong monotonicity follows by adding the two subgradient
inequalities for the strongly convex function $\phi$, together with the
monotonicity of $F_\mu$:
\[
\langle v-v',x-x'\rangle\ge\alpha_A|x-x'|^2,
\qquad v\in A(x,\mu),\quad v'\in A(x',\mu).
\]
This is also the classical framework of \cite{Brezis1973}. For a coupling
$\pi$ of $\mu,\mu'$,
\[
|F_\mu(x)-F_{\mu'}(x')|
\le L_K|x-x'|+L_K\int|y-y'|\pi(dy,dy'),
\]
hence
\[
|F_\mu(x)-F_{\mu'}(x')|
\le L_K(|x-x'|+\Wtwo(\mu,\mu')).
\]
Write $y=J_\lambda(x,\mu)$ and $y'=J_\lambda(x',\mu')$. Subtracting the two
resolvent identities and taking the scalar product with $y-y'$, the monotone
terms are nonnegative and yield
\[
|y-y'|^2\le
\bigl(|x-x'|+\lambda L_K\Wtwo(\mu,\mu')\bigr)|y-y'|.
\]
This proves \eqref{eq:Jlip}. At fixed law, monotonicity gives
\[
\lambda|A^\lambda(x,\mu)-A^\lambda(x',\mu)|^2
\le\langle A^\lambda(x,\mu)-A^\lambda(x',\mu),x-x'\rangle.
\]
Thus the Lipschitz constant in $x$ is $\lambda^{-1}$. At fixed state,
\eqref{eq:Jlip} gives
\[
|A^\lambda(x,\mu)-A^\lambda(x,\mu')|
=\lambda^{-1}|J_\lambda(x,\mu)-J_\lambda(x,\mu')|
\le L_K\Wtwo(\mu,\mu').
\]
The triangle inequality between $(x,\mu)$, $(x',\mu)$, and $(x',\mu')$ then
proves \eqref{eq:AYlip}.

Finally, if $a^0=A^0(x,\mu)$, monotonicity between
$(J_\lambda x,A^\lambda x)$ and $(x,a^0)$ gives
$|A^\lambda x|^2\le\langle A^\lambda x,a^0\rangle$. Hence
$|A^\lambda x|\le|a^0|$ and $J_\lambda x\to x$. Every cluster point of
$A^\lambda x$ belongs to the closed graph of $A$ and has norm no larger than
that of $a^0$. Uniqueness of the minimal-norm selection forces convergence to
$a^0$.
\end{proof}

For every $\lambda>0$, estimates \eqref{eq:Jlip} and \eqref{eq:AYlip} show
that $(x,\mu)\mapsto J_\lambda(x,\mu)$ and
$(x,\mu)\mapsto A^\lambda(x,\mu)$ are continuous on
$\R^d\times(\mathcal P_2(\R^d),\Wtwo)$, hence Borel measurable. Therefore
$A^\lambda(X_{t-},\Lcal(X_{t-}))$ is predictable whenever $X$ is adapted and
c\`adl\`ag. The pointwise convergence $A^{1/n}\to A^0$ also shows that $A^0$
is Borel measurable. Thus the measurability clause in \ref{HD} follows from
the operator construction; linear growth remains an additional assumption in
the general framework. One may also verify directly that the joint graph is
closed. If $(x_n,\mu_n,v_n)\to(x,\mu,v)$ in
$\R^d\times(\mathcal P_2(\R^d),W_2)\times\R^d$ and
$v_n\in A(x_n,\mu_n)$, then for every $z\in\R^d$,
\[
\phi(z)\ge\phi(x_n)+\langle v_n-F_{\mu_n}(x_n),z-x_n\rangle.
\]
The finite convex function $\phi$ is continuous; \eqref{eq:Klip} allows us
to pass to the limit and obtain
$\phi(z)\ge\phi(x)+\langle v-F_\mu(x),z-x\rangle$. Hence
$v\in A(x,\mu)$ and the joint graph is Borel. Measurability of the
minimal-norm selection follows instead from the pointwise limit above, without
assuming continuity across the interfaces.

\paragraph{ALA prototype.}
For $r\in\R$, set $r_+=\max\{r,0\}$ and $r_-=\max\{-r,0\}$. We use
\begin{align}
\phi_{\rm ALA}(x)
&=\sum_{k=1}^d\{\kappa_+(x_k)_++\kappa_-(x_k)_-\}
+\frac{\alpha_A}{2}|x|^2,\qquad
\kappa_->\kappa_+>0,\quad\alpha_A>0,\label{eq:ALAphi}\\
K(x,y)&=\eta\{\sqrt{1+|x-y|^2}-1\},\qquad \eta>0.\label{eq:ALAK}
\end{align}
For this prototype, we write
\[
A_{\rm ALA}(x,\mu):=\partial\phi_{\rm ALA}(x)+F_\mu(x),
\qquad
A_{\rm ALA}^0(x,\mu)
:=\operatorname*{argmin}_{\gamma\in A_{\rm ALA}(x,\mu)}|\gamma|.
\]
To lighten notation, the generic symbol $A^0$ is retained in the theorems and
means $A_{\rm ALA}^0$ when they are applied to this prototype. Here,
\[
F_\mu(x)=\eta\int_{\R^d}
\frac{x-y}{\sqrt{1+|x-y|^2}}\,\mu(dy).
\]
For $1\le k\le d$, the minimal-norm selection is explicitly
\begin{equation}
[A^0(x,\mu)]_k=\alpha_Ax_k+[F_\mu(x)]_k+
\begin{cases}
\kappa_+,&x_k>0,\\
-\kappa_-,&x_k<0,\\
\operatorname{proj}_{[-\kappa_-,\kappa_+]}
\bigl(-[F_\mu(x)]_k\bigr),&x_k=0.
\end{cases}
\label{eq:A0explicit}
\end{equation}
When $x_k=0$, the full coordinate satisfies
\[
[A^0(x,\mu)]_k
=\operatorname{proj}_{[F_\mu(x)]_k+[-\kappa_-,\kappa_+]}(0).
\]
The singular set is
\begin{equation}
\Sigma=\bigcup_{k=1}^d\{x:x_k=0\},
\label{eq:Sigma}
\end{equation}
and
\begin{equation}
|A^0(x,\mu)|\le C_A(1+|x|+M_2(\mu)).
\label{eq:A0growth}
\end{equation}
For this prototype, the bound follows directly from $|F_\mu(x)|\le\eta$ and
\eqref{eq:A0explicit}; the term $M_2(\mu)$ may even be omitted from the
right-hand side.

\subsection{Coefficients, controls, and system}

The control $\alpha$ takes values in a closed set $D\subset\R^k$ containing
$0$. To avoid ambiguity arising from different completions of the filtrations,
we require $\alpha$ to admit a version predictable with respect to the raw
canonical filtration generated by $(\xi,B,N)$. The same functional then
defines, by completion, an $\F^P$-predictable control under each model. For
$p\ge2$, its integrability is defined by
\begin{equation}
\sup_{P\in\Pcal}\E^P\int_0^T|\alpha_t|^pdt<\infty.
\label{eq:controlq}
\end{equation}
The set of controls satisfying these conditions is denoted by
$\mathcal A^p$. Well-posedness uses $\alpha\in\mathcal A^2$. Controls used in
the product construction are additionally subject to
Assumption~\ref{Hprod}. This restriction is a cross-model consistency
assumption; it does not construct a control by quasi-sure aggregation.

The coefficients have dimensions
\begin{align*}
b&:[0,T]\times\R^d\times\mathcal P_2(\R^d)\times D\to\R^d,\\
\sigma&:[0,T]\times\R^d\times\mathcal P_2(\R^d)\times D
\to\R^{d\times m},\\
\beta&:[0,T]\times\R^d\times\mathcal P_2(\R^d)\times D\times E
\to\R^d,\\
f&:[0,T]\times\R^d\times\mathcal P_2(\R^d)\times\R
\times\R^{1\times m}\times L_\Pi^2(E;\R)
\times\mathcal P_2(\R)\times D\to\R,\\
g&:\R^d\times\mathcal P_2(\R^d)\to\R.
\end{align*}

\begin{hypothesis}[Forward coefficients]\label{HF}
The functions $b,\sigma,\beta$ are measurable and there exist
$L_F,C_F<\infty$ such that
\begin{align}
&|b(t,x,\mu,\alpha)-b(t,x',\mu',\alpha')|
+\|\sigma(t,x,\mu,\alpha)-\sigma(t,x',\mu',\alpha')\|\nonumber\\
&\quad+\left(\int_E|\beta(t,x,\mu,\alpha,e)
-\beta(t,x',\mu',\alpha',e)|^2\Pi(de)\right)^{1/2}\nonumber\\
&\le L_F\bigl(|x-x'|+\Wtwo(\mu,\mu')+|\alpha-\alpha'|\bigr),
\label{eq:FLip}\\
&|b(t,0,\delta_0,0)|+\|\sigma(t,0,\delta_0,0)\|
+\|\beta(t,0,\delta_0,0,\cdot)\|_{L_\Pi^2}\le C_F.
\label{eq:Fzero}
\end{align}
\end{hypothesis}

\begin{hypothesis}[Backward coefficients]\label{HB}
The functions $f$ and $g$ are Borel measurable. There exist $L_B,C_B<\infty$
such that
\begin{align}
&|f(t,x,\mu,y,z,u,\eta,\alpha)
-f(t,x',\mu',y',z',u',\eta',\alpha')|\nonumber\\
&\le L_B\bigl(
|x-x'|+\Wtwo(\mu,\mu')+|y-y'|+|z-z'|
+\|u-u'\|_\Pi+\Wtwo(\eta,\eta')+|\alpha-\alpha'|
\bigr),\label{eq:fLip}\\
&|f(t,0,\delta_0,0,0,0,\delta_0,0)|\le C_B,\label{eq:fzero}\\
&|g(x,\mu)-g(x',\mu')|
\le L_B\bigl(|x-x'|+\Wtwo(\mu,\mu')\bigr),
\qquad |g(0,\delta_0)|\le C_B.
\label{eq:gLip}
\end{align}
\end{hypothesis}

\begin{hypothesis}[Data and minimal-norm selection]\label{HD}
The initial variable $\xi$ is $\mathcal F_0^P$-measurable under each $P$,
its law does not depend on the model, and
\[
\sup_{P\in\Pcal}\E^P|\xi|^2<\infty.
\]
The minimal-norm selection $(x,\mu)\mapsto A^0(x,\mu)$ is Borel measurable
and satisfies
\[
|A^0(x,\mu)|\le C_A(1+|x|+M_2(\mu)).
\]
Finally, $\varrho\in\R^d$ is fixed.
\end{hypothesis}

\begin{hypothesis}[Higher moments for rates]\label{HqJ}
For a fixed $q>4$, assume $\alpha\in\mathcal A^q$ and
$\sup_P\E^P|\xi|^q<\infty$. For $\mu\in\mathcal P_q(\R^d)$,
\begin{equation}
\int_E|\beta(t,x,\mu,\alpha,e)|^q\Pi(de)
\le C_F^q(1+|x|^q+M_q(\mu)^q+|\alpha|^q).
\label{eq:betaq}
\end{equation}
This assumption is added to the quadratic framework only for higher moments
and particle rates. It is not required for well-posedness or quadratic
stability.
\end{hypothesis}

For $P\in\Pcal$, set
\[
\mu_t^P=\Lcal^P(X_t^P),\quad
\mu_{t-}^P=\Lcal^P(X_{t-}^P),\quad
\eta_t^P=\Lcal^P(Y_t^P).
\]
Under the preceding assumptions, the rigorous formulation of the system
studied below is
\begin{align}
X_t^P
={}&\xi+\int_0^t b(s,X_{s-}^P,\mu_{s-}^P,\alpha_s)ds
+\int_0^t\sigma(s,X_{s-}^P,\mu_{s-}^P,\alpha_s)dB_s\nonumber\\
&+\int_0^t\int_E\beta(s,X_{s-}^P,\mu_{s-}^P,\alpha_s,e)
\widetilde N^P(ds,de),\label{eq:FWD}\\
Y_t^P
={}&g(X_T^P,\mu_T^P)
+\int_t^T f(s,X_{s-}^P,\mu_{s-}^P,Y_s^P,Z_s^P,U_s^P,
\eta_s^P,\alpha_s)ds\nonumber\\
&+\mathsf K_T^P-\mathsf K_t^P
-\int_t^TZ_s^P\,dB_s
-\int_t^T\int_EU_s^P(e)\widetilde N^P(ds,de),\label{eq:BWD}\\
\mathsf K_t^P
={}&\int_0^t\langle\varrho,
A^0(X_{s-}^P,\mu_{s-}^P)\rangle ds.
\label{eq:Kdef}
\end{align}
The use of left limits makes the integrands predictable. Since a c\`adl\`ag
process has at most countably many jump times,
$X_{s-}^P=X_s^P$ for $dt\otimes P$-a.e. $(s,\omega)$.

\begin{definition}[Modelwise solution]
\label{def:modelsolution}
Fix $P\in\Pcal$. A solution of the selected system is a sextuple
\[
(X^P,Y^P,Z^P,U^P,\Gamma^P,\mathsf K^P)
\]
such that
\begin{align*}
&X^P\in\mathbb S^2(P;\R^d),\qquad
Y^P,\mathsf K^P\in\mathbb S^2(P),\\
&Z^P\in\mathbb H_B^2(P),\qquad
U^P\in\mathbb H_\Pi^2(P),\qquad
\Gamma^P\in L^2(dt\otimes P;\R^d).
\end{align*}
The processes $X^P,Y^P$ are adapted and c\`adl\`ag, $Z^P,U^P$ are
predictable, $\Gamma^P$ is predictable, and $\mathsf K^P$ is adapted,
continuous, and absolutely continuous in time. They satisfy
\begin{align*}
\mu_t^P&=\Lcal^P(X_t^P),&
\eta_t^P&=\Lcal^P(Y_t^P),\\
\Gamma_t^P&=A^0(X_{t-}^P,\mu_{t-}^P)
&&dt\otimes P\text{-a.e.},
\end{align*}
and identities \eqref{eq:FWD}--\eqref{eq:Kdef} for every
$t\in[0,T]$, up to indistinguishability under $P$. All stochastic integrals
are taken in their respective square-integrable spaces. A regularized
solution with parameter $\lambda>0$ is defined analogously by replacing
$A^0$ by $A^\lambda$ in $\Gamma^P$ and in \eqref{eq:Kdef}.
\end{definition}

\paragraph{Dimension of the backward variable.}
The choice $Y\in\R$ corresponds to a scalar evaluation of the vector state
$X\in\R^d$. The conditional identity is
\[
Y_t^P=\E^P\left[g(X_T^P,\mu_T^P)+\int_t^T f_s^Pds
+\mathsf K_T^P-\mathsf K_t^P\mid\mathcal F_t^P\right],
\]
where $f_s^P$ denotes the generator evaluated along the solution in
\eqref{eq:BWD}. This identity follows because the increments of the two
martingales have zero conditional expectation. When $f$ depends on
$(Y,Z,U)$, the identity is implicit. The projection
$\langle\varrho,A^0\rangle$ ensures scalar dimensional compatibility. It does
not transform monotonicity in $x$,
\[
\langle v-v',x-x'\rangle\ge0,
\qquad v\in A(x,\mu),\quad v'\in A(x',\mu),
\]
into monotonicity of the generator in $Y$. For example, when $d=1$,
$A(x)=x$ and $\varrho=-1$, the projection is $-x$. The source is exogenous
to the backward fixed point because the system is triangular. A formulation
with $Y\in\R^k$ would require $g,f\in\R^k$,
$Z\in\R^{k\times m}$, $U\in L_\Pi^2(E;\R^k)$, and a linear map
$R:\R^d\to\R^k$ in place of $\varrho^\top$. The backward law would then
belong to $\mathcal P_2(\R^k)$; the empirical estimates currently used in
one dimension would have to be replaced by their $k$-dimensional versions.
Under the corresponding Lipschitz assumptions, the quadratic estimate uses
$|Y|^2$, $\operatorname{Tr}(ZaZ^\top)$, and
$\int_E|U(e)|^2\Pi(de)$; the contraction construction encounters no spectral
monotonicity obstruction. This vector-valued formulation is not a result of
the present paper.

\begin{remark}[Nature of $\mathsf K$]
The operator acts on the forward state. The process $\mathsf K^P$ is
absolutely continuous and its total variation satisfies
\[
\operatorname{Var}(\mathsf K^P)_T
\le |\varrho|\int_0^T|A^0(X_{s-}^P,\mu_{s-}^P)|ds.
\]
It need not be increasing because the scalar product with $\varrho$ may
change sign. It does not carry the jumps of $Y^P$; those come from the Poisson
integral. Hence \eqref{eq:BWD} is neither a reflected equation nor a BSVI in
the variable $Y$.
\end{remark}

The dependencies among the results are summarized as follows:
\begin{center}
\small
\begin{tabular}{p{0.28\textwidth}p{0.63\textwidth}}
\toprule
Result & Assumptions and results used \\
\midrule
Well-posedness & \ref{HF}, \ref{HB}, \ref{HD}, control in
$\mathcal A^2$, and representation \eqref{eq:PRP}. \\
Stability & Quadratic solutions already constructed, with the same integrators. \\
Yosida approximation at fixed $P$ & Domination $|A^\lambda|\le|A^0|$;
for uniformity, \ref{HY} or a criterion implying it. \\
Particle rates & \ref{HqJ}, \ref{Hprod}, moments of order $q$, followed by
the empirical Wasserstein estimate. \\
Joint ALA bound & Under \ref{HoccD}, baseline rate; under \ref{Hocc2},
enhanced cumulative control and exponent $\theta/(\theta+1)$. \\
Occupation & \ref{HellJ}, \ref{HtruncJ}, \ref{HmultJ}, or \ref{HconjJ},
depending on the mechanism considered. \\
\bottomrule
\end{tabular}
\end{center}
The construction of solutions does not require occupation assumptions. The
criteria introduced later verify these assumptions for the rates, without
entering the well-posedness fixed-point arguments.
\section{Modelwise well-posedness}

\subsection{Forward equation with jumps}

Existence and uniqueness for Lipschitz Brownian--Poisson SDEs belong to the
theory developed in \cite[Chapter~6]{Applebaum2009}; well-posedness and
propagation of chaos for McKean--Vlasov equations with jumps are studied in
\cite{Erny2022}. These results are not claimed as new here. The estimates and
the fixed-point argument on the law flow below specify the adaptation to predictable
controls and to constants uniform over $\Pcal$, which cannot be inferred from
a citation without checking the assumptions. The backward result is
constructed explicitly in Theorem~\ref{thm:Bwell}, before its stability
estimate is established.

\begin{lemma}[Integral stability of the forward equation]\label{lem:Fstab}
Under \ref{HF}, fix $P\in\Pcal$. Let
$m,\bar m\in C([0,T];\mathcal P_2(\R^d))$, let
$\xi,\bar\xi\in L^2(P;\R^d)$ be two $\mathcal F_0^P$-measurable initial
data, and let $\alpha,\bar\alpha\in L^2(dt\otimes P)$ be two predictable
$D$-valued controls. Let $X,\bar X\in\mathbb S^2(P;\R^d)$ be the solutions
with prescribed flows $m,\bar m$, these data, the same $P$, and the same
noises. Then, for every $t\in[0,T]$,
\begin{align}
\E^P\sup_{r\le t}|X_r-\bar X_r|^2
\le C\Bigl(&\E^P|\xi-\bar\xi|^2
+\E^P\int_0^t|\alpha_s-\bar\alpha_s|^2ds\nonumber\\
&+\int_0^t\Wtwo^2(m_s,\bar m_s)ds\Bigr).
\label{eq:Fstab}
\end{align}
One may take $C=\max\{4,k_T\}e^{k_TT}$, where
\[
k_T=3L_F^2\{4T+16\lambda_{\max}(\overline a)+16\}.
\]
This constant is independent of $P$, the flows, and the data.
\end{lemma}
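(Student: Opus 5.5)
We set $\Delta_r=X_r-\bar X_r$ and write the difference of the two forward equations as
\[
\Delta_r=(\xi-\bar\xi)+\int_0^r\delta b_s\,ds+\int_0^r\delta\sigma_s\,dB_s+\int_0^r\!\!\int_E\delta\beta_s(e)\,\widetilde N^P(ds,de),
\]
where $\delta b_s=b(s,X_{s-},m_{s-},\alpha_s)-b(s,\bar X_{s-},\bar m_{s-},\bar\alpha_s)$, and $\delta\sigma_s,\delta\beta_s$ are defined analogously. Since $m,\bar m$ are continuous flows, $m_{s-}=m_s$ and $\bar m_{s-}=\bar m_s$.

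\textbf{Splitting the terms.} We split $\Delta$ into four terms and use $(a_1+\dots+a_4)^2\le 4\sum a_i^2$. The initial term gives the factor $4$ in $\max\{4,k_T\}$.

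\begin{itemize}[label={}]
\item The drift term is handled by Cauchy--Schwarz in time, giving $T\int_0^t|\delta b_s|^2ds$.
\item For the Brownian term, we apply Doob's $L^2$ inequality (or \eqref{eq:BDGB}) with bracket $\int\operatorname{Tr}(\delta\sigma_s a_s^P\delta\sigma_s^\top)ds\le\lambda_{\max}(\overline a)\int\|\delta\sigma_s\|^2ds$. This gives the factor $4\lambda_{\max}(\overline a)$ uniformly in $P$, because $a^P\le\overline a$.
\item For the Poisson term, we use the Doob--isometry bound from Lemma~\ref{lem:mart}, with constant $4$. This uses only the common compensator $\Pi(de)dt$.
\end{itemize}

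\textbf{Lipschitz bound.} Next, \eqref{eq:FLip} bounds each of $|\delta b_s|^2$, $\|\delta\sigma_s\|^2$ and $\|\delta\beta_s\|_\Pi^2$ by
\[
3L_F^2\bigl(|\Delta_{s-}|^2+\Wtwo^2(m_s,\bar m_s)+|\alpha_s-\bar\alpha_s|^2\bigr).
\]
Collecting the constants, $4\cdot 3L_F^2\{T+4\lambda_{\max}(\overline a)+4\}$ reproduces $k_T=3L_F^2\{4T+16\lambda_{\max}(\overline a)+16\}$. With $\phi(t)=\E^P\sup_{r\le t}|\Delta_r|^2$ and $|\Delta_{s-}|^2\le\phi(s)$, this yields
\[
\phi(t)\le 4\E^P|\xi-\bar\xi|^2+k_T\Bigl(\E^P\!\int_0^t|\alpha_s-\bar\alpha_s|^2ds+\int_0^t\Wtwo^2(m_s,\bar m_s)ds\Bigr)+k_T\int_0^t\phi(s)\,ds.
\]

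\textbf{Gronwall.} The function $\phi$ is finite because $X,\bar X\in\mathbb S^2(P)$, and it is nondecreasing. The inhomogeneous term is nondecreasing in $t$, so Gronwall's lemma gives \eqref{eq:Fstab} with $C=\max\{4,k_T\}e^{k_TT}$.

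\textbf{Main point of care.} The only real obstacle is uniformity in $P$, since $B$ is not a standard Brownian motion. This is handled entirely by the Loewner bound $a^P\le\overline a$ in the bracket. The jump part is uniform because $\Pi$ does not depend on the model. The remaining steps are routine: measurability of $s\mapsto\Wtwo(m_s,\bar m_s)$ follows from continuity of the flows, and the stochastic integrals are true square-integrable martingales because the integrands are in $L^2$ by the linear growth that follows from \eqref{eq:FLip}--\eqref{eq:Fzero}.
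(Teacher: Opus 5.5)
Your proposal is correct and follows the paper's proof essentially step by step. Both use the same four-term splitting with the factor $4$, Cauchy--Schwarz for the drift, Doob's $L^2$ inequality with the isometry and the Loewner bound $a^P\le\overline a$ for the Brownian term, Doob plus the Poisson isometry for the jump term, the pointwise bound $3L_F^2\{|\Delta_{s-}|^2+\Wtwo^2(m_s,\bar m_s)+|\alpha_s-\bar\alpha_s|^2\}$ from \eqref{eq:FLip}, and Gronwall, which produce exactly $k_T$ and $C=\max\{4,k_T\}e^{k_TT}$; the only differences are that the paper writes out the Gronwall step explicitly while you cite it, and that your inequality $|\Delta_{s-}|^2\le\phi(s)$ should be read after taking expectations.
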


\begin{proof}
\emph{Step 1: predictable differences and integrability.}
Set $\Delta\xi=\xi-\bar\xi$, $\Delta\alpha=\alpha-\bar\alpha$,
$\Delta X=X-\bar X$, and
\begin{align*}
\Delta b_s&=b(s,X_{s-},m_s,\alpha_s)
-b(s,\bar X_{s-},\bar m_s,\bar\alpha_s),\\
\Delta\sigma_s&=\sigma(s,X_{s-},m_s,\alpha_s)
-\sigma(s,\bar X_{s-},\bar m_s,\bar\alpha_s),\\
\Delta\beta_s(e)&=\beta(s,X_{s-},m_s,\alpha_s,e)
-\beta(s,\bar X_{s-},\bar m_s,\bar\alpha_s,e).
\end{align*}
Left limits are used for predictable integrands. The deterministic continuous
flows have uniformly bounded second moments on $[0,T]$. By \eqref{eq:FLip},
each of the three quantities $|\Delta b_s|^2$,
$\|\Delta\sigma_s\|^2$, and $\|\Delta\beta_s\|_\Pi^2$ is bounded by
\begin{equation}
3L_F^2\{|\Delta X_{s-}|^2+W_2^2(m_s,\bar m_s)
+|\Delta\alpha_s|^2\}.
\label{eq:Fdiffpoint}
\end{equation}
The right-hand side is integrable under $ds\otimes P$, because both solutions
belong to $\mathbb S^2$. The following martingale integrals are therefore
square integrable, and the difference satisfies
\[
\Delta X_t=\Delta\xi+D_t+M_t^B+M_t^J,
\quad D_t=\int_0^t\Delta b_sds,
\]
\[
M_t^B=\int_0^t\Delta\sigma_s dB_s,
\qquad M_t^J=\int_0^t\int_E\Delta\beta_s(e)\widetilde N^P(ds,de).
\]
Here $\widetilde N^P=N-\Pi(de)ds$ is the compensated measure. The argument
would not apply verbatim to an uncompensated integral against $N$.

\emph{Step 2: estimate of each integral.}
Pathwise Cauchy--Schwarz yields
\[
\sup_{r\le t}|D_r|^2
\le\sup_{r\le t}\left(r\int_0^r|\Delta b_s|^2ds\right)
\le t\int_0^t|\Delta b_s|^2ds.
\]
The norm of a vector-valued martingale is a submartingale; Doob's $L^2$
inequality and the continuous isometry give
\begin{align*}
\E^P\sup_{r\le t}|M_r^B|^2
&\le4\E^P|M_t^B|^2\\
&=4\E^P\int_0^t\operatorname{Tr}
(\Delta\sigma_s a_s^P\Delta\sigma_s^\top)ds\\
&\le4\lambda_{\max}(\overline a)\E^P\int_0^t\|\Delta\sigma_s\|^2ds.
\end{align*}
The last inequality uses $a_s^P\le\overline a$ and the Frobenius matrix
norm. For the discontinuous part, Doob's inequality and the Poisson isometry
give
\begin{align*}
\E^P\sup_{r\le t}|M_r^J|^2
&\le4\E^P|M_t^J|^2
=4\E^P\int_0^t\int_E|\Delta\beta_s(e)|^2\Pi(de)ds.
\end{align*}
These identities remain valid when $\Pi(E)=\infty$: the integrals are
constructed by completion of simple integrands in $L^2$, and both the
isometry and Doob's inequality pass to the limit. The constant $4$ in Doob's
inequality is common to all $P$; no independence between the two martingales
is required to estimate them separately.

\emph{Step 3: closed integral inequality.}
Set
\[
u_P(t)=\E^P\sup_{r\le t}|\Delta X_r|^2,
\qquad q_P(s)=W_2^2(m_s,\bar m_s)+\E^P|\Delta\alpha_s|^2.
\]
The function $u_P$ is measurable, nonnegative, and bounded on $[0,T]$. The
vector inequality $|v_1+\cdots+v_4|^2\le4\sum_{j=1}^4|v_j|^2$ and Step 2
imply
\begin{align*}
u_P(t)\le{}&4\E^P|\Delta\xi|^2+4T\E^P\int_0^t|\Delta b_s|^2ds\\
&+16\lambda_{\max}(\overline a)\E^P\int_0^t\|\Delta\sigma_s\|^2ds
+16\E^P\int_0^t\|\Delta\beta_s\|_\Pi^2ds.
\end{align*}
For $s>0$, $|\Delta X_{s-}|^2\le\sup_{r\le s}|\Delta X_r|^2$.
Applying \eqref{eq:Fdiffpoint} and Tonelli's theorem gives
\begin{equation}
u_P(t)\le h_P(t)+k_T\int_0^tu_P(s)ds,
\quad h_P(t)=4\E^P|\Delta\xi|^2+k_T\int_0^tq_P(s)ds.
\label{eq:FbeforeGronwall}
\end{equation}
The function $h_P$ is finite and nondecreasing.

\emph{Step 4: explicit solution of the Gronwall inequality.}
We record the argument in a form that can be reused below. If $u\ge0$ is
integrable, $h\ge0$ is nondecreasing, and
$u(t)\le h(t)+k\int_0^tu(s)ds$, $k\ge0$, define
$V(t)=\int_0^tu(s)ds$. Then $V$ is absolutely continuous and, for a.e. $t$,
\[
\frac{d}{dt}(e^{-kt}V(t))
=e^{-kt}(V'(t)-kV(t))\le e^{-kt}h(t).
\]
Integrating from $0$ to $t$ and substituting into the inequality for $u$ gives
\begin{equation}
V(t)\le\int_0^te^{k(t-s)}h(s)ds,
\qquad u(t)\le h(t)+k\int_0^te^{k(t-s)}h(s)ds\le h(t)e^{kt}.
\label{eq:GronwallExplicit}
\end{equation}
The last inequality uses $h(s)\le h(t)$ for $s\le t$; if $k=0$, it simply
reads $u(t)\le h(t)$. No differentiability of $u$ or $h$ is assumed.
Applied to \eqref{eq:FbeforeGronwall}, this yields
\[
u_P(t)\le e^{k_Tt}\left(4\E^P|\Delta\xi|^2
+k_T\int_0^t\{W_2^2(m_s,\bar m_s)+\E^P|\Delta\alpha_s|^2\}ds\right).
\]
Since $t\le T$, the stated constant gives \eqref{eq:Fstab}. Its expression
involves only $T,L_F,\overline a$, which are common to all models. The
supremum over $P$, when needed, is taken only after this estimate; no
dominating measure is used.
\end{proof}

\begin{corollary}[Stability with random flows]\label{cor:Fstabrandom}
The conclusion of Lemma~\ref{lem:Fstab} remains valid if $m_t$ and $\bar m_t$
are progressively measurable $\mathcal P_2(\R^d)$-valued random variables
satisfying
$\E^P\int_0^T[M_2(m_s)^2+M_2(\bar m_s)^2]ds<\infty$, provided the last term
in \eqref{eq:Fstab} is replaced by
\[
\E^P\int_0^t\Wtwo^2(m_s,\bar m_s)ds.
\]
\end{corollary}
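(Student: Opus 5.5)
The plan is to rerun the proof of Lemma~\ref{lem:Fstab} verbatim, with deterministic Wasserstein distances replaced by random ones, and check that each step still goes through.

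\emph{Measurability and integrability.} Progressive measurability of $s\mapsto m_s$ and $s\mapsto\bar m_s$, together with Borel measurability of the coefficients, makes $b(s,X_{s-},m_s,\alpha_s)$, $\sigma(s,X_{s-},m_s,\alpha_s)$ and $\beta(s,X_{s-},m_s,\alpha_s,e)$ measurable in the required sense. To obtain genuinely predictable integrands, I would use either of two routes.
\begin{itemize}
\item Evaluate at $m_{s-}$ if the flows are c\`adl\`ag.
\item Note that a progressive process agrees with a predictable one $ds\otimes P$-a.e., because the compensators $ds$ and $\Pi(de)ds$ are absolutely continuous in time. This is the standard way to integrate progressive integrands against $dB$ and $\widetilde N^P$.
\end{itemize}
Combining \eqref{eq:FLip}, \eqref{eq:Fzero} and $\Wtwo(\mu,\delta_0)=M_2(\mu)$ gives the growth bound $|b(s,x,\mu,\alpha)|^2\le C(1+|x|^2+M_2(\mu)^2+|\alpha|^2)$, and similarly for $\sigma$ and $\beta$. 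The hypothesis $\E^P\int_0^T[M_2(m_s)^2+M_2(\bar m_s)^2]ds<\infty$ therefore makes all drift and martingale integrands square integrable. As a result, $M^B$ and $M^J$ are true square-integrable martingales, and the isometries and Doob's inequality apply.

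\emph{Pointwise bound.} The bound \eqref{eq:Fdiffpoint} holds $\omega$ by $\omega$, now with the random quantity $\Wtwo^2(m_s(\omega),\bar m_s(\omega))$. Step~2 of Lemma~\ref{lem:Fstab} is unchanged: pathwise Cauchy--Schwarz for the drift, and Doob plus the isometries with $a^P\le\overline a$ for the two martingales. In Step~3 the only change is that $q_P(s)$ becomes
\[
q_P(s)=\E^P\Wtwo^2(m_s,\bar m_s)+\E^P|\Delta\alpha_s|^2 .
\]
This function is measurable in $s$ by Tonelli and integrable by the moment hypothesis. The inequality \eqref{eq:FbeforeGronwall} then holds with the same $k_T$, and $h_P$ is still nondecreasing and finite.

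\emph{Closing the argument.} The Gronwall lemma \eqref{eq:GronwallExplicit} yields \eqref{eq:Fstab} with the same constant and with the last term $\E^P\int_0^t\Wtwo^2(m_s,\bar m_s)ds$. The one point needing care is that $u_P$ must be finite before Gronwall is applied, and this holds because $X,\bar X\in\mathbb S^2(P)$ by assumption. If existence of such solutions is also wanted, the standard Picard iteration with random Lipschitz coefficients and the same square-integrable growth bound provides it.

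The main obstacle is purely measure-theoretic: justifying the stochastic integrals and the isometries for integrands that are only progressive through $m_s$. I would settle this with the predictable-version argument above. No new analytic estimate is needed, so the constant stays independent of $P$, the flows and the data.
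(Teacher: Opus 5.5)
Your proposal is correct and follows essentially the same route as the paper's proof. Both apply the Lipschitz bounds pathwise with the random $\Wtwo(m_s,\bar m_s)$, use Doob's inequality and the isometries, take expectations via Tonelli, and close with the explicit Gronwall inequality \eqref{eq:GronwallExplicit}, with the progressive integrands replaced by predictable versions equal $dt\otimes P$-a.e. Your checks on integrability through the $M_2$ moment hypothesis and on finiteness of $u_P$ simply spell out what the paper's shorter proof leaves implicit.
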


\begin{proof}
The difference has exactly the same integral form as in the preceding proof.
The Lipschitz bounds apply pathwise to $m_s(\omega)$ and $\bar m_s(\omega)$.
Thus, if $D(t)=\E^P\sup_{r\le t}|X_r-\bar X_r|^2$, Cauchy--Schwarz and
Lemma~\ref{lem:mart} give
\[
D(t)\le C\E^P|\Delta\xi|^2
+C\E^P\int_0^t|\Delta\alpha_s|^2ds
+C\int_0^tD(s)ds
+C\E^P\int_0^t\Wtwo^2(m_s,\bar m_s)ds.
\]
Set $h(t)=C\E^P|\Delta\xi|^2+C\E^P\int_0^t
(|\Delta\alpha_s|^2+W_2^2(m_s,\bar m_s))ds$. This function is
nondecreasing; \eqref{eq:GronwallExplicit} yields $D(t)\le e^{Ct}h(t)$.
The stochastic integrands are taken in predictable versions equal
$dt\otimes P$-a.e. to their progressively measurable versions. The
deterministic nature of the flows is not used in this argument.
\end{proof}

\begin{lemma}[Construction of the forward equation with frozen law]\label{lem:frozenfull}
Under \ref{HF}, for $m\in C([0,T];\mathcal P_2(\R^d))$,
$\xi\in L^2(\mathcal F_0^P,P)$, and a predictable $D$-valued control
$\alpha$ satisfying $\E^P\int_0^T|\alpha_t|^2dt<\infty$, equation
\eqref{eq:FWD} with the law replaced by $m$ admits a unique solution
$X^m\in\mathbb S^2(P;\R^d)$. Moreover,
\[
\E^P\sup_{t\le T}|X_t^m|^2\le C\left[1+\E^P|\xi|^2+
\E^P\int_0^T|\alpha_t|^2dt+\int_0^TM_2(m_t)^2dt\right].
\]
\end{lemma}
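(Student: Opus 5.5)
We construct the solution by a Picard (contraction) argument in $\mathbb S^2(P;\R^d)$, with the flow $m$ frozen, so the equation is an ordinary Lipschitz SDE driven by $B$ and $\widetilde N^P$ with random (through $\alpha$) but Lipschitz coefficients. For $V\in\mathbb S^2(P;\R^d)$ we define
\[
\Phi(V)_t=\xi+\int_0^t b(s,V_{s-},m_s,\alpha_s)ds+\int_0^t\sigma(s,V_{s-},m_s,\alpha_s)dB_s+\int_0^t\int_E\beta(s,V_{s-},m_s,\alpha_s,e)\widetilde N^P(ds,de).
\]
The first step is to check that $\Phi$ maps $\mathbb S^2$ into itself. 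By \eqref{eq:FLip} and \eqref{eq:Fzero}, comparing with $(0,\delta_0,0)$ and using $\Wtwo(m_s,\delta_0)=M_2(m_s)$, each of $|b|^2$, $\|\sigma\|^2$, $\|\beta\|_\Pi^2$ at $(s,V_{s-},m_s,\alpha_s)$ is bounded by $C(1+|V_{s-}|^2+M_2(m_s)^2+|\alpha_s|^2)$. This is $ds\otimes P$-integrable, since $m$ is continuous (so $\sup_s M_2(m_s)<\infty$) and $\alpha\in L^2$. The integrands are predictable because $V_{s-}$ is, $\alpha$ is predictable, and the coefficients are measurable. Lemma~\ref{lem:mart} together with Cauchy--Schwarz for the drift then gives $\Phi(V)\in\mathbb S^2$, which is adapted and c\`adl\`ag.

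For the contraction, we repeat Steps 2--3 of the proof of Lemma~\ref{lem:Fstab} with identical $\xi,\alpha,m$. For $V,V'$ this gives
\[
\E^P\sup_{r\le t}|\Phi(V)_r-\Phi(V')_r|^2\le k_T\int_0^t\E^P\sup_{r\le s}|V_r-V'_r|^2ds.
\]
Iterating, the $n$-th iterate satisfies the bound with factor $(k_TT)^n/n!$. Hence $\Phi^n$ is a contraction for large $n$; alternatively, one can use the weighted norm $\sup_t e^{-2k_Tt}\E^P\sup_{r\le t}|\cdot|^2$, under which $\Phi$ itself contracts. Banach's fixed point theorem in the complete space $\mathbb S^2(P;\R^d)$ gives existence. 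Uniqueness among $\mathbb S^2$ solutions follows from Lemma~\ref{lem:Fstab} with equal data and flows, since its right-hand side vanishes; this gives indistinguishability.

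For the moment bound we apply the same decomposition to $X^m$ itself. We use $|X_t^m|^2\le4(|\xi|^2+|D_t|^2+|M^B_t|^2+|M^J_t|^2)$, Doob's inequality and the isometries, and the growth bound above with $V=X^m$. This yields
\[
u(t):=\E^P\sup_{r\le t}|X_r^m|^2\le C\Bigl[1+\E^P|\xi|^2+\E^P\int_0^T|\alpha_s|^2ds+\int_0^TM_2(m_s)^2ds\Bigr]+C\int_0^tu(s)ds.
\]
Since $u$ is finite because $X^m\in\mathbb S^2$, the explicit Gronwall bound \eqref{eq:GronwallExplicit} concludes, and the constant depends only on $T,L_F,C_F,\overline a$.

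The only delicate point is the self-map and predictability bookkeeping: ensuring the compensated Poisson integral is well defined when $\Pi(E)=\infty$, via $L^2$ completion as in Lemma~\ref{lem:Fstab}, and that no localization issue arises. Both are handled by the a priori integrability from the linear growth bound, so I expect no genuine obstacle.
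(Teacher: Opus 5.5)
Your proposal is correct and follows essentially the paper's route: a Picard/contraction argument in $\mathbb S^2(P;\R^d)$ using the Lipschitz and growth bounds of \ref{HF} with the martingale estimates of Lemma~\ref{lem:mart}, uniqueness from Lemma~\ref{lem:Fstab}, and the moment bound via the explicit Gronwall inequality \eqref{eq:GronwallExplicit}. The only cosmetic difference is that you invoke Banach's theorem for $\Phi^n$ (or a weighted norm), whereas the paper sums the $\sqrt{(CT)^n/n!}$ bounds on successive iterates and then checks the c\`adl\`ag version of the limit and the passage to the limit in the integrals directly.
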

\begin{proof}
Set $X^0_t=\xi$ and construct $X^{n+1}$ by evaluating the three coefficients
in the integral equation at $(X^n_{t-},m_t,\alpha_t)$. Linear growth shows
inductively that every integral is well defined in $L^2$ and
$X^n\in\mathbb S^2$. For
$D_n(t)=\E^P\sup_{s\le t}|X^{n+1}_s-X^n_s|^2$, the two martingale
inequalities in Lemma~\ref{lem:mart} give
\[
D_n(t)\le C\int_0^t D_{n-1}(s)ds,
\quad
D_n(T)\le D_0(T)\frac{(CT)^n}{n!}.
\]
The series $\sum_n\sqrt{(CT)^n/n!}$ converges by the ratio test. By the
triangle inequality in $\mathbb S^2$, $(X^n)$ is therefore Cauchy. Its limit
$X^m$ has an adapted c\`adl\`ag version: a subsequence converges uniformly
almost surely, and the uniform limit of c\`adl\`ag functions is c\`adl\`ag.
Lipschitz continuity gives
\[
\E^P\int_0^T\left(|b(X^n)-b(X^m)|^2+
\|\sigma(X^n)-\sigma(X^m)\|^2+
\|\beta(X^n)-\beta(X^m)\|_\Pi^2\right)dt\to0,
\]
with all other arguments frozen. Cauchy--Schwarz and the isometries allow
passage to the limit in every integral. For the stated bound, the integral
form yields
\[
\E^P\sup_{s\le t}|X_s^m|^2\le C\left[\E^P|\xi|^2+
\int_0^t(1+M_2(m_s)^2+\E^P|\alpha_s|^2)ds+
\int_0^t\E^P\sup_{u\le s}|X_u^m|^2ds\right].
\]
With $u(t)=\E^P\sup_{s\le t}|X_s^m|^2$ and
$h(t)=C[\E^P|\xi|^2+\int_0^t(1+M_2(m_s)^2+\E^P|\alpha_s|^2)ds]$,
\eqref{eq:GronwallExplicit} gives $u(T)\le h(T)e^{CT}$. Applied to two
solutions with the same data, Lemma~\ref{lem:Fstab} yields
indistinguishability.
\end{proof}

\begin{theorem}[Well-posedness and moments of the forward equation]\label{thm:Fwell}
Let $\alpha\in\mathcal A^2$. Under Assumptions~\ref{HF} and~\ref{HD},
for every $P\in\Pcal$, \eqref{eq:FWD} admits a unique strong c\`adl\`ag
solution. Under Assumption~\ref{HqJ}, moreover,
\begin{equation}
\sup_{P\in\Pcal}\E^P\sup_{t\le T}|X_t^P|^q
\le C\left(1+\sup_{P\in\Pcal}\E^P|\xi|^q
+\sup_{P\in\Pcal}\E^P\int_0^T|\alpha_t|^qdt\right).
\label{eq:Fq}
\end{equation}
\end{theorem}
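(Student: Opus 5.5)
We construct the solution as a fixed point of the law flow, working under a fixed $P\in\Pcal$ throughout.

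\emph{The solution map.} Let $\mathcal C=C([0,T];\mathcal P_2(\R^d))$. For $m\in\mathcal C$, let $X^m$ be the solution with frozen law given by Lemma~\ref{lem:frozenfull}. Define $\Psi(m)_t=\Lcal^P(X^m_t)$.

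\emph{$\Psi$ maps $\mathcal C$ into itself.} Two properties are needed.
\begin{itemize}[leftmargin=*]
\item $\Psi(m)_t\in\mathcal P_2$ for every $t$, which follows from $X^m\in\mathbb S^2$.
\item Continuity of $t\mapsto\Psi(m)_t$ in $\Wtwo$. By \eqref{eq:coupling}, $\Wtwo^2(\Psi(m)_t,\Psi(m)_s)\le\E^P|X^m_t-X^m_s|^2$.
\end{itemize}
For the increment we use the integral form, the isometries of Lemma~\ref{lem:mart}, and dominated convergence. This gives $\E^P|X_t^m-X_s^m|^2\le C\E^P\int_s^t(1+|X^m_{r-}|^2+M_2(m_r)^2+|\alpha_r|^2)\,dr\to0$ as $s\to t$. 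No continuity of paths is needed here, only continuity in $L^2$, because jumps occur at fixed times with probability zero for a Poisson measure with compensator $\Pi(de)dt$.

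\emph{Contraction.} Lemma~\ref{lem:Fstab}, applied with the same $\xi$ and $\alpha$ but different flows, together with \eqref{eq:coupling}, gives
\[
\sup_{r\le t}\Wtwo^2(\Psi(m)_r,\Psi(\bar m)_r)\le C\int_0^t\Wtwo^2(m_s,\bar m_s)\,ds.
\]
Iterating this bound yields $\sup_t\Wtwo^2(\Psi^n m_t,\Psi^n\bar m_t)\le \frac{(CT)^n}{n!}\sup_t\Wtwo^2(m_t,\bar m_t)$. Alternatively, one can use the weighted metric $\sup_te^{-2Ct}\Wtwo^2(\cdot,\cdot)$. Since $\mathcal C$ with the uniform $\Wtwo$ metric is complete, $\Psi$ has a unique fixed point $\mu$.

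\emph{Existence and uniqueness.} Set $X^P=X^\mu$. Then $\mu_{t-}=\Lcal(X_{t-})$: for each $t$, $X_{t-}=X_t$ a.s., and this matches $\mu_t$ by continuity of the flow, so \eqref{eq:FWD} holds. The solution is strong because the Picard iterates of Lemma~\ref{lem:frozenfull} are built from $(\xi,B,N)$.

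For uniqueness, let $X'$ be another solution in $\mathbb S^2$. Its law flow is continuous by the same argument as above, and it is a fixed point of $\Psi$. Hence it equals $\mu$, and uniqueness in Lemma~\ref{lem:frozenfull} gives indistinguishability.

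\emph{The $q$-moment bound \eqref{eq:Fq}.} Apply Lemma~\ref{lem:martp} with $p=q$ to the martingale part. The jump term then requires $\E\int_0^t\|\beta\|_{L^q_\Pi}^q\,ds$, which \eqref{eq:betaq} controls by $C\int_0^t(1+\E|X_{s-}|^q+M_q(\mu_s)^q+\E|\alpha_s|^q)\,ds$. The remaining terms are handled as follows.
\begin{itemize}[leftmargin=*]
\item The $(\int\|\beta\|_\Pi^2)^{q/2}$, $(\int|\sigma|^2)^{q/2}$ and drift terms are bounded via \eqref{eq:FLip}--\eqref{eq:Fzero}, Hölder in time, and the uniform bound $a\le\overline a$.
\item The measure terms satisfy $M_2(\mu_s)^q\le M_q(\mu_s)^q=\E|X_s|^q\le\E\sup_{r\le s}|X_r|^q$.
\end{itemize}
With $u(t)=\E^P\sup_{r\le t}|X_r|^q$, this gives $u(t)\le C(1+\E|\xi|^q+\E\int_0^T|\alpha|^q)+C\int_0^tu$.

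\emph{Main obstacle.} To apply the Gronwall argument \eqref{eq:GronwallExplicit} we need $u(T)<\infty$ a priori. I would obtain this by localization with $\tau_R=\inf\{t:|X_t|\ge R\}$. This is delicate: a jump can overshoot $R$, so $X$ is not bounded by $R$ on $[0,\tau_R]$. Moreover, the law terms are not localized, although $M_q(\mu_s)$ should still be finite. To handle this, I would first bound $u$ along the Picard iterates of Lemma~\ref{lem:frozenfull}, where finiteness holds inductively. I would then pass to the limit with Fatou's lemma, using $\mu$ as the frozen flow after first showing $\sup_tM_q(\mu_t)<\infty$ by a Gronwall argument applied to the iterates of $\Psi$. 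All constants depend only on the admissible parameters, so taking the supremum over $P$ yields \eqref{eq:Fq}.
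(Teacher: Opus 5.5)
Your proposal is correct and takes essentially the same route as the paper. It solves the frozen-flow equation, shows the law map $m\mapsto\Lcal^P(X^m)$ is a contraction via Lemma~\ref{lem:Fstab} in a weighted (or iterated) uniform $W_2$ metric, and proves \eqref{eq:Fq} with continuous BDG, the Bichteler--Jacod bound \eqref{eq:BJ}, and Gronwall. Your additional steps fill in details the paper's proof passes over here, and they mirror what the paper does for the particle system in Lemma~\ref{lem:moments}. These are checking that $\Psi(m)$ is $W_2$-continuous, and securing a~priori finiteness of the $q$-th moments and of $\sup_tM_q(\mu_t)$ (needed for \eqref{eq:betaq}) through Picard iterates and Fatou.
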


\begin{proof}
Fix $P\in\Pcal$.  For a deterministic flow
$m\in C([0,T];\mathcal P_2(\R^d))$, freeze the law variable in
\eqref{eq:FWD}.  Picard iteration in $\mathbb S^2(P)$ is well defined by
\ref{HF}--\ref{HD}.  Lemma~\ref{lem:Fstab}, applied to successive iterates,
gives
\[
D_n(t):=\E^P\sup_{r\le t}|X_r^{n+1}-X_r^n|^2
\le C\int_0^tD_{n-1}(s)\,ds,
\]
hence $D_n(T)\le C_0(CT)^n/n!$.  The iterates converge to the unique
frozen-flow solution.

Define
\[
\Phi_P(m)_t=\Lcal^P(X_t^m),\qquad
d_\rho(m,\bar m)^2=
\sup_{t\le T}e^{-\rho t}\Wtwo^2(m_t,\bar m_t).
\]
The natural coupling and Lemma~\ref{lem:Fstab} imply
\[
e^{-\rho t}\Wtwo^2(\Phi_P(m)_t,\Phi_P(\bar m)_t)
\le \frac C\rho d_\rho(m,\bar m)^2.
\]
For $\rho>C$, $\Phi_P$ is a contraction on the complete space of continuous
$\mathcal P_2$-valued flows with the prescribed initial law.  Its fixed point
is the McKean--Vlasov solution; uniqueness follows from the same stability
estimate and Gronwall.

The same estimates with a zero reference process give
\begin{equation}
\sup_P\E^P\sup_{t\le T}|X_t^P|^2
\le C\left(1+\sup_P\E^P|\xi|^2+
\sup_P\E^P\int_0^T|\alpha_t|^2dt\right).
\label{eq:Fsecond}
\end{equation}
Under \ref{HqJ}, the compensated-Poisson BDG estimate used below is
\begin{align}
\E^P\sup_{r\le t}\left|
\int_0^r\int_E\beta_s(e)\widetilde N^P(ds,de)\right|^q
\le C_q\E^P\left[
\left(\int_0^t\int_E|\beta_s(e)|^2\Pi(de)ds\right)^{q/2}
+\int_0^t\int_E|\beta_s(e)|^q\Pi(de)ds\right].
\label{eq:BJ}
\end{align}
Together with the continuous BDG inequality, linear growth, Young and
Gronwall, this yields \eqref{eq:Fq}.  Uniformity in $P$ follows from the
uniform ellipticity bounds and the common compensator.
\end{proof}

\subsection{Existence and uniqueness of the backward equation with jumps}

Well-posedness under discontinuous martingales is studied in the preprint of
Papapantoleon, Saplaouras and Theodorakopoulos (arXiv:2408.13758,
Section~4). For fixed $P$ and $X^P$, set
\begin{align}
F^P(t,\omega,y,z,u,\eta)
&=f(t,X_{t-}^P(\omega),\mu_{t-}^P,y,z,u,\eta,\alpha_t(\omega))
+h_t^P(\omega),\nonumber\\
h_t^P&=\langle\varrho,A^0(X_{t-}^P,\mu_{t-}^P)\rangle.
\label{eq:exogenousgenerator}
\end{align}
For $v=(y,z,u,\eta)$ and $v'=(y',z',u',\eta')$, \ref{HB} gives
\[
|F^P(t,v)-F^P(t,v')|
\le L_B\{|y-y'|+|z-z'|+\|u-u'\|_\Pi+W_2(\eta,\eta')\}.
\]
The source $h^P$ cancels from this difference. The proof below verifies its
integrability and constructs the solution under full dependence on
$u\in L_\Pi^2$, with constants common to all models. The comparison of sources
evaluated along different forward processes is treated separately.

\begin{theorem}[Existence and uniqueness of the jump BSDE along the forward process]
\label{thm:Bwell}
Let $\alpha\in\mathcal A^2$. Under Assumptions~\ref{HF}, \ref{HB},
\ref{HD}, and the representation property \eqref{eq:PRP}, fix
$P\in\Pcal$ and the forward process $X^P$ from Theorem~\ref{thm:Fwell},
with $\mu_t^P=\Lcal^P(X_t^P)$. Define
\[
h_t^P=\langle\varrho,A^0(X_{t-}^P,\mu_{t-}^P)\rangle,
\qquad \mathsf K_t^P=\int_0^th_s^Pds.
\]
There exists a unique triple
\[
(Y^P,Z^P,U^P)\in\mathbb S^2(P)\times\mathbb H_B^2(P)
\times\mathbb H_\Pi^2(P)
\]
satisfying, simultaneously for all $t\in[0,T]$, $P$-a.s.,
\begin{align}
Y_t^P={}&g(X_T^P,\mu_T^P)+\int_t^T
f(s,X_{s-}^P,\mu_{s-}^P,Y_s^P,Z_s^P,U_s^P,
\Lcal^P(Y_s^P),\alpha_s)ds\nonumber\\
&+\mathsf K_T^P-\mathsf K_t^P-
\int_t^TZ_s^PdB_s-\int_t^T\int_EU_s^P(e)\widetilde N^P(ds,de).
\label{eq:Bexist}
\end{align}
Uniqueness of $Y$ holds up to indistinguishability, while uniqueness of
$Z,U$ holds in their energy norms. Moreover,
\begin{align}
\sup_P\E^P\Bigl[\sup_{t\le T}|Y_t^P|^2+
\sup_{t\le T}|\mathsf K_t^P|^2+
\int_0^T\{|Z_t^P|_{a_t^P}^2+\|U_t^P\|_\Pi^2\}dt\Bigr]<\infty.
\label{eq:Bexistbound}
\end{align}
No occupation condition and no moment assumption $q>4$ is required.
\end{theorem}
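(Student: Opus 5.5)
The plan is a Picard/contraction argument on the backward variables under the fixed model $P$, with the forward process $X^P$ and the source $h^P$ frozen as exogenous data.

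\emph{Step 1: integrability of the data.}
By Theorem~\ref{thm:Fwell} and \eqref{eq:Fsecond}, $\sup_P\E^P\sup_t|X_t^P|^2<\infty$. Since $M_2(\mu_{t-}^P)^2\le\E^P\sup_s|X_s^P|^2$, the growth bound of \ref{HD} gives
\[
\E^P\int_0^T|h_t^P|^2dt\le C|\varrho|^2C_A^2\bigl(1+\E^P\sup_t|X_t^P|^2\bigr),
\]
uniformly in $P$. Hence $\mathsf K^P$ is continuous and adapted, and Cauchy--Schwarz yields $\E^P\sup_t|\mathsf K_t^P|^2\le T\,\E^P\int_0^T|h^P|^2dt$. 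Predictability of $h^P$ follows from the Borel measurability of $A^0$ together with left limits. By \eqref{eq:gLip}, the terminal value $g(X_T^P,\mu_T^P)$ lies in $L^2(P)$. Finally, the generator at zero, $F^P(t,0,0,0,\delta_0)$, is in $L^2(dt\otimes P)$ by \eqref{eq:fLip}--\eqref{eq:fzero} and the moments of $X^P$ and $\alpha$.

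\emph{Step 2: the map $\Psi$.}
Given $(y,z,u)$ in $\mathbb H^2:=L^2(dt\otimes P)\times\mathbb H_B^2\times\mathbb H_\Pi^2$, form the law flow $m^y$ via Lemma~\ref{lem:lawmeas}; this is well defined on equivalence classes. Set $\phi_s=F^P(s,y_s,z_s,u_s,m^y_s)$ and
\[
\xi_T=g(X_T^P,\mu_T^P)+\int_0^T\phi_sds+\mathsf K_T^P\in L^2(P).
\]
Define $Y_t=\E^P[\xi_T\mid\mathcal F_t^P]-\int_0^t\phi_sds-\mathsf K_t^P$, taking a càdlàg version, which exists because the filtration is right-continuous. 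Apply \eqref{eq:PRP} to the martingale $\E^P[\xi_T\mid\mathcal F_t^P]$ to obtain $(Z,U)$. Then $(Y,Z,U)$ solves the equation with the frozen generator $\phi$, and $\Psi(y,z,u)=(Y,Z,U)$.

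\emph{Step 3: contraction estimate.}
Take the weighted norm
\[
\|(y,z,u)\|_\beta^2=\E^P\int_0^Te^{\beta s}\bigl(|y_s|^2+|z_s|_{a_s^P}^2+\|u_s\|_\Pi^2\bigr)ds.
\]
For two inputs, apply Itô's formula to $e^{\beta t}|\Delta Y_t|^2$. The sources $h^P$ and $g$ cancel, so the jump part contributes $\sum|\Delta U|^2$, whose compensator is $\|\Delta U\|_\Pi^2$, and the local martingales are true martingales by Lemma~\ref{lem:mart} and BDG. With the Lipschitz bound $|\Delta\phi|\le L_B(\dots+W_2(m^y,m^{y'}))$ and $|z|\le\lambda_{\min}(\underline a)^{-1/2}|z|_{a^P}$, Young's inequality gives
\[
\|\Psi(v)-\Psi(v')\|_\beta^2\le\frac{C}{\beta}\|v-v'\|_\beta^2.
\]
The constant $C$ depends only on $L_B$ and $\underline a$. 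The law term is handled through $\int e^{\beta s}W_2^2(m^y_s,m^{y'}_s)\,ds\le\E^P\int e^{\beta s}|\Delta y_s|^2ds$ (Lemma~\ref{lem:lawmeas}). Choosing $\beta$ large yields a contraction with a unique fixed point. The fixed point then has $Y\in\mathbb S^2$ by Doob's inequality and satisfies \eqref{eq:Bexist} for all $t$ simultaneously, since both sides are càdlàg. Uniqueness of any solution in $\mathbb S^2\times\mathbb H_B^2\times\mathbb H_\Pi^2$ follows because any such solution is a fixed point of $\Psi$. Indistinguishability of $Y$ then follows from the càdlàg property and the $\mathbb S^2$ estimate.

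\emph{Step 4: uniform bound \eqref{eq:Bexistbound}.}
Apply Itô's formula to $e^{\beta t}|Y_t|^2$ against the zero process, and use Lemma~\ref{lem:mart} for the supremum. This gives
\[
\E^P\Bigl[\sup|Y|^2+\int(|Z|_{a}^2+\|U\|_\Pi^2)\Bigr]\le C\E^P\Bigl[|g|^2+\int|F^P(s,0,0,0,\delta_0)|^2+\int|h^P|^2\Bigr],
\]
where $W_2(\eta_s,\delta_0)^2=\E^P|Y_s|^2$ is absorbed. All constants depend only on the listed parameters, so Step 1 gives a bound uniform in $P$.

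\emph{Main obstacle.}
The delicate point is the Itô/BDG computation in Step 3 with jumps. One must justify that the stochastic integral terms $\int e^{\beta s}\Delta Y_{s-}\,\Delta U\,d\widetilde N^P$ are uniformly integrable martingales. One must also replace $\Delta Y_s$ by $\Delta Y_{s-}$ in the generator differences, which is valid $ds$-a.e. Finally, the law dependence through $L^2$ classes must be tracked without dependence on the version chosen. I would handle the first of these by localization followed by Lemma~\ref{lem:mart}.
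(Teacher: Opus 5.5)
Your proposal is correct and follows essentially the paper's own route: the forward data are frozen, $\Psi$ is built from a conditional expectation plus \eqref{eq:PRP}, the map contracts in an $e^{\beta t}$-weighted $L^2$ norm with the law term handled by coupling after expectation, and uniqueness comes from the fact that any solution is a fixed point. The differences are cosmetic: the paper feeds back $Y_-$ to keep the input predictable and derives \eqref{eq:Bexistbound} from $\|v\|_\gamma\le(1-\sqrt\chi)^{-1}\|\Psi(0)\|_\gamma$ rather than from a separate Itô estimate. One notational correction: the paper's $F^P$ in \eqref{eq:exogenousgenerator} already includes $h^P$, so your $\phi$ must be read as the $f$-part only, otherwise $\xi_T$ counts $\mathsf K_T^P$ twice.
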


\begin{proof}
\emph{Step 1: BSDE data after construction of the forward process.}
Fix $P$. Theorem~\ref{thm:Fwell} provides $X=X^P$ and $\mu=\mu^P$. Set
\[
\zeta=g(X_T,\mu_T),\qquad
h_t=\langle\varrho,A^0(X_{t-},\mu_{t-})\rangle,
\qquad
\ell_t=f(t,X_{t-},\mu_{t-},0,0,0,\delta_0,\alpha_t)+h_t.
\]
Measurability of the law flow and of $A^0$, together with predictability of
$X_-$, makes $h$ predictable. By \ref{HB}, \ref{HD}, and
$M_2(\mu_t)^2=\E^P|X_t|^2$,
\begin{align*}
\E^P|\zeta|^2+\E^P\int_0^T(|\ell_t|^2+|h_t|^2)dt
&\le C\left(1+\E^P\sup_{t\le T}|X_t|^2+
\E^P\int_0^T|\alpha_t|^2dt\right)\le C.
\end{align*}
The last constant is uniform in $P$ by \eqref{eq:Fsecond}. Write
$\mathfrak d_P=\E^P|\zeta|^2+
\E^P\int_0^T(|\ell_t|^2+|h_t|^2)dt$.

\emph{Step 2: complete input space.}
For $\gamma\ge2$, consider
\[
\mathcal H_\gamma(P)=L^2_{\rm pred}(dt\otimes P;\R)
\times\mathbb H_B^2(P)\times\mathbb H_\Pi^2(P),
\]
with norm
\[
\|y,z,u\|_\gamma^2=
\E^P\int_0^Te^{\gamma t}(|y_t|^2+|z_t|_{a_t^P}^2+
\|u_t\|_\Pi^2)dt.
\]
The three factors are Hilbert spaces of equivalence classes of predictable
processes. Bound \eqref{eq:qv} and
$1\le e^{\gamma t}\le e^{\gamma T}$ make this norm equivalent to the usual
product norm, so the space is complete. Lemma~\ref{lem:lawmeas} provides a
measurable representative of the flow $\Lcal^P(y_t)$, defined for a.e. $t$,
and shows that the choice of representative of $y$ does not change the
following integrals.

\emph{Step 3: explicit solution for a fixed input.}
For $v=(y,z,u)\in\mathcal H_\gamma(P)$, define
\[
G_t(v)=f(t,X_{t-},\mu_{t-},y_t,z_t,u_t,\Lcal^P(y_t),\alpha_t)+h_t.
\]
The Lipschitz condition and coupling with zero give
\[
\E^P|G_t(v)|^2\le C\E^P\left[
|\ell_t|^2+|y_t|^2+|z_t|_{a_t^P}^2+\|u_t\|_\Pi^2\right].
\]
Thus $G(v)\in L^2(dt\otimes P)$ and
$Q(v)=\zeta+\int_0^TG_s(v)ds\in L^2(P)$. The martingale
$M_t(v)=\E^P[Q(v)\mid\mathcal F_t^P]$ admits a c\`adl\`ag version and, by
\eqref{eq:PRP}, the unique representation
\[
M_t(v)=M_0(v)+\int_0^tZ_s(v)dB_s+
\int_0^t\int_EU_s(v,e)\widetilde N^P(ds,de).
\]
Define $Y_t(v)=M_t(v)-\int_0^tG_s(v)ds$. Since $M_T(v)=Q(v)$, we have,
simultaneously for every $t$,
\[
Y_t(v)=\zeta+\int_t^TG_s(v)ds
-\int_t^TZ_s(v)dB_s-\int_t^T\int_EU_s(v,e)\widetilde N^P(ds,de).
\]
Doob's inequality and the isometries give
\begin{align*}
\E^P\sup_t|Y_t(v)|^2
&\le8\E^P|Q(v)|^2+2T\E^P\int_0^T|G_s(v)|^2ds,\\
\E^P\int_0^T(|Z_s(v)|_{a_s^P}^2+\|U_s(v)\|_\Pi^2)ds
&=\E^P|Q(v)-\E^P[Q(v)\mid\mathcal F_0^P]|^2
\le\E^P|Q(v)|^2.
\end{align*}
These bounds show that the output belongs to the required spaces. The
fixed-point map is $\Psi(v)=(Y_-(v),Z(v),U(v))$, with $Y_{0-}=Y_0$. Its first
component is predictable, and $Y_-=Y$ for $dt\otimes P$-a.e.
$(t,\omega)$.

\emph{Step 4: contraction, including explicit treatment of the law term.}
Compare two inputs $v,\bar v$. Their generator difference satisfies
\begin{align*}
|\Delta G_t|^2\le C L_B^2\bigl(
|\Delta y_t|^2+|\Delta z_t|_{a_t^P}^2+
\|\Delta u_t\|_\Pi^2+
\Wtwo^2(\Lcal^P(y_t),\Lcal^P(\bar y_t))\bigr).
\end{align*}
Only after taking expectation, the coupling inequality
$\Wtwo^2(\Lcal^P(y_t),\Lcal^P(\bar y_t))\le\E^P|\Delta y_t|^2$ yields
\[
\E^P\int_0^Te^{\gamma t}|\Delta G_t|^2dt
\le C L_B^2\|v-\bar v\|_\gamma^2.
\]
The terminal difference is zero. We now establish the energy identity without
appealing to a separate stability theorem. By Step 3, both outputs belong to
$\mathbb S^2\times\mathbb H_B^2\times\mathbb H_\Pi^2$. The product
martingales arising in Itô's formula have integrable suprema. For the
continuous term, BDG and Cauchy--Schwarz give
\[
\E^P\sup_{t\le T}\left|\int_0^te^{\gamma s}
\Delta Y_{s-}\Delta Z_s dB_s\right|
\le C_{\gamma,T}(\E^P\sup_s|\Delta Y_s|^2)^{1/2}
\left(\E^P\int_0^T|\Delta Z_s|_{a_s^P}^2ds\right)^{1/2}<\infty.
\]
For the discontinuous term, optional quadratic variation and compensation give
similarly
\begin{align*}
&\E^P\sup_{t\le T}\left|\int_0^t\int_Ee^{\gamma s}
\Delta Y_{s-}\Delta U_s(e)\widetilde N^P(ds,de)\right|\\
&\quad\le C_{\gamma,T}(\E^P\sup_s|\Delta Y_s|^2)^{1/2}
\left(\E^P\int_0^T\int_E|\Delta U_s(e)|^2N(ds,de)\right)^{1/2}\\
&\quad=C_{\gamma,T}(\E^P\sup_s|\Delta Y_s|^2)^{1/2}
\left(\E^P\int_0^T\|\Delta U_s\|_\Pi^2ds\right)^{1/2}<\infty.
\end{align*}
These bounds are obtained first after localization, then by Fatou's lemma.
They show that the product martingales belong to $H^1$; the stopped
martingales converge in $L^1$ and retain zero expectation. We keep the jump
quadratic term in the positive form
$\int e^{\gamma s}|\Delta U_s(e)|^2N(ds,de)$, whose expectation equals the
compensated integral. Positive terms pass to the limit by monotone
convergence, while the drift term passes by Cauchy--Schwarz and domination.
The terminal datum passes by domination with
$e^{\gamma T}\sup_s|\Delta Y_s|^2$ by choosing threshold stopping times that
are eventually equal to $T$ almost surely. Thus no fourth moment of $U$ is
needed. The resulting identity is
\begin{align*}
\E^P|\Delta Y_0|^2+
\E^P\int_0^Te^{\gamma t}
\{\gamma|\Delta Y_t|^2+|\Delta Z_t|_{a_t^P}^2+
\|\Delta U_t\|_\Pi^2\}dt
=2\E^P\int_0^Te^{\gamma t}\Delta Y_t\Delta G_tdt.
\end{align*}
Using $2ab\le\gamma a^2/2+2b^2/\gamma$ and $\gamma\ge2$,
\[
\|\Psi(v)-\Psi(\bar v)\|_\gamma^2
\le\frac{C L_B^2}{\gamma}\|v-\bar v\|_\gamma^2.
\]
The choice can be made explicit. Let
$a_*=\lambda_{\min}(\underline a)>0$ and
$c_*=8L_B^2\max\{1,a_*^{-1}\}$. Indeed,
$(r_1+\cdots+r_4)^2\le4\sum_jr_j^2$, followed by the law coupling, gives
\[
\E^P|\Delta G_t|^2\le c_*\E^P
(|\Delta y_t|^2+|\Delta z_t|_{a_t^P}^2+\|\Delta u_t\|_\Pi^2).
\]
The preceding energy estimate therefore yields the contraction factor
$\chi=2c_*/\gamma$. Choosing $\gamma=2+8c_*$ ensures $\gamma\ge2$ and
$0\le\chi<1/2$, independently of $P$. The norm contracts with factor
$\sqrt\chi$ and its square with factor $\chi$.

\emph{Step 5: construction of the fixed point and passage to the integrals.}
Start from $v^0=0$ and define $v^{n+1}=\Psi(v^n)$. Step 4 gives
\[
\|v^{n+1}-v^n\|_\gamma\le\chi^{n/2}\|v^1\|_\gamma,
\qquad
\sum_{n=0}^\infty\|v^{n+1}-v^n\|_\gamma<\infty.
\]
Completeness gives a limit $v=(y,z,u)$ and continuity of $\Psi$ yields
$\Psi(v)=v$. We spell out its identification with a c\`adl\`ag solution.
The generator estimate of Step 4 shows that
$G(v^n)\to G(v)$ strongly in $L^2(dt\otimes P)$. For the outputs of Step 3,
Doob's inequality and Cauchy--Schwarz yield
\begin{align*}
\E^P\sup_t|M_t(v^n)-M_t(v)|^2
&\le4T\E^P\int_0^T|G_s(v^n)-G_s(v)|^2ds,\\
\E^P\sup_t|Y_t(v^n)-Y_t(v)|^2
&\le10T\E^P\int_0^T|G_s(v^n)-G_s(v)|^2ds\longrightarrow0.
\end{align*}
The outputs $Z(v^n),U(v^n)$ converge to $z,u$ in their energy norms by
convergence of $v^{n+1}$. More precisely, writing $Z_n=Z(v^n)$ and
$U_n=U(v^n)$,
\begin{align*}
\E^P\sup_{t\le T}\left|\int_0^t(Z_n-z)dB_s\right|^2
&\le4\E^P\int_0^T|Z_n-z|_{a_s^P}^2ds\longrightarrow0,\\
\E^P\sup_{t\le T}\left|\int_0^t\int_E(U_n-u)\widetilde N^P(ds,de)\right|^2
&\le4\E^P\int_0^T\|U_n-u\|_\Pi^2ds\longrightarrow0,\\
\E^P\sup_{t\le T}\left|\int_0^t(G_s(v^n)-G_s(v))ds\right|^2
&\le T\E^P\int_0^T|G_s(v^n)-G_s(v)|^2ds\longrightarrow0.
\end{align*}
The limit $Y=Y(v)$ is c\`adl\`ag and satisfies $y=Y_-$ in
$L^2(dt\otimes P)$. By Fubini's theorem,
$\Lcal^P(y_t)=\Lcal^P(Y_t)$ for almost every $t$. Hence the formula in Step 3
is exactly \eqref{eq:BWD} with the backward law of $Y$. The integral
identities hold for all $t$ outside a single $P$-null set, after choosing their
c\`adl\`ag versions.

\emph{Step 6: uniform estimate and uniqueness.}
For zero input, $G(0)=\ell$. Step 3 gives
$\|\Psi(0)\|_\gamma^2\le C_{T,\gamma}\mathfrak d_P$. The contraction
implies
\[
\|v\|_\gamma\le\|\Psi(v)-\Psi(0)\|_\gamma+
\|\Psi(0)\|_\gamma
\le\sqrt\chi\|v\|_\gamma+\|\Psi(0)\|_\gamma,
\]
hence
\[
\|v\|_\gamma^2\le(1-\sqrt\chi)^{-2}\|\Psi(0)\|_\gamma^2
\le (1-\sqrt\chi)^{-2}C_{T,\gamma}\mathfrak d_P.
\]
The factor $(1-\sqrt\chi)^{-2}$ is finite and common to all models. The bound
on $G(v)$ and then the bound on $Y(v)$ from Step 3 give
\[
\E^P\sup_t|Y_t|^2+
\E^P\int_0^T(|Z_t|_{a_t^P}^2+\|U_t\|_\Pi^2)dt
\le C\mathfrak d_P.
\]
Finally, $\mathsf K_t=\int_0^th_sds$ is absolutely continuous and
$\E^P\sup_t|\mathsf K_t|^2\le T\E^P\int_0^T|h_t|^2dt$. Together with
Step 1 and the forward bound, this proves \eqref{eq:Bexistbound}.

Any other quadratic solution of the BSDE along this fixed forward process
defines a fixed point of the same map $\Psi$. Contraction forces equality of
the equivalence classes $(Y_-,Z,U)$. The integral equation and the isometries
then imply indistinguishability of the c\`adl\`ag versions of $Y$ and equality
of $\mathsf K$. Uniqueness of $Z,U$ is understood in their energy norms.
\end{proof}

\begin{theorem}[Well-posedness of the selected system]\label{thm:well}
Let $\alpha\in\mathcal A^2$. Under Assumptions~\ref{HF}, \ref{HB},
and~\ref{HD}, in the probabilistic framework with representation
\eqref{eq:PRP}, the system
\eqref{eq:FWD}--\eqref{eq:Kdef} admits, for every $P\in\Pcal$, a unique
solution
\[
(X^P,Y^P,Z^P,U^P,\mathsf K^P)
\in\mathbb S^2(P;\R^d)\times\mathbb S^2(P)\times\mathbb H_B^2(P)
\times\mathbb H_\Pi^2(P)\times\mathbb S^2(P).
\]
It satisfies
\begin{align}
\sup_{P\in\Pcal}\E^P\Bigl[
&\sup_{t\le T}|X_t^P|^2+\sup_{t\le T}|Y_t^P|^2
+\sup_{t\le T}|\mathsf K_t^P|^2\nonumber\\
&+\int_0^T\{|Z_t^P|_{a_t^P}^2+\|U_t^P\|_\Pi^2\}dt
\Bigr]<\infty.
\label{eq:uniformwell}
\end{align}
\end{theorem}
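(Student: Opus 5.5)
The plan exploits the triangular structure. The forward equation \eqref{eq:FWD} does not involve $(Y,Z,U,\mathsf K)$, so the system decouples. Fix $P\in\Pcal$. Theorem~\ref{thm:Fwell} (under \ref{HF}, \ref{HD}, $\alpha\in\mathcal A^2$) provides the unique strong c\`adl\`ag solution $X^P$ and its law flow $\mu^P_t=\Lcal^P(X^P_t)$.

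Next I would form the source $h^P_t=\langle\varrho,A^0(X^P_{t-},\mu^P_{t-})\rangle$ and $\mathsf K^P=\int_0^\cdot h^P_s\,ds$. Predictability of $h^P$ needs Borel measurability of $A^0$ from \ref{HD}, together with measurability of the left-limit law flow $t\mapsto\mu^P_{t-}$. The latter is the only point needing a comment. The flow $t\mapsto\mu_t^P$ is c\`adl\`ag in $W_2$, since $X^P\in\mathbb S^2$ and dominated convergence give $\E^P|X_{s}-X_{t-}|^2\to0$ as $s\uparrow t$. Its left limit is therefore $\Lcal^P(X_{t-}^P)$ and is Borel in $t$. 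Since $X_{t-}=X_t$ for a.e.\ $t$, the value at countably many jump times is irrelevant for $dt$-integrals.

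Theorem~\ref{thm:Bwell}, applied with this $X^P$, then gives a unique triple $(Y^P,Z^P,U^P)$ satisfying \eqref{eq:Bexist}, which is exactly \eqref{eq:BWD} with $\eta^P_s=\Lcal^P(Y^P_s)$. Setting $\Gamma^P_t=A^0(X^P_{t-},\mu^P_{t-})$, the growth bound in \ref{HD} and $M_2(\mu_{t-}^P)^2\le\E^P\sup|X^P|^2$ give $\Gamma^P\in L^2(dt\otimes P)$ and $\mathsf K^P\in\mathbb S^2$. The fact that $\mathsf K^P$ is continuous and absolutely continuous is immediate. This produces a modelwise solution in the sense of Definition~\ref{def:modelsolution}.

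For uniqueness, take any solution $(\bar X,\bar Y,\bar Z,\bar U,\bar{\mathsf K})$. Its first component solves \eqref{eq:FWD} alone, so $\bar X=X^P$ up to indistinguishability by the uniqueness part of Theorem~\ref{thm:Fwell}. Consequently the laws agree, $\bar\Gamma=\Gamma^P$ $dt\otimes P$-a.e., and $\bar{\mathsf K}=\mathsf K^P$ for all $t$, because the integral is insensitive to null modifications. The remaining triple solves the same BSDE along the same forward process, so the uniqueness clause of Theorem~\ref{thm:Bwell} identifies $\bar Y$ up to indistinguishability and $(\bar Z,\bar U)$ in energy norm.

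Finally, the uniform bound \eqref{eq:uniformwell} is the sum of \eqref{eq:Fsecond} and \eqref{eq:Bexistbound}. Both hold with constants independent of $P$, because the controlling quantities $\sup_P\E^P|\xi|^2$ and $\sup_P\E^P\int|\alpha|^2$ are finite by \ref{HD} and $\alpha\in\mathcal A^2$.

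I do not expect a genuine obstacle. The only delicate point is the measurability and predictability of the source along the McKean--Vlasov state, specifically the left-limit law flow discussed above, and I would check it carefully rather than cite it.
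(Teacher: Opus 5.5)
Your proposal is correct and follows the paper's proof essentially step for step. Existence comes from Theorem~\ref{thm:Fwell} for the forward process and Theorem~\ref{thm:Bwell} along $X^P$; uniqueness comes from forward uniqueness followed by backward uniqueness, with $\mathsf K$ identified from its integral definition; and \eqref{eq:uniformwell} is the sum of \eqref{eq:Fsecond} and \eqref{eq:Bexistbound}. Your extra remark on measurability of the left-limit law flow is sound, though Lemma~\ref{lem:lawcontinuous} already covers it: it gives $W_2$-continuity of $t\mapsto\mu_t^P$, so $\mu_{t-}^P=\mu_t^P$.
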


\begin{proof}
Theorem~\ref{thm:Fwell} constructs the unique forward process $X^P$ and its
law flow. Theorem~\ref{thm:Bwell} then constructs $(Y^P,Z^P,U^P)$ and
$\mathsf K^P$ along this forward process. The triangular structure ensures
that this construction solves the full system. If two system solutions are
given, forward uniqueness identifies their first components, and backward
uniqueness then identifies their triples in the senses stated in
Theorem~\ref{thm:Bwell}. The integral definition finally identifies their
cumulative processes. Adding \eqref{eq:Fsecond} and \eqref{eq:Bexistbound}
gives \eqref{eq:uniformwell}.
\end{proof}

\subsection{Stability of the backward equation with jumps}

Having established existence and uniqueness, we now compare solutions
associated with different data. In the following lemma, the exogenous sources
are predictable and square integrable, while the other data satisfy the
preceding quadratic and Lipschitz assumptions.

\begin{lemma}[Stability of a McKean--Vlasov BSDE with jumps]
\label{lem:Bstab}
Let $(Y,Z,U)$ and $(\bar Y,\bar Z,\bar U)$ be two solutions with terminal
data $\zeta,\bar\zeta$ and exogenous terms $h,\bar h$. The other arguments of
the generator are respectively $(X,\mu,\alpha)$ and
$(\bar X,\bar\mu,\bar\alpha)$. Then
\begin{align}
&\E^P\sup_{t\le T}|Y_t-\bar Y_t|^2
+\E^P\int_0^T|Z_t-\bar Z_t|_{a_t^P}^2dt
+\E^P\int_0^T\|U_t-\bar U_t\|_\Pi^2dt\nonumber\\
&\le C\E^P\left[
|\zeta-\bar\zeta|^2+\int_0^T\{
|X_t-\bar X_t|^2+\Wtwo^2(\mu_t,\bar\mu_t)
+|\alpha_t-\bar\alpha_t|^2+|h_t-\bar h_t|^2\}dt\right],
\label{eq:Bstab}
\end{align}
where $C$ is independent of $P$.
\end{lemma}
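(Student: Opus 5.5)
We would prove \eqref{eq:Bstab} by the standard It\^o energy argument for the difference, reusing the integrability and localization steps already carried out in Step~4 of the proof of Theorem~\ref{thm:Bwell}. Set $\Delta Y=Y-\bar Y$, $\Delta Z=Z-\bar Z$, $\Delta U=U-\bar U$, and let $\Delta G_t$ be the difference of the two full generators (including the exogenous terms $h,\bar h$) evaluated along the respective solutions, with backward laws $\Lcal^P(Y_t)$, $\Lcal^P(\bar Y_t)$.

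\emph{Step 1: generator difference.} By \eqref{eq:fLip},
\begin{align*}
|\Delta G_t|\le{}& L_B\bigl(|X_{t-}-\bar X_{t-}|+\Wtwo(\mu_{t-},\bar\mu_{t-})+|\alpha_t-\bar\alpha_t|\bigr)+|h_t-\bar h_t|\\
&+L_B\bigl(|\Delta Y_t|+|\Delta Z_t|+\|\Delta U_t\|_\Pi+\Wtwo(\Lcal^P(Y_t),\Lcal^P(\bar Y_t))\bigr).
\end{align*}
Write $\delta_t$ for the first line. We use $|\Delta Z_t|^2\le a_*^{-1}|\Delta Z_t|_{a_t^P}^2$ with $a_*=\lambda_{\min}(\underline a)$. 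The left limits in $X$ and $\mu$ may be replaced by the values $X_t,\mu_t$ for a.e.\ $t$, since a c\`adl\`ag process has countably many jumps and the law flow has countably many discontinuities.

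\emph{Step 2: weighted energy identity.} We apply It\^o's formula to $e^{\gamma t}|\Delta Y_t|^2$ on $[t,T]$. We keep the jump term in the positive form $\int e^{\gamma s}|\Delta U_s(e)|^2N(ds,de)$, and show exactly as in Theorem~\ref{thm:Bwell} that the two product martingales are in $H^1$, so they have zero expectation after localization and Fatou. This gives
\begin{align*}
\E^P e^{\gamma t}|\Delta Y_t|^2+\E^P\int_t^Te^{\gamma s}\{\gamma|\Delta Y_s|^2+|\Delta Z_s|_{a_s^P}^2+\|\Delta U_s\|_\Pi^2\}ds
=\E^Pe^{\gamma T}|\zeta-\bar\zeta|^2+2\E^P\int_t^Te^{\gamma s}\Delta Y_s\Delta G_sds.
\end{align*}
We then bound $2\Delta Y\Delta G\le \gamma'|\Delta Y|^2+\tfrac12(|\Delta Z|_{a}^2+\|\Delta U\|_\Pi^2)+C\delta^2$ with $\gamma'$ depending only on $L_B,a_*$. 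The law term is controlled only after taking expectations: $\Wtwo^2(\Lcal^P(Y_s),\Lcal^P(\bar Y_s))\le\E^P|\Delta Y_s|^2$ by \eqref{eq:coupling}. Choosing $\gamma=\gamma'+1$, a constant independent of $P$, absorbs all $\Delta Y,\Delta Z,\Delta U$ terms. This yields the energy bound on $\int(|\Delta Z|_a^2+\|\Delta U\|_\Pi^2)$ and on $\sup_t\E^P|\Delta Y_t|^2$ by $C\E^P[|\zeta-\bar\zeta|^2+\int_0^T\delta_s^2ds]$. Finally, $\Wtwo^2(\mu_t,\bar\mu_t)$ is deterministic, so it stays inside the expectation unchanged.

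\emph{Step 3: supremum inside the expectation.} From the backward equation, $\sup_t|\Delta Y_t|\le|\zeta-\bar\zeta|+\int_0^T|\Delta G_s|ds+2\sup_t|\int_0^t\Delta Z\,dB|+2\sup_t|\int_0^t\int_E\Delta U\,d\widetilde N^P|$. Squaring and applying Lemma~\ref{lem:mart}, Cauchy--Schwarz and the Step~2 bound gives \eqref{eq:Bstab}. Every constant depends only on $T,L_B,\underline a,\overline a$ through Doob/BDG constants, which are common to all $P$ because the compensator is common.

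\emph{Main obstacle.} The delicate point is the rigorous justification that the stochastic integrals in It\^o's formula are true martingales using only square integrability, with no fourth moments of $U$. We handle this exactly as in Step~4 of Theorem~\ref{thm:Bwell}: BDG with $\sup|\Delta Y|\in L^2$, the identity $\E\int|\Delta U|^2dN=\E\int\|\Delta U\|_\Pi^2ds$, localization, and passage to the limit by monotone and dominated convergence.
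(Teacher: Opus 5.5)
Your proposal is correct and follows essentially the same route as the paper's proof. Both apply It\^o's formula to $e^{\gamma t}|\Delta Y_t|^2$, keeping the jump quadratic term in uncompensated positive form and justifying zero expectation of the product martingales through $H^1$ bounds (BDG plus Cauchy--Schwarz). Both then use Young absorption, with the backward-law term converted through $\Wtwo^2(\Lcal^P(Y_s),\Lcal^P(\bar Y_s))\le\E^P|\Delta Y_s|^2$ only after taking expectations, and finally recover $\E^P\sup_t|\Delta Y_t|^2$ from the integral form via Doob's inequality and Cauchy--Schwarz.
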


\begin{proof}
\emph{Step 1: integral differences and error data.}
Fix $P$; the two solutions use the same integrators. Set
$\Delta Y=Y-\bar Y$, $\Delta Z=Z-\bar Z$, $\Delta U=U-\bar U$, and
$\Delta\zeta=\zeta-\bar\zeta$. With
\begin{align*}
\Delta f_s={}&f(s,X_{s-},\mu_{s-},Y_s,Z_s,U_s,\eta_s,\alpha_s)\\
&-f(s,\bar X_{s-},\bar\mu_{s-},\bar Y_s,\bar Z_s,\bar U_s,
\bar\eta_s,\bar\alpha_s),
\end{align*}
we have, for $G_s=\Delta f_s+\Delta h_s$,
\[
\Delta Y_t=\Delta\zeta+\int_t^T G_sds-(M_T-M_t),
\quad M_t=\int_0^t\Delta Z_sdB_s+
\int_0^t\int_E\Delta U_s(e)\widetilde N^P(ds,de).
\]
Define
\[
R_s=|\Delta X_{s-}|+W_2(\mu_{s-},\bar\mu_{s-})
+|\Delta\alpha_s|+|\Delta h_s|,
\quad w_s=W_2(\eta_s,\bar\eta_s),
\]
\[
z_s^2=|\Delta Z_s|_{a_s^P}^2,\qquad
v_s^2=\|\Delta U_s\|_\Pi^2,\qquad
\mathfrak e_P=\E^P|\Delta\zeta|^2+\E^P\int_0^T R_s^2ds.
\]
If $\mathfrak e_P=\infty$, the claimed inequality is trivial, so assume it
is finite. Let $L=\max\{1,L_B\}$ and
$a_*=\lambda_{\min}(\underline a)$. Then
\begin{equation}
|G_s|\le L\{|\Delta Y_s|+a_*^{-1/2}z_s+v_s+w_s+R_s\},
\qquad w_s^2\le\E^P|\Delta Y_s|^2.
\label{eq:Bstablipdetail}
\end{equation}
In particular, $G\in L^2(dt\otimes P)$ by the solution spaces.

\emph{Step 2: Itô formula, compensation, and justification of expectations.}
For $\gamma>0$, Itô's formula with jumps between $t$ and $T$ gives
\begin{align*}
&e^{\gamma t}|\Delta Y_t|^2+
\int_t^Te^{\gamma s}(\gamma|\Delta Y_{s-}|^2+z_s^2)ds
+\int_t^T\int_E e^{\gamma s}|\Delta U_s(e)|^2N(ds,de)\\
&=e^{\gamma T}|\Delta\zeta|^2+
2\int_t^Te^{\gamma s}\Delta Y_{s-}G_sds
-2\int_t^Te^{\gamma s}\Delta Y_{s-}\Delta Z_sdB_s\\
&\hspace{18mm}-2\int_t^T\int_Ee^{\gamma s}
\Delta Y_{s-}\Delta U_s(e)\widetilde N^P(ds,de).
\end{align*}
The two product integrals are initially only local martingales. For their
stopped versions, BDG and Cauchy--Schwarz respectively yield
\[
C_{\gamma,T}(\E^P\sup_s|\Delta Y_s|^2)^{1/2}
(\E^P\int_0^Tz_s^2ds)^{1/2},
\]
\[
C_{\gamma,T}(\E^P\sup_s|\Delta Y_s|^2)^{1/2}
\left(\E^P\int_0^T\int_E|\Delta U_s(e)|^2N(ds,de)\right)^{1/2}.
\]
Compensation identifies the last expectation with
$\E^P\int_0^Tv_s^2ds$. These bounds are finite. Fatou's lemma after
localization shows that the suprema of the product martingales are integrable.
Hence they are uniformly integrable, and their increments have zero
expectation. The drift is integrable because
\[
\E^P\int_0^T|\Delta Y_sG_s|ds
\le(\E^P\int_0^T|\Delta Y_s|^2ds)^{1/2}
(\E^P\int_0^T|G_s|^2ds)^{1/2}<\infty.
\]
The jump quadratic term is positive and has finite expectation, so Tonelli's
theorem and compensation apply directly. We do not treat it as a quadratic
integral against $\widetilde N^P$, which would require an additional moment.
The values at $s$ and $s-$ coincide $ds\otimes P$-a.e. Consequently,
\begin{align}
&e^{\gamma t}\E^P|\Delta Y_t|^2+
\E^P\int_t^Te^{\gamma s}(\gamma|\Delta Y_s|^2+z_s^2+v_s^2)ds
\nonumber\\
&=e^{\gamma T}\E^P|\Delta\zeta|^2+
2\E^P\int_t^Te^{\gamma s}\Delta Y_sG_sds.
\label{eq:Bstabexact}
\end{align}
This justification avoids assuming that an arbitrary sequence of stopping
times converging to $T$ makes $Y_{\tau_n}$ converge to $Y_T$ in the presence
of a possible terminal jump.

\emph{Step 3: explicit absorption, including the backward law.}
Each term in \eqref{eq:Bstablipdetail} is treated by Young's inequality:
\begin{align*}
2L|\Delta Y_s|a_*^{-1/2}z_s
&\le\tfrac14z_s^2+4L^2a_*^{-1}|\Delta Y_s|^2,\\
2L|\Delta Y_s|v_s&\le\tfrac14v_s^2+4L^2|\Delta Y_s|^2,\\
2L|\Delta Y_s|w_s&\le L|\Delta Y_s|^2+Lw_s^2,\\
2L|\Delta Y_s|R_s&\le L|\Delta Y_s|^2+LR_s^2.
\end{align*}
Therefore, with $c_Y=4L+4L^2(1+a_*^{-1})$,
\[
2\Delta Y_sG_s\le c_Y|\Delta Y_s|^2
+\tfrac14(z_s^2+v_s^2)+Lw_s^2+LR_s^2.
\]
The quantity $w_s$ is deterministic under $P$; after expectation,
$Lw_s^2\le L\E^P|\Delta Y_s|^2$. Choose $\gamma=c_Y+L+1$.
Substitution in \eqref{eq:Bstabexact} yields
\begin{align*}
&e^{\gamma t}\E^P|\Delta Y_t|^2+
\E^P\int_t^Te^{\gamma s}|\Delta Y_s|^2ds
+\tfrac34\E^P\int_t^Te^{\gamma s}(z_s^2+v_s^2)ds\\
&\le e^{\gamma T}\E^P|\Delta\zeta|^2
+L\E^P\int_t^Te^{\gamma s}R_s^2ds
\le Le^{\gamma T}\mathfrak e_P.
\end{align*}
Taking the supremum in $t$ for the first term and $t=0$ for the integrals,
we obtain
\begin{equation}
\sup_{t\le T}\E^P|\Delta Y_t|^2+
\E^P\int_0^T(|\Delta Y_s|^2+z_s^2+v_s^2)ds
\le C\mathfrak e_P.
\label{eq:Benergy}
\end{equation}
The choice of $\gamma$ depends only on $L_B,\underline a$.

\emph{Step 4: time supremum and norm of the cumulative process.}
By \eqref{eq:Bstablipdetail},
\[
\E^P\int_0^T|G_s|^2ds
\le5L^2\E^P\int_0^T
(|\Delta Y_s|^2+a_*^{-1}z_s^2+v_s^2+w_s^2+R_s^2)ds
\le C\mathfrak e_P.
\]
Orthogonality of the continuous and discontinuous parts, the isometry, and
Doob's inequality imply
\[
\E^P\sup_{t\le T}|M_t|^2
\le4\E^P|M_T|^2=4\E^P\int_0^T(z_s^2+v_s^2)ds.
\]
The integral form from Step 1 then yields
\begin{align*}
\E^P\sup_{t\le T}|\Delta Y_t|^2
&\le3\E^P|\Delta\zeta|^2+3T\E^P\int_0^T|G_s|^2ds
+12\E^P\sup_{t\le T}|M_t|^2\le C\mathfrak e_P.
\end{align*}
Finally,
$R_s^2\le4(|\Delta X_{s-}|^2+W_2^2(\mu_{s-},\bar\mu_{s-})
+|\Delta\alpha_s|^2+|\Delta h_s|^2)$ gives \eqref{eq:Bstab}. When
$\mathsf K_t=\int_0^th_sds$ and
$\bar{\mathsf K}_t=\int_0^t\bar h_sds$, we also have
\[
\E^P\sup_{t\le T}|\mathsf K_t-\bar{\mathsf K}_t|^2
\le T\E^P\int_0^T|\Delta h_s|^2ds.
\]
All estimates are obtained before taking the supremum over $P$ and use common
constants; no comparison is made between integrals defined under different
probability measures.
\end{proof}

\begin{lemma}[Continuity of the laws despite jumps]\label{lem:lawcontinuous}
For square-integrable solutions, the maps $t\mapsto\mu_t^P$ and
$t\mapsto\eta_t^P$ are continuous in $W_2$ under each $P$. For every
deterministic time $t>0$,
$X_t^P=X_{t-}^P$ and $Y_t^P=Y_{t-}^P$, $P$-a.s.
\end{lemma}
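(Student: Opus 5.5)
The plan is to prove the almost sure identity at deterministic times first, and then deduce $W_2$-continuity of the laws from it by a dominated convergence argument. Both $X^P$ and $Y^P$ have the same structure: a c\`adl\`ag process that is the sum of an initial or terminal term, a $ds$-integral with integrand in $L^1(dt\otimes P)$, a continuous stochastic integral against $B$, and a compensated Poisson integral. For $Y^P$, we use the backward equation \eqref{eq:BWD} rewritten forward as $Y_t=Y_0-\int_0^t(f_s+h_s)ds+\int_0^tZ_sdB_s+\int_0^t\int_EU_s(e)\widetilde N^P(ds,de)$. The $ds$-integrals and the $dB$-integral are continuous in time. Hence the jumps of $X^P$ and $Y^P$ are exactly the jumps of the Poisson integrals, namely $\Delta X_t=\int_E\beta(t,X_{t-},\mu_{t-},\alpha_t,e)N(\{t\},de)$, and similarly with $U_t(e)$ for $Y$.

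\emph{Step 1: no fixed-time jumps.} Fix a deterministic $t>0$. Since $N$ is a Poisson random measure with compensator $\Pi(de)ds$, we have $\E^P N(\{t\}\times\{|e|>1/n\})=\Pi(|e|>1/n)\cdot 0=0$ for every $n$, hence $N(\{t\}\times E)=0$ a.s. A cleaner alternative that avoids the finiteness issue near $0$ is to note that
\[
\E^P|\Delta M_t|^2\le \E^P\sum_{s\le T}|\Delta M_s|^2\one_{\{s=t\}}=\E^P\int_{\{t\}}\int_E|\beta_s(e)|^2\Pi(de)ds=0,
\]
where $\sum_{s}|\Delta M_s|^2\one_{s=t}=\int\int\one_{\{s=t\}}|\beta_s(e)|^2N(ds,de)$ and the identity follows by compensation of a nonnegative predictable integrand. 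The integrand is in $L^1$ by the square integrability of $\beta$ and $U$. Thus $X_t=X_{t-}$ and $Y_t=Y_{t-}$ $P$-a.s.

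\emph{Step 2: continuity in $W_2$.} Let $t_n\to t$. By the c\`adl\`ag property, $X_{t_n}\to X_t$ along $t_n\downarrow t$, and $X_{t_n}\to X_{t-}=X_t$ a.s. along $t_n\uparrow t$ by Step 1. The domination $|X_{t_n}-X_t|^2\le4\sup_{s\le T}|X_s|^2\in L^1(P)$ then gives $\E^P|X_{t_n}-X_t|^2\to0$ by dominated convergence, and \eqref{eq:coupling} yields $W_2(\mu_{t_n},\mu_t)\to0$. For $Y$, the same argument applies using $Y\in\mathbb S^2(P)$. At $t=0$, only right limits matter; at $t=T$, only left limits, which are covered by Step 1. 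This also shows that $\mu_{t-}^P=\mu_t^P$, which is consistent with the notation $\mu_{s-}^P$ in \eqref{eq:FWD}.

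The main point to handle carefully is Step 1 when $\Pi(E)=\infty$: the Poisson integral is defined by $L^2$ completion rather than pathwise. I expect to handle this via the quadratic variation identity $[M^d]_t=\int_0^t\int_E|\beta_s(e)|^2N(ds,de)$, whose jump at $t$ is $|\Delta M_t|^2$, together with compensation of nonnegative predictable integrands, exactly as in the proof of Lemma~\ref{lem:Bstab}.
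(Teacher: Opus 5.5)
Your proof is correct, but it runs in the opposite order from the paper's.

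The paper first proves $L^2$-continuity at deterministic times directly from the integral equations. It uses Cauchy--Schwarz and the two isometries: $\E^P|X_t-X_s|^2\le3(t-s)\E^P\int_s^t|b_u|^2du+3\E^P\int_s^t\operatorname{Tr}(\sigma_ua_u^P\sigma_u^\top)du+3\E^P\int_s^t\|\beta_u\|_\Pi^2du$, with the analogue for $Y$ using $f+h$, $Z$, $U$. It then obtains $W_2$-continuity from the canonical coupling. Only at the end does it get $X_t=X_{t-}$ a.s., by comparing the almost sure limit $X_{t-}$ along $s_n\uparrow t$ with the limit $X_t$ in probability.

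You instead remove fixed-time jumps first, using that the compensator $\Pi(de)\,ds$ charges no time slice. You then recover $L^2$-continuity qualitatively by dominated convergence with the majorant $4\sup_{s\le T}|X_s|^2$.

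What each order buys:
\begin{itemize}
\item The paper's order never needs the pathwise identification of the jumps of the $L^2$-constructed compensated integral, which is exactly the point you flag when $\Pi(E)=\infty$. It uses only $L^2(dt\otimes P)$ integrability of the integrands rather than the $\mathbb S^2$ bound, and it yields an explicit modulus of continuity.
\item Your order isolates the structural reason for the result, namely the absence of time atoms in the compensator, independently of the form of the drift and diffusion.
\end{itemize}

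The simplest way to close your flagged point is precisely the paper's isometry. Since $M_t-M_s\to\Delta M_t$ a.s. as $s\uparrow t$, Fatou's lemma gives $\E^P|\Delta M_t|^2\le\liminf_{s\uparrow t}\E^P\int_s^t\|\beta_u\|_\Pi^2du=0$, and likewise with $U$ for $Y$. This avoids describing jumps through $N(\{t\},de)$ or through $[M]$.
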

\begin{proof}
For the forward process, the isometries and Cauchy--Schwarz give, for $s<t$,
\[
\E^P|X_t-X_s|^2\le3(t-s)\E^P\int_s^t|b_u|^2du
+3\E^P\int_s^t\operatorname{Tr}(\sigma_u a_u^P\sigma_u^\top)du
+3\E^P\int_s^t\|\beta_u\|_\Pi^2du.
\]
All integrands are integrable on $[0,T]$. The same estimate for $Y$ uses
$f+h$, $Z$, and $U$, where $h=\langle\varrho,\Gamma\rangle$; their quadratic
integrability follows from Lipschitz growth and the solution spaces. Absolute
continuity of time integrals yields $L^2$ continuity of both processes at
deterministic times. The canonical coupling then gives $W_2$ continuity of
the laws. For $s_n\uparrow t$, the c\`adl\`ag property gives
$X_{s_n}\to X_{t-}$ almost surely, while $L^2$ continuity gives
$X_{s_n}\to X_t$ in probability. Uniqueness of the limit in probability
implies $X_t=X_{t-}$ almost surely; the same argument applies to $Y$. This
statement at deterministic $t$ does not mean that paths are continuous at
their random jump times.
\end{proof}
\section{Stability under perturbations of the data}

The results of this section use the quadratic framework
$\alpha\in\mathcal A^2$ and Assumptions~\ref{HF}, \ref{HB}, \ref{HD}.
The higher moments in \ref{HqJ} are not needed here.

The comparison keeps $P$, the filtration, and the integrators $(B,N)$ fixed.
Stability with varying integrators and filtrations is studied in the preprint
of Papapantoleon, Saplaouras and Theodorakopoulos (arXiv:2506.03562).

Consider two sets of data
\[
\mathfrak D=(\xi,\alpha,b,\sigma,\beta,f,g,A),\qquad
\bar{\mathfrak D}=(\bar\xi,\bar\alpha,\bar b,\bar\sigma,
\bar\beta,\bar f,\bar g,\bar A),
\]
on the same stochastic basis under $P$. We assume that their coefficients
satisfy Assumptions~\ref{HF}--\ref{HD} with the same structural constants.
Objects associated with the second set of data carry a bar. For $\lambda>0$,
the two operators are replaced by their Yosida approximations
$A^\lambda$ and $\bar A^\lambda$.

The forward residual, evaluated along the second solution, is
\begin{align}
\mathfrak R_F(P):={}&\E^P|\xi-\bar\xi|^2
+\E^P\int_0^T|\alpha_t-\bar\alpha_t|^2dt\nonumber\\
&+\E^P\int_0^T\Bigl[
|b(t,\bar X_{t-},\bar\mu_{t-},\bar\alpha_t)
-\bar b(t,\bar X_{t-},\bar\mu_{t-},\bar\alpha_t)|^2\nonumber\\
&\qquad+\|\sigma(t,\bar X_{t-},\bar\mu_{t-},\bar\alpha_t)
-\bar\sigma(t,\bar X_{t-},\bar\mu_{t-},\bar\alpha_t)\|^2\nonumber\\
&\qquad+\|\beta(t,\bar X_{t-},\bar\mu_{t-},\bar\alpha_t,\cdot)
-\bar\beta(t,\bar X_{t-},\bar\mu_{t-},\bar\alpha_t,\cdot)
\|_{L_\Pi^2}^2\Bigr]dt.
\label{eq:dataRF}
\end{align}
For the regularized backward part, set
\begin{align}
\mathfrak R_B^\lambda(P):={}&
\E^P|g(\bar X_T,\bar\mu_T)-\bar g(\bar X_T,\bar\mu_T)|^2
\nonumber\\
&+\E^P\int_0^T\bigl|f(t,\bar X_{t-},\bar\mu_{t-},
\bar Y_t,\bar Z_t,\bar U_t,\bar\eta_t,\bar\alpha_t)\nonumber\\
&\hspace{34mm}-\bar f(t,\bar X_{t-},\bar\mu_{t-},
\bar Y_t,\bar Z_t,\bar U_t,\bar\eta_t,\bar\alpha_t)
\bigr|^2dt\nonumber\\
&+\E^P\int_0^T|A^\lambda(\bar X_{t-},\bar\mu_{t-})
-\bar A^\lambda(\bar X_{t-},\bar\mu_{t-})|^2dt.
\label{eq:dataRB}
\end{align}
In \eqref{eq:dataRB}, $(\bar Y,\bar Z,\bar U)$ denotes the regularized
solution of the second system with the same parameter $\lambda$.

\begin{theorem}[Strong stability with fixed integrators]
\label{thm:datastability}
Let $\lambda>0$. Under Assumptions~\ref{HF}, \ref{HB}, and~\ref{HD},
with both data sets satisfying the same structural constants on the same
stochastic basis, filtration, and integrators, and with
$\alpha,\bar\alpha\in\mathcal A^2$, the regularized solutions satisfy
\begin{align}
\E^P\sup_{t\le T}|X_t-\bar X_t|^2
&\le C\mathfrak R_F(P),
\label{eq:dataXstab}\\
\E^P\Biggl[\sup_{t\le T}|Y_t-\bar Y_t|^2
&+\sup_{t\le T}|\mathsf K_t^\lambda-
\bar{\mathsf K}_t^\lambda|^2
+\int_0^T|Z_t-\bar Z_t|_{a_t^P}^2dt\nonumber\\
&+\int_0^T\|U_t-\bar U_t\|_\Pi^2dt\Biggr]
\le C\Bigl\{(1+\lambda^{-2})\mathfrak R_F(P)
+\mathfrak R_B^\lambda(P)\Bigr\}.
\label{eq:dataYstab}
\end{align}
The constant $C$ is independent of $P$, $\lambda$, and the two data sets;
all dependence on $\lambda$ is displayed explicitly in
\eqref{eq:dataYstab}.
\end{theorem}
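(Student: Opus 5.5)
The proof proceeds in two stages, following the triangular structure: first the forward estimate, then the backward estimate with the regularized source treated as an exogenous term.

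\emph{Forward part.} We compare $X$ (data $\mathfrak D$) with $\bar X$ (data $\bar{\mathfrak D}$) under the same $P$. We write each coefficient difference by adding and subtracting the first coefficient evaluated along the second solution. For instance,
\[
b(t,X_{t-},\mu_{t-},\alpha_t)-\bar b(t,\bar X_{t-},\bar\mu_{t-},\bar\alpha_t)
=\bigl[b(\cdot,X,\mu,\alpha)-b(\cdot,\bar X,\bar\mu,\bar\alpha)\bigr]
+\bigl[b-\bar b\bigr](\cdot,\bar X,\bar\mu,\bar\alpha),
\]
and similarly for $\sigma$ and $\beta$. The first bracket is controlled by \eqref{eq:FLip}. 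The second bracket is exactly a residual term in \eqref{eq:dataRF}. Repeating Steps 2--4 of Lemma~\ref{lem:Fstab} then gives
\[
u(t)\le C\mathfrak R_F(P)+C\int_0^t\bigl(u(s)+W_2^2(\mu_s,\bar\mu_s)\bigr)ds,
\qquad u(t)=\E^P\sup_{r\le t}|X_r-\bar X_r|^2,
\]
with the doubled constants coming from $|a+b|^2\le2|a|^2+2|b|^2$. The coupling bound \eqref{eq:coupling} gives $W_2^2(\mu_s,\bar\mu_s)\le u(s)$, and \eqref{eq:GronwallExplicit} then yields \eqref{eq:dataXstab}. The law terms $\mu_{s-}$ and $\mu_s$ coincide by Lemma~\ref{lem:lawcontinuous}.

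\emph{Backward part.} Here we cannot apply Lemma~\ref{lem:Bstab} verbatim, because the generators $f,\bar f$ and terminal functions $g,\bar g$ differ. We rerun its proof (Itô formula with $e^{\gamma t}$, the justification of the zero-expectation martingales, and Young absorption), with the error data modified as follows.
\begin{itemize}[label={}]
\item The terminal difference is split as $g(X_T,\mu_T)-g(\bar X_T,\bar\mu_T)+(g-\bar g)(\bar X_T,\bar\mu_T)$, which is bounded by $2L_B^2(|\Delta X_T|^2+W_2^2)$ plus the first residual in \eqref{eq:dataRB}.
\item The generator difference is split the same way, around $f$ evaluated at the second solution, producing the residual $f-\bar f$ along $(\bar X,\bar Y,\bar Z,\bar U,\bar\eta,\bar\alpha)$.
\item The source difference is split as
\[
A^\lambda(X_{t-},\mu_{t-})-A^\lambda(\bar X_{t-},\bar\mu_{t-})+(A^\lambda-\bar A^\lambda)(\bar X_{t-},\bar\mu_{t-}).
\]
\end{itemize}
By \eqref{eq:AYlip}, the first part of the source difference is bounded by $\lambda^{-1}|\Delta X_{t-}|+L_KW_2(\mu_{t-},\bar\mu_{t-})$. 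Its square, after the coupling inequality, contributes $C(1+\lambda^{-2})\E^P\sup|\Delta X|^2\le C(1+\lambda^{-2})\mathfrak R_F(P)$, using the forward estimate. The $\varrho$ projection only costs a factor $|\varrho|^2$.

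Collecting terms, the analogue of $\mathfrak e_P$ is bounded by $C\{(1+\lambda^{-2})\mathfrak R_F(P)+\mathfrak R_B^\lambda(P)\}$. Steps 3--4 of Lemma~\ref{lem:Bstab} then give the bounds on $Y$, $Z$, $U$, and the estimate for $\mathsf K^\lambda$ follows from $\E^P\sup_t|\Delta\mathsf K_t|^2\le T\E^P\int_0^T|\Delta h_s|^2ds$.

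The only delicate point is bookkeeping. All dependence on $\lambda$ must enter solely through the Lipschitz constant $\lambda^{-1}$ in \eqref{eq:AYlip}, so that $C$ itself stays independent of $\lambda$. The residuals must be evaluated along the barred solution, so that no Lipschitz property of $\bar f$ or $\bar A^\lambda$ in the unbarred variables is needed. The constants also remain independent of $P$, since only $L_F$, $L_B$, $L_K$, $|\varrho|$, $\underline a$ and $\overline a$ enter.
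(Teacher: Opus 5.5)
Your proposal is correct and follows the paper's proof essentially step by step. The forward part is the Lipschitz/Gronwall argument with the coefficient residuals evaluated along the barred solution, closed by the coupling $W_2^2(\mu_t,\bar\mu_t)\le\E^P|\Delta X_t|^2$. The backward part is the jump It\^o energy estimate of Lemma~\ref{lem:Bstab}, with the terminal, generator and source differences split around the barred solution, the factor $\lambda^{-2}$ entering only through \eqref{eq:AYlip}, and Cauchy--Schwarz in time for $\mathsf K^\lambda$.

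One minor remark: rerunning the proof of Lemma~\ref{lem:Bstab} is not strictly needed. You can apply it verbatim by taking the barred terminal datum to be $\bar\zeta=\bar g(\bar X_T,\bar\mu_T)$ and absorbing the residual $(\bar f-f)$, evaluated along the barred solution, into the exogenous term $\bar h$.
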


\begin{proof}
Set $\Delta X=X-\bar X$ and evaluate coefficient residuals along the barred
solution as in \eqref{eq:dataRF}.  The forward difference equation, the
Lipschitz assumptions, Lemma~\ref{lem:mart}, the natural coupling
$\Wtwo^2(\mu_t,\bar\mu_t)\le\E^P|\Delta X_t|^2$, and Gronwall give
\[
\E^P\sup_{t\le T}|\Delta X_t|^2\le C\mathfrak R_F(P),
\]
which is \eqref{eq:dataXstab}.

For the backward equation, decompose the terminal value and generator into
their Lipschitz state differences and the residuals entering
$\mathfrak R_B^\lambda(P)$.  The Yosida estimate \eqref{eq:AYlip} gives
\[
|A^\lambda(X_{t-},\mu_{t-})
-\bar A^\lambda(\bar X_{t-},\bar\mu_{t-})|^2
\le C\lambda^{-2}|\Delta X_{t-}|^2
+C\Wtwo^2(\mu_{t-},\bar\mu_{t-})
+C|r_t^{A,\lambda}|^2.
\]
Applying the jump Itô estimate of Lemma~\ref{lem:Bstab} to
$e^{\gamma t}|\Delta Y_t|^2$, choosing $\gamma$ large enough and absorbing
the $Z$- and $U$-terms yields
\[
\sup_{t\le T}\E^P|\Delta Y_t|^2+
\E^P\int_0^T\!\!
\{|\Delta Z_t|_{a_t^P}^2+\|\Delta U_t\|_\Pi^2\}dt
\le
C\{(1+\lambda^{-2})\mathfrak R_F(P)
+\mathfrak R_B^\lambda(P)\}.
\]
BDG applied to the backward difference upgrades the first term to
$\E^P\sup_{t\le T}|\Delta Y_t|^2$ with the same right-hand side.  Finally,
\[
\sup_{t\le T}|\mathsf K_t^\lambda-\bar{\mathsf K}_t^\lambda|^2
\le T|\varrho|^2\int_0^T
|A^\lambda(X_{s-},\mu_{s-})
-\bar A^\lambda(\bar X_{s-},\bar\mu_{s-})|^2ds,
\]
and \eqref{eq:dataXstab} completes \eqref{eq:dataYstab}.  All constants are
uniform in $P$ because the ellipticity, Lipschitz and martingale constants are
uniform over $\Pcal$.
\end{proof}

\begin{corollary}[Passage to the limit uniformly in the model]
\label{cor:datauniform}
Let $(\mathfrak D^n)_n$ be a sequence of data satisfying the preceding
assumptions uniformly, and fix $\lambda>0$. If
\[
\sup_{P\in\Pcal}\mathfrak R_F^n(P)\longrightarrow0,
\qquad
\sup_{P\in\Pcal}\mathfrak R_B^{n,\lambda}(P)\longrightarrow0,
\]
then
\begin{align*}
&\sup_{P\in\Pcal}\E^P\Biggl[
\sup_{t\le T}|X_t^n-X_t|^2+\sup_{t\le T}|Y_t^{n,\lambda}-Y_t^\lambda|^2
+\sup_{t\le T}|\mathsf K_t^{n,\lambda}-\mathsf K_t^\lambda|^2\\
&\hspace{32mm}+\int_0^T|Z_t^{n,\lambda}-Z_t^\lambda|_{a_t^P}^2dt
+\int_0^T\|U_t^{n,\lambda}-U_t^\lambda\|_\Pi^2dt
\Biggr]\longrightarrow0.
\end{align*}
These are strong convergences, respectively in $\mathbb S^2(P)$,
$\mathbb H_B^2(P)$, and $\mathbb H_\Pi^2(P)$, uniformly in $P$.
\end{corollary}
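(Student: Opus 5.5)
The plan is to apply Theorem~\ref{thm:datastability} model by model, with the $n$-th data $\mathfrak D^n$ as the first data set and the limiting data $\mathfrak D$ as the barred one, and then take the supremum over $P$. The residuals are evaluated along the limiting regularized solution $(\bar X,\bar Y,\bar Z,\bar U)$ with the same $\lambda$, exactly as in \eqref{eq:dataRF}--\eqref{eq:dataRB}. So $\mathfrak R_F^n(P)$ and $\mathfrak R_B^{n,\lambda}(P)$ are precisely the quantities appearing in \eqref{eq:dataXstab}--\eqref{eq:dataYstab}. For each fixed $P$ and $n$ this gives
\[
\E^P\sup_{t\le T}|X^n_t-X_t|^2\le C\,\mathfrak R_F^n(P),
\]
and the backward quantities, including the cumulative processes, are bounded by
\[
C\{(1+\lambda^{-2})\mathfrak R_F^n(P)+\mathfrak R_B^{n,\lambda}(P)\}.
\]

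The key point to verify is that the constant $C$ does not depend on $n$ or $P$. Theorem~\ref{thm:datastability} states that $C$ depends only on the structural constants, and not on $P$, $\lambda$, or the data sets. The hypothesis that the $\mathfrak D^n$ satisfy the assumptions uniformly means they share the structural constants $L_F,C_F,L_B,C_B,L_K,C_K,C_A$. Hence one constant $C$ serves for all $n$.

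Next I take $\sup_{P\in\Pcal}$ on both sides. Since $\lambda$ is fixed, $(1+\lambda^{-2})$ is a harmless finite factor, and the right-hand side is bounded by
\[
C(1+\lambda^{-2})\sup_P\mathfrak R_F^n(P)+C\sup_P\mathfrak R_B^{n,\lambda}(P)\to0 .
\]
Adding the forward and backward estimates yields the displayed convergence. The identification with strong convergence in $\mathbb S^2(P)$, $\mathbb H_B^2(P)$ and $\mathbb H_\Pi^2(P)$, uniformly in $P$, is then just the definition of the norms on $\mathfrak S^2(\Pcal)$ and its analogues. No common version of the processes across models is needed.

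The main obstacle, mild here, is bookkeeping: one must check that the regularized solutions used in \eqref{eq:dataRB} exist for each $n$ and are the ones being compared. Existence and uniqueness follow from Theorems~\ref{thm:Fwell}, \ref{thm:Bwell} and \ref{thm:well} with $A^\lambda$ in place of $A^0$; the domination $|A^\lambda|\le|A^0|$ from \eqref{eq:Aconv} preserves the growth condition in \ref{HD}. One must also check that all expectations are finite uniformly in $P$, which is guaranteed by \eqref{eq:uniformwell} applied to each data set.
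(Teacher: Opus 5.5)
Your proposal is correct and matches the paper's proof: apply Theorem~\ref{thm:datastability} to $(\mathfrak D^n,\mathfrak D)$, with residuals evaluated along the limiting regularized solution and a constant that is uniform in $P$ and $n$ because the structural constants are common. Then take the supremum over $P$ in \eqref{eq:dataXstab}--\eqref{eq:dataYstab} and let $n\to\infty$; the factor $1+\lambda^{-2}$ is harmless because $\lambda$ is fixed.
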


\begin{proof}
Apply Theorem~\ref{thm:datastability} to
$(\mathfrak D^n,\mathfrak D)$. Its constant is uniform in $P$ and $n$ because
the structural constants are uniform. Take the supremum over $P\in\Pcal$ in
\eqref{eq:dataXstab}--\eqref{eq:dataYstab}, then let $n\to\infty$.
\end{proof}

\begin{corollary}[Stability of the selected systems]
\label{cor:selectedData}
Assume $\sup_P\mathfrak R_F^n(P)\to0$ and that the terminal and Lipschitz
residuals
\begin{align*}
\mathfrak R_{g}^n(P)&=
\E^P|g_n(X_T^P,\mu_T^P)-g(X_T^P,\mu_T^P)|^2,\\
\mathfrak R_{f}^n(P)&=
\E^P\int_0^T|f_n(t,X_{t-}^P,\mu_{t-}^P,Y_t^P,Z_t^P,U_t^P,
\eta_t^P,\alpha_t)\\
&\hspace{39mm}-f(t,X_{t-}^P,\mu_{t-}^P,Y_t^P,Z_t^P,U_t^P,
\eta_t^P,\alpha_t)|^2dt
\end{align*}
converge to zero uniformly in $P$. Assume finally that
\begin{equation}
\sup_{P\in\Pcal}\E^P\int_0^T
|A_n^0(X_{t-}^{n,P},\mu_{t-}^{n,P})
-A^0(X_{t-}^P,\mu_{t-}^P)|^2dt\longrightarrow0.
\label{eq:selectedDataSource}
\end{equation}
Then
\begin{align*}
\sup_{P\in\Pcal}\E^P\Biggl[&
\sup_{t\le T}|Y_t^{n,P}-Y_t^P|^2
+\sup_{t\le T}|\mathsf K_t^{n,P}-\mathsf K_t^P|^2\\
&+\int_0^T|Z_t^{n,P}-Z_t^P|_{a_t^P}^2dt
+\int_0^T\|U_t^{n,P}-U_t^P\|_\Pi^2dt\Biggr]\longrightarrow0.
\end{align*}
\end{corollary}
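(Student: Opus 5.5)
The plan is to apply the backward stability Lemma~\ref{lem:Bstab} directly, under each fixed $P$, to the two selected solutions $(Y^{n,P},Z^{n,P},U^{n,P})$ and $(Y^P,Z^P,U^P)$. This deliberately avoids the regularized Theorem~\ref{thm:datastability}, whose $\lambda^{-2}$ factor is useless here. Both triples solve jump BSDEs on the same basis with the same integrators $(B,N)$, with exogenous sources
\[
h^n_t=\langle\varrho,A_n^0(X_{t-}^{n,P},\mu_{t-}^{n,P})\rangle,
\qquad
h_t=\langle\varrho,A^0(X_{t-}^P,\mu_{t-}^P)\rangle .
\]
Lemma~\ref{lem:Bstab} compares two generators of the same form $f$. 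Since here $f_n$ differs from $f$, I will redo its proof with the generator difference split as
\[
G_s=\bigl[f_n(\Theta^n_s)-f(\Theta^n_s)\bigr]+\bigl[f(\Theta^n_s)-f(\Theta_s)\bigr]+(h^n_s-h_s),
\]
where $\Theta^n,\Theta$ denote the full argument lists along the two solutions.

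This split is not ideal, because $\mathfrak R_f^n$ is evaluated along the limit solution. I will therefore use instead
\[
G_s=\bigl[f_n(\Theta^n_s)-f_n(\Theta_s)\bigr]+\bigl[f_n(\Theta_s)-f(\Theta_s)\bigr]+(h^n_s-h_s).
\]
The first bracket is Lipschitz in the differences with the common constant $L_B$, since $f_n$ satisfies \ref{HB} uniformly. The second bracket is exactly the integrand of $\mathfrak R_f^n$. The terminal difference is split in the same way:
\[
g_n(X^n_T,\mu^n_T)-g(X_T,\mu_T)=\bigl[g_n(X^n_T,\mu^n_T)-g_n(X_T,\mu_T)\bigr]+\bigl[g_n(X_T,\mu_T)-g(X_T,\mu_T)\bigr].
\]
The first part is bounded by $L_B(|\Delta X_T|+W_2(\mu^n_T,\mu_T))$ and the second is controlled by $\mathfrak R_g^n$.

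With $R_s$ augmented by $|f_n(\Theta_s)-f(\Theta_s)|$, Steps 2--4 of the proof of Lemma~\ref{lem:Bstab} go through verbatim. These are the Itô formula for $e^{\gamma t}|\Delta Y_t|^2$, Young absorption of the $z,v,w$ terms (using $W_2^2(\eta^n_s,\eta_s)\le\E^P|\Delta Y_s|^2$), and BDG for the supremum. They give
\[
\E^P\Bigl[\sup_t|\Delta Y_t|^2+\int_0^T(|\Delta Z|_{a^P}^2+\|\Delta U\|_\Pi^2)dt\Bigr]
\le C\Bigl\{\E^P\sup_t|\Delta X_t|^2+\mathfrak R_g^n+\mathfrak R_f^n+|\varrho|^2\E^P\!\int_0^T|A_n^0(X^{n}_{t-},\mu^n_{t-})-A^0(X_{t-},\mu_{t-})|^2dt\Bigr],
\]
with $C$ independent of $P$ and $n$. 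The Wasserstein terms are bounded by $\E^P|\Delta X_t|^2$ through the natural coupling \eqref{eq:coupling}.

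The remaining ingredients are straightforward. The forward part of Theorem~\ref{thm:datastability}, namely \eqref{eq:dataXstab}, involves no $\lambda$ and gives $\E^P\sup_t|\Delta X_t|^2\le C\mathfrak R_F^n(P)$. The cumulative process satisfies $\sup_t|\mathsf K^n_t-\mathsf K_t|^2\le T|\varrho|^2\int_0^T|A_n^0(\cdot)-A^0(\cdot)|^2dt$ by Cauchy--Schwarz, which is controlled by \eqref{eq:selectedDataSource}. Taking the supremum over $P$ and letting $n\to\infty$ concludes the proof.

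The main point to check is that the stability argument needs only square integrability of the sources, not any Lipschitz property of $A^0$. Lemma~\ref{lem:Bstab} treats $h,\bar h$ as arbitrary predictable $L^2$ processes, and \ref{HD} together with the uniform moments \eqref{eq:uniformwell} guarantee this. So the discontinuity of the selection is entirely absorbed into assumption \eqref{eq:selectedDataSource}. I also need the solutions of the $n$-th systems to exist in the quadratic spaces, which is Theorem~\ref{thm:well} applied to $\mathfrak D^n$.
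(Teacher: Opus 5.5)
Your proposal is correct and is essentially the paper's proof: you rerun the jump It\^o estimate of Lemma~\ref{lem:Bstab} with the selected-source difference as the exogenous term. You split the terminal and generator differences so that $\mathfrak R_g^n,\mathfrak R_f^n$ are evaluated along the limit solution, use \eqref{eq:dataXstab} for the forward error, and apply Cauchy--Schwarz for $\mathsf K$, with no $\lambda^{-2}$ substitution. The one term you leave implicit is the control difference $|\alpha^n_t-\alpha_t|$ produced by the Lipschitz bracket $f_n(\Theta^n_s)-f_n(\Theta_s)$; it is harmless, since $\E^P\int_0^T|\alpha^n_t-\alpha_t|^2dt$ is already part of $\mathfrak R_F^n(P)$.
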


\begin{proof}
The difference of the selected sources is already controlled in
\eqref{eq:selectedDataSource}; hence it is neither necessary nor valid to use
the $\lambda^{-1}$ Lipschitz estimate for the Yosida approximation, which
holds only for $\lambda>0$. We return to the jump Itô stability estimate of
Lemma~\ref{lem:Bstab}, replacing $\Delta h$ by the difference of the selected
sources. No substitution involving $\lambda^{-2}$ is made. After Young's
inequality and absorption,
\begin{align*}
&\E^P\sup_{t\le T}|Y_t^{n,P}-Y_t^P|^2
+\E^P\int_0^T\{|Z_t^{n,P}-Z_t^P|_{a_t^P}^2
+\|U_t^{n,P}-U_t^P\|_\Pi^2\}dt\\
&\quad\le C\biggl\{\mathfrak R_F^n(P)+\mathfrak R_g^n(P)
+\mathfrak R_f^n(P)
+\E^P\int_0^T|A_n^0(X_{t-}^{n,P},\mu_{t-}^{n,P})
-A^0(X_{t-}^P,\mu_{t-}^P)|^2dt\biggr\}.
\end{align*}
Moreover, Cauchy--Schwarz gives
\[
\E^P\sup_{t\le T}|\mathsf K_t^{n,P}-\mathsf K_t^P|^2
\le T|\varrho|^2\E^P\int_0^T
|A_n^0(X_{t-}^{n,P},\mu_{t-}^{n,P})
-A^0(X_{t-}^P,\mu_{t-}^P)|^2dt.
\]
Taking the supremum over $P$ and then using the convergence assumptions
completes the proof.
\end{proof}

Condition \eqref{eq:selectedDataSource} does not follow from the forward
Lipschitz assumptions alone, because $A^0$ may be discontinuous on $\Sigma$.
It must be verified through a non-contact assumption, an occupation estimate,
or uniform convergence of the selections.
\section{Yosida approximation and graph closure}

For $\lambda>0$, replace \eqref{eq:Kdef} by
\begin{equation}
\mathsf K_t^{\lambda,P}
=\int_0^t\langle\varrho,
A^\lambda(X_{s-}^P,\mu_{s-}^P)\rangle ds
\label{eq:Klambda}
\end{equation}
and denote by $(Y^{\lambda,P},Z^{\lambda,P},U^{\lambda,P})$ the corresponding
backward solution, together with
\[
\eta_t^{P,\lambda}=\Lcal^P(Y_t^{\lambda,P}).
\]
The regularized solution exists and is unique by the construction of
Theorem~\ref{thm:Bwell}; we verify here that its data satisfy the required
conditions:
\[
h_t^{\lambda,P}=\langle\varrho,A^\lambda(X_{t-}^P,\mu_{t-}^P)\rangle,
\qquad |h_t^{\lambda,P}|^2\le|\varrho|^2|A^0(X_{t-}^P,\mu_{t-}^P)|^2.
\]
The resolvent is Borel measurable, hence $h^{\lambda,P}$ is predictable, and
\[
\sup_{P,\lambda>0}\E^P\int_0^T|h_t^{\lambda,P}|^2dt<\infty.
\]
For $F^{\lambda,P}=f(\cdot,X_-,\mu_-,\cdot,\alpha)+h^{\lambda,P}$,
\[
\sup_{P,\lambda>0}\E^P\int_0^T
|F^{\lambda,P}(t,0,0,0,\delta_0)|^2dt<\infty,
\qquad
F^{\lambda,P}(t,v)-F^{\lambda,P}(t,v')
=F^{P}(t,v)-F^{P}(t,v').
\]
Theorem~\ref{thm:Bwell} therefore applies with the same contraction and
energy constants for every $\lambda>0$. The forward process is unchanged.

\begin{lemma}[Quadratic convergence of the resolvents]\label{lem:Jstrong}
Under the quadratic well-posedness assumptions,
\[
\sup_P\E^P\int_0^T
|J_\lambda(X_{t-}^P,\mu_{t-}^P)-X_{t-}^P|^2dt\le C\lambda^2.
\]
\end{lemma}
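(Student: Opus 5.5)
The plan is to use the pointwise identity
\[
J_\lambda(x,\mu)-x=-\lambda A^\lambda(x,\mu),
\]
which is simply the definition of $A^\lambda$ in Proposition~\ref{prop:Yosida}. Combined with the domination $|A^\lambda(x,\mu)|\le|A^0(x,\mu)|$ from \eqref{eq:Aconv}, it gives, for every $(t,\omega)$,
\[
|J_\lambda(X_{t-}^P,\mu_{t-}^P)-X_{t-}^P|^2
=\lambda^2|A^\lambda(X_{t-}^P,\mu_{t-}^P)|^2
\le\lambda^2|A^0(X_{t-}^P,\mu_{t-}^P)|^2.
\]
The problem therefore reduces to a bound, uniform in $P$, on $\E^P\int_0^T|A^0(X_{t-}^P,\mu_{t-}^P)|^2dt$. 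This bound does not depend on $\lambda$.

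For that bound I would use the linear growth in Assumption~\ref{HD}:
\[
|A^0(x,\mu)|^2\le 3C_A^2\bigl(1+|x|^2+M_2(\mu)^2\bigr).
\]
Here $M_2(\mu_{t-}^P)^2=\E^P|X_{t-}^P|^2\le\E^P\sup_{s\le T}|X_s^P|^2$. Also $|X_{t-}^P|\le\sup_{s\le T}|X_s^P|$ pathwise, since left limits of a c\`adl\`ag path are bounded by its supremum; alternatively, $X_{t-}=X_t$ for $dt\otimes P$-a.e. $(t,\omega)$. Tonelli's theorem then gives
\[
\E^P\int_0^T|A^0(X_{t-}^P,\mu_{t-}^P)|^2dt
\le 3C_A^2T\Bigl(1+2\E^P\sup_{t\le T}|X_t^P|^2\Bigr).
\]
Finally, the uniform second-moment estimate \eqref{eq:Fsecond} of Theorem~\ref{thm:Fwell} makes the right-hand side bounded uniformly in $P$. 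It depends only on the constants of Assumptions~\ref{HF} and~\ref{HD} and on $\sup_P\E^P|\xi|^2$ and $\sup_P\E^P\int_0^T|\alpha_t|^2dt$, which are finite because $\alpha\in\mathcal A^2$. Taking the supremum over $P$ then yields the claim with $C$ independent of $\lambda$ and $P$.

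There is no real obstacle. The only point needing care is measurability: the integrand must be a genuine $dt\otimes P$-measurable process. This follows because $J_\lambda$ is continuous on $\R^d\times(\mathcal P_2,W_2)$, because the law flow $t\mapsto\mu_{t-}^P$ is Borel, and because $A^0$ is Borel, as noted after Proposition~\ref{prop:Yosida}.
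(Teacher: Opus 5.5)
Your proposal is correct and follows the paper's proof: the identity $J_\lambda-x=-\lambda A^\lambda$ together with $|A^\lambda|\le|A^0|$ reduces the claim to a $\lambda$-independent bound on $\sup_P\E^P\int_0^T|A^0(X_{t-}^P,\mu_{t-}^P)|^2dt$, which the linear growth in \ref{HD} and the uniform moment bound \eqref{eq:Fsecond} supply. Your remarks on left limits and on measurability are accurate details that the paper leaves implicit.
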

\begin{proof}
The identity $J_\lambda-x=-\lambda A^\lambda$ and the bound
$|A^\lambda|\le|A^0|$ show that the left-hand side is bounded above by
\[
\lambda^2\sup_P\E^P\int_0^T|A^0(X_{t-}^P,\mu_{t-}^P)|^2dt.
\]
The growth of the source, Jensen's inequality and \eqref{eq:Fsecond} make
this quantity uniformly finite. No occupation assumption is used.
\end{proof}

\begin{lemma}[Convergence under a fixed model]\label{lem:fixedY}
Under Assumptions~\ref{HF}, \ref{HB} and~\ref{HD}, for every fixed
$P\in\Pcal$,
\[
\E^P\int_0^T
|A^\lambda(X_{t-}^P,\mu_{t-}^P)
-A^0(X_{t-}^P,\mu_{t-}^P)|^2dt\longrightarrow0.
\]
Consequently, \eqref{eq:Yconv} holds without the supremum over $P$.
\end{lemma}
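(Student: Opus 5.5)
The plan is dominated convergence at fixed $P$ for the source, followed by the stability estimate of Lemma~\ref{lem:Bstab} with the sources as the only varying data.

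\emph{Step 1: pointwise convergence.} Fix $P\in\Pcal$ and $(t,\omega)$. Proposition~\ref{prop:Yosida}, \eqref{eq:Aconv}, applied at the point $(X_{t-}^P(\omega),\mu_{t-}^P)$, gives
\[
A^\lambda(X_{t-}^P,\mu_{t-}^P)\to A^0(X_{t-}^P,\mu_{t-}^P)
\quad\text{as }\lambda\downarrow0 .
\]
No regularity of $A^0$ across $\Sigma$ is needed, because the forward process and its law flow do not depend on $\lambda$.

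\emph{Step 2: domination and the limit of the source.} Again by \eqref{eq:Aconv}, $|A^\lambda|\le|A^0|$, so
\[
|A^\lambda(X_{t-}^P,\mu_{t-}^P)-A^0(X_{t-}^P,\mu_{t-}^P)|^2
\le 4|A^0(X_{t-}^P,\mu_{t-}^P)|^2 .
\]
By \ref{HD} the right-hand side is at most $C(1+|X_{t-}^P|^2+M_2(\mu_{t-}^P)^2)$. Since $M_2(\mu_{t-}^P)^2=\E^P|X_{t-}^P|^2$, \eqref{eq:Fsecond} makes this bound integrable with respect to $dt\otimes P$. Dominated convergence under the fixed $P$ then yields the displayed limit. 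Measurability of the integrand is already settled: $A^\lambda$ is continuous, $A^0$ is Borel, and $X_-$ is predictable.

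\emph{Step 3: consequence for $(Y,Z,U,\mathsf K)$.} The selected and regularized systems share the forward process, the terminal datum and the generator $f$. They differ only through the exogenous sources $h=\langle\varrho,A^0\rangle$ and $h^\lambda=\langle\varrho,A^\lambda\rangle$. Lemma~\ref{lem:Bstab}, with $\zeta=\bar\zeta$, $X=\bar X$, $\mu=\bar\mu$ and $\alpha=\bar\alpha$, then bounds the $\mathbb S^2\times\mathbb H_B^2\times\mathbb H_\Pi^2$ error by
\[
C|\varrho|^2\,\E^P\int_0^T|A^\lambda-A^0|^2dt .
\]
Cauchy--Schwarz gives
\[
\E^P\sup_t|\mathsf K_t^{\lambda,P}-\mathsf K_t^P|^2\le T|\varrho|^2\,\E^P\int_0^T|A^\lambda-A^0|^2dt .
\]
By Step~2 both bounds tend to zero. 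This is \eqref{eq:Yconv} at fixed $P$, without the supremum over models.

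\emph{Main obstacle.} The only delicate point is that this argument gives no rate and no uniformity in $P$. Dominated convergence is applied separately under each $P$, the family may be non-dominated, and no common version of the processes exists. That is why the lemma is stated only at fixed $P$, and uniformity has to come from \ref{HY} or an occupation criterion. Within the proof itself I expect no real difficulty, since the crucial domination $|A^\lambda|\le|A^0|$ is already provided by Proposition~\ref{prop:Yosida}.
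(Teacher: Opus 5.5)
Your proposal is correct and follows the paper's own proof step for step. Both use pointwise convergence from \eqref{eq:Aconv}, the domination $|A^\lambda-A^0|^2\le4|A^0|^2$ made integrable by \ref{HD} and \eqref{eq:Fsecond}, dominated convergence under the fixed $P$, and then Lemma~\ref{lem:Bstab} together with Cauchy--Schwarz for $\mathsf K^{\lambda,P}-\mathsf K^P$.
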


\begin{proof}
Pointwise convergence follows from \eqref{eq:Aconv}. The domination
\[
|A^\lambda(X_{t-}^P,\mu_{t-}^P)-A^0(X_{t-}^P,\mu_{t-}^P)|^2
\le4|A^0(X_{t-}^P,\mu_{t-}^P)|^2
\]
is integrable by \ref{HD} and \eqref{eq:Fsecond}. The dominated
convergence theorem yields convergence of the source. Lemma~\ref{lem:Bstab}
and the Cauchy--Schwarz inequality applied to
$\mathsf K^{\lambda,P}-\mathsf K^P$ then yield convergence of
$(Y,Z,U,\mathsf K)$.
\end{proof}

\begin{hypothesis}[Uniform convergence of the source]\label{HY}
There exists $\delta>0$ such that
\begin{equation}
\sup_{P\in\Pcal}\E^P\int_0^T
|A^0(X_{t-}^P,\mu_{t-}^P)|^{2+\delta}dt<\infty,
\label{eq:AUI}
\end{equation}
and, for every $\varepsilon>0$,
\begin{equation}
\lim_{\lambda\downarrow0}\sup_{P\in\Pcal}
(dt\otimes P)\left(
|A^\lambda(X_{t-}^P,\mu_{t-}^P)
-A^0(X_{t-}^P,\mu_{t-}^P)|>\varepsilon\right)=0.
\label{eq:Ameasure}
\end{equation}
\end{hypothesis}

\begin{lemma}[Uniform Vitali argument]\label{lem:Vitali}
Under Assumption~\ref{HY},
\begin{equation}
\lim_{\lambda\downarrow0}\sup_{P\in\Pcal}
\E^P\int_0^T|A^\lambda(X_{t-}^P,\mu_{t-}^P)
-A^0(X_{t-}^P,\mu_{t-}^P)|^2dt=0.
\label{eq:Astrong}
\end{equation}
\end{lemma}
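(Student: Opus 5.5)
The plan is a standard uniform-integrability (Vitali) split, done model by model with constants uniform in $P$. Write
\[
D_t^{\lambda,P}:=|A^\lambda(X_{t-}^P,\mu_{t-}^P)-A^0(X_{t-}^P,\mu_{t-}^P)|,
\qquad a_t^{0,P}:=|A^0(X_{t-}^P,\mu_{t-}^P)|.
\]
By \eqref{eq:Aconv}, $D_t^{\lambda,P}\le 2a_t^{0,P}$ pointwise, for every $\lambda>0$ and every $P$. So the $(2+\delta)$-moment bound \eqref{eq:AUI} transfers to the differences:
\[
\sup_{\lambda>0}\sup_P\E^P\int_0^T (D_t^{\lambda,P})^{2+\delta}dt\le 2^{2+\delta}M,
\]
where $M$ is the finite supremum in \eqref{eq:AUI}. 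This bound is uniform in both $\lambda$ and $P$. It is the only place the domination $|A^\lambda|\le|A^0|$ enters.

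Next, fix $\varepsilon>0$ and split the space-time domain $[0,T]\times\Omega$ according to the event $G_\varepsilon^{\lambda,P}=\{D^{\lambda,P}>\varepsilon\}$. On the complement, $(D^{\lambda,P})^2\le\varepsilon^2$, which contributes at most $\varepsilon^2 T$ (the measure $dt\otimes P$ has total mass $T$). On $G_\varepsilon^{\lambda,P}$, apply H\"older's inequality with exponents $(2+\delta)/2$ and $(2+\delta)/\delta$ under $dt\otimes P$:
\[
\E^P\int_0^T (D_t^{\lambda,P})^2\one_{G_\varepsilon^{\lambda,P}}dt
\le\Bigl(\E^P\int_0^T (D_t^{\lambda,P})^{2+\delta}dt\Bigr)^{2/(2+\delta)}
\bigl((dt\otimes P)(G_\varepsilon^{\lambda,P})\bigr)^{\delta/(2+\delta)}.
\]
The first factor is bounded by $(2^{2+\delta}M)^{2/(2+\delta)}$, independently of $P$ and $\lambda$. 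Taking the supremum over $P$ before letting $\lambda\downarrow0$ gives
\[
\limsup_{\lambda\downarrow0}\sup_P\E^P\int_0^T (D_t^{\lambda,P})^2dt
\le\varepsilon^2T+C_{M,\delta}\,\limsup_{\lambda\downarrow0}\sup_P\bigl((dt\otimes P)(G_\varepsilon^{\lambda,P})\bigr)^{\delta/(2+\delta)}.
\]
By \eqref{eq:Ameasure}, the last term vanishes. Letting $\varepsilon\downarrow0$ then yields \eqref{eq:Astrong}.

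The only delicate point is ordering the limits so that every bound is taken before the supremum over $P$ and with constants free of $P$ and $\lambda$. Measurability of the events $G_\varepsilon^{\lambda,P}$ follows from the Borel measurability of $A^\lambda$ and $A^0$ and the predictability of $X_-^P$ established earlier. No common version across models is needed, because every quantity is a modelwise integral, and the supremum is taken over numbers.
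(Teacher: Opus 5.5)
Your proposal is correct and follows essentially the same route as the paper. Both use the domination $|A^\lambda-A^0|\le 2|A^0|$ to transfer the uniform $(2+\delta)$-moment, split on $\{|A^\lambda-A^0|>\varepsilon\}$, and apply H\"older with exponents $(2+\delta)/2$ and $(2+\delta)/\delta$ under $dt\otimes P$. Both then take the supremum over $P$ before $\limsup_{\lambda\downarrow0}$, invoke \eqref{eq:Ameasure}, and finally let $\varepsilon\downarrow0$.
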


\begin{proof}
Write $D_\lambda^P=A^\lambda(X_{-}^P,\mu_{-}^P)
-A^0(X_{-}^P,\mu_{-}^P)$. By \eqref{eq:Aconv},
$|D_\lambda^P|\le2|A^0(X_{-}^P,\mu_{-}^P)|$. Fix $\varepsilon>0$.
Hölder's inequality on the set $\{|D_\lambda^P|>\varepsilon\}$ gives
\begin{align*}
\E^P\int_0^T|D_{\lambda,t}^P|^2dt
&\le T\varepsilon^2+
\left(\E^P\int_0^T|D_{\lambda,t}^P|^{2+\delta}dt
\right)^{2/(2+\delta)}\\
&\quad\times
\left((dt\otimes P)(|D_\lambda^P|>\varepsilon)
\right)^{\delta/(2+\delta)}\\
&\le T\varepsilon^2+C
\left((dt\otimes P)(|D_\lambda^P|>\varepsilon)
\right)^{\delta/(2+\delta)},
\end{align*}
where $C$ is independent of $P$ and $\lambda$ by \eqref{eq:AUI}.
Set $q_\lambda(\varepsilon)=\sup_P(dt\otimes P)
(|D_\lambda^P|>\varepsilon)$. Then
\[
0\le\limsup_{\lambda\downarrow0}\sup_P
\E^P\int_0^T|D_{\lambda,t}^P|^2dt
\le T\varepsilon^2+
C\limsup_{\lambda\downarrow0}q_\lambda(\varepsilon)^{\delta/(2+\delta)}
=T\varepsilon^2.
\]
Since this inequality holds for every $\varepsilon>0$, the left-hand side
is zero.
\end{proof}

\begin{theorem}[Strong Yosida convergence]\label{thm:Yconv}
Under Assumptions~\ref{HF}, \ref{HB}, \ref{HD} and~\ref{HY},
\begin{align}
\lim_{\lambda\downarrow0}\sup_{P\in\Pcal}\E^P\Bigl[
&\sup_{t\le T}|Y_t^{\lambda,P}-Y_t^P|^2
+\sup_{t\le T}|\mathsf K_t^{\lambda,P}-\mathsf K_t^P|^2\nonumber\\
&+\int_0^T|Z_t^{\lambda,P}-Z_t^P|_{a_t^P}^2dt
+\int_0^T\|U_t^{\lambda,P}-U_t^P\|_\Pi^2dt
\Bigr]=0.
\label{eq:Yconv}
\end{align}
\end{theorem}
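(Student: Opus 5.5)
The argument is a direct combination of Lemma~\ref{lem:Vitali} with the backward stability estimate of Lemma~\ref{lem:Bstab}. The key structural fact is that, for fixed $P$, the regularized and selected systems share the same forward process $X^P$, law flow $\mu^P$, terminal datum $g(X_T^P,\mu_T^P)$ and control $\alpha$. Regularization only changes the source: $h^{\lambda,P}=\langle\varrho,A^\lambda(X_-^P,\mu_-^P)\rangle$ replaces $h^P=\langle\varrho,A^0(X_-^P,\mu_-^P)\rangle$. So no $\lambda^{-1}$ Lipschitz bound and no forward residual enter. This is exactly the case where Theorem~\ref{thm:datastability} is too crude and Lemma~\ref{lem:Bstab} should be used directly.

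\emph{Step 1.} Apply Lemma~\ref{lem:Bstab} under a fixed $P$ with $(Y,Z,U)=(Y^{\lambda,P},Z^{\lambda,P},U^{\lambda,P})$ and $(\bar Y,\bar Z,\bar U)=(Y^P,Z^P,U^P)$. The lemma applies because both triples belong to the quadratic solution spaces by Theorem~\ref{thm:Bwell} and the remark preceding Lemma~\ref{lem:Jstrong}. Since $\Delta\zeta=0$, $\Delta X=0$, $W_2(\mu_t,\bar\mu_t)=0$ and $\Delta\alpha=0$, the right-hand side of \eqref{eq:Bstab} reduces to
\[
C\E^P\int_0^T|h_t^{\lambda,P}-h_t^P|^2dt
\le C|\varrho|^2\E^P\int_0^T|A^\lambda(X_{t-}^P,\mu_{t-}^P)-A^0(X_{t-}^P,\mu_{t-}^P)|^2dt.
\]
Here $C$ depends only on $T$, $L_B$ and $\underline a$, so it is independent of both $P$ and $\lambda$. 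The backward law term $W_2(\eta^{\lambda,P}_t,\eta^P_t)$ is already absorbed inside the proof of Lemma~\ref{lem:Bstab} through the coupling $w_s^2\le\E^P|\Delta Y_s|^2$.

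\emph{Step 2.} For the cumulative process, Cauchy--Schwarz gives
\[
\E^P\sup_{t\le T}|\mathsf K_t^{\lambda,P}-\mathsf K_t^P|^2
\le T|\varrho|^2\E^P\int_0^T|A^\lambda(X_{t-}^P,\mu_{t-}^P)-A^0(X_{t-}^P,\mu_{t-}^P)|^2dt,
\]
as recorded at the end of the proof of Lemma~\ref{lem:Bstab}.

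\emph{Step 3.} Add the two bounds and take the supremum over $P\in\Pcal$. Since the constant is uniform, the left-hand side of \eqref{eq:Yconv} is at most
\[
C\sup_P\E^P\int_0^T|A^\lambda(X_{t-}^P,\mu_{t-}^P)-A^0(X_{t-}^P,\mu_{t-}^P)|^2dt.
\]
This tends to zero as $\lambda\downarrow0$ by \eqref{eq:Astrong} in Lemma~\ref{lem:Vitali}, which in turn uses \ref{HY}.

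The only delicate point is confirming that Lemma~\ref{lem:Bstab} applies verbatim when the exogenous sources differ but are merely predictable and square integrable. This holds because $|A^\lambda|\le|A^0|$ and \ref{HD} with \eqref{eq:Fsecond} give square integrability uniformly in $\lambda$, and both solutions are built on the same integrators and filtration. The genuine difficulty, uniformity in $P$ of the source convergence despite the discontinuity of $A^0$, has already been isolated in \ref{HY} and handled by the Vitali argument.
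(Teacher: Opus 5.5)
Your proposal is correct and follows essentially the same route as the paper. You apply Lemma~\ref{lem:Bstab} with the same forward process, terminal value and control, so that only the source difference $\langle\varrho,A^\lambda-A^0\rangle$ survives; you bound $\mathsf K^{\lambda,P}-\mathsf K^P$ by Cauchy--Schwarz; and you then take the supremum over $P$ and conclude with Lemma~\ref{lem:Vitali} under \ref{HY}. One small correction to your aside: Theorem~\ref{thm:datastability} is not ``too crude'' here but simply inapplicable, since it compares two regularized systems at the same $\lambda$. This aside plays no role in your argument.
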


\begin{proof}
The two equations have the same forward dynamics and the same terminal
condition. Lemma~\ref{lem:Bstab}, with
\[
h_t^{\lambda,P}-h_t^P
=\left\langle\varrho,
A^\lambda(X_{t-}^P,\mu_{t-}^P)
-A^0(X_{t-}^P,\mu_{t-}^P)\right\rangle,
\]
gives
\begin{align*}
&\E^P\sup_{t\le T}|Y_t^{\lambda,P}-Y_t^P|^2
+\E^P\int_0^T\{|Z_t^{\lambda,P}-Z_t^P|_{a_t^P}^2
+\|U_t^{\lambda,P}-U_t^P\|_\Pi^2\}dt\\
&\quad\le C|\varrho|^2\E^P\int_0^T
|D_{\lambda,t}^P|^2dt.
\end{align*}
Moreover,
\[
\sup_{t\le T}|\mathsf K_t^{\lambda,P}-\mathsf K_t^P|^2
\le T|\varrho|^2\int_0^T|D_{\lambda,t}^P|^2dt.
\]
Lemma~\ref{lem:Vitali} allows us to take the supremum over $P$ and then let
$\lambda$ tend to zero.
\end{proof}

\begin{proposition}[Strong--weak closure with varying law]\label{prop:closure}
Fix $P\in\Pcal$. Assume
\begin{align*}
X^n&\longrightarrow X
\quad\text{strongly in }L^2(dt\otimes P;\R^d),\\
\Gamma^n&\rightharpoonup\Gamma
\quad\text{weakly in }L^2(dt\otimes P;\R^d),\\
\Gamma_t^n&\in A(X_t^n,\Lcal^P(X_t^n))
\quad dt\otimes P\text{-a.e.}
\end{align*}
Then
\[
\Gamma_t\in A(X_t,\Lcal^P(X_t))
\quad dt\otimes P\text{-a.e.}
\]
\end{proposition}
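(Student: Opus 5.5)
The plan is to reduce the claim to the classical strong--weak closure of maximal monotone operators, Minty's trick, applied on the Hilbert space $H=L^2(dt\otimes P;\R^d)$. The varying law is absorbed into the Lipschitz perturbation $F_\mu$. Write $\mu_t^n=\Lcal^P(X_t^n)$ and $\mu_t=\Lcal^P(X_t)$. By Lemma~\ref{lem:lawmeas},
\[
\int_0^TW_2^2(\mu_t^n,\mu_t)\,dt\le\E^P\int_0^T|X_t^n-X_t|^2dt\to0 .
\]
Since $A(x,\mu)=\partial\phi(x)+F_\mu(x)$, I set
\[
\widetilde\Gamma^n_t:=\Gamma^n_t-F_{\mu^n_t}(X^n_t)\in\partial\phi(X^n_t),
\qquad
\widetilde\Gamma_t:=\Gamma_t-F_{\mu_t}(X_t).
\]
By the Lipschitz bound established in the proof of Proposition~\ref{prop:Yosida},
\[
|F_{\mu^n_t}(X^n_t)-F_{\mu_t}(X_t)|\le L_K\bigl(|X^n_t-X_t|+W_2(\mu^n_t,\mu_t)\bigr).
\]
Hence $F_{\mu^n}(X^n)\to F_\mu(X)$ strongly in $H$. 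Square integrability follows from the growth bound \eqref{eq:Kgrowth} together with $M_2(\mu_t)^2=\E^P|X_t|^2$. Consequently $\widetilde\Gamma^n\rightharpoonup\widetilde\Gamma$ weakly in $H$, and it suffices to show $\widetilde\Gamma_t\in\partial\phi(X_t)$ a.e.

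Next I lift $\partial\phi$ to $H$. Define $\mathcal A=\{(x,v)\in H\times H: v_t\in\partial\phi(x_t)\ dt\otimes P\text{-a.e.}\}$. This is the subdifferential of the convex lower semicontinuous functional $\Phi(x)=\E^P\int_0^T\phi(x_t)dt$, which takes values in $(-\infty,\infty]$ and is bounded below by an affine function since $\phi$ is strongly convex. It is therefore maximal monotone. Alternatively, maximality can be checked directly by pointwise resolvents, since $J^\phi_\lambda$ is $1$-Lipschitz and thus maps $H$ into $H$.

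The key step is the pairing limit. Given any $(y,w)\in\mathcal A$, monotonicity gives
\[
\E^P\int_0^T\langle\widetilde\Gamma^n_t-w_t,X^n_t-y_t\rangle\,dt\ge0 .
\]
Since $X^n\to X$ strongly and $\widetilde\Gamma^n\rightharpoonup\widetilde\Gamma$ weakly, with weakly convergent sequences bounded, the product terms converge. The limit gives $\E^P\int\langle\widetilde\Gamma-w,X-y\rangle\ge0$ for all $(y,w)\in\mathcal A$. By maximality of $\mathcal A$ in $H$, $(X,\widetilde\Gamma)\in\mathcal A$, that is $\widetilde\Gamma_t\in\partial\phi(X_t)$ a.e. Adding back $F_{\mu_t}(X_t)$ yields $\Gamma_t\in A(X_t,\Lcal^P(X_t))$ a.e.

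The main obstacle is making the maximality of $\mathcal A$ rigorous without dwelling on domain issues. I would avoid it with a pointwise argument. Take a countable dense set $\{z_j\}\subset\R^d$ and an arbitrary bounded nonnegative predictable weight $\psi$. Then $\phi(z_j)\ge\phi(X^n_t)+\langle\widetilde\Gamma^n_t,z_j-X^n_t\rangle$. Multiply by $\psi$ and integrate. The limit uses weak times strong convergence for the inner product term, and lower semicontinuity (Fatou, or weak lower semicontinuity of the convex integral) for $\E\int\psi\phi(X^n)$. Because $\phi$ has at most quadratic growth on $\R^d$, as a finite convex function with bounded subgradients on bounded sets, a truncation may be needed; strong $L^2$ convergence along a subsequence a.e. plus continuity of $\phi$ would suffice wherever $\phi(X^n)$ is uniformly integrable. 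Since $\psi$ is arbitrary, this gives the subgradient inequality a.e. for every $z_j$, and density together with continuity of $\phi$ then gives $\widetilde\Gamma_t\in\partial\phi(X_t)$. If the uniform integrability of $\phi(X^n)$ is delicate, I fall back on the Minty route with $\mathcal A$ maximal via Brezis's theorem on $\partial\Phi$.
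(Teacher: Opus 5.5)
Your main route is correct and is essentially the paper's proof: you subtract the law-dependent part $F_{\mu^n}(X^n)$, which converges strongly, pass to the limit in the monotonicity pairing by weak--strong convergence, and conclude by maximality, which the paper makes concrete exactly as in your ``pointwise resolvent'' remark by testing with $V=J(X+H)$, $W=X+H-V$, $J=(I+\partial\phi)^{-1}$, to obtain $-\E^P\int_0^T|X-V|^2dt\ge0$. The only flaw is in your fallback, which you do not need: a finite convex $\phi$ need not have at most quadratic growth, but there Fatou's lemma together with the affine lower bound on $\phi$ already gives the required lower semicontinuity, without any uniform integrability.
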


\begin{proof}
Set $\mu_t^n=\Lcal^P(X_t^n)$ and $\mu_t=\Lcal^P(X_t)$. By
\eqref{eq:coupling},
\[
\int_0^T\Wtwo^2(\mu_t^n,\mu_t)dt
\le\E^P\int_0^T|X_t^n-X_t|^2dt\longrightarrow0.
\]
The bound \eqref{eq:Klip} yields
\begin{align*}
&\E^P\int_0^T
|F_{\mu_t^n}(X_t^n)-F_{\mu_t}(X_t)|^2dt\\
&\quad\le2L_K^2\E^P\int_0^T|X_t^n-X_t|^2dt
+2L_K^2\int_0^T\Wtwo^2(\mu_t^n,\mu_t)dt\longrightarrow0.
\end{align*}
The growth bound \eqref{eq:Kgrowth} guarantees that all preceding terms
belong to $L^2(dt\otimes P)$. Hence
\[
H^n:=\Gamma^n-F_{\mu^n}(X^n)
\rightharpoonup H:=\Gamma-F_\mu(X)
\quad\text{in }L^2(dt\otimes P),
\]
and $H_t^n\in\partial\phi(X_t^n)$.

Let $(V,W)$ be a pair in $L^2(dt\otimes P;\R^d)^2$ such that
$W_t\in\partial\phi(V_t)$ almost everywhere. Monotonicity gives
\[
\E^P\int_0^T\langle H_t^n-W_t,X_t^n-V_t\rangle dt\ge0.
\]
Writing, with $dt\otimes P$ omitted from the integrals,
\[
\int\langle H^n-W,X^n-V\rangle
=\int\langle H^n-W,X-V\rangle
+\int\langle H^n-W,X^n-X\rangle,
\]
weak convergence yields
$\int\langle H^n-W,X-V\rangle\to\int\langle H-W,X-V\rangle$.
Moreover,
\[
\left|\int\langle H^n-W,X^n-X\rangle\right|
\le\bigl(\sup_n\|H^n\|_{L^2}+\|W\|_{L^2}\bigr)
\|X^n-X\|_{L^2}\longrightarrow0.
\]
Therefore,
\[
\E^P\int_0^T\langle H_t-W_t,X_t-V_t\rangle dt\ge0.
\]
To conclude without invoking maximality of an operator on the function
space, set $J=(I+\partial\phi)^{-1}$ and choose
\[
V=J(X+H),\qquad W=X+H-V.
\]
The resolvent $J$ is defined everywhere and is $1$-Lipschitz; in
particular, $|J(x)|\le|x|+|J(0)|$. Hence
$V,W\in L^2(dt\otimes P;\R^d)$ and, by the definition of $J$,
$W\in\partial\phi(V)$ almost everywhere. This pair is therefore admissible
in the preceding inequality. Since $H-W=V-X$,
\[
0\le\E^P\int_0^T\langle H-W,X-V\rangle dt
=-\E^P\int_0^T|X-V|^2dt.
\]
Thus $X=V$, $H=W$, and hence $H\in\partial\phi(X)$ almost everywhere.
Since $\Gamma=H+F_\mu(X)$, the conclusion follows.
\end{proof}

\subsection{Regularization rate}

\begin{hypothesis}[Facewise regularity]\label{HfacesJ}
There exists a closed set $\Sigma\subset\R^d$, independent of $\mu$, such
that $\R^d\setminus\Sigma=\bigsqcup_{\ell\in\Lambda}O_\ell$, where the
$O_\ell$ are the connected components of $\R^d\setminus\Sigma$. On each
$O_\ell$, $A(\cdot,\mu)$ is single-valued and
\[
|A(x,\mu)-A(x',\mu)|\le L_{\rm face}|x-x'|,
\qquad x,x'\in O_\ell,
\]
with a constant independent of $\ell$ and $\mu$.
\end{hypothesis}

\begin{hypothesis}[Quantitative occupation]\label{HoccJ}
There exist $\theta,\delta>0$ and $C_{\rm occ}<\infty$ such that
\begin{align}
\sup_{P\in\Pcal}\E^P\int_0^T
|A^0(X_{t-}^P,\mu_{t-}^P)|^{2+\delta}dt&<\infty,
\label{eq:occmoment}\\
\sup_{P\in\Pcal}\E^P\int_0^T
\one_{\{\operatorname{dist}(X_{t-}^P,\Sigma)\le r\}}dt
&\le C_{\rm occ}r^\theta,\qquad 0<r\le1.
\label{eq:occ}
\end{align}
\end{hypothesis}

For the ALA prototype, under \ref{HqJ}, \eqref{eq:occmoment} is automatic
for every $\delta\in(0,q-2]$. Indeed, \ref{HD}, Jensen's inequality and
\eqref{eq:Fq} give
\[
\sup_{P\in\Pcal}\E^P\int_0^T
|A^0(X_{t-}^P,\mu_{t-}^P)|^{2+\delta}dt
\le C\left(1+\sup_P\E^P\sup_{t\le T}|X_t^P|^{2+\delta}\right)<\infty.
\]

In the prototype \eqref{eq:ALAphi}, Assumption~\ref{HfacesJ} holds with
the open orthants as faces. Assumption~\ref{HoccJ} does not follow from
the mere presence of jumps. Theorem~\ref{thm:occupation-jumps} verifies it
under an explicit condition on the continuous diffusion.

\begin{proposition}[Uniform qualitative non-contact]\label{prop:noncontactJ}
Under Assumptions~\ref{HF}, \ref{HB}, \ref{HD}, and~\ref{HfacesJ},
assume a uniform $(2+\delta)$-moment of the source,
$\delta>0$, and
\[
\omega(r):=\sup_P\E^P\int_0^T
\one_{\{\operatorname{dist}(X_{t-}^P,\Sigma)\le r\}}dt
\longrightarrow0\quad(r\downarrow0).
\]
Then \ref{HY} is satisfied and the convergence of
Theorem~\ref{thm:Yconv} is uniform over the model family.
\end{proposition}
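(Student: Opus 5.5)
The proof has two parts. We verify the two conditions of Assumption~\ref{HY} and then invoke Theorem~\ref{thm:Yconv}. Condition \eqref{eq:AUI} is exactly the assumed uniform $(2+\delta)$-moment of the source, so all the work goes into the uniform convergence in measure \eqref{eq:Ameasure}. The key input is a deterministic, $\mu$-uniform estimate for the Yosida error off a neighbourhood of $\Sigma$. Once we have it, the bad set in \eqref{eq:Ameasure} is contained in the set where $X^P_{t-}$ lies close to $\Sigma$, and that set is small uniformly in $P$ by $\omega(r)\to0$.

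\emph{Step 1: pointwise rate away from $\Sigma$.} Fix $r\in(0,1]$ and $x$ with $\operatorname{dist}(x,\Sigma)>r$. Let $O_\ell$ be the face containing $x$; then $B(x,r)\subset O_\ell$. Write $y=J_\lambda(x,\mu)$. By the resolvent identity $x-y=\lambda A^\lambda(x,\mu)$ and \eqref{eq:Aconv},
\[
|x-y|\le\lambda|A^0(x,\mu)|\le\lambda C_A(1+|x|+M_2(\mu)).
\]
Whenever $\lambda C_A(1+|x|+M_2(\mu))<r$, we get $y\in B(x,r)\subset O_\ell$. There $A(\cdot,\mu)$ is single-valued, so $A^\lambda(x,\mu)=A(y,\mu)=A^0(y,\mu)$. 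Assumption~\ref{HfacesJ} then gives
\[
|A^\lambda(x,\mu)-A^0(x,\mu)|\le L_{\rm face}|x-y|\le L_{\rm face}\lambda C_A(1+|x|+M_2(\mu)).
\]
This bound is uniform in $\ell$ and $\mu$. I expect this localization to be the delicate point. One must check that the resolvent point stays in the \emph{same} connected component, which is why the full ball $B(x,r)$, and not merely a nearby point, must lie in $O_\ell$. One must also control the factor $1+|x|+M_2(\mu)$ uniformly in the model.

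\emph{Step 2: truncation of large states.} Let $R\ge1$. Consider
\[
G_R=\{\operatorname{dist}(X^P_{t-},\Sigma)>r,\ |X^P_{t-}|\le R\}.
\]
Here $M_2(\mu^P_{t-})\le M:=\sup_P(\E^P\sup_t|X^P_t|^2)^{1/2}<\infty$ by \eqref{eq:Fsecond}. On $G_R$, Step 1 gives
\[
|D^P_\lambda|\le L_{\rm face}C_A\lambda(1+R+M)
\]
as soon as $\lambda C_A(1+R+M)<r$. Hence, for fixed $\varepsilon$, there is $\lambda_0(r,R,\varepsilon)$ such that $\{|D_\lambda^P|>\varepsilon\}\cap G_R=\emptyset$ for all $\lambda<\lambda_0$ and all $P$. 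Consequently, for such $\lambda$,
\[
(dt\otimes P)(|D^P_\lambda|>\varepsilon)\le\E^P\int_0^T\one_{\{\operatorname{dist}(X^P_{t-},\Sigma)\le r\}}dt+(dt\otimes P)(|X^P_{t-}|>R)\le\omega(r)+\frac{TM^2}{R^2},
\]
where the last term comes from Chebyshev's inequality.

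\emph{Step 3: conclusion.} Take $\sup_P$ and then $\limsup_{\lambda\downarrow0}$. This gives $\limsup_\lambda q_\lambda(\varepsilon)\le\omega(r)+TM^2R^{-2}$. Letting $R\to\infty$ and then $r\downarrow0$ yields \eqref{eq:Ameasure}. Together with the moment hypothesis, Assumption~\ref{HY} holds. Lemma~\ref{lem:Vitali} and Theorem~\ref{thm:Yconv} then give the uniform convergence \eqref{eq:Yconv}.

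Measurability is not an issue. The sets involved are Borel because $A^0$, $A^\lambda$ and $\operatorname{dist}(\cdot,\Sigma)$ are Borel, and $X_{t-}$ is predictable.
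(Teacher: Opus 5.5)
Your argument is correct, but it is organized differently from the paper's.

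The paper works on the good set $\{\operatorname{dist}(X_{t-}^P,\Sigma)>r,\ \lambda|A^0|\le r/2\}$. It truncates the source itself rather than the state, and proves the uniform $L^2$ convergence directly from three terms. On the good set, the facewise Lipschitz bound gives $C\lambda^2$. The layer term is bounded by H\"older as $C\omega(r)^{\delta/(2+\delta)}$. The large-source term is bounded by $C(\lambda/r)^\delta$. Both of the last two bounds use the $(2+\delta)$-moment. The paper then deduces the measure clause \eqref{eq:Ameasure} by Markov's inequality.

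You instead verify \eqref{eq:Ameasure} directly, using a state truncation $|X_{t-}^P|\le R$, the linear growth in \ref{HD}, and Chebyshev. Only afterwards do you upgrade to $L^2$, through Lemma~\ref{lem:Vitali} and Theorem~\ref{thm:Yconv}.

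Each route has an advantage. Yours shows that the in-measure clause needs only the second moments \eqref{eq:Fsecond} and $\omega(r)\to0$; the $(2+\delta)$-moment enters solely as uniform integrability, via the existing lemma. The paper's route is self-contained, since it effectively redoes the Vitali step inline. It also yields the explicit bound $C\{\lambda^2+\omega(r)^{\delta/(2+\delta)}+(\lambda/r)^\delta\}$, which is exactly the estimate later optimized in Proposition~\ref{prop:Yrate} once $\omega(r)\le Cr^\theta$.
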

\begin{proof}
Fix $r\in(0,1]$. On
$G=\{\operatorname{dist}(X_{-}^P,\Sigma)>r,
\lambda|A^0|\le r/2\}$, the resolvent identity gives
$|J_\lambda-X_{-}^P|\le r/2$. The two points lie in the same connected
ball disjoint from $\Sigma$, hence on the same face. Therefore
$|A^\lambda-A^0|\le L_{\rm face}\lambda|A^0|$.
On $G^c$, the domination $|A^\lambda-A^0|\le2|A^0|$ gives
\begin{align*}
\sup_P\E^P\int_0^T|A^\lambda-A^0|^2dt
\le C\lambda^2
&+4\sup_P\E^P\int_0^T|A^0|^2
\one_{\{\operatorname{dist}(X_{-}^P,\Sigma)\le r\}}dt\\
&+4\sup_P\E^P\int_0^T|A^0|^2
\one_{\{|A^0|>r/(2\lambda)\}}dt.
\end{align*}
Hölder's inequality with exponents $(2+\delta)/2$ and
$(2+\delta)/\delta$ bounds the first exceptional term by
$C\omega(r)^{\delta/(2+\delta)}$. The inequality
$|a|^2\one_{|a|>M}\le M^{-\delta}|a|^{2+\delta}$ bounds the second by
$C(\lambda/r)^\delta$. For fixed $r$, take the
$\limsup_{\lambda\downarrow0}$ and then let $r\downarrow0$. Uniform
$L^2$ convergence follows. Finally, Markov's inequality gives
\[
\sup_P(dt\otimes P)(|A^\lambda-A^0|>\varepsilon)
\le\varepsilon^{-2}\sup_P\E^P\int_0^T|A^\lambda-A^0|^2dt\to0,
\]
which verifies the second clause of \ref{HY}; the first is assumed.
\end{proof}

\begin{hypothesis}[Continuous non-degeneracy at the ALA interfaces]
\label{HellJ}
In the ALA prototype, set
$\sigma_t^P=\sigma(t,X_{t-}^P,\mu_{t-}^P,\alpha_t)$ and
$c_t^P=\sigma_t^Pa_t^P(\sigma_t^P)^\top$.
There exists $c_*>0$, independent of $P$, such that
$(c_t^P)_{kk}\ge c_*$ for $k=1,\ldots,d$, $dt\otimes P$-almost
everywhere. Non-degeneracy of $a^P$ alone is not sufficient when $\sigma$
is degenerate.
\end{hypothesis}

\begin{theorem}[Occupation of the interfaces in the presence of jumps]
\label{thm:occupation-jumps}
Let $\alpha\in\mathcal A^2$. Under Assumptions~\ref{HF}, \ref{HD},
and~\ref{HellJ},
\begin{equation}
\sup_P\E^P\int_0^T
\one_{\{\operatorname{dist}(X_{t-}^P,\Sigma)\le r\}}dt
\le C r,\qquad 0<r\le1.
\label{eq:occ-jumps-proved}
\end{equation}
If \ref{HqJ} holds, then \ref{HoccJ} is satisfied with $\theta=1$ and
any $\delta\in(0,q-2]$. The jump coefficients may depend on the state,
the law, and the control.
\end{theorem}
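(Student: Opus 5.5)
Since $\Sigma=\bigcup_k\{x_k=0\}$, we have $\operatorname{dist}(x,\Sigma)=\min_k|x_k|$, so
\[
\one_{\{\operatorname{dist}(X_{t-},\Sigma)\le r\}}\le\sum_{k=1}^d\one_{\{|X^k_{t-}|\le r\}} .
\]
It therefore suffices to prove, for each coordinate $k$ and uniformly in $P$,
\[
\E^P\int_0^T\one_{\{|X^k_{t-}|\le r\}}dt\le Cr .
\]
Fix $k$. The coordinate $X^k$ is a real semimartingale with continuous martingale part $\int\sigma^k dB$, whose bracket density is $(c_t^P)_{kk}\ge c_*$ by \ref{HellJ}, and whose purely discontinuous part is $\int\beta^k\,d\widetilde N^P$. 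The plan is the occupation-times formula for semimartingales (local time version, valid with jumps):
\[
\int_0^T\one_{\{|X^k_{s-}|\le r\}}\,d\langle X^{k,c}\rangle_s=\int_{-r}^rL_T^a\,da .
\]
Combined with the lower bound on the bracket density, this gives
\[
c_*\int_0^T\one_{\{|X^k_{t-}|\le r\}}dt\le 2r\sup_aL_T^a ,
\]
where $X_{s-}=X_s$ for a.e. $s$. The remaining task is a bound on $\sup_a\E^PL_T^a$ that is uniform in $a$ and $P$. Taking expectations outside first, it is enough to bound $\E^P L^a_T$ for each fixed $a$ and then integrate over $a\in[-r,r]$.

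For the local time bound I would use Tanaka's formula with jumps:
\[
|X_T-a|=|X_0-a|+\int_0^T\operatorname{sgn}(X_{s-}-a)\,dX_s+L_T^a+\sum_{s\le T}\bigl(|X_s-a|-|X_{s-}-a|-\operatorname{sgn}(X_{s-}-a)\Delta X_s\bigr).
\]
The jump sum is nonnegative by convexity, so it can be dropped after moving it to the left:
\[
L_T^a\le |X_T-a|-|X_0-a|-\int_0^T\operatorname{sgn}(X_{s-}-a)\,dX_s\le|X_T-X_0|+\Bigl|\int_0^T\operatorname{sgn}\,dX\Bigr| .
\]
Taking expectations, the stochastic integrals against $B$ and $\widetilde N^P$ are true martingales, with zero mean, because their integrands are in $L^2$ by \ref{HF} and \eqref{eq:Fsecond}. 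Therefore
\[
\E^PL_T^a\le \E^P|X_T-X_0|+\E^P\int_0^T|b_s|ds\le C ,
\]
where Lemma~\ref{lem:mart} and \eqref{eq:Fsecond} make the constant $C$ uniform in $a$ and $P$, depending only on the structural constants and $\sup_P\E^P|\xi|^2$ and the control norm. Integrating over $a\in[-r,r]$ yields $2Cr$, and dividing by $c_*$ gives \eqref{eq:occ-jumps-proved} with $\theta=1$.

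The main obstacle is justifying the semimartingale occupation formula and Tanaka formula in the presence of possibly infinite-activity jumps. The key points are that $\sum_s|\Delta X_s|^2<\infty$, and that the local time is associated with the continuous part only, $d\langle X^c\rangle$, not $d[X]$. I would cite Protter's Chapter IV, which treats general semimartingales. The point to verify is that the jump compensator sum in Tanaka is absolutely convergent; this holds since for the absolute value function the summand is bounded by $2|\Delta X_s|\one\{\ldots\}$, more precisely by $2|\Delta X_s|^2/\ldots$. Failing a clean bound there, I would instead regularize $|x|$ by a $C^2$ convex function $\psi_\varepsilon$ with $\psi_\varepsilon''=\one_{[a-\varepsilon,a+\varepsilon]}/(2\varepsilon)$ and apply Itô's formula directly; the jump terms are again nonnegative by convexity and are discarded, which gives the occupation bound without local times.

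For the second claim, under \ref{HqJ} the moment part \eqref{eq:occmoment} holds for $\delta\in(0,q-2]$ by the remark following \ref{HoccJ}, using \ref{HD}, Jensen's inequality and \eqref{eq:Fq}. Together with \eqref{eq:occ-jumps-proved} this gives \ref{HoccJ} with $\theta=1$. Nothing in the argument used the form of $\beta$ beyond square integrability, which is why the jump coefficients may depend on the state, the law and the control.
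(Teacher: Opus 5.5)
Your proof is correct, but it reaches the bound with a different tool than the paper. You go through the Meyer--Tanaka formula for $|x-a|$ and the semimartingale occupation density formula $\int_0^T\one_{\{|X^k_{s-}|\le r\}}\,d\langle X^{k,c}\rangle_s=\int_{-r}^rL^a_T\,da$. The paper never introduces local times. It applies Itô's formula with jumps directly to the smooth convex function $\psi_r(x)=\sqrt{x^2+r^2}$ and uses $\psi_r''\ge(2\sqrt2\,r)^{-1}\one_{\{|x|\le r\}}$ to dominate the occupation time by the continuous second-order term. It then discards the nonnegative jump Taylor remainder $R_r$, by the same convexity you use. Finally it bounds the first-order terms by $\E^P|X_T^{P,k}-X_0^{P,k}|+\E^P\int_0^T|b_t^k|\,dt$, exactly as you do.

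The paper's route is more self-contained. It only needs localization for a $C^\infty$ function with $|\psi_r'|\le1$, $\psi_r''\le r^{-1}$ and $0\le R_r\le v^2/(2r)$. It therefore does not rely on Protter's Chapter IV, on absolute convergence of the Meyer--Tanaka jump series, or on a jointly measurable version of $(a,\omega)\mapsto L^a_T$ for Tonelli. Your route gives, from a single uniform bound on $\E^PL^a_T$, the occupation estimate for every level $a$ at once. The paper's argument also gives this after the shift $x\mapsto x-a$, as it later does in Lemma~\ref{lem:adaptedoccupation}.

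Your fallback with $\psi_\varepsilon''=\one_{[a-\varepsilon,a+\varepsilon]}/(2\varepsilon)$ is in spirit the paper's proof. Such a $\psi_\varepsilon$ is only $C^{1,1}$, not $C^2$, so it would itself need Meyer--Itô or a further mollification; $\sqrt{x^2+r^2}$ avoids this.

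Two small slips do not affect the conclusion. First, do not put absolute values around $\int\operatorname{sgn}(X_{s-}-a)\,dX_s$ before taking expectations. Keep the signed integral, whose martingale parts have zero mean, so that only $\E^P\int_0^T|b_s|\,ds$ remains. Second, your remark about bounding the jump summands is unnecessary, since absolute convergence of that series for convex functions is part of the Meyer--Itô theorem you are citing.
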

\begin{proof}
\emph{Step 1: convex test function.}
Fix $P,k,r$ and write $V_t=X_t^{P,k}$,
$b_t^k=b_k(t,X_{t-}^P,\mu_{t-}^P,\alpha_t)$ and
$v_t(e)=\beta_k(t,X_{t-}^P,\mu_{t-}^P,\alpha_t,e)$.
For $\psi_r(x)=\sqrt{x^2+r^2}$,
\[
|\psi_r'|\le1,\qquad
0\le\psi_r''(x)=\frac{r^2}{(x^2+r^2)^{3/2}}\le r^{-1},
\quad
\psi_r''(x)\ge\frac{1}{2\sqrt2\,r}\one_{\{|x|\le r\}}.
\]
The Taylor remainder is exactly
\[
R_r(x,v)=\psi_r(x+v)-\psi_r(x)-\psi_r'(x)v
=v^2\int_0^1(1-u)\psi_r''(x+uv)du.
\]
Hence $0\le R_r(x,v)\le v^2/(2r)$.

\emph{Step 2: integral formula and integrability.}
Itô's formula with jumps yields
\begin{align*}
\psi_r(V_T)-\psi_r(V_0)
={}&\int_0^T\psi_r'(V_{t-})b_t^kdt
+\frac12\int_0^T\psi_r''(V_{t-})(c_t^P)_{kk}dt\\
&+\int_0^T\int_E R_r(V_{t-},v_t(e))\Pi(de)dt+M_T,
\end{align*}
with
\begin{align*}
M_T={}&\int_0^T\psi_r'(V_{t-})(\sigma_t^P)_{k,\cdot}dB_t\\
&+\int_0^T\int_E
[\psi_r(V_{t-}+v_t(e))-\psi_r(V_{t-})]\widetilde N^P(dt,de).
\end{align*}
The quadratic growth of the coefficients and \eqref{eq:Fsecond} give
\[
\sup_P\E^P\int_0^T
\bigl(|b_t|^2+\|\sigma_t^P\|^2+\|\beta_t\|_\Pi^2\bigr)dt<\infty.
\]
The integrands of $M$ are therefore square integrable, since
$|\psi_r(x+v)-\psi_r(x)|\le|v|$. In particular $\E^PM_T=0$.
The $R_r$ term is integrable for each $r>0$ by the bound
$R_r\le v^2/(2r)$; the second-derivative term is integrable as well.
These bounds justify the formula by localization and passage to
expectation: the martingales converge in $L^2$, the drift terms by
domination, and the positive remainders by monotone convergence.

\emph{Step 3: uniform estimate.}
Positivity of $R_r$ and the Lipschitz property of $\psi_r$ give
\begin{align*}
\frac12\E^P\int_0^T\psi_r''(V_{t-})(c_t^P)_{kk}dt
&\le\E^P[\psi_r(V_T)-\psi_r(V_0)]
+\E^P\int_0^T|b_t^k|dt\\
&\le\E^P|V_T-V_0|+\E^P\int_0^T|b_t^k|dt\le C_0,
\end{align*}
where $C_0$ is uniform in $P,k,r$ by Cauchy--Schwarz and
\eqref{eq:Fsecond}. Hence
\[
\E^P\int_0^T\one_{\{|X_{t-}^{P,k}|\le r\}}dt
\le \frac{4\sqrt2 C_0}{c_*}r.
\]
Since $\Sigma$ is the union of the coordinate hyperplanes,
$\{\operatorname{dist}(X_{t-}^P,\Sigma)\le r\}
=\bigcup_k\{|X_{t-}^{P,k}|\le r\}$. The union bound gives
\eqref{eq:occ-jumps-proved} with $C=4\sqrt2 dC_0/c_*$.

\emph{Step 4: source moment.}
Under \ref{HqJ}, for $2+\delta\le q$, the growth condition and Jensen's
inequality yield
\[
\sup_P\E^P\int_0^T|A^0(X_{t-}^P,\mu_{t-}^P)|^{2+\delta}dt
\le C\left(1+\sup_P\E^P\sup_{t\le T}|X_t^P|^{2+\delta}\right)<\infty.
\]
This verifies both clauses of \ref{HoccJ} with $\theta=1$.
\end{proof}

\begin{proposition}[Density criterion]\label{prop:density}
In the prototype \eqref{eq:ALAphi}, assume that each coordinate of
$X_{t-}^P$ admits a density $p_{t,k}^P$ and that
\[
\sup_{P\in\Pcal}\max_{1\le k\le d}
\int_0^T\|p_{t,k}^P\|_\infty dt\le C_{\rm dens}.
\]
Then \eqref{eq:occ} holds with $\theta=1$ and
$C_{\rm occ}=2dC_{\rm dens}$.
\end{proposition}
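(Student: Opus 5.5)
The plan is a direct union bound followed by Fubini, with the density bound applied coordinate by coordinate. For the ALA prototype, \eqref{eq:Sigma} says that $\Sigma$ is the union of the coordinate hyperplanes. The Euclidean distance to $\{x:x_k=0\}$ is exactly $|x_k|$, so
\[
\operatorname{dist}(x,\Sigma)=\min_{1\le k\le d}|x_k|,
\qquad
\{\operatorname{dist}(X_{t-}^P,\Sigma)\le r\}
=\bigcup_{k=1}^d\{|X_{t-}^{P,k}|\le r\},
\]
and hence
\[
\one_{\{\operatorname{dist}(X_{t-}^P,\Sigma)\le r\}}
\le\sum_{k=1}^d\one_{\{|X_{t-}^{P,k}|\le r\}}.
\]

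Next, I take expectations and exchange $\E^P$ with $\int_0^T dt$ by Tonelli; the integrand is nonnegative and jointly measurable because $X_-$ is predictable. For each fixed $t$ in the set where the density exists, I have
\[
P\bigl(|X_{t-}^{P,k}|\le r\bigr)=\int_{-r}^{r}p_{t,k}^P(x)\,dx\le 2r\,\|p_{t,k}^P\|_\infty .
\]
Integrating in $t$ and applying the assumed uniform bound gives
\[
\E^P\int_0^T\one_{\{|X_{t-}^{P,k}|\le r\}}dt\le 2rC_{\rm dens}.
\]
Summing over the $d$ coordinates and taking the supremum over $P\in\Pcal$ yields \eqref{eq:occ} with $\theta=1$ and $C_{\rm occ}=2dC_{\rm dens}$. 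The bound holds for every $r>0$, not only for $r\le1$.

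There is no genuine obstacle here. The only points to check are, first, the measurability needed for Tonelli, which follows from predictability, and second, that the density hypothesis is understood for almost every $t$. If $\int_0^T\|p_{t,k}^P\|_\infty dt$ is finite, the set of times with $\|p_{t,k}^P\|_\infty=\infty$ is null, so it does not affect the integral. Note also that only \eqref{eq:occ} is asserted here, not the moment clause \eqref{eq:occmoment}.
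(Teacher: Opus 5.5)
Your proposal is correct and follows the paper's proof essentially verbatim. You include the interface layer in the union of the coordinate slabs $\{|X_{t-}^{P,k}|\le r\}$, bound $\int_{-r}^{r}p_{t,k}^P\le 2r\|p_{t,k}^P\|_\infty$, and then use Tonelli and the supremum over $P$ to get $C_{\rm occ}=2dC_{\rm dens}$; your remarks on measurability, almost-every-$t$ densities, and validity for all $r>0$ are accurate refinements rather than a different route.
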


\begin{proof}
By \eqref{eq:Sigma},
\[
\{\operatorname{dist}(X_{t-}^P,\Sigma)\le r\}
\subset\bigcup_{k=1}^d\{|X_{t-}^{P,k}|\le r\}.
\]
By the union bound,
\[
P(\operatorname{dist}(X_{t-}^P,\Sigma)\le r)
\le\sum_{k=1}^d\int_{-r}^rp_{t,k}^P(x)dx
\le2r\sum_{k=1}^d\|p_{t,k}^P\|_\infty.
\]
Tonelli's theorem and the supremum over $P$ yield the result.
\end{proof}

\begin{proposition}[Yosida rate]\label{prop:Yrate}
Under Assumptions~\ref{HF}, \ref{HB}, \ref{HD}, \ref{HfacesJ},
and~\ref{HoccJ}, with
\begin{equation}
\gamma:=\min\left\{2,\frac{\theta\delta}{2+\delta+\theta}\right\},
\label{eq:gamma}
\end{equation}
for $0<\lambda\le1$ we have
\begin{equation}
\sup_{P\in\Pcal}\E^P\int_0^T
|A^\lambda(X_{t-}^P,\mu_{t-}^P)
-A^0(X_{t-}^P,\mu_{t-}^P)|^2dt
\le C\lambda^\gamma.
\label{eq:Arate}
\end{equation}
Consequently, the left-hand side of \eqref{eq:Yconv} is bounded by
$C\lambda^\gamma$.
\end{proposition}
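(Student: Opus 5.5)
The plan is to refine the qualitative argument of Proposition~\ref{prop:noncontactJ} by keeping track of every term and then optimizing the free radius $r$ against $\lambda$. Fix $P$ and $0<\lambda\le1$, abbreviate $A^0_t=A^0(X_{t-}^P,\mu_{t-}^P)$ and $D_t=A^\lambda(X_{t-}^P,\mu_{t-}^P)-A^0_t$, and fix $r\in(0,1]$ to be chosen later. Split $[0,T]\times\Omega$ into three sets: the good set
\[
G=\{\operatorname{dist}(X_{t-}^P,\Sigma)>r,\ \lambda|A^0_t|\le r/2\},
\]
the near-interface set $N_r=\{\operatorname{dist}(X_{t-}^P,\Sigma)\le r\}$, and the large-source set $L=\{|A^0_t|>r/(2\lambda)\}$. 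These cover $[0,T]\times\Omega$, since $G^c\subset N_r\cup L$.

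On $G$, use the resolvent identity $J_\lambda(x,\mu)-x=-\lambda A^\lambda(x,\mu)$ together with $|A^\lambda|\le|A^0|$. This gives $|J_\lambda-X_{t-}^P|\le\lambda|A^0_t|\le r/2$. The ball of radius $r$ around $X_{t-}^P$ does not meet $\Sigma$, so $J_\lambda$ and $X_{t-}^P$ lie on the same face $O_\ell$ of \ref{HfacesJ}. Moreover $A^\lambda(x,\mu)\in A(J_\lambda(x,\mu),\mu)$, and $A$ is single-valued on $O_\ell$, so $A^\lambda=A(J_\lambda,\mu)$ and $A^0=A(x,\mu)$. The face Lipschitz bound then yields
\[
|D_t|\le L_{\rm face}|J_\lambda-X_{t-}^P|\le L_{\rm face}\lambda|A^0_t|.
\]
Integrating, and using the uniform second moment of $A^0$ (from \ref{HD} and \eqref{eq:Fsecond}), bounds the $G$-contribution by $C\lambda^2$.

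On $N_r$ and on $L$, use the domination $|D_t|\le2|A^0_t|$.
\begin{itemize}
\item On $N_r$, apply Hölder with exponents $(2+\delta)/2$ and $(2+\delta)/\delta$, together with \eqref{eq:occmoment} and \eqref{eq:occ}. This gives the bound $C(r^\theta)^{\delta/(2+\delta)}$.
\item On $L$, use $|a|^2\one_{\{|a|>M\}}\le M^{-\delta}|a|^{2+\delta}$ with $M=r/(2\lambda)$. This gives $C(\lambda/r)^\delta$.
\end{itemize}
Altogether, uniformly in $P$,
\[
\E^P\int_0^T|D_t|^2dt\le C\bigl\{\lambda^2+r^{\theta\delta/(2+\delta)}+\lambda^\delta r^{-\delta}\bigr\}.
\]

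The main step is balancing the last two terms; the exponent $\gamma$ in \eqref{eq:gamma} should come out of this choice. Take $r=\lambda^\kappa$ with $\kappa\in(0,1]$ so that $r\le1$. Equating exponents, $\kappa\theta\delta/(2+\delta)=\delta(1-\kappa)$, gives $\kappa=(2+\delta)/(2+\delta+\theta)$. With this choice both terms equal $\lambda^{\theta\delta/(2+\delta+\theta)}$. Combined with the $\lambda^2$ term and $\lambda\le1$, this gives $C\lambda^{\min\{2,\theta\delta/(2+\delta+\theta)\}}=C\lambda^\gamma$, which is \eqref{eq:Arate}. The constant $C$ depends only on $C_{\rm occ}$, $L_{\rm face}$, the moment bounds and $T$, so the estimate is uniform in $P$ and $\lambda$.

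The consequence for \eqref{eq:Yconv} follows as in the proof of Theorem~\ref{thm:Yconv}. Lemma~\ref{lem:Bstab}, applied with $\Delta h=\langle\varrho,D\rangle$ and the same forward data, bounds the $Y,Z,U$ terms by $C|\varrho|^2\E^P\int_0^T|D_t|^2dt$. The Cauchy--Schwarz bound $\sup_t|\mathsf K^{\lambda,P}_t-\mathsf K^P_t|^2\le T|\varrho|^2\int_0^T|D_t|^2dt$ handles the cumulative source. Taking the supremum over $P$ and applying \eqref{eq:Arate} finishes the proof.

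I expect the main delicate point to be the face argument on $G$. One must check that $J_\lambda(x,\mu)$ stays in the same connected component as $x$, which holds because the segment from $x$ to $J_\lambda$ avoids $\Sigma$. One must also check that single-valuedness on $O_\ell$ identifies both $A^\lambda$ and $A^0$ as values of the Lipschitz branch. Both points are already implicit in Proposition~\ref{prop:noncontactJ}.
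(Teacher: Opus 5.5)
Your proof is correct and follows the paper's argument essentially step for step. It uses the same good set $\{\operatorname{dist}(X_{t-}^P,\Sigma)>r,\ \lambda|A^0|\le r/2\}$ with the face-Lipschitz bound $|A^\lambda-A^0|\le L_{\rm face}\lambda|A^0|$, the same H\"older and truncation estimates on the complement, the same choice $r=\lambda^{(2+\delta)/(2+\delta+\theta)}$, and the same transfer to $(Y,Z,U,\mathsf K)$ through Lemma~\ref{lem:Bstab} and Cauchy--Schwarz.
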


\begin{proof}
Fix $P$ and write $X=X_{t-}^P$, $\mu=\mu_{t-}^P$,
$A^\lambda=A^\lambda(X,\mu)$ and $A^0=A^0(X,\mu)$. For $r\in(0,1]$,
set
\[
G_{\lambda,r}=\{\operatorname{dist}(X,\Sigma)>r,\ 
\lambda|A^0|\le r/2\}.
\]
On this set,
$|J_\lambda(X,\mu)-X|=\lambda|A^\lambda|\le r/2$.
Since $\operatorname{dist}(X,\Sigma)>r$, the ball $B(X,r)$ is contained
in $\R^d\setminus\Sigma$. It is connected; hence $X$ and
$J_\lambda(X,\mu)$ belong to the same component $O_\ell$. On that
component, $A(\cdot,\mu)$ is single-valued and
$A^\lambda(X,\mu)=A(J_\lambda(X,\mu),\mu)$. Therefore,
\[
|A^\lambda-A^0|
=|A(J_\lambda(X,\mu),\mu)-A(X,\mu)|
\le L_{\rm face}\lambda|A^0|.
\]
On the complement, $|A^\lambda-A^0|^2\le4|A^0|^2$. Thus
\begin{align}
|A^\lambda-A^0|^2
&\le L_{\rm face}^2\lambda^2|A^0|^2
+4|A^0|^2\one_{\{\operatorname{dist}(X,\Sigma)\le r\}}\nonumber\\
&\quad+4|A^0|^2\one_{\{|A^0|>r/(2\lambda)\}}.
\label{eq:three}
\end{align}
Hölder's inequality with exponents $(2+\delta)/2$ and
$(2+\delta)/\delta$ yields
\[
\E^P\int_0^T|A^0|^2
\one_{\{\operatorname{dist}(X,\Sigma)\le r\}}dt
\le Cr^{\theta\delta/(2+\delta)}.
\]
The truncation
$|A^0|^2\one_{\{|A^0|>M\}}\le M^{-\delta}|A^0|^{2+\delta}$
gives
\[
\E^P\int_0^T|A^0|^2
\one_{\{|A^0|>r/(2\lambda)\}}dt
\le C(\lambda/r)^\delta.
\]
After integrating \eqref{eq:three},
\[
\sup_P\E^P\int_0^T|A^\lambda-A^0|^2dt
\le C\{\lambda^2+r^{\theta\delta/(2+\delta)}
+(\lambda/r)^\delta\}.
\]
The choice
$r=\lambda^{(2+\delta)/(2+\delta+\theta)}$ balances the last two terms
and gives \eqref{eq:Arate}. The final conclusion follows from the proof of
Theorem~\ref{thm:Yconv}. In particular, \eqref{eq:Arate} implies
Assumption~\ref{HY}.
\end{proof}

\section{Particle system and propagation of chaos}

In this section and in the section devoted to the joint approximation,
Assumption~\ref{HqJ} is imposed in addition to the assumptions stated in
each result. The constants are uniform for data whose moments of order $q$
are uniformly bounded.

\begin{hypothesis}[Product compatibility]\label{Hprod}
For each $P\in\Pcal$ and each $N\ge1$, under $P^{\otimes N}$ we are given
independent copies
\[
(\xi^i,B^i,N^i,\alpha^i),\qquad 1\le i\le N,
\]
of $(\xi,B,N,\alpha)$. The control is product-compatible: there exists a
predictable functional $\mathfrak a$ such that
\[
\alpha_t=\mathfrak a_t(\xi,B_{\cdot\wedge t},
N|_{[0,t)\times E}),\qquad
\alpha_t^i=\mathfrak a_t(\xi^i,B^i_{\cdot\wedge t},
N^i|_{[0,t)\times E}).
\]
On $\Omega^N$, with projections $\pi_i$, we use the raw filtration
$\mathcal G_t^{0,N}=\bigvee_{i=1}^N\pi_i^{-1}(\mathcal F_t^0)$ and then
its usual augmentation under $P^{\otimes N}$, denoted by
$\mathcal F_t^{P,\otimes N}$. More precisely, if $\mathcal N^{P,N}$ is
the ideal of null sets of the terminal product sigma-field,
\[
\mathcal F_t^{P,\otimes N}
=\bigcap_{t<u\le T}(\mathcal G_u^{0,N}\vee\mathcal N^{P,N})\quad(t<T),
\qquad
\mathcal F_T^{P,\otimes N}=\mathcal G_T^{0,N}\vee\mathcal N^{P,N}.
\]
The controls are generated by a predictable functional for the individual
raw filtration. By assumption, this product filtration has the
representation property with respect to
$(B^1,\ldots,B^N,N^1,\ldots,N^N)$.
\end{hypothesis}

This class contains deterministic controls and predictable functionals of
the individual noise. In the coupling, the control $\alpha^i$ is the same
for $X^{i,N}$ and $\bar X^i$. Hence a feedback control
$\alpha_t^{i,N}=a(t,X_{t-}^{i,N},\mu_{t-}^N)$ does not automatically
belong to this class: it must be incorporated into the coefficients and
their Lipschitz assumptions must then be verified. No common noise is
imposed on the different product coordinates.

Under $P^{\otimes N}$, the density of the quadratic variation of $B^j$ is
denoted by $a_t^{j,P}=a_t^P(\omega^j)$. Martingales associated with two
distinct coordinates are orthogonal. For estimates on the product space,
we use the normalized norms
\begin{align*}
|x|_N^2&=\frac1N\sum_{i=1}^N|x_i|^2,\\
\|z\|_{N,a}^2&=\frac1N\sum_{i=1}^N\sum_{j=1}^N
|z^{i,j}|_{a^{j,P}}^2,\\
\|u\|_{N,\Pi}^2&=\frac1N\sum_{i=1}^N\sum_{j=1}^N
\|u^{i,j}\|_\Pi^2.
\end{align*}

\begin{definition}[Propagation of chaos used here]\label{def:chaos}
Let $V^P$ be a limiting process and, for each $N$, let
$(V^{i,N})_{1\le i\le N}$ be an exchangeable family. We say that quadratic
propagation of chaos holds uniformly over $\Pcal$ if, on the product space,
there exist independent copies $\bar V^i$ of $V^P$, with a jointly
exchangeable coupling $((V^{i,N},\bar V^i))_{i\le N}$, such that
\[
\lim_{N\to\infty}\sup_{P\in\Pcal}\frac1N\sum_{i=1}^N
\E^{P^{\otimes N}}d(V^{i,N},\bar V^i)^2=0,
\]
where $d$ is the metric specified in the result under consideration. For
the state component, we take
$d(X,\bar X)=\sup_{t\le T}|X_t-\bar X_t|$. This property implies that,
for every fixed $k$ and $t$, the law of
$(V_t^{1,N},\ldots,V_t^{k,N})$ converges to
$\Lcal^P(V_t^P)^{\otimes k}$ in quadratic Wasserstein distance. Indeed,
the coupling
$(V_t^{1,N},\ldots,V_t^{k,N};\bar V_t^1,\ldots,\bar V_t^k)$ gives
\[
\Wtwo^2\left(\Lcal^{P^{\otimes N}}(V_t^{1,N},\ldots,V_t^{k,N}),
\Lcal^P(V_t^P)^{\otimes k}\right)
\le\sum_{i=1}^k\E^{P^{\otimes N}}|V_t^{i,N}-\bar V_t^i|^2.
\]

Joint exchangeability gives
\[
\sum_{i=1}^k\E|V_t^{i,N}-\bar V_t^i|^2
=\frac{k}{N}\sum_{i=1}^N\E|V_t^{i,N}-\bar V_t^i|^2.
\]
It is satisfied by the couplings used in this paper: the coefficients are
invariant under permutations of the indices, the data are i.i.d., and
strong uniqueness identifies the permuted solution. More generally, a
coupling can be symmetrized by applying an independent uniform random
permutation; its marginals and its mean error remain unchanged.
\end{definition}

Under $P^{\otimes N}$, write
\[
\widetilde N^i(dt,de)=N^i(dt,de)-\Pi(de)dt.
\]
The forward particle system is
\begin{align}
X_t^{i,N}
={}&\xi^i+\int_0^t b(s,X_{s-}^{i,N},\mu_{s-}^{N},\alpha_s^i)ds
+\int_0^t\sigma(s,X_{s-}^{i,N},\mu_{s-}^{N},\alpha_s^i)dB_s^i\nonumber\\
&+\int_0^t\int_E\beta(s,X_{s-}^{i,N},\mu_{s-}^{N},
\alpha_s^i,e)\widetilde N^i(ds,de),\label{eq:particleF}\\
\mu_t^N&=\frac1N\sum_{j=1}^N\delta_{X_t^{j,N}}.
\label{eq:muN}
\end{align}

For $\lambda>0$, the regularized backward particle system is
\begin{align}
Y_t^{i,N,\lambda}
={}&g(X_T^{i,N},\mu_T^N)
+\int_t^T f(s,X_{s-}^{i,N},\mu_{s-}^N,Y_s^{i,N,\lambda},
Z_s^{i,i,N,\lambda},U_s^{i,i,N,\lambda},
\eta_s^{N,\lambda},\alpha_s^i)ds\nonumber\\
&+\mathsf K_T^{i,N,\lambda}-\mathsf K_t^{i,N,\lambda}
-\sum_{j=1}^N\int_t^TZ_s^{i,j,N,\lambda}dB_s^j\nonumber\\
&-\sum_{j=1}^N\int_t^T\int_EU_s^{i,j,N,\lambda}(e)
\widetilde N^j(ds,de),\label{eq:particleB}\\
\eta_t^{N,\lambda}
={}&\frac1N\sum_{j=1}^N\delta_{Y_t^{j,N,\lambda}},\label{eq:etaN}\\
\mathsf K_t^{i,N,\lambda}
={}&\int_0^t\langle\varrho,
A^\lambda(X_{s-}^{i,N},\mu_{s-}^N)\rangle ds.
\label{eq:particleK}
\end{align}
The generator uses the diagonal components $Z^{i,i,N,\lambda}$ and
$U^{i,i,N,\lambda}$, while the martingale representation retains all
off-diagonal components.

\begin{proposition}[Well-posedness of the particle system]\label{prop:particlewell}
Under Assumptions~\ref{HF}, \ref{HB}, \ref{HD} and~\ref{Hprod}, the
system \eqref{eq:particleF}--\eqref{eq:particleK} has a unique
square-integrable solution for every $N$ and $\lambda>0$.  The higher
moment assumption \ref{HqJ}, imposed later for quantitative particle
rates, is not needed for this well-posedness statement.
\end{proposition}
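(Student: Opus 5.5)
The proof follows the triangular structure, as in Theorem~\ref{thm:well}, now on the product space $(\Omega^N,\mathcal F^{P,\otimes N}_T,P^{\otimes N})$. We first solve the forward particle system \eqref{eq:particleF}--\eqref{eq:muN}. We then solve the backward system \eqref{eq:particleB}--\eqref{eq:particleK} along it, by a contraction in a weighted norm, using the representation property postulated in Assumption~\ref{Hprod}.

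\emph{Forward part.} We view $\mathbf X=(X^{1,N},\dots,X^{N,N})$ as a single $\R^{dN}$-valued SDE driven by $(B^1,\dots,B^N,N^1,\dots,N^N)$. The coefficient of coordinate $i$ is $b(s,x_i,\frac1N\sum_j\delta_{x_j},\alpha^i_s)$, and similarly for $\sigma$ and $\beta$. The key elementary estimate is
\[
W_2^2\Bigl(\tfrac1N\textstyle\sum_j\delta_{x_j},\tfrac1N\sum_j\delta_{x'_j}\Bigr)\le|x-x'|_N^2,
\]
obtained from the diagonal coupling. With \eqref{eq:FLip}, it makes the aggregated coefficients Lipschitz in the normalized norm $|\cdot|_N$ with constant $CL_F$. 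Linear growth follows from \eqref{eq:Fzero} and $M_2(\mu^N)^2=|x|_N^2$. The controls $\alpha^i$ are predictable for the product filtration by product compatibility, and $\E\int_0^T|\alpha^i_t|^2dt<\infty$. The Picard iteration and Gronwall argument of Lemma~\ref{lem:frozenfull} then apply verbatim. The martingale estimates of Lemma~\ref{lem:mart} hold coordinatewise: $B^j$ has bracket density $a^{j,P}$ with the same ellipticity bounds, and $\widetilde N^j$ has compensator $\Pi(de)dt$. Distinct coordinates are orthogonal, so the bounds add. This gives a unique $\mathbf X\in\mathbb S^2$ with $\E\sup_t|\mathbf X_t|_N^2\le C(1+\E|\xi|^2+\E\int_0^T|\alpha_t|^2dt)$. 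Here no fixed point on law flows is needed, because $\mu^N$ is an explicit function of the state.

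\emph{Backward part.} We fix $\mathbf X$ and the exogenous sources $h^i_t=\langle\varrho,A^\lambda(X^{i,N}_{t-},\mu^N_{t-})\rangle$. These are predictable, since $A^\lambda$ is continuous by \eqref{eq:AYlip}, and square integrable by \eqref{eq:Aconv} and \eqref{eq:A0growth}. Uniqueness and existence do not even need $\lambda$-dependence, since the sources are frozen. As in Step~3 of Theorem~\ref{thm:Bwell}, we take inputs
\[
(y^i,z^{i,j},u^{i,j})_{i,j}\in L^2_{\rm pred}\times\mathbb H^2_B\times\mathbb H^2_\Pi,
\]
with norm
\[
\E\int_0^Te^{\gamma t}\bigl(|y_t|_N^2+\|z_t\|_{N,a}^2+\|u_t\|_{N,\Pi}^2\bigr)dt.
\]
The generator $G^i$ uses $(y^i,z^{i,i},u^{i,i},\frac1N\sum_j\delta_{y^j})$. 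The output is obtained by conditioning $\zeta^i+\int_0^TG^i\,ds$ with $\zeta^i=g(X^{i,N}_T,\mu^N_T)$, and its martingale part is represented through the product representation property, which yields the full matrix $(Z^{i,j},U^{i,j})$. For the contraction, the empirical backward measure is handled by the pathwise bound $W_2^2(\eta^N,\bar\eta^N)\le|y-\bar y|_N^2$. This bound is pointwise and needs no expectation, unlike the law term in Theorem~\ref{thm:Bwell}. The diagonal terms are dominated by the full normalized norms, $\frac1N\sum_i|z^{i,i}|^2_{a^{i,P}}\le\|z\|^2_{N,a}$. The Itô energy identity of Step~4 is applied to each $i$ and averaged, with the localization and $H^1$ justification copied from there. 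It gives the contraction factor $C/\gamma$, and a large $\gamma$ yields the fixed point. Identification of a c\`adl\`ag solution and uniqueness follow as in Steps~5--6.

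\emph{Main obstacle.} The only nonroutine point is the martingale machinery on the product space. First, the quadratic variations $a^{j,P}$ differ across coordinates but share the bounds $\underline a,\overline a$. Second, one must check that the cross-coordinate brackets vanish, namely $[B^j,B^k]=0$ and that $N^j,N^k$ have no common jumps for $j\ne k$. Both follow from independence under $P^{\otimes N}$ together with the augmentation of the product filtration, which preserves the compensators. I would verify these two facts first. After that, everything reduces to the single-model arguments already established.
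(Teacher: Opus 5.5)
Your proposal is correct and follows essentially the same route as the paper's proof. The paper first solves the forward particle vector as a finite-dimensional Lipschitz jump SDE, using the diagonal empirical coupling. It then obtains the full backward tuple $(Y^{i},Z^{i,j},U^{i,j})$ by a weighted-norm contraction, built from the product representation property in \ref{Hprod} and the pathwise empirical coupling for the backward law. Your explicit verification of cross-coordinate orthogonality is a detail the paper simply asserts.
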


\begin{proof}
For fixed $N$, the forward particle vector is a finite-dimensional jump SDE.
The empirical-law coupling
\[
\Wtwo^2(\mu^N(x),\mu^N(\bar x))
\le \frac1N\sum_{j=1}^N|x_j-\bar x_j|^2
\]
turns \ref{HF} into a global Lipschitz estimate on the product space.
Picard iteration, Lemma~\ref{lem:mart}, and Gronwall therefore give a unique
forward solution in the quadratic product space.

Condition on this forward system.  The regularized source is square
integrable by Proposition~\ref{prop:Yosida} and the forward moment estimate.
For a given input vector of backward processes, solve the $N$ terminal-value
martingale representation problems using \ref{Hprod}.  The resulting map on
the product of the weighted spaces
$\mathbb S^2\times\mathbb H_B^2\times\mathbb H_\Pi^2$ satisfies, by the same
jump Itô calculation as in Theorem~\ref{thm:Bwell},
\[
\|\Psi_N(v)-\Psi_N(\bar v)\|_\gamma^2
\le \frac{C L_B^2}{\gamma}\|v-\bar v\|_\gamma^2.
\]
The empirical-law terms are controlled by the natural particle coupling and
the constant is independent of $N$.  Choosing $\gamma$ sufficiently large
makes $\Psi_N$ a contraction.  Banach's theorem gives the unique backward
tuple, including all matrix components $(Z^{i,j,N,\lambda})_{i,j}$ and
$(U^{i,j,N,\lambda})_{i,j}$.  The cumulative process is then defined by
\eqref{eq:particleK}.  This proves quadratic well-posedness; no use of
\ref{HqJ} is made.
\end{proof}

\subsection{Moments and empirical rate}

Define
\begin{equation}
\tau_d(N)=
\begin{cases}
N^{-1/2},&d<4,\\
N^{-1/2}\log(1+N),&d=4,\\
N^{-2/d},&d>4.
\end{cases}
\label{eq:tau}
\end{equation}
The theorem of Fournier--Guillin \cite{FournierGuillin2015} gives, for a
law $m\in\mathcal P_q(\R^d)$ and its empirical sample $m^N$,
\begin{equation}
\E\Wtwo^2(m^N,m)\le C_{d,q}(1+M_q(m)^2)\tau_d(N),
\qquad q>4.
\label{eq:FG}
\end{equation}
In the general formulation of the cited result, a tail term
$N^{-(q-2)/q}$ is added to the principal rate. Since $q>4$, its exponent
is strictly larger than $1/2$; this term is therefore dominated by
$\tau_d(N)$ in all three dimension regimes. In dimension one, the
right-hand side is thus $CN^{-1/2}$.

\begin{lemma}[Uniform moments of the particles and copies]
\label{lem:moments}
Under \ref{HF}, \ref{HB}, \ref{HD}, \ref{HqJ} and \ref{Hprod},
\begin{align}
\sup_{N,P}\frac1N\sum_{i=1}^N
\E^{P^{\otimes N}}\sup_{t\le T}|X_t^{i,N}|^q&<\infty,\label{eq:XNmom}\\
\sup_{\lambda\in[0,1]}\sup_{P\in\Pcal}
\E^P\sup_{t\le T}|Y_t^{\lambda,P}|^q&<\infty.
\label{eq:Ymom}
\end{align}
In \eqref{eq:Ymom}, $\lambda=0$ denotes the selected solution.
\end{lemma}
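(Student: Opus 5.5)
For \eqref{eq:XNmom} I will repeat the proof of \eqref{eq:Fq}, now on the product space. Fix $P$, $N$ and $i$, and write \eqref{eq:particleF} in integral form. Itô's expansion is not needed: the triangle inequality in $L^q$ splits $\sup_{s\le t}|X_s^{i,N}|^q$ into five pieces, namely the initial datum, the drift, the Brownian integral and the compensated Poisson integral. The Brownian integral is bounded by the continuous BDG inequality and $a^{i,P}\le\overline a$. The Poisson integral is bounded by \eqref{eq:BJ}, which holds under $P^{\otimes N}$ with the same constant because $N^i$ has compensator $\Pi(de)dt$ there.

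Linear growth (from \eqref{eq:FLip}--\eqref{eq:Fzero} and \eqref{eq:betaq}) is applied with the empirical law. Here $\Wtwo(\mu_s^N,\delta_0)=M_2(\mu_s^N)$, and $M_q(\mu^N_s)^q=\frac1N\sum_j|X_s^{j,N}|^q$. The quadratic part of \eqref{eq:BJ} is handled by Hölder in time, which turns $(\int_0^t\|\beta_s\|_\Pi^2ds)^{q/2}$ into $C\int_0^t(1+|X_{s-}^{i,N}|^q+M_q(\mu_{s-}^N)^q+|\alpha_s^i|^q)ds$. Averaging over $i$ gives
\[
m_N(t):=\frac1N\sum_{i=1}^N\E\sup_{s\le t}|X_s^{i,N}|^q
\le C\Bigl(1+\E|\xi|^q+\E\int_0^T|\alpha_s|^qds\Bigr)+C\int_0^tm_N(s)ds .
\]
The empirical term is absorbed since $\frac1N\sum_i M_q(\mu_s^N)^q=\frac1N\sum_j|X_s^{j,N}|^q$.

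Before Gronwall can be applied, $m_N$ must be finite. I will get this by localising with $\tau_R=\inf\{t:\max_j|X^{j,N}_t|>R\}$, running the same bound for the stopped processes, and letting $R\to\infty$ by Fatou. The constants depend only on the structural constants, $q$ and the uniform moments of $\xi$ and $\alpha$. Since the $\alpha^i$ are copies, $\E|\alpha^i|^q$ equals the single-model quantity. Hence \eqref{eq:GronwallExplicit} yields a bound uniform in $N$ and $P$.

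For \eqref{eq:Ymom} I fix $P$ and $\lambda\in[0,1]$ and use the conditional-expectation form of the BSDE. The generator obeys $|f(t,\dots,y,z,u,\eta,\alpha)|\le C(1+|X|+M_2(\mu)+|y|+|z|+\|u\|_\Pi+M_2(\eta)+|\alpha|)$. The source satisfies $|h^\lambda|\le|\varrho||A^0|\le C(1+|X_{t-}|+M_2(\mu_{t-}))$, by \eqref{eq:Aconv} and \eqref{eq:A0growth}, uniformly in $\lambda$.

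The $(z,u)$-dependence prevents a direct conditional bound, so I will use the standard $L^q$ a priori estimate for Lipschitz BSDEs with jumps. Apply Itô to $e^{\gamma t}|Y_t|^q$; alternatively apply it to $|Y_t|^2$ and then raise to power $q/2$ with BDG, as in the $L^p$ theory of jump BSDEs. Then use Lemma~\ref{lem:martp} to control the martingale part and Young's inequality to absorb the $z,u$ terms. This gives
\[
\E\sup_t|Y_t|^q\le C\,\E\Bigl[|g(X_T,\mu_T)|^q+\Bigl(\int_0^T(1+|X_t|+M_2(\mu_t)+M_2(\eta_t)+|\alpha_t|)dt\Bigr)^q\Bigr].
\]
The law term satisfies $M_2(\eta_t)^2=\E|Y_t|^2$, which is bounded uniformly by \eqref{eq:uniformwell} and \eqref{eq:Bexistbound}, since those bounds are independent of $\lambda$. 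The remaining terms are controlled by \eqref{eq:Fq} and \ref{HqJ}.

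The main obstacle is the $L^q$ estimate with jumps. With $|Y|^q$, the jump remainder $|Y_{s-}+\Delta|^q-|Y_{s-}|^q-q|Y_{s-}|^{q-2}Y_{s-}\Delta$ is nonnegative and can simply be dropped. However, the lower bound for the $U$-energy it should provide is then lost. I would therefore work with $|Y|^2$: first bound $\E(\int|Z|^2+\int\|U\|_\Pi^2)^{q/2}$ by $\E\sup|Y|^q$ plus data, using BDG on $|Y_T|^2-|Y_0|^2$. Then close the estimate by Young with a small parameter on $[T-\epsilon,T]$ and iterate over subintervals. This is the only place where the size of the Lipschitz constant matters, and the subdivision handles it.
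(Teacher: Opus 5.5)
Your argument for \eqref{eq:XNmom} is fine and differs from the paper only in how finiteness is secured. Localizing with the common time $\tau_R$ works because the right-hand sides of BDG and \eqref{eq:BJ} involve only $X^{j,N}_{s-}$ for $s\le\tau_R$. These are bounded by $R$, so the overshoot at $\tau_R$ is harmless, and Fatou then replaces the paper's uniform bound on the Picard iterates. Freezing the backward law through the $\lambda$-uniform bound on $\E|Y_t|^2$ is also legitimate. It is a little simpler than the paper's absorption of the law term into a contraction.

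The gap is in \eqref{eq:Ymom}. You close an a priori estimate by absorbing $C\epsilon^{q/2}\E\sup_t|Y_t|^q$ and the order-$q$ energies of $(Z,U)$ into the left-hand side. That presupposes these quantities are finite, and nothing you have gives this. Theorem~\ref{thm:Bwell} only places the solution in $\mathbb S^2\times\mathbb H_B^2\times\mathbb H_\Pi^2$. The representation $Y_t=\E[\zeta+\int_t^TF_s\,ds\mid\mathcal F_t]$ yields $Y\in\mathbb S^q$ only if $\int_0^T(|Z_s|+\|U_s\|_\Pi)\,ds\in L^q$, which is circular. Finiteness is exactly what the lemma asserts, and unlike in the forward part you supply no localization. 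For a backward equation, stopping does not help directly, since the terminal value at the stopping time is not controlled by the data.

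The paper resolves this by constructing the solution inside the order-$q$ spaces.
\begin{itemize}
\item On an interval $I$, take an input in $\mathbb S^q(I)\times\mathbb H_B^q(I)\times\mathbb J_\Pi^q(I)$ and define the output from $M_t=\E[\chi+\int_IG_r^{y,z,u}dr\mid\mathcal F_t^P]$ and \eqref{eq:PRP}.
\item Doob's inequality and the lower bound \eqref{eq:martplower} bound all order-$q$ norms of the output by $C_q\E(|\chi|+\int_I|G_t|dt)^q$. So the map sends the space to itself, and it is a contraction once $C_qL_B^q(|I|^q+|I|^{q/2})\le2^{-q}$.
\item The fixed point lies in the order-$q$ spaces by construction and is identified with the quadratic solution by the uniqueness in Theorem~\ref{thm:Bwell}. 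Backward concatenation over a partition then gives the bound.
\end{itemize}

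This route also makes your It\^o computation on $|Y|^2$ unnecessary. If you keep it, note that it controls $\int_0^T\!\int_E|U_s(e)|^2N(ds,de)$ rather than $\int_0^T\|U_s\|_\Pi^2ds$. Passing from one to the other at power $q/2$ requires \eqref{eq:martplower} together with BDG.
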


\begin{proof}
For \eqref{eq:XNmom}, first justify that the moments being manipulated are
finite. Return to the Picard iterates $X^{i,N,n}$ of
Proposition~\ref{prop:particlewell} and set
\[
u_n(t)=\frac1N\sum_i\E\sup_{s\le t}|X_s^{i,N,n}|^q.
\]
The growth of the coefficients, Hölder's inequality, BDG and \eqref{eq:BJ}
give, since the empirical measures are those of the previous iterate,
\[
u_{n+1}(t)\le C_0+C\int_0^t u_n(s)ds,\qquad
C_0=C\left(1+\E|\xi|^q+\E\int_0^T|\alpha_s|^qds\right).
\]
With the initialization $X^{i,N,0}=\xi^i$, this inequality shows by
induction that $u_n(T)<\infty$. For
$v_J(t)=\max_{0\le n\le J}u_n(t)$, after increasing $C_0$ to include
$u_0$, we obtain $v_J(t)\le C_0+C\int_0^tv_J(s)ds$, hence
$v_J(T)\le C_0e^{CT}$. These constants are independent of $P,N,J$. For
fixed $P,N$, a subsequence of the iterates converges uniformly almost
surely to the already constructed forward particle system. Fatou's lemma
yields the $q$-moment of this limit with the same uniform bound. We may
therefore apply directly the Brownian estimates of order $q$ and
\eqref{eq:BJ} to \eqref{eq:particleF}, and then average over $i$.
The law terms reduce to
\[
M_q(\mu_t^N)^q=\frac1N\sum_{j=1}^N|X_t^{j,N}|^q.
\]
Thus
\[
\frac1N\sum_i\E\sup_{r\le t}|X_r^{i,N}|^q
\le C\left(1+\E|\xi|^q+\E\int_0^t|\alpha_s|^qds
+\int_0^t\frac1N\sum_i\E\sup_{r\le s}|X_r^{i,N}|^qds\right).
\]
For $u_N(t)=N^{-1}\sum_i\E\sup_{r\le t}|X_r^{i,N}|^q$ and
$h(t)=C[1+\E|\xi|^q+\E\int_0^t|\alpha_s|^qds]$,
\eqref{eq:GronwallExplicit} gives $u_N(T)\le h(T)e^{CT}$.
The constant is independent of $N,P$; this proves \eqref{eq:XNmom}.

For \eqref{eq:Ymom}, fix $P$ and $0\le\lambda\le1$. Write
\begin{align*}
\zeta^P&=g(X_T^P,\mu_T^P),\\
\ell_t^P&=
|f(t,X_{t-}^P,\mu_{t-}^P,0,0,0,\delta_0,\alpha_t)|
+|\varrho|\,|A^0(X_{t-}^P,\mu_{t-}^P)|,\\
m_t^P&=M_2(\Lcal^P(Y_t^{\lambda,P})).
\end{align*}
The property $|A^\lambda|\le|A^0|$, \eqref{eq:fLip}, and
$|z|\le\lambda_{\min}(\underline a)^{-1/2}|z|_{a_t^P}$ give
\begin{equation}
|F_t^\lambda|
\le\ell_t^P+C\bigl(|Y_t^{\lambda,P}|
+|Z_t^{\lambda,P}|_{a_t^P}
+\|U_t^{\lambda,P}\|_\Pi+m_t^P\bigr).
\label{eq:Fqlin}
\end{equation}

We establish simultaneously membership in the order-$q$ spaces and the
uniform bounds directly from the integral formulation. On an interval
$I=[s_0,s_1]$, set
\begin{align*}
\|Y\|_{\mathbb S^q(I)}^q
&=\E^P\sup_{t\in I}|Y_t|^q,\\
\|Z\|_{\mathbb H_B^q(I)}^q
&=\E^P\left(\int_I|Z_t|_{a_t^P}^2dt\right)^{q/2},\\
\|U\|_{\mathbb J_\Pi^q(I)}^q
&=\E^P\left(\int_I\|U_t\|_\Pi^2dt\right)^{q/2}
+\E^P\int_I\|U_t\|_{L_\Pi^q}^qdt.
\end{align*}
For an input $(y,z,u)$ in the product of these three spaces, the
construction by conditional expectation and martingale representation used
in the proof of Theorem~\ref{thm:Bwell} defines the linear output solution.
More precisely, if the terminal value at the right endpoint of $I$ is
$\chi\in L^q(\mathcal F_{s_1}^P)$, set
\[
G_t^{y,z,u}=f(t,X_{t-}^P,\mu_{t-}^P,y_t,z_t,u_t,
\Lcal^P(y_t),\alpha_t)+\langle\varrho,
A^\lambda(X_{t-}^P,\mu_{t-}^P)\rangle
\]
and
\[
M_t=\E^P\left[\chi+\int_{s_0}^{s_1}G_r^{y,z,u}dr
\,\middle|\,\mathcal F_t^P\right],\qquad t\in I.
\]
The representation property gives the unique pair $(Z,U)$ such that
\[
M_t=M_{s_0}+\int_{s_0}^tZ_r\,dB_r
+\int_{s_0}^t\int_EU_r(e)\widetilde N^P(dr,de),
\]
and
\[
Y_t=M_t-\int_{s_0}^tG_r^{y,z,u}dr.
\]
For two inputs with the same terminal value, Doob's inequality and
Lemma~\ref{lem:martp} first give
\begin{align*}
&\|\Delta Y\|_{\mathbb S^q(I)}^q
+\|\Delta Z\|_{\mathbb H_B^q(I)}^q
+\|\Delta U\|_{\mathbb J_\Pi^q(I)}^q
\le C_q\E^P\left(\int_I|\Delta G_t|dt\right)^q.
\end{align*}
By \eqref{eq:fLip}, Hölder's inequality and \eqref{eq:coupling},
\begin{align}
&\|\Delta Y\|_{\mathbb S^q(I)}^q
+\|\Delta Z\|_{\mathbb H_B^q(I)}^q
+\|\Delta U\|_{\mathbb J_\Pi^q(I)}^q\nonumber\\
&\quad\le C_qL_B^q
\bigl(|I|^q+|I|^{q/2}\bigr)
\left(
\|\Delta y\|_{\mathbb S^q(I)}^q
+\|\Delta z\|_{\mathbb H_B^q(I)}^q
+\|\Delta u\|_{\mathbb J_\Pi^q(I)}^q
\right).
\label{eq:Lqcontraction}
\end{align}
Indeed,
\[
\left(\int_I|\Delta y_t|dt\right)^q
\le |I|^q\sup_{t\in I}|\Delta y_t|^q,
\quad
\left(\int_I|\Delta z_t|dt\right)^q
\le |I|^{q/2}
\left(\int_I|\Delta z_t|^2dt\right)^{q/2},
\]
and the same estimate holds for $\|\Delta u_t\|_\Pi$; the difference of
the laws is controlled explicitly by
\[
\Wtwo(\Lcal^P(y_t),\Lcal^P(\bar y_t))
\le(\E^P|y_t-\bar y_t|^2)^{1/2}
\le\|y-\bar y\|_{\mathbb S^q(I)}.
\]
Let $\|\cdot\|_{q,I}$ denote the $q$th root of the sum of the three norm
powers. This is a complete norm on the product of the preceding spaces;
the first component is adapted càdlàg and the other two are predictable.
The generator, integrated against $dt$, may use $y$ or its predictable
version $y_-$ without changing the equation. Choose $h\in(0,T]$ such that
\[
C_qL_B^q(h^q+h^{q/2})\le 2^{-q}.
\]
The constants include the norm equivalence for $z$ induced by
$\underline a$; $h$ is independent of $P$ and $\lambda$. For $|I|\le h$,
the map $\Psi_{I,\chi}$ constructed above is therefore a contraction with
ratio at most $1/2$ in norm. For the zero input,
$|G_t^{0,0,0}|\le\ell_t^P$. Doob's inequality followed by
\eqref{eq:martplower} applied to $M_t-M_{s_0}$ gives
\[
\|\Psi_{I,\chi}(0)\|_{q,I}^q
\le C_q\E^P\left[|\chi|^q+
\left(\int_I\ell_t^Pdt\right)^q\right].
\]
The same estimates with an arbitrary input show that
$\Psi_{I,\chi}$ maps this complete space into itself. The fixed point
theorem therefore yields $v_I=(Y,Z,U)$ and
\[
\|v_I\|_{q,I}\le\tfrac12\|v_I\|_{q,I}
+\|\Psi_{I,\chi}(0)\|_{q,I}.
\]
There is thus $K\ge1$, independent of $I,P,\lambda$, such that
\begin{equation}
\|v_I\|_{q,I}^q\le
K\E^P\left[|\chi|^q+\left(\int_I\ell_t^Pdt\right)^q\right].
\label{eq:Yq-local}
\end{equation}
The law term has already been absorbed into the contraction
\eqref{eq:Lqcontraction}; no prior moment bound on the solution has been
used.

Take a deterministic partition $0=t_0<\cdots<t_J=T$ with mesh at most
$h$, where $J=\lceil T/h\rceil$. Starting with $\chi=\zeta^P$ on the last
interval, solve successively backward, taking as terminal value on
$I_j=[t_{j-1},t_j]$ the already constructed value $Y_{t_j}$. Endpoint
values agree; the integral identities concatenate and define a global
solution. It belongs to the quadratic spaces and therefore coincides, by
Theorem~\ref{thm:Bwell}, with the selected or regularized solution already
constructed.

Set $a_j=\|v\|_{q,I_j}^q$, $a_{J+1}=\E^P|\zeta^P|^q$ and
$b_j=\E^P(\int_{I_j}\ell_t^Pdt)^q$. Since
$\E^P|Y_{t_j}|^q\le a_{j+1}$ for $j<J$, \eqref{eq:Yq-local} gives
\[
a_j\le K(a_{j+1}+b_j)
\le K^{J-j+1}a_{J+1}+\sum_{l=j}^JK^{l-j+1}b_l.
\]
Summing and using $K\ge1$ and $\ell\ge0$,
\[
\sum_{j=1}^Ja_j\le JK^J\left(a_{J+1}+\sum_{j=1}^Jb_j\right)
\le JK^J\E^P\left[|\zeta^P|^q+
\left(\int_0^T\ell_t^Pdt\right)^q\right].
\]
The global supremum is bounded by the sum of the local suprema, hence
\begin{equation}
\E^P\sup_{t\le T}|Y_t^{\lambda,P}|^q
\le C_q\E^P\left[|\zeta^P|^q+
\left(\int_0^T\ell_t^Pdt\right)^q\right].
\label{eq:YqSup}
\end{equation}
For the energy terms, the inequality
$(\sum_{j=1}^Jc_j)^{q/2}\le J^{q/2-1}\sum_{j=1}^Jc_j^{q/2}$,
$c_j\ge0$, similarly gives
\begin{align}
&\E^P\left(\int_0^T|Z_t^{\lambda,P}|_{a_t^P}^2dt\right)^{q/2}
+\E^P\left(\int_0^T\|U_t^{\lambda,P}\|_\Pi^2dt\right)^{q/2}
+\E^P\int_0^T\|U_t^{\lambda,P}\|_{L_\Pi^q}^qdt\nonumber\\
&\qquad\le C_q\E^P\left[|\zeta^P|^q+
\left(\int_0^T\ell_t^Pdt\right)^q\right].
\label{eq:ZUq}
\end{align}
Finally, \eqref{eq:Fq}, \eqref{eq:fLip}, \eqref{eq:gLip},
\ref{HD} and \eqref{eq:controlq} uniformly bound the right-hand side of
\eqref{eq:YqSup}--\eqref{eq:ZUq}. Taking the supremum over
$(P,\lambda)$ gives \eqref{eq:Ymom}.
\end{proof}

\subsection{Forward propagation of chaos}

On the product space, let $\bar X^i$ be the limiting solution
\eqref{eq:FWD} driven by $(\xi^i,B^i,N^i,\alpha^i)$ and set
\[
\bar\mu_t^N=\frac1N\sum_{i=1}^N\delta_{\bar X_t^i}.
\]

\begin{theorem}[Forward propagation of chaos]\label{thm:Fchaos}
Under Assumptions~\ref{HF}, \ref{HD}, \ref{HqJ} and~\ref{Hprod},
\begin{align}
\sup_{P\in\Pcal}\frac1N\sum_{i=1}^N
\E^{P^{\otimes N}}\sup_{t\le T}|X_t^{i,N}-\bar X_t^i|^2
&\le C\tau_d(N),\label{eq:Fchaos}\\
\sup_{P\in\Pcal}\sup_{t\le T}
\E^{P^{\otimes N}}\Wtwo^2(\mu_t^N,\mu_t^P)
&\le C\tau_d(N).\label{eq:mulaw}
\end{align}
\end{theorem}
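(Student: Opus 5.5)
The proof follows Sznitman's synchronous coupling, run separately under each fixed $P$ with constants that do not depend on $P$. On the product space $(\Omega^N,\mathcal F^{P,\otimes N},P^{\otimes N})$, the particle $X^{i,N}$ and the copy $\bar X^i$ use the same data $(\xi^i,B^i,N^i,\alpha^i)$. The copies $\bar X^i$ are i.i.d. with common law flow $\mu_t^P$. This holds because Assumption~\ref{Hprod} makes $\alpha^i$ the same functional of the $i$-th noise, and because the strong solution of \eqref{eq:FWD} is a measurable functional of $(\xi,B,N,\alpha)$ built modelwise.

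\emph{Step 1 (difference equation).} I subtract \eqref{eq:FWD} for $\bar X^i$ from \eqref{eq:particleF} and argue as in the proof of Lemma~\ref{lem:Fstab}, applied coordinatewise on the product space. The coefficient differences are evaluated at $(X_{s-}^{i,N},\mu_{s-}^N,\alpha_s^i)$ and $(\bar X_{s-}^i,\mu_{s-}^P,\alpha_s^i)$; the controls coincide, so no control term appears. Using \eqref{eq:FLip}, Doob's inequality and the two isometries of Lemma~\ref{lem:mart} (valid uniformly since $a^{i,P}\le\overline a$ and the compensator is $\Pi$), I obtain
\[
\E\sup_{r\le t}|X_r^{i,N}-\bar X_r^i|^2\le C\int_0^t\E|X_{s}^{i,N}-\bar X_{s}^i|^2ds+C\int_0^t\E\Wtwo^2(\mu_{s}^N,\mu_{s}^P)ds,
\]
where left limits may be dropped $ds\otimes P$-a.e.

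\emph{Step 2 (splitting the law term).} I write $\Wtwo(\mu_s^N,\mu_s^P)\le\Wtwo(\mu_s^N,\bar\mu_s^N)+\Wtwo(\bar\mu_s^N,\mu_s^P)$. The empirical coupling gives $\Wtwo^2(\mu_s^N,\bar\mu_s^N)\le N^{-1}\sum_j|X_s^{j,N}-\bar X_s^j|^2$. For the second piece, $\bar\mu_s^N$ is the empirical measure of $N$ i.i.d. samples of $\mu_s^P$, so \eqref{eq:FG} applies with $M_q(\mu_s^P)^q\le\sup_P\E^P\sup_t|X_t^P|^q$, which is finite uniformly by \eqref{eq:Fq} under \ref{HqJ}. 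This yields $\E\Wtwo^2(\bar\mu_s^N,\mu_s^P)\le C\tau_d(N)$ uniformly in $s$ and $P$.

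\emph{Step 3 (Gronwall).} I average the bound of Step 1 over $i$ and set $u(t)=N^{-1}\sum_i\E\sup_{r\le t}|X_r^{i,N}-\bar X_r^i|^2$. Then $u(t)\le C\tau_d(N)+C\int_0^tu(s)ds$, and $u$ is finite by the moments of Lemma~\ref{lem:moments}. Applying \eqref{eq:GronwallExplicit} gives $u(T)\le Ce^{CT}\tau_d(N)$, which is \eqref{eq:Fchaos}.

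\emph{Step 4 (law estimate).} For \eqref{eq:mulaw}, I use the same triangle inequality at a fixed time $t$ together with Steps 2 and 3:
\[
\E\Wtwo^2(\mu_t^N,\mu_t^P)\le 2u(T)+2C\tau_d(N).
\]

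The main point needing care is Step 2. Fournier--Guillin requires genuinely i.i.d. samples and a uniform $q$-moment with $q>4$, so I need the i.i.d. structure of the $\bar X_t^i$ under $P^{\otimes N}$ and the uniformity of $C_{d,q}(1+M_q^2)$ in $t$ and $P$. Both follow from Assumption~\ref{Hprod} and \eqref{eq:Fq}. A secondary point is measurability of $s\mapsto\E\Wtwo^2(\bar\mu_s^N,\mu_s^P)$. I handle it with the continuity of $\mu^P$ in $W_2$ (Lemma~\ref{lem:lawcontinuous}) and the càdlàg paths of the copies.
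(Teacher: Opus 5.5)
Your proposal is correct and follows the paper's proof essentially step for step. The paper uses the same ingredients in the same order: synchronous coupling through the random-flow stability estimate of Corollary~\ref{cor:Fstabrandom}; the atomic-coupling split $\Wtwo^2(\mu_s^N,\mu_s^P)\le\frac2N\sum_j|X_s^{j,N}-\bar X_s^j|^2+2\Wtwo^2(\bar\mu_s^N,\mu_s^P)$; the Fournier--Guillin bound \eqref{eq:FG} for the i.i.d.\ copies with the uniform moments \eqref{eq:Fq}; Gronwall; and the same split again for \eqref{eq:mulaw}.

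One cosmetic point: finiteness of your $u$ already follows from quadratic well-posedness of both systems. You therefore need not invoke Lemma~\ref{lem:moments}, which also lists \ref{HB} among its hypotheses.
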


\begin{proof}
Set $\Delta X^i=X^{i,N}-\bar X^i$ and
\[
D_X(t)=\frac1N\sum_i\E^{P^{\otimes N}}
\sup_{r\le t}|\Delta X_r^i|^2.
\]
The initial data and the controls are identical within each pair.
Corollary~\ref{cor:Fstabrandom}, applied to the random flows $\mu^N$ and
$\mu^P$, gives
\[
D_X(t)\le C\int_0^t
\left[D_X(s)+\E^{P^{\otimes N}}
\Wtwo^2(\mu_s^N,\mu_s^P)\right]ds.
\]
The atomic coupling $N^{-1}\sum_i\delta_{(X_s^{i,N},\bar X_s^i)}$ and the
inequality $(a+b)^2\le2a^2+2b^2$ give
\begin{equation}
\Wtwo^2(\mu_s^N,\mu_s^P)
\le\frac2N\sum_i|\Delta X_s^i|^2
+2\Wtwo^2(\bar\mu_s^N,\mu_s^P).
\label{eq:WdecompX}
\end{equation}
The $\bar X_s^i$ are i.i.d. with law $\mu_s^P$. By \eqref{eq:Fq}, their
moments of order $q$ are uniform in $(s,P)$; \eqref{eq:FG} therefore
yields
\[
\sup_{s,P}\E^{P^{\otimes N}}
\Wtwo^2(\bar\mu_s^N,\mu_s^P)\le C\tau_d(N).
\]
Hence
\[
D_X(t)\le C\int_0^tD_X(s)ds+C\tau_d(N),
\]
and \eqref{eq:GronwallExplicit}, with $h(t)=C\tau_d(N)$, gives
$D_X(t)\le C e^{Ct}\tau_d(N)\le C e^{CT}\tau_d(N)$.
The constants are independent of $P,N$, proving \eqref{eq:Fchaos}.
Returning to \eqref{eq:WdecompX}, then taking the supremum over $t$ and
$P$, yields \eqref{eq:mulaw}.
\end{proof}

Thus \eqref{eq:Fchaos} establishes forward propagation of chaos in the
sense of Definition~\ref{def:chaos}, uniformly over $\Pcal$; every finite
marginal therefore converges at each fixed time.

\subsection{Backward propagation of chaos}

Let $(\bar Y^{i,\lambda},\bar Z^{i,\lambda},\bar U^{i,\lambda})$ be the
copy of the regularized limiting solution driven by the individual noise
$i$. Its accumulated process is
\[
\bar{\mathsf K}_t^{i,\lambda}
=\int_0^t\langle\varrho,
A^\lambda(\bar X_{s-}^i,\mu_{s-}^P)\rangle ds.
\]
In the product filtration it is represented by
\[
\bar Z^{i,j,\lambda}=\one_{\{i=j\}}\bar Z^{i,\lambda},
\qquad
\bar U^{i,j,\lambda}=\one_{\{i=j\}}\bar U^{i,\lambda}.
\]
Set
\[
\bar\eta_t^{N,\lambda}
=\frac1N\sum_i\delta_{\bar Y_t^{i,\lambda}}.
\]

\begin{theorem}[Backward propagation with full matrices]
\label{thm:Bchaos}
Under Assumptions~\ref{HF}, \ref{HB}, \ref{HD}, \ref{HqJ} and~\ref{Hprod},
for $0<\lambda\le1$,
\begin{align}
&\sup_{P\in\Pcal}\frac1N\sum_i\E^{P^{\otimes N}}\Biggl[
\sup_{t\le T}|Y_t^{i,N,\lambda}-\bar Y_t^{i,\lambda}|^2
+\sup_{t\le T}|\mathsf K_t^{i,N,\lambda}
-\bar{\mathsf K}_t^{i,\lambda}|^2\nonumber\\
&\quad+\sum_{j=1}^N\int_0^T
|Z_t^{i,j,N,\lambda}
-\one_{\{i=j\}}\bar Z_t^{i,\lambda}|_{a_t^{j,P}}^2dt\nonumber\\
&\quad+\sum_{j=1}^N\int_0^T
\|U_t^{i,j,N,\lambda}
-\one_{\{i=j\}}\bar U_t^{i,\lambda}\|_\Pi^2dt
\Biggr]\le C(1+\lambda^{-2})\tau_d(N).
\label{eq:Bchaos}\\
&\sup_{P\in\Pcal}\sup_{t\le T}
\E^{P^{\otimes N}}\Wtwo^2(\eta_t^{N,\lambda},\eta_t^{P,\lambda})
\le C(1+\lambda^{-2})\tau_d(N).
\label{eq:etalaw}
\end{align}
\end{theorem}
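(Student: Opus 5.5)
The estimate is obtained by an energy argument on the product space for the differences $\Delta Y^i=Y^{i,N,\lambda}-\bar Y^{i,\lambda}$, $\Delta Z^{i,j}=Z^{i,j,N,\lambda}-\one_{\{i=j\}}\bar Z^{i,\lambda}$, $\Delta U^{i,j}$. The forward input comes from Theorem~\ref{thm:Fchaos}, the source error is paid for with the $\lambda^{-1}$ Lipschitz constant of $A^\lambda$, and the empirical measures are handled with the Fournier--Guillin bound \eqref{eq:FG}, using the uniform $q$-moments of Lemma~\ref{lem:moments}.

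\emph{Step 1: difference equation.} Because the copies $\bar Y^i$ are represented in the product filtration with diagonal integrands, each $\Delta Y^i$ solves a BSDE against $(B^j,\widetilde N^j)_{j\le N}$. Its terminal value is $g(X_T^{i,N},\mu_T^N)-g(\bar X_T^i,\mu_T^P)$. Its driver $G^i$ is the difference of $f$ terms plus $\langle\varrho,A^\lambda(X_{s-}^{i,N},\mu^N_{s-})-A^\lambda(\bar X^i_{s-},\mu^P_{s-})\rangle$. By \eqref{eq:fLip} and \eqref{eq:AYlip},
\[
|G^i_s|\le C\bigl\{(1+\lambda^{-1})|\Delta X^i_{s-}|+\Wtwo(\mu^N_{s-},\mu^P_{s-})+|\Delta Y^i_s|+|\Delta Z^{i,i}_s|_{a^{i,P}_s}+\|\Delta U^{i,i}_s\|_\Pi+\Wtwo(\eta^{N,\lambda}_s,\eta^{P,\lambda}_s)\bigr\}.
\]
Only the diagonal integrands enter the generator.

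\emph{Step 2: Itô estimate and absorption.} Apply Itô's formula with jumps to $e^{\gamma t}|\Delta Y^i_t|^2$, as in Lemma~\ref{lem:Bstab}. Orthogonality of the coordinates makes the quadratic variation produce $\sum_j|\Delta Z^{i,j}|^2_{a^{j,P}}+\sum_j\|\Delta U^{i,j}\|_\Pi^2$, which controls the full matrices. The product local martingales are handled by the same localization and integrability argument. Average over $i$ and split the backward law term as
\[
\Wtwo^2(\eta^{N,\lambda}_s,\eta^{P,\lambda}_s)\le \frac2N\sum_j|\Delta Y^j_s|^2+2\Wtwo^2(\bar\eta^{N,\lambda}_s,\eta^{P,\lambda}_s).
\]
The first piece is absorbed by choosing $\gamma$ large, uniformly in $N,P,\lambda$. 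For the second, the $\bar Y^{i,\lambda}_s$ are i.i.d. with law $\eta^{P,\lambda}_s$ and have uniform $q$-moments by \eqref{eq:Ymom}. Since they are one-dimensional, \eqref{eq:FG} bounds this piece by $CN^{-1/2}\le C\tau_d(N)$. The forward terms are bounded by \eqref{eq:Fchaos}, \eqref{eq:mulaw} and \eqref{eq:WdecompX}, giving $C(1+\lambda^{-2})\tau_d(N)$. The terminal term, by \eqref{eq:gLip}, gives $C\tau_d(N)$. This bounds $\sup_t$ of the averaged $\E|\Delta Y^i_t|^2$ and the averaged energy norms by $C(1+\lambda^{-2})\tau_d(N)$.

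\emph{Step 3: suprema, $\mathsf K$, and \eqref{eq:etalaw}.} Next bound the driver in $L^2$ and apply Doob's inequality to the martingale part, as in Step 4 of Lemma~\ref{lem:Bstab}; this moves the supremum inside the expectation. For the accumulated process, Cauchy--Schwarz gives
\[
\sup_t|\Delta\mathsf K^i_t|^2\le T|\varrho|^2\int_0^T\bigl|A^\lambda(X^{i,N}_{s-},\mu^N_{s-})-A^\lambda(\bar X^i_{s-},\mu^P_{s-})\bigr|^2ds,
\]
which \eqref{eq:AYlip} and Step 1 bound by the same rate. Finally, \eqref{eq:etalaw} follows from the same split of $\Wtwo^2(\eta^{N,\lambda}_t,\eta^{P,\lambda}_t)$, now with the averaged $\E\sup|\Delta Y^j|^2$ bound.

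The main obstacle is Step 2 with full $N\times N$ matrices. One must check that the constants, including absorption of the empirical backward law and the cross terms, do not depend on $N$. One must also justify that the many product local martingales are true martingales, using the integrability of the diagonal copies and the particle solution spaces. I would handle both coordinatewise, exactly as in Lemma~\ref{lem:Bstab}, and then average.
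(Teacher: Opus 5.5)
Your proposal is correct and follows essentially the same route as the paper's proof. Both apply It\^o's formula to $e^{\gamma t}|\Delta Y_t^i|^2$ on the product space, with the full $N\times N$ energy on the left and only diagonal integrands in the driver. Both pay for the source with the $\lambda^{-1}$ Lipschitz bound \eqref{eq:AYlip}, absorb the empirical backward law through the atomic coupling and the one-dimensional Fournier--Guillin bound $CN^{-1/2}\le C\tau_d(N)$, and finish with Doob's inequality for the suprema and Cauchy--Schwarz for $\mathsf K$.
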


\begin{proof}
Fix $P$. Write
\begin{align*}
\Delta Y^i&=Y^{i,N,\lambda}-\bar Y^{i,\lambda},\\
\Delta Z^{i,j}&=Z^{i,j,N,\lambda}
-\one_{\{i=j\}}\bar Z^{i,\lambda},\\
\Delta U^{i,j}&=U^{i,j,N,\lambda}
-\one_{\{i=j\}}\bar U^{i,\lambda}.
\end{align*}
The terminal difference $\Delta\zeta^i$ satisfies
\begin{equation}
|\Delta\zeta^i|^2\le
2L_B^2\{|X_T^{i,N}-\bar X_T^i|^2
+\Wtwo^2(\mu_T^N,\mu_T^P)\}.
\label{eq:terminal}
\end{equation}
The source difference is
\[
\Delta A_t^i=A^\lambda(X_{t-}^{i,N},\mu_{t-}^N)
-A^\lambda(\bar X_{t-}^i,\mu_{t-}^P),
\]
and \eqref{eq:AYlip} gives
\begin{equation}
|\Delta A_t^i|
\le\lambda^{-1}|X_{t-}^{i,N}-\bar X_{t-}^i|
+L_K\Wtwo(\mu_{t-}^N,\mu_{t-}^P).
\label{eq:sourceparticle}
\end{equation}

Let $\Delta F_t^i$ denote the difference of the generator $f$ together
with the source $\langle\varrho,A^\lambda\rangle$. By \eqref{eq:fLip},
\begin{align}
|\Delta F_t^i|
&\le L_B\bigl(
|\Delta X_t^i|+\Wtwo(\mu_t^N,\mu_t^P)+|\Delta Y_t^i|
+|\Delta Z_t^{i,i}|+\|\Delta U_t^{i,i}\|_\Pi\nonumber\\
&\hspace{39mm}+\Wtwo(\eta_t^{N,\lambda},\eta_t^{P,\lambda})
\bigr)\nonumber\\
&\quad+|\varrho|\{\lambda^{-1}|\Delta X_t^i|
+L_K\Wtwo(\mu_t^N,\mu_t^P)\}.
\label{eq:Fdiff}
\end{align}

Apply Itô's formula with jumps to $e^{\gamma t}|\Delta Y_t^i|^2$ and
then take expectations. The quadratic part is
\[
\sum_j\int_t^Te^{\gamma s}|\Delta Z_s^{i,j}|_{a_s^{j,P}}^2ds
+\sum_j\int_t^T\int_Ee^{\gamma s}
|\Delta U_s^{i,j}(e)|^2N^j(ds,de).
\]
After expectation, the second term becomes
$\sum_j\E\int e^{\gamma s}\|\Delta U_s^{i,j}\|_\Pi^2ds$.
Young's inequality allows one quarter of the diagonal components of $Z$
and $U$ to be absorbed. There exists a constant $C_0$, independent of
$N,P,\lambda$, such that
\begin{align}
&e^{\gamma t}\E|\Delta Y_t^i|^2
+\frac34\sum_j\E\int_t^Te^{\gamma s}
\{|\Delta Z_s^{i,j}|_{a_s^{j,P}}^2
+\|\Delta U_s^{i,j}\|_\Pi^2\}ds\nonumber\\
&\quad+(\gamma-C_0)\E\int_t^Te^{\gamma s}|\Delta Y_s^i|^2ds\nonumber\\
&\le e^{\gamma T}\E|\Delta\zeta^i|^2
+C(1+\lambda^{-2})\E\int_t^Te^{\gamma s}
\{|\Delta X_s^i|^2+\Wtwo^2(\mu_s^N,\mu_s^P)\}ds\nonumber\\
&\quad+C\E\int_t^Te^{\gamma s}
\Wtwo^2(\eta_s^{N,\lambda},\eta_s^{P,\lambda})ds.
\label{eq:BenergyN}
\end{align}
The equivalence \eqref{eq:qv} was used to replace
$|\Delta Z^{i,i}|^2$ by a constant times
$|\Delta Z^{i,i}|_{a^{i,P}}^2$.

Set
\[
D_Y(t)=\frac1N\sum_i\E|\Delta Y_t^i|^2.
\]
The atomic coupling and the i.i.d. copies give
\begin{equation}
\E\Wtwo^2(\eta_t^{N,\lambda},\eta_t^{P,\lambda})
\le2D_Y(t)+2\E\Wtwo^2(\bar\eta_t^{N,\lambda},
\eta_t^{P,\lambda}).
\label{eq:WdecompY}
\end{equation}
By Lemma~\ref{lem:moments} and Fournier--Guillin in dimension one,
\[
\sup_{t,P,\lambda}\E\Wtwo^2(\bar\eta_t^{N,\lambda},
\eta_t^{P,\lambda})\le CN^{-1/2}\le C\tau_d(N).
\]
After averaging over $i$, choose $\gamma>C_0+2C+1$ in
\eqref{eq:BenergyN}. The term $2C\int D_Y$ arising from
\eqref{eq:WdecompY} is absorbed on the left-hand side. Estimates
\eqref{eq:Fchaos}, \eqref{eq:mulaw}, and \eqref{eq:terminal} yield
\begin{align}
&\sup_{t\le T}D_Y(t)
+\frac1N\sum_{i,j}\E\int_0^T
\{|\Delta Z_t^{i,j}|_{a_t^{j,P}}^2
+\|\Delta U_t^{i,j}\|_\Pi^2\}dt\nonumber\\
&\le C(1+\lambda^{-2})\tau_d(N).
\label{eq:BenergyFinal}
\end{align}

To control the supremum of $\Delta Y^i$, write the difference in integral
form. The martingales
\[
\sum_j\int_0^\cdot\Delta Z_s^{i,j}dB_s^j,\qquad
\sum_j\int_0^\cdot\int_E\Delta U_s^{i,j}(e)
\widetilde N^j(ds,de)
\]
are orthogonal. Doob's inequality and the isometries give
\begin{align*}
&\E\sup_{t\le T}\left|
\sum_j\int_0^t\Delta Z_s^{i,j}dB_s^j
+\sum_j\int_0^t\int_E\Delta U_s^{i,j}(e)
\widetilde N^j(ds,de)\right|^2\\
&\quad\le C\sum_j\E\int_0^T
\{|\Delta Z_s^{i,j}|_{a_s^{j,P}}^2
+\|\Delta U_s^{i,j}\|_\Pi^2\}ds.
\end{align*}
Together with
\[
\sup_{t\le T}|\Delta Y_t^i|^2
\le3|\Delta\zeta^i|^2
+3T\int_0^T|\Delta F_s^i|^2ds
+12\sup_{t\le T}|\Delta M_t^i|^2,
\]
\eqref{eq:Fdiff}, \eqref{eq:WdecompY}, and
\eqref{eq:BenergyFinal} imply
\[
\frac1N\sum_i\E\sup_{t\le T}|\Delta Y_t^i|^2
\le C(1+\lambda^{-2})\tau_d(N).
\]

Finally,
\[
\sup_{t\le T}|\mathsf K_t^{i,N,\lambda}
-\bar{\mathsf K}_t^{i,\lambda}|^2
\le T|\varrho|^2\int_0^T|\Delta A_s^i|^2ds.
\]
The bound \eqref{eq:sourceparticle}, followed by
\eqref{eq:Fchaos}--\eqref{eq:mulaw}, gives the same order
$C(1+\lambda^{-2})\tau_d(N)$. This proves \eqref{eq:Bchaos};
\eqref{eq:etalaw} then follows from \eqref{eq:WdecompY}.
\end{proof}

For each fixed $\lambda>0$, \eqref{eq:Bchaos} in particular yields
propagation of chaos of $(Y^{i,N,\lambda})_i$ in the uniform-in-time
metric of Definition~\ref{def:chaos}. The estimates on $Z$ and $U$ are
stronger than mere convergence of the laws of $Y$: they control the full
martingale representation in the product filtration.

\begin{corollary}[Convergence of laws on path space]
\label{cor:pathchaos}
Equip $D([0,T];\R^k)$ with the Skorokhod $J_1$ distance, denoted by
$d_{J_1}$. On products of $\ell$ trajectories, we use the quadratic
distance $((x_i),(y_i))\mapsto
(\sum_{i=1}^\ell d_{J_1}(x_i,y_i)^2)^{1/2}$, and
$W_2(\cdot,\cdot;d_{J_1})$ denotes the associated Wasserstein distance.
For every fixed $\lambda>0$ and every integer $\ell\ge1$,
\begin{align*}
\sup_{P\in\Pcal}
W_2^2\Bigl(&\Lcal^{P^{\otimes N}}
((X^{1,N},\ldots,X^{\ell,N})),
\Lcal^P(X^P)^{\otimes\ell};d_{J_1}\Bigr)
\longrightarrow0,\\
\sup_{P\in\Pcal}
W_2^2\Bigl(&\Lcal^{P^{\otimes N}}
((Y^{1,N,\lambda},\ldots,Y^{\ell,N,\lambda})),
\Lcal^P(Y^{\lambda,P})^{\otimes\ell};d_{J_1}\Bigr)
\longrightarrow0.
\end{align*}
The same conclusion holds for the accumulated processes
$\mathsf K^{i,N,\lambda}$, which are continuous.
\end{corollary}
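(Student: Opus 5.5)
The plan is to deduce Corollary~\ref{cor:pathchaos} from Theorems~\ref{thm:Fchaos} and~\ref{thm:Bchaos} through a coupling argument. Since the $J_1$ distance is dominated by the uniform distance, $d_{J_1}(x,y)\le\sup_{t\le T}|x_t-y_t|$, any coupling that controls the supremum norm also controls the Skorokhod distance.

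Fix $\ell$ and $\lambda>0$, and take $N\ge\ell$. Consider the joint law of
\[
(X^{1,N},\ldots,X^{\ell,N};\bar X^1,\ldots,\bar X^\ell)
\]
on the product space. Its second marginal is $\Lcal^P(X^P)^{\otimes\ell}$, because the $\bar X^i$ are independent copies driven by independent data. Hence this law is an admissible coupling, and
\[
W_2^2\bigl(\cdots;d_{J_1}\bigr)\le\sum_{i=1}^\ell\E^{P^{\otimes N}}\sup_{t\le T}|X_t^{i,N}-\bar X_t^i|^2 .
\]
Joint exchangeability, as recalled after Definition~\ref{def:chaos}, turns the right-hand side into
\[
\frac{\ell}{N}\sum_{i=1}^N\E\sup_{t\le T}|X^{i,N}_t-\bar X^i_t|^2 .
\]
This holds because the coefficients are permutation invariant, the data are i.i.d., and strong uniqueness makes the law of the coupled family invariant under simultaneous permutations. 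By \eqref{eq:Fchaos} the quantity is at most $C\ell\,\tau_d(N)$, uniformly in $P$.

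The backward component is handled identically. The copies $\bar Y^{i,\lambda}$ are independent, each with law $\Lcal^P(Y^{\lambda,P})$, and \eqref{eq:Bchaos} gives the bound $C\ell(1+\lambda^{-2})\tau_d(N)$. This tends to zero for fixed $\lambda$.

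For $\mathsf K$, the same coupling applies with the bound on $\sup_t|\mathsf K^{i,N,\lambda}_t-\bar{\mathsf K}^{i,\lambda}_t|^2$ in \eqref{eq:Bchaos}. The limiting law is that of $\mathsf K^{\lambda,P}$, since $\bar{\mathsf K}^{i,\lambda}$ is a measurable functional of $(\bar X^i,\mu^P)$. Continuity of the paths means that $J_1$ and uniform convergence coincide on the limit set, but this is not even needed.

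The main point to check is measurability together with finiteness of the second moments in $D([0,T])$ under $d_{J_1}$. The space is separable and metrizable by a complete metric equivalent to $d_{J_1}$. The Borel $\sigma$-field coincides with the one generated by coordinate projections, so the càdlàg processes define random elements. Finite second moments follow from \eqref{eq:XNmom}, \eqref{eq:Ymom}, and the bound $d_{J_1}(x,0)\le\sup|x|$.

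The remaining subtlety is verifying joint exchangeability of the pairs $(X^{i,N},\bar X^i)$, which follows from the uniqueness in Proposition~\ref{prop:particlewell} and Theorem~\ref{thm:well}. If needed, one can instead bypass it by symmetrizing with an independent uniform permutation, as the paper notes.
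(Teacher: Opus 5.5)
Your proposal is correct and follows the paper's proof essentially verbatim. The steps are the same: the bound $d_{J_1}(x,\bar x)\le\sup_{t\le T}|x_t-\bar x_t|$ from the identity time change, the coupling of the first $\ell$ particles with the independent copies, exchangeability to reduce to $\ell$ times the mean error, and then \eqref{eq:Fchaos} and \eqref{eq:Bchaos}; your remarks on Borel measurability in $D([0,T];\R^k)$ and on finite second moments are valid details that the paper leaves implicit.
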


\begin{proof}
In the definition of $d_{J_1}$, choosing the time-change function equal to
the identity gives, for $x,\bar x\in D([0,T];\R^k)$,
\[
d_{J_1}(x,\bar x)\le\sup_{t\le T}|x_t-\bar x_t|.
\]
Couple the first $\ell$ particles with the independent copies used in
Theorems~\ref{thm:Fchaos} and~\ref{thm:Bchaos}. By the definition of the
Wasserstein distance on the product space,
\begin{align*}
&W_2^2\Bigl(\Lcal((X^{1,N},\ldots,X^{\ell,N})),
\Lcal(X^P)^{\otimes\ell};d_{J_1}\Bigr)\\
&\qquad\le\sum_{i=1}^\ell
\E^{P^{\otimes N}}\sup_{t\le T}|X_t^{i,N}-\bar X_t^i|^2.
\end{align*}
Exchangeability turns the right-hand side into $\ell$ times the mean error
of Theorem~\ref{thm:Fchaos}; it converges uniformly to zero. The same
construction yields, for $V=Y^{\lambda}$ or $V=\mathsf K^{\lambda}$,
\[
\sup_P W_2^2\bigl(\Lcal^{P^{\otimes N}}(V^{1,N},\ldots,V^{\ell,N}),
\Lcal^P(V)^{\otimes\ell};d_{J_1}\bigr)
\le C\ell(1+\lambda^{-2})\tau_d(N)\longrightarrow0,
\]
for fixed $\ell$ and $\lambda>0$, by \eqref{eq:Bchaos}. This conclusion
concerns the path laws of $X$, $Y$, and $\mathsf K$; the integrands $Z$
and $U$ converge in the energy norms displayed above, not in a Skorokhod
topology.
\end{proof}

\section{Generic joint particle--Yosida approximation}

Let
$(\bar X^i,\bar Y^i,\bar Z^i,\bar U^i,\bar{\mathsf K}^i)$ be an independent
copy of the selected solution of Theorem~\ref{thm:well}, constructed on the
$i$th coordinate.

\begin{theorem}[Qualitative diagonal convergence]\label{thm:diagonal}
Assume Hypotheses~\ref{HF}, \ref{HB}, \ref{HD}, \ref{HqJ},
\ref{HY}, and~\ref{Hprod}. Let $\lambda_N\downarrow0$ be such that
\[
(1+\lambda_N^{-2})\tau_d(N)\longrightarrow0.
\]
An explicit choice is $\lambda_N=\tau_d(N)^{1/4}$.
Then
\begin{align}
\lim_{N\to\infty}\sup_{P\in\Pcal}\frac1N\sum_i
\E^{P^{\otimes N}}\Biggl[
&\sup_{t\le T}|Y_t^{i,N,\lambda_N}-\bar Y_t^i|^2
+\sup_{t\le T}|\mathsf K_t^{i,N,\lambda_N}
-\bar{\mathsf K}_t^i|^2\nonumber\\
&+\sum_j\int_0^T
|Z_t^{i,j,N,\lambda_N}
-\one_{\{i=j\}}\bar Z_t^i|_{a_t^{j,P}}^2dt\nonumber\\
&+\sum_j\int_0^T
\|U_t^{i,j,N,\lambda_N}
-\one_{\{i=j\}}\bar U_t^i\|_\Pi^2dt
\Biggr]=0.
\label{eq:diagonal}
\end{align}
\end{theorem}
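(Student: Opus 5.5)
The plan is a triangle-inequality split through the regularized limiting copies. For each $i$ we insert $(\bar Y^{i,\lambda_N},\bar Z^{i,\lambda_N},\bar U^{i,\lambda_N},\bar{\mathsf K}^{i,\lambda_N})$, the copy of the regularized limiting solution with parameter $\lambda_N$ built on coordinate $i$. Using $|a+b|^2\le2|a|^2+2|b|^2$ in each component, the averaged quantity inside \eqref{eq:diagonal} is bounded by twice the sum of two terms. The first is a particle term, comparing the $\lambda_N$-particle system with the $\lambda_N$-regularized copies. The second is a Yosida term, comparing those regularized copies with the selected copies $(\bar Y^i,\dots)$. In the off-diagonal components $j\ne i$, both comparison processes $\one_{\{i=j\}}\bar Z^{i,\lambda_N}$ and $\one_{\{i=j\}}\bar Z^i$ vanish. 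So the full matrix sum splits cleanly: the off-diagonal part belongs entirely to the particle term, and the Yosida term involves only $j=i$.

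For the particle term we apply Theorem~\ref{thm:Bchaos} with $\lambda=\lambda_N$; it is admissible because $\lambda_N\le1$ for $N$ large. It bounds the first term, uniformly in $P$, by $C(1+\lambda_N^{-2})\tau_d(N)$, where $C$ does not depend on $\lambda$. This tends to zero by the choice of $\lambda_N$. For the explicit choice $\lambda_N=\tau_d(N)^{1/4}$, we get $(1+\lambda_N^{-2})\tau_d(N)=\tau_d(N)+\tau_d(N)^{1/2}\to0$.

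For the Yosida term we use a product-space reduction. Each $\bar V^{i,\lambda}$ and $\bar V^i$ is the same measurable functional of the $i$th coordinate $(\xi^i,B^i,N^i,\alpha^i)$ as the modelwise solution is of $(\xi,B,N,\alpha)$; this uses Assumption~\ref{Hprod} and strong uniqueness. Their joint law under $P^{\otimes N}$ is therefore the $P$-law of $(V^{\lambda,P},V^P)$, and the weights $a^{i,P}$ correspond to $a^P$. The key point is that stochastic integrals against $B^i$ and $\widetilde N^i$ in the product filtration agree with those in the $i$th coordinate filtration, which holds because the other coordinates are independent. Granting this, each summand of the second term equals
\[
\E^P\Bigl[\sup_{t\le T}|Y^{\lambda_N,P}_t-Y^P_t|^2+\sup_{t\le T}|\mathsf K^{\lambda_N,P}_t-\mathsf K^P_t|^2+\int_0^T\{|Z^{\lambda_N,P}_t-Z^P_t|^2_{a_t^P}+\|U^{\lambda_N,P}_t-U^P_t\|^2_\Pi\}dt\Bigr].
\]
Its supremum over $P$ tends to zero by Theorem~\ref{thm:Yconv}, which applies under \ref{HY}, since $\lambda_N\downarrow0$. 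Adding the two limits gives \eqref{eq:diagonal}.

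The main obstacle is the identification in the Yosida step. We have to check that the regularized and selected limiting copies, constructed in the $i$th coordinate filtration, remain solutions in the augmented product filtration $\mathcal F^{P,\otimes N}$ with the same integrands. That is what lets Theorem~\ref{thm:Bchaos} and Theorem~\ref{thm:Yconv} be combined on one stochastic basis. My first attempt would be to argue that independence of the coordinates preserves the martingale property of $\int \bar Z^i\,dB^i$ under the enlarged filtration, with $B^i$ and $N^i$ keeping their characteristics. Uniqueness in Theorem~\ref{thm:Bwell}, applied on the product basis, would then make the representation there coincide with $\bar Z^{i,j}=\one_{\{i=j\}}\bar Z^i$. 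This is the same convention already used before Theorem~\ref{thm:Bchaos}.
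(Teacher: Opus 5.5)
Your proposal is correct and follows the paper's proof: the paper also inserts the regularized copies $\overline V^{i,\lambda_N}$ and bounds the particle term by Theorem~\ref{thm:Bchaos} as $C(1+\lambda_N^{-2})\tau_d(N)$. It bounds the Yosida term by twice the uniform error of Theorem~\ref{thm:Yconv}, carried over to each product space by equality in law of the copies. Your explicit check that the coordinate copies remain solutions in the product filtration with $\bar Z^{i,j}=\one_{\{i=j\}}\bar Z^i$ fills in what the paper asserts in a single line.
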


\begin{proof}
Set $V^{i,N,\lambda}=(Y^{i,N,\lambda},\mathsf K^{i,N,\lambda},
(Z^{i,j,N,\lambda})_j,(U^{i,j,N,\lambda})_j)$ and
\[
\overline V^{i,\lambda}
=(\bar Y^{i,\lambda},\bar{\mathsf K}^{i,\lambda},
(\one_{\{i=j\}}\bar Z^{i,\lambda})_j,
(\one_{\{i=j\}}\bar U^{i,\lambda})_j).
\]
On this product space, $\|\cdot\|_{P,N}^2$ denotes the sum of the four
quadratic norms appearing in \eqref{eq:diagonal}. The identity
\[
V^{i,N,\lambda}-\overline V^{i,0}
=(V^{i,N,\lambda}-\overline V^{i,\lambda})
+(\overline V^{i,\lambda}-\overline V^{i,0})
\]
together with Theorems~\ref{thm:Bchaos} and~\ref{thm:Yconv}, yields
\begin{align*}
\sup_P\frac1N\sum_i
\|V^{i,N,\lambda_N}-\overline V^{i,0}\|_{P,N}^2
&\le2\sup_P\frac1N\sum_i
\|V^{i,N,\lambda_N}-\overline V^{i,\lambda_N}\|_{P,N}^2
+2\sup_P\frac1N\sum_i
\|\overline V^{i,\lambda_N}-\overline V^{i,0}\|_{P,N}^2\\
&\le C(1+\lambda_N^{-2})\tau_d(N)+2r(\lambda_N)
\longrightarrow0.
\end{align*}
Here $r(\lambda)$ is the uniform error from Theorem~\ref{thm:Yconv};
the equality in law of the copies preserves this bound on each product space.
\end{proof}

\begin{theorem}[Joint rate with jumps]\label{thm:joint}
Assume Hypotheses~\ref{HF}, \ref{HB}, \ref{HD}, \ref{HqJ},
\ref{Hprod}, \ref{HfacesJ}, and~\ref{HoccJ}. Let $\gamma$ be defined by
\eqref{eq:gamma} and
\begin{equation}
\lambda_N=\tau_d(N)^{1/(\gamma+2)}.
\label{eq:lambdaopt}
\end{equation}
Then, for $N$ sufficiently large,
\begin{align}
&\sup_{P\in\Pcal}\frac1N\sum_i
\E^{P^{\otimes N}}\Biggl[
\sup_{t\le T}|X_t^{i,N}-\bar X_t^i|^2
+\sup_{t\le T}|Y_t^{i,N,\lambda_N}-\bar Y_t^i|^2\nonumber\\
&\quad+\sup_{t\le T}|\mathsf K_t^{i,N,\lambda_N}
-\bar{\mathsf K}_t^i|^2
+\sum_j\int_0^T
|Z_t^{i,j,N,\lambda_N}
-\one_{\{i=j\}}\bar Z_t^i|_{a_t^{j,P}}^2dt\nonumber\\
&\quad+\sum_j\int_0^T
\|U_t^{i,j,N,\lambda_N}
-\one_{\{i=j\}}\bar U_t^i\|_\Pi^2dt
\Biggr]\le C\tau_d(N)^{\gamma/(\gamma+2)}.
\label{eq:joint}\\
&\sup_{P\in\Pcal}\sup_{t\le T}
\E^{P^{\otimes N}}\left[
\Wtwo^2(\mu_t^N,\mu_t^P)
+\Wtwo^2(\eta_t^{N,\lambda_N},\eta_t^P)
\right]\le C\tau_d(N)^{\gamma/(\gamma+2)}.
\label{eq:jointlaws}
\end{align}
\end{theorem}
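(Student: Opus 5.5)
The estimate will follow by the same triangle decomposition as in Theorem~\ref{thm:diagonal}, now made quantitative. For each $i$, write $V^{i,N,\lambda}-\overline V^{i,0}=(V^{i,N,\lambda}-\overline V^{i,\lambda})+(\overline V^{i,\lambda}-\overline V^{i,0})$ and bound the square of the sum by twice the sum of squares. The forward component requires no $\lambda$ at all. Theorem~\ref{thm:Fchaos} gives $C\tau_d(N)$, and this is at most $C\tau_d(N)^{\gamma/(\gamma+2)}$ because $\tau_d(N)\le1$ for $N$ large and $\gamma/(\gamma+2)<1$.

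\emph{Particle term.} Theorem~\ref{thm:Bchaos} with $\lambda=\lambda_N\in(0,1]$ bounds the first piece by $C(1+\lambda_N^{-2})\tau_d(N)$. The condition "$N$ sufficiently large" ensures $\lambda_N\le1$, so that $1+\lambda_N^{-2}\le2\lambda_N^{-2}$.

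\emph{Yosida term.} Proposition~\ref{prop:Yrate} applies under \ref{HfacesJ} and \ref{HoccJ}. It bounds the left-hand side of \eqref{eq:Yconv} by $C\lambda^\gamma$, uniformly in $P$, and this covers $Y$, $\mathsf K$, $Z$ and $U$ together. I transfer this to the product space through the equality in law of the $i$th copies. The diagonal representation $\bar Z^{i,j}=\one_{\{i=j\}}\bar Z^i$ (and likewise for $U$) reduces the sums over $j$ to the single diagonal term. The second piece is therefore bounded by $C\lambda_N^\gamma$.

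\emph{Balancing.} Adding the two pieces gives
\[
C\{\lambda_N^{-2}\tau_d(N)+\lambda_N^\gamma\}.
\]
With $\lambda_N=\tau_d(N)^{1/(\gamma+2)}$, both terms equal $\tau_d(N)^{\gamma/(\gamma+2)}$, which yields \eqref{eq:joint}.

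\emph{Law estimates.} For \eqref{eq:jointlaws}, the $\mu$ part comes directly from \eqref{eq:mulaw}. For the $\eta$ part, I use the triangle inequality
\[
\Wtwo(\eta_t^{N,\lambda_N},\eta_t^P)\le\Wtwo(\eta_t^{N,\lambda_N},\eta_t^{P,\lambda_N})+\Wtwo(\eta_t^{P,\lambda_N},\eta_t^P).
\]
The first term is handled by \eqref{eq:etalaw}. The second is at most $\E^P|Y_t^{\lambda_N,P}-Y_t^P|^2\le C\lambda_N^\gamma$ by the coupling inequality \eqref{eq:coupling}. The same balancing then gives the stated rate.

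\emph{Main obstacle.} The only delicate point is checking that every constant is uniform in $P$, $N$ and $\lambda$. Theorem~\ref{thm:Bchaos} displays its $\lambda$-dependence only through the factor $(1+\lambda^{-2})$, with the moment bounds of Lemma~\ref{lem:moments} uniform over $\lambda\in[0,1]$. Proposition~\ref{prop:Yrate} has a constant independent of $\lambda\le1$. Given these two facts, the argument is pure bookkeeping.
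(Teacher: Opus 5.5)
Your proposal is correct and follows the paper's proof of \eqref{eq:joint} essentially verbatim. It uses the same split into the particle error from Theorem~\ref{thm:Bchaos} and the Yosida error from Proposition~\ref{prop:Yrate}, followed by the same balancing at $\lambda_N=\tau_d(N)^{1/(\gamma+2)}$. The only difference is in the $\eta$-part of \eqref{eq:jointlaws}. The paper couples $\eta_t^{N,\lambda_N}$ directly with the empirical measure of the selected copies $\bar Y_t^i$, using \eqref{eq:joint} and the one-dimensional Fournier--Guillin bound $CN^{-1/2}$. You instead pass through $\eta_t^{P,\lambda_N}$, using \eqref{eq:etalaw} and the coupling bound $\Wtwo^2(\eta_t^{P,\lambda_N},\eta_t^P)\le C\lambda_N^\gamma$. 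Both routes give the same rate.
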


\begin{proof}
Theorem~\ref{thm:Bchaos} and Proposition~\ref{prop:Yrate} give, for
$0<\lambda\le1$,
\begin{align}
\mathcal E_{N,\lambda}
\le C\left\{(1+\lambda^{-2})\tau_d(N)+\lambda^\gamma\right\},
\label{eq:preopt}
\end{align}
where $\mathcal E_{N,\lambda}$ denotes the sum of the errors in
$Y,\mathsf K,Z$, and $U$ appearing in \eqref{eq:joint}. Indeed, the
differences between the regularized copy and the selected copy are bounded by
$C\lambda^\gamma$, including the norm of $\bar U^{i,\lambda}-\bar U^i$.

With \eqref{eq:lambdaopt},
\[
\lambda_N^\gamma
=\tau_d(N)^{\gamma/(\gamma+2)},\qquad
\lambda_N^{-2}\tau_d(N)
=\tau_d(N)^{\gamma/(\gamma+2)}.
\]
Since $0<\gamma/(\gamma+2)<1$, the term $\tau_d(N)$ is dominated by the
same order for $N$ sufficiently large. The forward term is controlled by
\eqref{eq:Fchaos}, which yields \eqref{eq:joint}.

The first law estimate in \eqref{eq:jointlaws} follows from
\eqref{eq:mulaw}. For the second, set
$\bar\eta_t^N=N^{-1}\sum_i\delta_{\bar Y_t^i}$. The atomic coupling gives
\[
\Wtwo^2(\eta_t^{N,\lambda_N},\eta_t^P)
\le\frac2N\sum_i|Y_t^{i,N,\lambda_N}-\bar Y_t^i|^2
+2\Wtwo^2(\bar\eta_t^N,\eta_t^P).
\]
The first term is controlled by \eqref{eq:joint}. The second is at most
$CN^{-1/2}$ by \eqref{eq:Ymom} and the Fournier--Guillin estimate in dimension
one. Moreover,
$N^{-1/2}\le\tau_d(N)\le\tau_d(N)^{\gamma/(\gamma+2)}$ for all sufficiently
large $N$. This proves \eqref{eq:jointlaws}.
\end{proof}

\begin{corollary}[Stability of the particle approximation]
\label{cor:stablechaos}
Let $(\mathfrak D^n)_n$ be a sequence of data satisfying Hypotheses
~\ref{HF}, \ref{HB}, \ref{HD}, \ref{HqJ}, and~\ref{Hprod} uniformly. Fix
$\lambda>0$ and a sequence $N_n\to\infty$. Denote by
\[
\mathbf S^{i,N_n,n,\lambda}
:=\bigl(X^{i,N_n,n},Y^{i,N_n,n,\lambda},Z^{i,\cdot,N_n,n,\lambda},
U^{i,\cdot,N_n,n,\lambda},\mathsf K^{i,N_n,n,\lambda}\bigr)
\]
the particle system associated with $\mathfrak D^n$, and by
\[
\bar{\mathbf S}^{i,\lambda}
:=\bigl(\bar X^i,\bar Y^{i,\lambda},\bar Z^{i,\lambda},
\bar U^{i,\lambda},\bar{\mathsf K}^{i,\lambda}\bigr)
\]
independent copies of the limiting solution associated with $\mathfrak D$.
Define explicitly the normalized error
\begin{align*}
\mathcal Q_{n,N_n}^\lambda
:=\sup_{P\in\Pcal}\frac1{N_n}\sum_{i=1}^{N_n}
\E^{P^{\otimes N_n}}\Biggl[&
\sup_{t\le T}|X_t^{i,N_n,n}-\bar X_t^i|^2
+\sup_{t\le T}|Y_t^{i,N_n,n,\lambda}-\bar Y_t^{i,\lambda}|^2\\
&+\sup_{t\le T}|\mathsf K_t^{i,N_n,n,\lambda}
-\bar{\mathsf K}_t^{i,\lambda}|^2\\
&+\sum_{j=1}^{N_n}\int_0^T
|Z_t^{i,j,N_n,n,\lambda}
-\one_{\{i=j\}}\bar Z_t^{i,\lambda}|_{a_t^{j,P}}^2dt\\
&+\sum_{j=1}^{N_n}\int_0^T
\|U_t^{i,j,N_n,n,\lambda}
-\one_{\{i=j\}}\bar U_t^{i,\lambda}\|_\Pi^2dt\Biggr].
\end{align*}
Then
\begin{align}
\mathcal Q_{n,N_n}^\lambda
\le C\Bigl[&(1+\lambda^{-2})\tau_d(N_n)
+(1+\lambda^{-2})\sup_{P\in\Pcal}\mathfrak R_F^n(P)
+\sup_{P\in\Pcal}\mathfrak R_B^{n,\lambda}(P)\Bigr].
\label{eq:stablechaos}
\end{align}
In particular, if both residuals converge uniformly to zero, then
$\mathcal Q_{n,N_n}^\lambda\to0$.
\end{corollary}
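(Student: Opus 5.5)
The plan is to interpolate between the particle system of $\mathfrak D^n$ and the copies of the limit of $\mathfrak D$ through independent copies of the limiting solution of $\mathfrak D^n$. On the $i$th coordinate of the product space, let $\bar{\mathbf S}^{i,n,\lambda}=(\bar X^{i,n},\bar Y^{i,n,\lambda},\bar Z^{i,n,\lambda},\bar U^{i,n,\lambda},\bar{\mathsf K}^{i,n,\lambda})$ be the regularized McKean--Vlasov solution for the data $\mathfrak D^n$, driven by $(\xi^{i,n},B^i,N^i,\alpha^{i,n})$. In the product filtration these copies are represented with diagonal integrands $\one_{\{i=j\}}\bar Z^{i,n,\lambda}$ and $\one_{\{i=j\}}\bar U^{i,n,\lambda}$. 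Each of the five squared norms in $\mathcal Q^\lambda_{n,N_n}$ is split by $(a+b)^2\le 2a^2+2b^2$ into a particle-versus-copy error at fixed data $\mathfrak D^n$ and a copy-versus-copy error between $\mathfrak D^n$ and $\mathfrak D$.

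\emph{Step 1 (particle part).} Apply Theorems~\ref{thm:Fchaos} and~\ref{thm:Bchaos} to the data $\mathfrak D^n$ with $N=N_n$. Their constants depend only on the structural constants and on the uniform $q$-moments. Since the sequence satisfies \ref{HF}, \ref{HB}, \ref{HD}, \ref{HqJ}, \ref{Hprod} uniformly in $n$, these constants are uniform in $n$. This step bounds the first part of every term by $C(1+\lambda^{-2})\tau_d(N_n)$, uniformly in $P$ and $n$. The moment bounds of Lemma~\ref{lem:moments} are likewise uniform in $n$, which I will state explicitly.

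\emph{Step 2 (data part).} Each pair $(\bar{\mathbf S}^{i,n,\lambda},\bar{\mathbf S}^{i,\lambda})$ is a copy, on coordinate $i$, of the pair of limiting solutions for $\mathfrak D^n$ and $\mathfrak D$ under the single model $P$, with the same integrators. The pair is therefore equal in law to its one-coordinate version, and Theorem~\ref{thm:datastability} applies to it, bounding the average over $i$ by $C\{(1+\lambda^{-2})\mathfrak R_F^n(P)+\mathfrak R_B^{n,\lambda}(P)\}$. The off-diagonal components $j\ne i$ vanish for both copies, so the sums over $j$ reduce to the diagonal terms. The $Z$ norms use $a^{j,P}=a^P(\omega^j)$, which agrees with the one-model norm on coordinate $j=i$. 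Taking $\sup_P$ and combining with Step 1 gives \eqref{eq:stablechaos}. If both residuals tend to zero uniformly, then $\tau_d(N_n)\to0$ and $\lambda$ is fixed, so $\mathcal Q^\lambda_{n,N_n}\to0$.

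\emph{Main obstacle.} The only delicate point is the uniformity in $n$ of the constants in Step 1, in particular the Fournier--Guillin bound for $\bar\mu^{N}_s$ and $\bar\eta^{N,\lambda}_s$. I would handle it by checking that the constants of Theorems~\ref{thm:Fwell}, \ref{thm:Fchaos}, \ref{thm:Bchaos} and of Lemma~\ref{lem:moments} depend only on the listed structural constants and on $\sup_P\E|\xi|^q$ and $\sup_P\E\int|\alpha|^q$, which are assumed bounded along the sequence. A secondary check is that the forward part of Theorem~\ref{thm:datastability} carries no $\lambda^{-2}$ factor, which is consistent with the displayed bound.
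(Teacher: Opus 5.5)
Your proposal is correct and follows essentially the same route as the paper. You insert copies of the $\mathfrak D^n$-limiting solution driven by the same individual noises, and split each norm by $(a+b)^2\le 2a^2+2b^2$. You then bound the particle part by Theorems~\ref{thm:Fchaos} and~\ref{thm:Bchaos}, with constants uniform in $n$, and the data part by Theorem~\ref{thm:datastability} applied coordinatewise.
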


\begin{proof}
For each $n$, construct, using the same individual noises, copies
$(\bar X^{i,n},\bar Y^{i,n,\lambda},\bar Z^{i,n,\lambda},
\bar U^{i,n,\lambda},\bar{\mathsf K}^{i,n,\lambda})$ of the limiting solution associated with $\mathfrak D^n$. For the forward
component,
\begin{align*}
\frac1{N_n}\sum_i\E\sup_{t\le T}
|X_t^{i,N_n,n}-\bar X_t^i|^2
&\le\frac2{N_n}\sum_i\E\sup_{t\le T}
|X_t^{i,N_n,n}-\bar X_t^{i,n}|^2\\
&\quad+\frac2{N_n}\sum_i\E\sup_{t\le T}
|\bar X_t^{i,n}-\bar X_t^i|^2.
\end{align*}
The first term is bounded by $C\tau_d(N_n)$ by
Theorem~\ref{thm:Fchaos}, uniformly in $n$ because the constants and the
moments of order $q$ are uniform. The second is bounded by
$C\sup_P\mathfrak R_F^n(P)$ by \eqref{eq:dataXstab}.

Apply the same decomposition to the components $Y$, $\mathsf K$, $Z$, and
$U$. For $Z$, for example,
\begin{align*}
&\frac1{N_n}\sum_{i,j}\E\int_0^T
|Z_t^{i,j,N_n,n,\lambda}-\one_{\{i=j\}}\bar Z_t^{i,\lambda}
|_{a_t^{j,P}}^2dt\\
&\quad\le\frac2{N_n}\sum_{i,j}\E\int_0^T
|Z_t^{i,j,N_n,n,\lambda}-\one_{\{i=j\}}
\bar Z_t^{i,n,\lambda}|_{a_t^{j,P}}^2dt\\
&\qquad+\frac2{N_n}\sum_i\E\int_0^T
|\bar Z_t^{i,n,\lambda}-\bar Z_t^{i,\lambda}|_{a_t^{i,P}}^2dt.
\end{align*}
Theorem~\ref{thm:Bchaos} controls the first difference by
$C(1+\lambda^{-2})\tau_d(N_n)$. Theorem~\ref{thm:datastability} controls the
second by
\[
C\left\{(1+\lambda^{-2})\sup_P\mathfrak R_F^n(P)
+\sup_P\mathfrak R_B^{n,\lambda}(P)\right\}.
\]
The calculations for $Y$, $U$, and $\mathsf K$ are identical in their
respective norms. Summing them gives \eqref{eq:stablechaos}.
\end{proof}

This corollary is a stability result for propagation of chaos in a setting
with fixed integrators. It does not include stability under convergence of
filtrations or driving martingales established in the preprint by
Papapantoleon, Saplaouras and Theodorakopoulos (arXiv:2506.03562).

Substituting \eqref{eq:tau} into \eqref{eq:joint} gives
$N^{-\gamma/[2(\gamma+2)]}$ for $d<4$,
$\{N^{-1/2}\log(1+N)\}^{\gamma/(\gamma+2)}$ for $d=4$, and
$N^{-2\gamma/[d(\gamma+2)]}$ for $d>4$; under the ALA density criterion,
$\theta=1$ and $\gamma=\delta/(3+\delta)$.

\section{Occupation-based direct propagation and cumulative-source estimates}
\label{sec:direct}

In this section, the operator is the prototype
\eqref{eq:ALAphi}--\eqref{eq:ALAK}. Assumptions
\ref{HF}, \ref{HB}, \ref{HD}, \ref{HqJ} and \ref{Hprod} remain in force.
The estimates concern $A^0$ directly; no Yosida parameter enters the
definition of the selected particle system.

\begin{hypothesis}[Occupation for the direct comparison]\label{HoccD}
There exist $\theta>0$ and $C_{\rm occ}<\infty$ such that
\[
\sup_{P\in\Pcal}\E^P\int_0^T
\one_{\{\operatorname{dist}(X_{t-}^P,\Sigma)\le r\}}dt
\le C_{\rm occ}r^\theta,\qquad 0<r\le1,
\]
where $\Sigma$ is defined in \eqref{eq:Sigma}.
\end{hypothesis}
This assumption is only the occupation clause of \ref{HoccJ}. The stronger
moments in \ref{HqJ} are used here for the empirical rate, not to truncate
the discontinuous part of the source.

\subsection{Comparison of the sources without regularization}

\begin{lemma}[Direct estimate by separation of the interfaces]
\label{lem:directsource}
Write
\[
A^0(x,\mu)=\alpha_Ax+F_\mu(x)+s(x,\mu),\qquad
M_s=\sqrt d\max\{\kappa_+,\kappa_-\}.
\]
Then $|s(x,\mu)|\le M_s$. For all $x,x',\mu,\mu'$ and $r>0$,
\begin{align}
|A^0(x',\mu')-A^0(x,\mu)|^2
\le C\bigl(&|x'-x|^2+\Wtwo^2(\mu',\mu)\nonumber\\
&+\one_{\{\operatorname{dist}(x,\Sigma)\le r\}}
+\one_{\{|x'-x|\ge r\}}\bigr).
\label{eq:pointdirect}
\end{align}
In particular, set
\[
D_{A,N}:=\sup_P\frac1N\sum_i\E^{P^{\otimes N}}\int_0^T
|A^0(X_{t-}^{i,N},\mu_{t-}^N)
-A^0(\bar X_{t-}^i,\mu_{t-}^P)|^2dt.
\]
Under \ref{HoccD},
\begin{equation}
D_{A,N}\le C\tau_d(N)^{\theta/(\theta+2)}.
\label{eq:directsource}
\end{equation}
\end{lemma}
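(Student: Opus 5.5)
The proof has two parts: a deterministic pointwise inequality \eqref{eq:pointdirect}, and then its integration along the coupled particle and limiting systems, followed by an optimization in $r$.

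\emph{Pointwise bound.} By \eqref{eq:A0explicit}, each coordinate of $s(x,\mu)$ is $\kappa_+$, $-\kappa_-$, or a projection onto $[-\kappa_-,\kappa_+]$. Hence $|s_k|\le\max\{\kappa_+,\kappa_-\}$ and $|s|\le M_s$.

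The smooth part $\alpha_Ax+F_\mu(x)$ is Lipschitz in $(x,\mu)$, by \eqref{eq:Klip} and the argument of Proposition~\ref{prop:Yosida}. This gives
\[
|F_{\mu'}(x')-F_\mu(x)|\le L_K(|x'-x|+\Wtwo(\mu',\mu)),
\]
and this term is responsible for the first two contributions in \eqref{eq:pointdirect}.

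For the switching part, we split into cases.
\begin{itemize}
\item If $\operatorname{dist}(x,\Sigma)>r$ and $|x'-x|<r$, then $x'$ lies in the open ball $B(x,r)$, which does not meet $\Sigma$. So $x$ and $x'$ lie in the same open orthant, with no coordinate vanishing. In that orthant $s$ is the constant vector with entries $\kappa_+$ or $-\kappa_-$, independently of $\mu$, so $s(x',\mu')=s(x,\mu)$.
\item Otherwise, one of the two indicators in \eqref{eq:pointdirect} equals $1$, and we use the crude bound $|s(x',\mu')-s(x,\mu)|^2\le4M_s^2$.
\end{itemize}
Combining the two parts with $(a+b)^2\le2a^2+2b^2$ yields \eqref{eq:pointdirect}.

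\emph{Integration.} Apply \eqref{eq:pointdirect} with $x=\bar X_{t-}^i$, $\mu=\mu_{t-}^P$, $x'=X_{t-}^{i,N}$, $\mu'=\mu_{t-}^N$, then average over $i$ and take expectations. This produces four terms.
\begin{itemize}
\item The Lipschitz terms are bounded by $C\tau_d(N)$, using \eqref{eq:Fchaos} and \eqref{eq:mulaw} (with $\mu_{t-}=\mu_t$ for a.e.\ $t$, by Lemma~\ref{lem:lawcontinuous}).
\item The occupation term involves only the limiting copy $\bar X^i$, which has law $\Lcal^P(X^P)$. By \ref{HoccD} it is at most $C_{\rm occ}r^\theta$; no particle occupation estimate is needed.
\item The bad-coupling term is handled by Markov's inequality:
\[
\E\int_0^T\one_{\{|\Delta X_{t-}^i|\ge r\}}dt\le r^{-2}\E\int_0^T|\Delta X_t^i|^2dt,
\]
which after averaging and \eqref{eq:Fchaos} is at most $Cr^{-2}\tau_d(N)$.
\end{itemize}
Altogether,
\[
D_{A,N}\le C\{\tau_d(N)+r^\theta+r^{-2}\tau_d(N)\},\qquad 0<r\le1,
\]
with $C$ uniform in $P$ and $N$.

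\emph{Optimization.} Choose $r=\tau_d(N)^{1/(\theta+2)}$, which is at most $1$ when $\tau_d(N)\le1$; this balances $r^\theta$ against $r^{-2}\tau_d(N)$. Both then equal $\tau_d(N)^{\theta/(\theta+2)}$, and $\tau_d(N)$ is dominated by this quantity. For the finitely many $N$ with $\tau_d(N)>1$, the bound follows from the uniform moment bounds and the growth estimate \eqref{eq:A0growth}, after enlarging $C$.

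\emph{Main obstacle.} The only delicate step is the orthant argument in the pointwise bound. One must check that the minimal-norm selection on an open orthant really is constant in $(x,\mu)$, so that the $\mu$-dependence enters only through $F_\mu$. This follows from \eqref{eq:A0explicit}, because the projection branch is active only when $x_k=0$, and that never happens inside an open orthant.
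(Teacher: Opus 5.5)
Your proposal is correct and follows essentially the same route as the paper's proof. Both arguments split off the bounded switching part $s$, use the orthant argument to get $s(x',\mu')=s(x,\mu)$ off the two exceptional events, and then integrate with occupation of the limiting copies $\bar X^i$ together with Markov's inequality on $\{|X_{t-}^{i,N}-\bar X_{t-}^i|\ge r\}$. The final step is likewise the same: choose $r=\tau_d(N)^{1/(\theta+2)}$.
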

\begin{proof}
\emph{Step 1: bounded discontinuous part.}
Formula \eqref{eq:A0explicit} places each coordinate of $s$ in
$[-\kappa_-,\kappa_+]$, including when $x_k=0$. Away from $\Sigma$, $s$
depends only on the signs of the coordinates. If
$\operatorname{dist}(x,\Sigma)>r$ and $|x'-x|<r$, the ball $B(x,r)$ is
contained in an orthant; hence $s(x',\mu')=s(x,\mu)$. On the complement,
$|s(x',\mu')-s(x,\mu)|\le2M_s$. Using
\[
|\alpha_A(x'-x)+F_{\mu'}(x')-F_\mu(x)|
\le(\alpha_A+L_K)|x'-x|+L_K\Wtwo(\mu',\mu),
\]
and then $|v+w|^2\le2|v|^2+2|w|^2$, we obtain
\eqref{eq:pointdirect}, with a constant depending only on
$d,\alpha_A,L_K,\kappa_+,\kappa_-$.

\emph{Step 2: integration on the product space.}
Apply \eqref{eq:pointdirect} to
$(x',\mu';x,\mu)=(X_{t-}^{i,N},\mu_{t-}^N;\bar X_{t-}^i,\mu_{t-}^P)$.
Each $\bar X^i$ has the law of the limiting forward process under $P$;
thus \ref{HoccD} controls the mean of the events near the interfaces.
Markov's inequality gives
\[
\frac1N\sum_i\E^{P^{\otimes N}}\int_0^T
\one_{\{|X_{t-}^{i,N}-\bar X_{t-}^i|\ge r\}}dt
\le r^{-2}\frac1N\sum_i\E^{P^{\otimes N}}\int_0^T
|X_{t-}^{i,N}-\bar X_{t-}^i|^2dt.
\]
Theorem~\ref{thm:Fchaos} therefore yields
\begin{equation}
D_{A,N}\le C\{\tau_d(N)+r^\theta+\tau_d(N)r^{-2}\},
\qquad 0<r\le1.
\label{eq:directbalance}
\end{equation}
Left-limit evaluations do not change the time integrals: every càdlàg
trajectory has at most a countable set of discontinuities. Tonelli's
theorem justifies this substitution under expectations.

\emph{Step 3: choice of the width.}
Whenever $\tau_d(N)\le1$, choose
$r=\tau_d(N)^{1/(\theta+2)}$. The last two terms in
\eqref{eq:directbalance} are equal to
$\tau_d(N)^{\theta/(\theta+2)}$; the first term is no larger than this
quantity. Any remaining small values of $N$ are absorbed into $C$ by the
quadratic moment bounds. This proves \eqref{eq:directsource}.
\end{proof}

\subsection{Selected system and full martingale representation}

With the forward system \eqref{eq:particleF}, define
\begin{align}
\mathsf K_t^{i,N,0}
&=\int_0^t\langle\varrho,A^0(X_{s-}^{i,N},\mu_{s-}^N)\rangle ds,
\qquad \eta_t^{N,0}=\frac1N\sum_i\delta_{Y_t^{i,N,0}},
\label{eq:directK}\\
Y_t^{i,N,0}
&=g(X_T^{i,N},\mu_T^N)
+\int_t^T f(s,X_{s-}^{i,N},\mu_{s-}^N,Y_s^{i,N,0},
Z_s^{i,i,N,0},U_s^{i,i,N,0},\eta_s^{N,0},\alpha_s^i)ds\nonumber\\
&\quad+\mathsf K_T^{i,N,0}-\mathsf K_t^{i,N,0}
-\sum_j\int_t^TZ_s^{i,j,N,0}dB_s^j
-\sum_j\int_t^T\int_EU_s^{i,j,N,0}(e)\widetilde N^j(ds,de).
\label{eq:directB}
\end{align}
The superscript $0$ denotes the minimal selection and not a formal
substitution into an estimate containing $\lambda^{-1}$.

\begin{proposition}[Well-posedness of the selected particle system]
\label{prop:directwell}
Under Assumptions~\ref{HF}, \ref{HB}, \ref{HD}, and~\ref{Hprod}, for the
ALA prototype the system \eqref{eq:directK}--\eqref{eq:directB} has a
unique solution in the quadratic product spaces.  Neither the occupation
assumption \ref{HoccD} nor the higher-moment assumption \ref{HqJ} is
needed for this well-posedness statement.
\end{proposition}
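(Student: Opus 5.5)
The argument follows the triangular structure used in Theorem~\ref{thm:well} and Proposition~\ref{prop:particlewell}: solve the forward particle system first, then the backward system with an exogenous source.

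\emph{Forward step.} The forward particle system \eqref{eq:particleF} does not involve $A^0$, so it coincides with the forward part of Proposition~\ref{prop:particlewell}. Under \ref{HF}, \ref{HD} and \ref{Hprod}, Picard iteration on the product space gives a unique solution $(X^{i,N})_i$ in the quadratic product space. The Lipschitz constant on the product space comes from the empirical coupling $\Wtwo^2(\mu^N(x),\mu^N(\bar x))\le N^{-1}\sum_j|x_j-\bar x_j|^2$, and the iteration is closed with Gronwall via \eqref{eq:GronwallExplicit}.

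\emph{Source step.} Along this forward solution, define the source $h^i_t=\langle\varrho,A^0(X^{i,N}_{t-},\mu^N_{t-})\rangle$. Three points need checking.
\begin{itemize}
\item Predictability: $A^0$ is Borel on $\R^d\times\mathcal P_2$, as the pointwise limit of $A^{1/n}$ after Proposition~\ref{prop:Yosida}. The map $x\mapsto\mu^N(x)$ is continuous into $(\mathcal P_2,\Wtwo)$, and $X_-$ is predictable, so $h^i$ is predictable.
\item Square integrability: for the ALA prototype, $|A^0|\le C_A(1+|x|)$ with the measure term omitted. The quadratic particle moments then give $\E\int_0^T|h^i_t|^2dt<\infty$.
\item Continuity of $\mathsf K^{i,N,0}$: it is absolutely continuous and lies in $\mathbb S^2$ by Cauchy--Schwarz.
\end{itemize}
No occupation estimate is needed: \ref{HoccD} enters only when comparing sources along different forward processes.

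\emph{Backward step.} Conditionally on the forward system, $h^i$ is a fixed exogenous source. It cancels from generator differences, exactly as noted after \eqref{eq:exogenousgenerator}. The map $\Psi_N$ sends an input vector $(y^i,z^{i,j},u^{i,j})$ to the output obtained from conditional expectations and the product martingale representation of \ref{Hprod}. It is the same map as in Proposition~\ref{prop:particlewell}, with $A^\lambda$ replaced by $A^0$ in the source.

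The weighted energy identity from Step~4 of Theorem~\ref{thm:Bwell} gives $\|\Psi_N(v)-\Psi_N(\bar v)\|_\gamma^2\le CL_B^2\gamma^{-1}\|v-\bar v\|_\gamma^2$. Its justification uses the jump Itô formula in positive form, $H^1$-bounds for the product martingales, and orthogonality across coordinates. The empirical backward law is controlled by the atomic coupling $\Wtwo^2(\eta^N(y),\eta^N(\bar y))\le N^{-1}\sum_j|y_j-\bar y_j|^2$. This term is absorbed after averaging over $i$, which is why the contraction should be set up in the averaged norm $N^{-1}\sum_i$.

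Choosing $\gamma$ large yields a contraction, and Banach's theorem gives a fixed point. The c\`adl\`ag solution is identified from it as in Step~5 of Theorem~\ref{thm:Bwell}. Uniqueness follows because any other quadratic solution is a fixed point of $\Psi_N$; this gives indistinguishability of $Y$ and energy-norm uniqueness of the full matrices $Z^{i,j},U^{i,j}$. Uniqueness of $\mathsf K^{i,N,0}$ then follows from forward uniqueness.

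\emph{Main obstacle.} The only genuinely new point relative to Proposition~\ref{prop:particlewell} is the discontinuity of $A^0$. It affects only the measurability and integrability of the source, never the contraction, which is why the second step needs care. I would stress that the Lipschitz bound \eqref{eq:AYlip} is never invoked, so the argument involves no $\lambda^{-1}$ factor and nothing degenerates at $\lambda=0$.
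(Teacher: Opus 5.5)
Your proposal is correct and follows the paper's proof essentially step by step. You fix the forward particle system, check that the selected source $h^i$ is predictable and square integrable (so neither occupation nor order-$q$ moments enter), and then run the contraction in the averaged weighted norm. In that contraction the sources and terminal data cancel from differences, and the empirical backward law is absorbed by the atomic coupling.
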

\begin{proof}
Fix $P,N$ and the already constructed forward system. The source
$h_t^i=\langle\varrho,A^0(X_{t-}^{i,N},\mu_{t-}^N)\rangle$
is predictable by measurability of $A^0$ and satisfies
\[
\frac1N\sum_i\E\int_0^T|h_t^i|^2dt
\le C\left(1+\frac1N\sum_i\E\sup_{t\le T}|X_t^{i,N}|^2\right)<\infty.
\]
For an input $(y,z,u)$ in the complete space endowed with the norm
$\|\cdot\|_{\gamma,N}$ defined in the proof of
Proposition~\ref{prop:particlewell}, set
\[
G_t^i=f(t,X_{t-}^{i,N},\mu_{t-}^N,y_t^i,z_t^{i,i},u_t^{i,i},
N^{-1}\textstyle\sum_j\delta_{y_t^j},\alpha_t^i)+h_t^i,
\quad \zeta^i=g(X_T^{i,N},\mu_T^N).
\]
The generator $G^i$ belongs to $L^2(dt\otimes P^{\otimes N})$.
By \ref{Hprod}, the martingale
$M_t^i=\E[\zeta^i+\int_0^TG_s^ids\mid\mathcal F_t^{P,\otimes N}]$
has the full integrands $(Z^{i,j},U^{i,j})_j$. The output is
$Y_t^i=M_t^i-\int_0^tG_s^ids$. Doob's inequality, the isometries and
Cauchy--Schwarz ensure that it belongs to the quadratic spaces.

For two inputs, the sources $h^i$ and terminal data cancel. The empirical
natural empirical coupling and \ref{HB} give
\[
\frac1N\sum_i|\Delta G_t^i|^2\le C
\left(|\Delta y_t|_N^2+
\frac1N\sum_{i,j}|\Delta z_t^{i,j}|_{a_t^{j,P}}^2+
\frac1N\sum_{i,j}\|\Delta u_t^{i,j}\|_\Pi^2\right).
\]
Itô's formula for $e^{\gamma t}|\Delta Y_t^i|^2$, followed by
$2|\Delta Y^i\Delta G^i|\le\gamma|\Delta Y^i|^2/2+
2|\Delta G^i|^2/\gamma$, gives, for $\gamma\ge2$,
\[
\|\Delta Y,\Delta Z,\Delta U\|_{\gamma,N}^2
\le\frac C\gamma\|\Delta y,\Delta z,\Delta u\|_{\gamma,N}^2.
\]
Choosing $\gamma>\max\{2,2C\}$ yields a unique fixed point. Convergence of
the integrands takes place in the quadratic norms: the isometries and
Doob's inequality make the stochastic integrals converge in
$\mathbb S^2$; Lipschitz continuity of $f$ makes the drift integrals
converge in the same space. The limit therefore satisfies
\eqref{eq:directB}. Uniqueness and invariance of the data under
permutations give exchangeability.
\end{proof}

Let $(\bar Y^i,\bar Z^i,\bar U^i,\bar{\mathsf K}^i)$ denote independent
copies of the selected limiting solution, constructed with
$(\xi^i,B^i,N^i,\alpha^i)$. Set
\begin{align*}
\Delta Y^i&=Y^{i,N,0}-\bar Y^i,
&\Delta\mathsf K^i&=\mathsf K^{i,N,0}-\bar{\mathsf K}^i,\\
\Delta Z^{i,j}&=Z^{i,j,N,0}-\one_{\{i=j\}}\bar Z^i,
&\Delta U^{i,j}&=U^{i,j,N,0}-\one_{\{i=j\}}\bar U^i.
\end{align*}
We use the quadratic error
\begin{align}
\mathcal D_N:=\sup_P\frac1N\sum_i\E^{P^{\otimes N}}\Bigl[
&\sup_{t\le T}|X_t^{i,N}-\bar X_t^i|^2
+\sup_{t\le T}|\Delta Y_t^i|^2
+\sup_{t\le T}|\Delta\mathsf K_t^i|^2\nonumber\\
&+\sum_j\int_0^T\{
|\Delta Z_t^{i,j}|_{a_t^{j,P}}^2+
\|\Delta U_t^{i,j}\|_\Pi^2\}dt\Bigr].
\label{eq:directerror}
\end{align}

\begin{theorem}[Direct propagation for the selected ALA source]
\label{thm:directchaos}
Under Assumptions~\ref{HF}, \ref{HB}, \ref{HD}, \ref{HqJ},
\ref{Hprod}, and~\ref{HoccD}, for the ALA prototype,
\begin{align}
\mathcal D_N&\le C\tau_d(N)^{\theta/(\theta+2)},
\label{eq:directchaos}\\
\sup_{P,t}\E^{P^{\otimes N}}\Wtwo^2(\eta_t^{N,0},\eta_t^P)
&\le C\tau_d(N)^{\theta/(\theta+2)}.
\label{eq:directlaw}
\end{align}
The constant is independent of $P,N$ and of any regularization parameter.
No uniform occupation estimate for the particles $X^{i,N}$ is required.
\end{theorem}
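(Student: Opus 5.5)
The plan is to repeat the backward energy argument of Theorem~\ref{thm:Bchaos}, with the selected source $A^0$ in place of $A^\lambda$. The only change is the source difference. We never use the Lipschitz bound \eqref{eq:AYlip}; instead we feed in the integrated estimate $D_{A,N}$ from Lemma~\ref{lem:directsource}. The forward part of $\mathcal D_N$ is already bounded by $C\tau_d(N)\le C\tau_d(N)^{\theta/(\theta+2)}$ through Theorem~\ref{thm:Fchaos}, since $\theta/(\theta+2)<1$ and $\tau_d(N)$ is bounded. So everything reduces to the backward components.

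Fix $P$ and write the difference equation for $\Delta Y^i$. Its terminal difference $\Delta\zeta^i$ is bounded by \eqref{eq:terminal}. Its driver difference is $\Delta f^i+\langle\varrho,\Delta A^i\rangle$, where
\[
\Delta A^i_t=A^0(X^{i,N}_{t-},\mu^N_{t-})-A^0(\bar X^i_{t-},\mu^P_{t-}).
\]
We bound $\Delta f^i$ exactly as in \eqref{eq:Fdiff}, omitting the last line, which carried the $\lambda^{-1}$ factor. We then apply It\^o's formula with jumps to $e^{\gamma t}|\Delta Y^i_t|^2$, using the same integrability justifications as in Lemma~\ref{lem:Bstab}: product martingales in $H^1$, and the positive jump quadratic term kept against $N^j$. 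Young's inequality absorbs a quarter of the diagonal $Z,U$ energy, and the term $2\Delta Y^i\langle\varrho,\Delta A^i\rangle$ is bounded by $|\Delta Y^i|^2+|\varrho|^2|\Delta A^i|^2$. Averaging over $i$ gives the analogue of \eqref{eq:BenergyN}, with $C(1+\lambda^{-2})\E\int(|\Delta X^i|^2+W_2^2)$ replaced by
\[
C\,\E\int\{|\Delta X^i|^2+\Wtwo^2(\mu^N,\mu^P)\}+C\,\frac1N\sum_i\E\int_0^T|\Delta A^i_t|^2dt .
\]
The last average is at most $D_{A,N}\le C\tau_d(N)^{\theta/(\theta+2)}$ by \eqref{eq:directsource}.

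For the backward law term we use \eqref{eq:WdecompY} with the selected copies $\bar Y^i$. Their $q$-moments are uniform by \eqref{eq:Ymom} at $\lambda=0$, so Fournier--Guillin in dimension one gives $CN^{-1/2}\le C\tau_d(N)$. Choosing $\gamma$ large absorbs $2C\int D_Y$, and we obtain
\[
\sup_tD_Y(t)+\text{(full $Z,U$ energy)}\le C\{\tau_d(N)+\tau_d(N)^{\theta/(\theta+2)}\}.
\]
The supremum in time is then upgraded exactly as at the end of the proof of Theorem~\ref{thm:Bchaos}, via Doob's inequality on the orthogonal sums of martingales and the integral form of $\Delta Y^i$. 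The driver there is again controlled by \eqref{eq:Fdiff} without its $\lambda^{-1}$ line, together with $D_{A,N}$. For $\mathsf K$, Cauchy--Schwarz gives $\sup_t|\Delta\mathsf K^i_t|^2\le T|\varrho|^2\int_0^T|\Delta A^i|^2$, which after averaging is again bounded by $D_{A,N}$. Finally, \eqref{eq:directlaw} follows from \eqref{eq:WdecompY}, the bound on $\sup_tD_Y$, and $N^{-1/2}\le\tau_d(N)$.

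The main obstacle is already isolated in Lemma~\ref{lem:directsource}: replacing the unavailable Lipschitz estimate by the occupation of the \emph{limiting} copies $\bar X^i$ together with the Markov bound on the bad-coupling event. The remaining delicate point is to check that the source enters the energy estimate only through $\frac1N\sum_i\E\int|\Delta A^i|^2$, never pointwise, so that no occupation bound for the particles is needed. Since \eqref{eq:directsource} is uniform in $P$ and contains no regularization parameter, the constant in \eqref{eq:directchaos} is independent of $P$, $N$ and $\lambda$. Small $N$ with $\tau_d(N)>1$ are absorbed into $C$ using the quadratic moment bounds of Lemma~\ref{lem:moments} and Theorem~\ref{thm:well}.
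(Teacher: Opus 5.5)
Your proposal is correct and follows essentially the same route as the paper's proof. The paper also replaces the Lipschitz bound for $A^\lambda$ by the integrated source estimate $D_{A,N}$ of Lemma~\ref{lem:directsource} and runs the jump It\^o energy identity for $e^{\gamma t}|\Delta Y^i_t|^2$ with all noises. It then absorbs the backward law term through the atomic coupling, \eqref{eq:Ymom} at $\lambda=0$ and the one-dimensional Fournier--Guillin bound $CN^{-1/2}\le C\tau_d(N)$, upgrades to the time supremum by Doob's inequality, and bounds $\mathsf K$ by Cauchy--Schwarz.
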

\begin{proof}
\emph{Step 1: integral differences and residuals.}
Fix $P$; all expectations below are under $P^{\otimes N}$. Set
$\Delta A_t^i=A^0(X_{t-}^{i,N},\mu_{t-}^N)
-A^0(\bar X_{t-}^i,\mu_{t-}^P)$,
$\Delta\zeta^i=g(X_T^{i,N},\mu_T^N)-g(\bar X_T^i,\mu_T^P)$,
and let $\Delta f^i$ denote the difference of the two generators $f$.
With $G^i=\Delta f^i+\langle\varrho,\Delta A^i\rangle$,
\[
\Delta Y_t^i=\Delta\zeta^i+\int_t^TG_s^ids-(M_T^i-M_t^i),
\quad
M_t^i=\sum_j\int_0^t\Delta Z_s^{i,j}dB_s^j
+\sum_j\int_0^t\int_E\Delta U_s^{i,j}(e)\widetilde N^j(ds,de).
\]
Define
\[
R_t^i=|X_{t-}^{i,N}-\bar X_{t-}^i|^2+
\Wtwo^2(\mu_{t-}^N,\mu_{t-}^P)+|\Delta A_t^i|^2,
\quad Q_t^i=\sum_j\{|\Delta Z_t^{i,j}|_{a_t^{j,P}}^2+
\|\Delta U_t^{i,j}\|_\Pi^2\}.
\]
Lemma~\ref{lem:directsource}, the terminal estimate \eqref{eq:terminal},
and Theorem~\ref{thm:Fchaos} give
\[
\frac1N\sum_i\E\left[|\Delta\zeta^i|^2+
\int_0^TR_t^idt\right]\le C b_N,
\quad b_N:=\tau_d(N)+\tau_d(N)^{\theta/(\theta+2)}.
\]

\emph{Step 2: energy identity with all noises.}
Itô's formula, orthogonality of the coordinates, and compensation yield
\begin{align}
&e^{\gamma t}\E|\Delta Y_t^i|^2+
\E\int_t^Te^{\gamma s}\{\gamma|\Delta Y_s^i|^2+Q_s^i\}ds\nonumber\\
&\qquad=e^{\gamma T}\E|\Delta\zeta^i|^2+
2\E\int_t^Te^{\gamma s}\Delta Y_s^iG_s^ids.
\label{eq:directenergy}
\end{align}
We spell out the passage from the localized formula to this identity. For
the continuous martingale on the product space, BDG and Cauchy--Schwarz
bound the expected supremum by
\[
C_\gamma(\E\sup_s|\Delta Y_s^i|^2)^{1/2}
\left(\E\sum_j\int_0^T|\Delta Z_s^{i,j}|_{a_s^{j,P}}^2ds\right)^{1/2}<\infty.
\]
For the jump martingale on the product space, the same bound holds after
replacing the continuous energy by
$\E\sum_j\int|\Delta U^{i,j}|^2N^j$, which equals the compensated energy.
These martingales are therefore in $H^1$; their stopped versions converge
in $L^1$ and have zero expectation. The drift terms are integrable by
Cauchy--Schwarz, while the positive quadratic terms pass to the limit by
monotone convergence. Finally, the terminal term passes by domination by
$e^{\gamma T}\sup_s|\Delta Y_s^i|^2$. The stopping times can be chosen
equal to $T$ once the localized quantities, finite almost surely, are below
the threshold. No fourth moment of $\Delta U$ is used.

\emph{Step 3: absorption of the law interaction.}
Let $w_s=\Wtwo(\eta_s^{N,0},\eta_s^P)$. By \ref{HB}, Young's inequality,
and \eqref{eq:qv},
\[
2\Delta Y_s^iG_s^i\le C_0|\Delta Y_s^i|^2+
\tfrac12 Q_s^i+C_1(R_s^i+w_s^2).
\]
This inequality retains the law term before expectation. Set
$D_Y(s)=N^{-1}\sum_i\E|\Delta Y_s^i|^2$ and
$\bar\eta_s^N=N^{-1}\sum_i\delta_{\bar Y_s^i}$. The coupling gives
\[
\E w_s^2\le2D_Y(s)+2\E\Wtwo^2(\bar\eta_s^N,\eta_s^P)
\le2D_Y(s)+CN^{-1/2}.
\]
The last bound uses \eqref{eq:Ymom} for the selected solution and
\eqref{eq:FG} in dimension one. Average \eqref{eq:directenergy} and choose
$\gamma\ge C_0+2C_1+1$. Then
\[
\sup_{t\le T}D_Y(t)+\frac1N\sum_i\E\int_0^T
(|\Delta Y_s^i|^2+Q_s^i)ds\le C(b_N+N^{-1/2})\le Cb_N.
\]
The constants depend on $T$ and the common constants in the assumptions,
but not on the number of martingale coordinates.

\emph{Step 4: time supremum and accumulated process.}
The Lipschitz property gives
$|G_s^i|^2\le C(R_s^i+|\Delta Y_s^i|^2+Q_s^i+w_s^2)$.
By the integral identity, $|M_T^i-M_t^i|\le2\sup_s|M_s^i|$, hence
\[
\E\sup_t|\Delta Y_t^i|^2\le3\E|\Delta\zeta^i|^2+
3T\E\int_0^T|G_s^i|^2ds+12\E\sup_t|M_t^i|^2.
\]
Doob's inequality and the isometries give
$\E\sup_t|M_t^i|^2\le4\E\int_0^TQ_s^ids$.
After averaging, the previous steps bound this supremum by $Cb_N$.
Moreover,
\[
\frac1N\sum_i\E\sup_t|\Delta\mathsf K_t^i|^2
\le T|\varrho|^2\frac1N\sum_i\E\int_0^T|\Delta A_s^i|^2ds
\le C\tau_d(N)^{\theta/(\theta+2)}.
\]
Together with the forward bound, this yields \eqref{eq:directchaos};
\eqref{eq:directlaw} follows from the coupling in Step 3. Exchangeability
and the coupling of the first $\ell$ particles also give, for every fixed
$\ell$, convergence of the path laws of $(X,Y,\mathsf K)$ under the
$J_1$ distance, by the argument of Corollary~\ref{cor:pathchaos}.
\end{proof}

\subsection{Improvement of the Yosida error in the prototype}

\begin{proposition}[Deterministic width of the regularization layer]
\label{prop:sharpALA}
Let $\alpha\in\mathcal A^2$. In the ALA prototype, under
Assumptions~\ref{HF}, \ref{HD}, and~\ref{HoccD},
\begin{equation}
\sup_P\E^P\int_0^T|A^\lambda(X_{t-}^P,\mu_{t-}^P)
-A^0(X_{t-}^P,\mu_{t-}^P)|^2dt
\le C(\lambda^2+\lambda^\theta),\quad 0<\lambda\le1.
\label{eq:sharpALA}
\end{equation}
The additional source moment appearing in \ref{HoccJ} is not required for
this estimate.
\end{proposition}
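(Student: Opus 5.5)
We follow the splitting used in the proof of Proposition~\ref{prop:Yrate}, but exploit the ALA structure so that the radius of the bad layer becomes deterministic and no truncation of $|A^0|$ is needed. The key observation is that for the prototype the minimal selection is bounded by an affine function of the state. Indeed, by \eqref{eq:A0explicit}, $|F_\mu|\le\eta\sqrt d$, and $|s|\le M_s$ (Lemma~\ref{lem:directsource}). Hence
\[
|A^\lambda(x,\mu)|\le|A^0(x,\mu)|\le C(1+|x|).
\]

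The weak point is the $\alpha_A|x|$ term, since it prevents a deterministic bound on $|J_\lambda x-x|$. I would first remove it by an algebraic reduction. The operator $A(\cdot,\mu)$ equals $\alpha_AI+B(\cdot,\mu)$ with $B=\partial\phi_0+F_\mu$, where $\phi_0$ denotes the polyhedral part. The resolvent of $A$ then satisfies
\[
J_\lambda^A(x)=J^{B}_{\lambda'}\bigl(x/(1+\lambda\alpha_A)\bigr),\qquad \lambda'=\lambda/(1+\lambda\alpha_A).
\]
Thus $|J_\lambda x-x|\le\lambda\alpha_A|x|/(1+\lambda\alpha_A)+\lambda'|B^0|$, and $|B^0|\le\eta\sqrt d+M_s=:c_0$ is bounded. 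Alternatively one could work directly. Set $y=J_\lambda x$, so that $x-y=\lambda(\alpha_Ay+F_\mu(y)+s)$ with $s\in\partial\phi_0(y)$. Coordinatewise one has
\[
|x_k-(1+\lambda\alpha_A)y_k|\le\lambda c_0 .
\]
Hence, if $|x_k|>\lambda c_0$, then $y_k$ has the sign of $x_k$. In that case $s_k(y)=s_k(x)$.

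This reduction gives the main pointwise estimate. On the set $G_\lambda=\{\operatorname{dist}(x,\Sigma)>\lambda c_0\}$, every coordinate keeps its sign, so $s(y)=s(x)$. Then
\[
A^\lambda(x)-A^0(x)=\alpha_A(y-x)+F_\mu(y)-F_\mu(x),
\]
which yields
\[
|A^\lambda-A^0|\le(\alpha_A+L_K)|y-x|=(\alpha_A+L_K)\lambda|A^\lambda|\le C\lambda(1+|x|).
\]
Here $A^\lambda(x)=A(y)$ is single-valued at $y$ off $\Sigma$, and in any case $A^\lambda(x)\in A(y)$ with the same $s$. On the complement $G_\lambda^c$, domination gives $|A^\lambda-A^0|\le2|A^0|$. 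Because the $s$ components are bounded, I would refine this to $|A^\lambda-A^0|\le C\lambda(1+|x|)+2M_s$. This follows from the same decomposition $A^\lambda-A^0=\alpha_A(y-x)+(F(y)-F(x))+(s(y)-s(x))$ with $|y-x|\le\lambda|A^0|$. Consequently, pointwise for $0<\lambda\le1$,
\[
|A^\lambda-A^0|^2\le C\lambda^2(1+|x|^2)+C\,\one_{\{\operatorname{dist}(x,\Sigma)\le c_0\lambda\}}.
\]

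Finally we integrate along $X_{t-}^P$. The first term contributes $C\lambda^2$ by \eqref{eq:Fsecond}. The second contributes $C\,C_{\rm occ}(c_0\lambda)^\theta$ by \ref{HoccD}; if $c_0\lambda>1$, we bound the occupation by $T$ and absorb it, since $\lambda\le1$. This gives \eqref{eq:sharpALA} with only second moments, so no source moment of order $2+\delta$ is needed.

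The main obstacle I anticipate is the sign-preservation step. It requires checking carefully that the coordinatewise resolvent inclusion holds with the non-separable term $F_\mu(y)$, which is bounded but depends on $y$. It also requires handling the boundary case $y_k=0$, where $s_k$ is a projection. The bound $|x_k-(1+\lambda\alpha_A)y_k|\le\lambda c_0$ handles both, since it only uses $s_k\in[-\kappa_-,\kappa_+]$ and $|[F_\mu]_k|\le\eta$.
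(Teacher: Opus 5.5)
Your proposal is correct and follows essentially the paper's argument. It uses sign preservation of the resolvent outside the deterministic layer $\{\operatorname{dist}(x,\Sigma)\le c\lambda\}$; the paper takes the coordinatewise width $M_0=\max\{\kappa_+,\kappa_-\}+\eta$, and your $c_0$ is a harmless larger constant. It then uses the Lipschitz bound on the regular part with $|y-x|\le\lambda|A^0|$, the bound $2M_s$ on the threshold difference inside the layer, and finally \eqref{eq:Fsecond} together with \ref{HoccD}, bounding the layer measure by $T$ when its width exceeds $1$. The conjugation identity $J_\lambda^A(x)=J^{B}_{\lambda'}\bigl(x/(1+\lambda\alpha_A)\bigr)$ is correct but not needed.
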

\begin{proof}
Fix $x,\mu$, set $y=J_\lambda(x,\mu)$, and let
$M_0=\max\{\kappa_+,\kappa_-\}+\eta$, where $\eta$ is the positive
parameter of the kernel \eqref{eq:ALAK}. There exists
$v\in\partial\sum_k\{\kappa_+(y_k)_++\kappa_-(y_k)_-\}$ such that
\[
A^\lambda(x,\mu)=\alpha_Ay+F_\mu(y)+v,
\qquad x=(1+\lambda\alpha_A)y+\lambda(F_\mu(y)+v).
\]
Each $|v_k|\le\max\{\kappa_+,\kappa_-\}$ and $|F_\mu(y)|\le\eta$.
If $x_k>\lambda M_0$, then $y_k>0$: otherwise the last identity would
give $x_k\le\lambda M_0$. If $x_k<-\lambda M_0$, the same reasoning
gives $y_k<0$. Consequently, outside
$\{\operatorname{dist}(x,\Sigma)\le\lambda M_0\}$, all signs agree and
$v=s(x,\mu)$. Everywhere, $|v-s(x,\mu)|\le2M_s$. Hence
\begin{align*}
|A^\lambda(x,\mu)-A^0(x,\mu)|^2
&\le2(\alpha_A+L_K)^2|y-x|^2+
8M_s^2\one_{\{\operatorname{dist}(x,\Sigma)\le\lambda M_0\}}\\
&\le C\lambda^2|A^0(x,\mu)|^2+
8M_s^2\one_{\{\operatorname{dist}(x,\Sigma)\le\lambda M_0\}}.
\end{align*}
The second inequality uses
$|y-x|=\lambda|A^\lambda(x,\mu)|\le\lambda|A^0(x,\mu)|$.
Integrate under $dt\otimes P$ and then take the supremum over $P$.
The growth of $A^0$ and \eqref{eq:Fsecond} control the first term;
\ref{HoccD} controls the second when $\lambda M_0\le1$.
If $\lambda M_0>1$, its measure is at most
$T\le TM_0^\theta\lambda^\theta$. This proves \eqref{eq:sharpALA}.
\end{proof}

\begin{theorem}[Joint approximation without Yosida amplification]
\label{thm:jointuniform}
Under the assumptions of Theorem~\ref{thm:directchaos}, for the ALA
prototype, set $a_\theta=\theta/(\theta+2)$ and, for $0<\lambda\le1$,
\begin{align*}
\Delta Y^{i,\lambda}&=Y^{i,N,\lambda}-\bar Y^i,&
\Delta\mathsf K^{i,\lambda}&=\mathsf K^{i,N,\lambda}-\bar{\mathsf K}^i,\\
\Delta Z^{i,j,\lambda}&=Z^{i,j,N,\lambda}-\one_{\{i=j\}}\bar Z^i,&
\Delta U^{i,j,\lambda}&=U^{i,j,N,\lambda}-\one_{\{i=j\}}\bar U^i.
\end{align*}
The barred copies are the selected limiting solutions, without
regularization. Define the full error
\begin{align}
\mathcal D_{N,\lambda}^{\rm reg}:=\sup_P\frac1N\sum_i
\E^{P^{\otimes N}}\Big[&\sup_{t\le T}|X_t^{i,N}-\bar X_t^i|^2
+\sup_{t\le T}|\Delta Y_t^{i,\lambda}|^2
+\sup_{t\le T}|\Delta\mathsf K_t^{i,\lambda}|^2\nonumber\\
&+\sum_j\int_0^T\big\{|\Delta Z_t^{i,j,\lambda}|_{a_t^{j,P}}^2
+\|\Delta U_t^{i,j,\lambda}\|_\Pi^2\big\}dt\Big].
\label{eq:regerrorfull}
\end{align}
There exists $C$, independent of $P,N,\lambda$, such that
\begin{equation}
\mathcal D_{N,\lambda}^{\rm reg}
\le C\{\tau_d(N)^{a_\theta}+\lambda^2+\lambda^\theta\}.
\label{eq:jointuniform}
\end{equation}
Only occupation of the limiting forward process is assumed; no occupation
estimate for the particles is required.
\end{theorem}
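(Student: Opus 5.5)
The plan is to compare the regularized particle system directly with the selected limiting copies, never passing through the regularized limiting copies. That intermediate step is exactly what costs the factor $1+\lambda^{-2}$ in Theorem~\ref{thm:Bchaos}. The forward term is already bounded by $C\tau_d(N)$ through Theorem~\ref{thm:Fchaos}. For the backward part, I would repeat verbatim Steps 1--4 of the proof of Theorem~\ref{thm:directchaos}. The only change is that the particle source is now $A^\lambda(X_{t-}^{i,N},\mu_{t-}^N)$, so the source difference becomes
\[
\Delta A_t^{i,\lambda}=A^\lambda(X_{t-}^{i,N},\mu_{t-}^N)-A^0(\bar X_{t-}^i,\mu_{t-}^P).
\]
The energy identity, the absorption of the backward law through $\E w_s^2\le 2D_Y(s)+CN^{-1/2}$, and the BDG/Doob upgrade to the time supremum use only the Lipschitz structure of $f,g$ and the square integrability of the sources. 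They therefore give
\[
\mathcal D_{N,\lambda}^{\rm reg}\le C\Bigl\{\tau_d(N)+N^{-1/2}+\sup_P\frac1N\sum_i\E\int_0^T|\Delta A_t^{i,\lambda}|^2dt\Bigr\},
\]
with $C$ independent of $\lambda$. The $\mathsf K$ component is handled by the same Cauchy--Schwarz bound applied to the same quantity.

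The key step is to estimate the source term by splitting it through the selected particle source:
\[
|\Delta A_t^{i,\lambda}|^2\le 2|A^\lambda(X_{t-}^{i,N},\mu_{t-}^N)-A^0(X_{t-}^{i,N},\mu_{t-}^N)|^2+2|A^0(X_{t-}^{i,N},\mu_{t-}^N)-A^0(\bar X_{t-}^i,\mu_{t-}^P)|^2 .
\]
Lemma~\ref{lem:directsource} bounds the second piece by $C\tau_d(N)^{a_\theta}$.

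The first piece is the main obstacle. Proposition~\ref{prop:sharpALA} cannot be applied directly, since it integrates against the occupation of the limiting process, and we do not assume any occupation estimate for the particles. The plan is to use the deterministic pointwise bound obtained in the proof of Proposition~\ref{prop:sharpALA}:
\[
|A^\lambda(x,\mu)-A^0(x,\mu)|^2\le C\lambda^2|A^0(x,\mu)|^2+8M_s^2\one_{\{\operatorname{dist}(x,\Sigma)\le\lambda M_0\}},
\]
valid for every $(x,\mu)$ and in particular for the empirical measure. The first term integrates to $C\lambda^2$ by the growth bound \eqref{eq:A0growth} and the particle moments \eqref{eq:XNmom}.

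To handle the indicator, I would transfer the bad set to the limiting copy. Take $r\in(0,1]$. If $\operatorname{dist}(X_{t-}^{i,N},\Sigma)\le\lambda M_0$, then either $|X_{t-}^{i,N}-\bar X_{t-}^i|\ge r$ or $\operatorname{dist}(\bar X_{t-}^i,\Sigma)\le\lambda M_0+r$. Markov's inequality with Theorem~\ref{thm:Fchaos}, together with \ref{HoccD} for $\bar X^i$, then gives the bound
\[
C\{\tau_d(N)r^{-2}+(\lambda M_0+r)^\theta\}\le C\{\tau_d(N)r^{-2}+\lambda^\theta+r^\theta\},
\]
absorbing the case $\lambda M_0+r>1$ as in Proposition~\ref{prop:sharpALA}. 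Choosing $r=\tau_d(N)^{1/(\theta+2)}$, exactly as in Step 3 of Lemma~\ref{lem:directsource}, yields $C\{\tau_d(N)^{a_\theta}+\lambda^\theta\}$.

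Collecting these bounds, and using $\tau_d(N)+N^{-1/2}\le C\tau_d(N)\le C\tau_d(N)^{a_\theta}$ (with small $N$ absorbed into $C$ through the quadratic moment bounds), gives \eqref{eq:jointuniform}. The constant is uniform in $P,N,\lambda$, because neither the energy estimate nor the source bound ever uses the Lipschitz constant $\lambda^{-1}$ of $A^\lambda$.
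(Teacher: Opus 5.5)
Your proposal is correct and follows the paper's proof. Like you, the paper inserts $A^0(X_{t-}^{i,N},\mu_{t-}^N)$ and uses the pre-integration pointwise bound from Proposition~\ref{prop:sharpALA}; it transfers the layer $\{\operatorname{dist}(x',\Sigma)\le M_0\lambda\}$ to the limiting copy at the cost of the bad-coupling event $\{|x'-x|\ge r\}$, chooses $r=\tau_d(N)^{1/(\theta+2)}$, and reruns the energy argument of Theorem~\ref{thm:directchaos}. The only cosmetic difference is that the paper packages both source pieces into the single pointwise inequality \eqref{eq:jointpoint} with one common width $r$, whereas you split them by the triangle inequality and cite Lemma~\ref{lem:directsource} for the selected-source piece.
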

\begin{proof}
\emph{Step 1: a two-parameter pointwise inequality.}
The proof of Proposition~\ref{prop:sharpALA}, before integration, gives
\[
|A^\lambda(x',\mu')-A^0(x',\mu')|^2
\le C\lambda^2|A^0(x',\mu')|^2+
8M_s^2\one_{\{\operatorname{dist}(x',\Sigma)\le M_0\lambda\}}.
\]
The distance to $\Sigma$ is $1$-Lipschitz. For every $r>0$,
\[
\{\operatorname{dist}(x',\Sigma)\le M_0\lambda\}
\subset\{\operatorname{dist}(x,\Sigma)\le M_0\lambda+r\}
\cup\{|x'-x|\ge r\}.
\]
Inserting $A^0(x',\mu')$ and applying \eqref{eq:pointdirect}, we obtain
\begin{align}
|A^\lambda(x',\mu')-A^0(x,\mu)|^2
\le C\big(&\lambda^2|A^0(x',\mu')|^2+|x'-x|^2+\Wtwo^2(\mu',\mu)
\nonumber\\
&+\one_{\{\operatorname{dist}(x,\Sigma)\le M_0\lambda+r\}}
+\one_{\{|x'-x|\ge r\}}\big).
\label{eq:jointpoint}
\end{align}
Thus the layer is transferred to the limiting state before taking any
expectation.

\emph{Step 2: integration and choice of the width.}
Fix $P$ and write $\E=\E^{P^{\otimes N}}$. Set
\[
\Delta A_t^{i,\lambda}=A^\lambda(X_{t-}^{i,N},\mu_{t-}^N)
-A^0(\bar X_{t-}^i,\mu_{t-}^P),\qquad
S_{N,\lambda}=\sup_P\frac1N\sum_i\E\int_0^T|\Delta A_t^{i,\lambda}|^2dt.
\]
The growth of the selection and the uniform moments of the forward system
give
\[
\sup_{P,N}\frac1N\sum_i\E\int_0^T
|A^0(X_{t-}^{i,N},\mu_{t-}^N)|^2dt<\infty.
\]
For $s>1$, the measure of a layer is at most $T\le Ts^\theta$;
thus \ref{HoccD} extends to every $s>0$ after increasing its constant.
Theorem~\ref{thm:Fchaos} and Markov's inequality applied to
\eqref{eq:jointpoint} imply, with $\tau=\tau_d(N)$,
\begin{align*}
S_{N,\lambda}
&\le C\{\lambda^2+\tau+(M_0\lambda+r)^\theta+\tau r^{-2}\}\\
&\le C\{\lambda^2+\lambda^\theta+\tau+r^\theta+\tau r^{-2}\}.
\end{align*}
Here $(u+v)^\theta\le2^{(\theta-1)_+}(u^\theta+v^\theta)$.
If $\tau\le1$, choose $r=\tau^{1/(\theta+2)}$. Then
$r^\theta=\tau r^{-2}=\tau^{a_\theta}$ and
$\tau\le\tau^{a_\theta}$; the other values of $N$ are controlled by the
uniform moments. Hence
\begin{equation}
S_{N,\lambda}\le C\{\tau^{a_\theta}+\lambda^2+\lambda^\theta\}
=:C b_{N,\lambda}.
\label{eq:jointsource}
\end{equation}
No Lipschitz constant of $A^\lambda$ has been used.

\emph{Step 3: backward integral differences.}
Suppress the index $\lambda$ on the solution differences and set
$\Delta\zeta^i=g(X_T^{i,N},\mu_T^N)-g(\bar X_T^i,\mu_T^P)$.
If $\Delta f^i$ denotes the difference of the generators evaluated along
the two systems, then exactly
\begin{align*}
\Delta Y_t^i&=\Delta\zeta^i+\int_t^T G_s^ids-(M_T^i-M_t^i),
& G_s^i&=\Delta f_s^i+\langle\varrho,\Delta A_s^{i,\lambda}\rangle,\\
\Delta\mathsf K_t^i&=\int_0^t\langle\varrho,\Delta A_s^{i,\lambda}\rangle ds,\\
M_t^i&=\sum_j\int_0^t\Delta Z_s^{i,j}dB_s^j
+\sum_j\int_0^t\int_E\Delta U_s^{i,j}(e)\widetilde N^j(ds,de).
\end{align*}
Define
\[
Q_s^i=\sum_j\{|\Delta Z_s^{i,j}|_{a_s^{j,P}}^2+
\|\Delta U_s^{i,j}\|_\Pi^2\},\quad
R_s^i=|X_{s-}^{i,N}-\bar X_{s-}^i|^2+
\Wtwo^2(\mu_{s-}^N,\mu_{s-}^P)+|\Delta A_s^{i,\lambda}|^2.
\]
The terminal estimate and \eqref{eq:jointsource} give
\[
\frac1N\sum_i\E\Big[|\Delta\zeta^i|^2+\int_0^T R_s^ids\Big]
\le C b_{N,\lambda}.
\]

\emph{Step 4: energy identity and absorption of the law term.}
Set $w_s=\Wtwo(\eta_s^{N,\lambda},\eta_s^P)$ and
$D_Y(s)=N^{-1}\sum_i\E|\Delta Y_s^i|^2$. Assumption \ref{HB} and
Young's inequality give, with constants independent of $\lambda$,
\[
2\Delta Y_s^iG_s^i\le C_0|\Delta Y_s^i|^2+
\tfrac12Q_s^i+C_1(R_s^i+w_s^2).
\]
Coupling with $\bar\eta_s^N=N^{-1}\sum_i\delta_{\bar Y_s^i}$ and using
the order-$q>4$ moments of the selected solution yield
\[
\E w_s^2\le2D_Y(s)+2\E\Wtwo^2(\bar\eta_s^N,\eta_s^P)
\le2D_Y(s)+CN^{-1/2}.
\]
For $\gamma>C_0+2C_1+1$, Itô's formula and compensation give
\begin{align*}
&e^{\gamma t}\E|\Delta Y_t^i|^2+
\E\int_t^T e^{\gamma s}\{\gamma|\Delta Y_s^i|^2+Q_s^i\}ds\\
&\hspace{1cm}=e^{\gamma T}\E|\Delta\zeta^i|^2+
2\E\int_t^T e^{\gamma s}\Delta Y_s^iG_s^ids.
\end{align*}
This identity is justified in the quadratic spaces: by BDG and
Cauchy--Schwarz, the linear product martingales have an $H^1$ norm bounded
by
\[
C_\gamma(\E\sup_s|\Delta Y_s^i|^2)^{1/2}
(\E\int_0^TQ_s^ids)^{1/2}<\infty.
\]
The positive term $\sum_j\int|\Delta U^{i,j}|^2N^j$ is kept in its
uncompensated form before taking expectations. Its expectation is the
Poisson energy; no fourth moment of $U$ is needed. Localization, followed
by $L^1$ convergence of the martingales and monotone convergence of the
energy terms, justifies the identity.
Set $\overline Q_s=N^{-1}\sum_i\E Q_s^i$,
$\overline R_s=N^{-1}\sum_i\E R_s^i$ and
$\overline\zeta_2=N^{-1}\sum_i\E|\Delta\zeta^i|^2$.
After averaging and inserting $\E w_s^2\le2D_Y(s)+CN^{-1/2}$,
\begin{align*}
e^{\gamma t}D_Y(t)
+\int_t^Te^{\gamma s}\bigl[
(\gamma-C_0-2C_1)D_Y(s)+\tfrac12\overline Q_s\bigr]ds
&\le e^{\gamma T}\overline\zeta_2
+C_1\int_t^Te^{\gamma s}\overline R_sds
+CN^{-1/2}\frac{e^{\gamma T}-e^{\gamma t}}{\gamma}\\
&\le C(b_{N,\lambda}+N^{-1/2}).
\end{align*}
The choice $\gamma>C_0+2C_1+1$, and then $t=0$ for the integral terms,
gives
\[
\sup_tD_Y(t)+\int_0^T(D_Y(s)+\overline Q_s)ds
\le C(b_{N,\lambda}+N^{-1/2})\le Cb_{N,\lambda}.
\]
The last inequality uses $N^{-1/2}\le C\tau_d(N)$.

\emph{Step 5: maximal norms and uniform conclusion.}
The Lipschitz property of $f$ implies
$|G_s^i|^2\le C(R_s^i+|\Delta Y_s^i|^2+Q_s^i+w_s^2)$.
The integral formulation and Doob's inequality successively give
\begin{align*}
\E\sup_t|\Delta Y_t^i|^2
&\le3\E|\Delta\zeta^i|^2+3T\E\int_0^T|G_s^i|^2ds
+12\E\sup_t|M_t^i|^2,\\
\E\sup_t|M_t^i|^2&\le4\E|M_T^i|^2
=4\E\int_0^TQ_s^ids,\\
\frac1N\sum_i\E\sup_t|\Delta\mathsf K_t^i|^2
&\le T|\varrho|^2\frac1N\sum_i\E\int_0^T
|\Delta A_s^{i,\lambda}|^2ds\le Cb_{N,\lambda}.
\end{align*}
Average the first inequality and use Steps 2--4, then add the forward
error $C\tau_d(N)$. All constants are common to the models; taking their
supremum proves \eqref{eq:jointuniform}.
\end{proof}

\subsection{Strengthening through the second occupation moment}
\label{subsec:quadraticoccupation}

Theorem~\ref{thm:jointuniform} relies only on a first occupation moment and
remains valid under that weaker assumption. We now show that quadratic
control of the occupation duration makes it possible to work directly with
the accumulated process $\mathsf K$ and improves the exponent in
$\tau_d(N)$. The maximal-monotonicity, resolvent, minimal-selection and
measurability properties required below have already been established in
Proposition~\ref{prop:Yosida} and in \eqref{eq:A0explicit}; they are not
repeated.

\begin{hypothesis}[Second occupation moment of the limiting forward process]
\label{Hocc2}
There exist $\theta\in(0,1]$ and $C_{\rm occ,2}<\infty$ such that, for
$0<r\le1$,
\begin{equation}
\sup_{P\in\Pcal}\E^P\left[
\left(\int_0^T
\one_{\{\operatorname{dist}(X_{t-}^P,\Sigma)\le r\}}\,dt
\right)^2\right]
\le C_{\rm occ,2}r^{2\theta}.
\label{eq:Hocc2}
\end{equation}
This condition concerns only the limiting solution. It is stronger than
\ref{HoccD}; no occupation estimate for the $N$-particle system is assumed.
\end{hypothesis}

\begin{lemma}[Conditional criterion for \ref{Hocc2}]
\label{lem:conditional-occ2}
Let $V$ be an adapted càdlàg process and
$I_t(r)=\one_{\{\operatorname{dist}(V_{t-},\Sigma)\le r\}}$. Assume that
there exists $C_1<\infty$ such that, for every deterministic time
$s\in[0,T]$ and $0<r\le1$,
\[
\E\left[\int_s^T I_t(r)\,dt\,\middle|\,\mathcal F_s\right]
\le C_1r^\theta\qquad\text{a.s.}
\]
Then
\[
\E\left(\int_0^T I_t(r)\,dt\right)^2\le2C_1^2r^{2\theta}.
\]
\end{lemma}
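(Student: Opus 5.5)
The plan is to use the classical identity for the square of an occupation integral,
\[
\left(\int_0^T I_t\,dt\right)^2=2\int_0^T I_s\left(\int_s^T I_t\,dt\right)ds,
\]
with $I_t=I_t(r)$. This holds pathwise by Fubini on the triangle $\{s<t\}$, since the diagonal has Lebesgue measure zero. Taking expectations and applying Tonelli (all terms are nonnegative and bounded by $T^2$), we get
\[
\E\left(\int_0^T I_t\,dt\right)^2=2\int_0^T\E\left[I_s\int_s^T I_t\,dt\right]ds .
\]

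Next, for each fixed deterministic $s$, condition on $\mathcal F_s$. The variable $I_s=\one_{\{\operatorname{dist}(V_{s-},\Sigma)\le r\}}$ is $\mathcal F_s$-measurable: $V_{s-}$ is $\mathcal F_{s-}\subset\mathcal F_s$-measurable because $V$ is adapted and càdlàg, and $\operatorname{dist}(\cdot,\Sigma)$ is continuous. The tower property and the hypothesis then give
\[
\E\left[I_s\int_s^T I_t\,dt\right]=\E\left[I_s\,\E\left[\int_s^T I_t\,dt\,\middle|\,\mathcal F_s\right]\right]\le C_1r^\theta\,\E I_s .
\]
Joint measurability of $(s,\omega)\mapsto I_s(\omega)\int_s^TI_t\,dt$ follows from progressive measurability of $V_{-}$, so the outer $ds$-integral is legitimate.

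Integrating in $s$, we obtain
\[
\E\left(\int_0^T I_t\,dt\right)^2\le 2C_1r^\theta\,\E\int_0^T I_s\,ds .
\]
The remaining first-moment factor is the hypothesis at $s=0$, after taking expectations: $\E\int_0^TI_t\,dt\le C_1r^\theta$. This yields the bound $2C_1^2r^{2\theta}$.

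The only delicate point is measurability. The conditional bound is assumed only at deterministic times, and that is exactly what the Fubini argument uses, so no optional-time version is needed. I expect no genuine obstacle beyond checking that $I_s$ is $\mathcal F_s$-measurable, which holds because the left limit is used.
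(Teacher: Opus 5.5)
Your proposal is correct and follows the paper's proof: the same symmetrization identity on the triangle, conditioning on $\mathcal F_s$ using the $\mathcal F_s$-measurability of $I_s(r)$, and then the hypothesis at $s=0$ for the remaining first moment. Your extra remarks on joint measurability via predictability of $V_-$ just make explicit what the paper leaves implicit.
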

\begin{proof}
Since the diagonal of $[0,T]^2$ is negligible, Tonelli's theorem and
symmetry give
\[
\left(\int_0^T I_t(r)dt\right)^2
=2\int_0^T I_s(r)\int_s^T I_t(r)dt\,ds.
\]
Since $I_s(r)$ is $\mathcal F_s$-measurable,
\[
\E\left[I_s(r)\int_s^T I_t(r)dt\right]
\le C_1r^\theta\E I_s(r).
\]
Integrating in $s$ and then using the conditional assumption at $s=0$
yields the stated bound. No independence between two occupation times is
required.
\end{proof}

\begin{lemma}[Pointwise control adapted to the accumulated source]
\label{lem:cumulative-point}
In the ALA prototype, there exists $C<\infty$, independent of
$x,x',\mu,\mu',r$ and $\lambda$, such that, for
$x,x'\in\R^d$, $\mu,\mu'\in\mathcal P_2(\R^d)$,
$0<\lambda\le1$ and $r>0$,
\begin{align}
|A^\lambda(x',\mu')-A^0(x,\mu)|
\le C\Big(&\lambda|A^0(x',\mu')|+|x'-x|+\Wtwo(\mu',\mu)\nonumber\\
&+\one_{\{\operatorname{dist}(x,\Sigma)\le M_0\lambda+r\}}
+\one_{\{|x'-x|\ge r\}}\Big),
\label{eq:cumulative-point}
\end{align}
where $M_0=\max(\kappa_+,\kappa_-)+\eta$. For $\lambda=0$, the same
inequality holds after removing the first term.
\end{lemma}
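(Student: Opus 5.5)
The plan is to obtain \eqref{eq:cumulative-point} as the non-squared version of \eqref{eq:jointpoint}, by inserting the intermediate value $A^0(x',\mu')$:
\[
|A^\lambda(x',\mu')-A^0(x,\mu)|
\le|A^\lambda(x',\mu')-A^0(x',\mu')|+|A^0(x',\mu')-A^0(x,\mu)|.
\]
The two pieces will be estimated separately, the first by the regularization layer and the second by the separation of interfaces. No squares are taken, so each indicator appears with power one.

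\emph{Regularization term.} I will reuse the pointwise part of the proof of Proposition~\ref{prop:sharpALA}, without squaring. Write $y=J_\lambda(x',\mu')$ and $A^\lambda(x',\mu')=\alpha_Ay+F_{\mu'}(y)+v$, with $v$ a subgradient of the piecewise linear part at $y$. The Lipschitz bound on $F_{\mu'}$ gives
\[
|\alpha_A(y-x')+F_{\mu'}(y)-F_{\mu'}(x')|\le(\alpha_A+L_K)|y-x'|\le(\alpha_A+L_K)\lambda|A^0(x',\mu')|,
\]
using $|y-x'|=\lambda|A^\lambda|\le\lambda|A^0|$. The sign argument shows $v=s(x',\mu')$ outside $\{\operatorname{dist}(x',\Sigma)\le M_0\lambda\}$, and $|v-s|\le2M_s$ everywhere. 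Hence
\[
|A^\lambda(x',\mu')-A^0(x',\mu')|\le C\lambda|A^0(x',\mu')|+2M_s\one_{\{\operatorname{dist}(x',\Sigma)\le M_0\lambda\}}.
\]
Because $\operatorname{dist}(\cdot,\Sigma)$ is $1$-Lipschitz, the indicator is bounded by
\[
\one_{\{\operatorname{dist}(x,\Sigma)\le M_0\lambda+r\}}+\one_{\{|x'-x|\ge r\}}.
\]

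\emph{Selection term.} I will repeat Step~1 of Lemma~\ref{lem:directsource} in linear form. If $\operatorname{dist}(x,\Sigma)>r$ and $|x'-x|<r$, then both points lie in the same open orthant, so $s(x',\mu')=s(x,\mu)$. Otherwise the difference of the $s$ terms is at most $2M_s$. The smooth part contributes $(\alpha_A+L_K)|x'-x|+L_K\Wtwo(\mu',\mu)$. This gives
\[
|A^0(x',\mu')-A^0(x,\mu)|\le C\bigl(|x'-x|+\Wtwo(\mu',\mu)+\one_{\{\operatorname{dist}(x,\Sigma)\le r\}}+\one_{\{|x'-x|\ge r\}}\bigr).
\]
The indicator $\one_{\{\operatorname{dist}(x,\Sigma)\le r\}}$ is dominated by the one with width $M_0\lambda+r$.

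\emph{Conclusion and the case $\lambda=0$.} Adding the two estimates yields \eqref{eq:cumulative-point} with $C$ depending only on $d,\alpha_A,L_K,\kappa_\pm$, and not on $\lambda$ or $r$. For $\lambda=0$, only the selection term is present, which removes the first term on the right. The one point to check is that the inclusion of layers for the regularization term holds for every $r>0$, including $r$ small relative to $\lambda$. This is immediate from the triangle inequality for the distance. Apart from that, no real obstacle is expected; the lemma is a bookkeeping refinement of \eqref{eq:jointpoint}.
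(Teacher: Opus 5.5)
Your proof is correct, but it is organized differently from the paper's proof of this lemma. You insert the intermediate value $A^0(x',\mu')$ and bound the two pieces separately. The first piece uses the unsquared pointwise estimate from the proof of Proposition~\ref{prop:sharpALA}: the layer $\{\operatorname{dist}(x',\Sigma)\le M_0\lambda\}$, transferred to $x$ by the $1$-Lipschitz property of the distance. The second uses the orthant-separation estimate of Lemma~\ref{lem:directsource}. This is exactly how the paper itself derives the squared inequality \eqref{eq:jointpoint} in Theorem~\ref{thm:jointuniform}.

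The paper's proof of the present lemma instead compares $A^\lambda(x',\mu')$ with $A^0(x,\mu)$ in one step. The regular parts are handled together through $|y-x|\le|y-x'|+|x'-x|\le\lambda|A^0(x',\mu')|+|x'-x|$, with $y=J_\lambda(x',\mu')$. A single sign chain then shows that, outside $\{\operatorname{dist}(x,\Sigma)\le M_0\lambda+r\}\cup\{|x'-x|\ge r\}$, the coordinates of $x$, $x'$ and $y$ share signs, so the threshold vector satisfies $v=s(x,\mu)$ directly.

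What each buys:
\begin{itemize}
\item Your route is more modular, since it reuses two pointwise estimates already established. The cost is that the exceptional indicators appear twice, which is harmlessly absorbed into $C$.
\item The paper's route is self-contained, and the case $\lambda=0$ is literally the same computation with $y=x'$.
\end{itemize}
Both give a constant depending only on $d,\alpha_A,L_K,\kappa_\pm$ and $\eta$, uniform in $\lambda$ and $r$.
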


\begin{proof}
First fix $\lambda>0$ and set $y=J_\lambda(x',\mu')$. By
Proposition~\ref{prop:Yosida},
\[
x'-y=\lambda A^\lambda(x',\mu'),\qquad
|x'-y|\le\lambda|A^0(x',\mu')|.
\]
There exists a vector $v$ in the subdifferential of the piecewise linear
part of $\phi_{\rm ALA}$ at $y$ such that
\[
A^\lambda(x',\mu')=\alpha_Ay+F_{\mu'}(y)+v,
\qquad |v|\le M_s:=\sqrt d\,\max(\kappa_+,\kappa_-).
\]
Likewise,
\[
A^0(x,\mu)=\alpha_Ax+F_\mu(x)+s(x,\mu),\qquad |s(x,\mu)|\le M_s.
\]
By \eqref{eq:Klip}, the regular parts satisfy
\begin{align*}
|\alpha_A(y-x)+F_{\mu'}(y)-F_\mu(x)|
&\le C\{|y-x|+\Wtwo(\mu',\mu)\}\\
&\le C\{\lambda|A^0(x',\mu')|+|x'-x|+\Wtwo(\mu',\mu)\}.
\end{align*}

It remains to compare the threshold vectors. If
$\operatorname{dist}(x,\Sigma)>M_0\lambda+r$ and $|x'-x|<r$, then every
coordinate $x'_k$ has the same sign as $x_k$ and satisfies
$|x'_k|>M_0\lambda$. The resolvent identity reads
\[
x'_k=(1+\lambda\alpha_A)y_k+
\lambda\{[F_{\mu'}(y)]_k+v_k\},
\qquad |[F_{\mu'}(y)]_k+v_k|\le M_0.
\]
Thus $x'_k>M_0\lambda$ implies $y_k>0$, while
$x'_k<-M_0\lambda$ implies $y_k<0$. Outside the two exceptional sets,
$x_k,x'_k,y_k$ therefore have the same sign and the threshold selections
coincide: $v=s(x,\mu)$. On their complement,
$|v-s(x,\mu)|\le2M_s$. Combining these estimates gives
\eqref{eq:cumulative-point}. For $\lambda=0$, take $y=x'$ directly and
apply the same sign argument.
\end{proof}

For the regularized system, set
\[
k_t^{i,\lambda}
:=\mathsf K_t^{i,N,\lambda}-\bar{\mathsf K}_t^i,
\quad
y_t^{i,\lambda}:=Y_t^{i,N,\lambda}-\bar Y_t^i,
\]
and retain the notation $\Delta Z^{i,j,\lambda}$ and
$\Delta U^{i,j,\lambda}$ from \eqref{eq:regerrorfull}. Define
\begin{equation}
Q_t^{i,\lambda}
:=\sum_{j=1}^N\left(
|\Delta Z_t^{i,j,\lambda}|_{a_t^{j,P}}^2+
\|\Delta U_t^{i,j,\lambda}\|_\Pi^2\right).
\label{eq:Qcumulative}
\end{equation}

\begin{lemma}[Backward stability through the accumulated process]
\label{lem:cumulative-stability}
Under \ref{HF}, \ref{HB}, \ref{HD}, \ref{HqJ} and \ref{Hprod}, there
exists $C<\infty$, independent of $P,N,\lambda$, such that
\begin{align}
\frac1N\sum_{i=1}^N\E^{P^{\otimes N}}\left[
\sup_{t\le T}|y_t^{i,\lambda}|^2+
\int_0^TQ_t^{i,\lambda}\,dt\right]
\le C\{\tau_d(N)+N^{-1/2}+R_K(P)\},
\label{eq:cumulative-stability}
\end{align}
where
\[
R_K(P):=\frac1N\sum_{i=1}^N
\E^{P^{\otimes N}}\sup_{t\le T}|k_t^{i,\lambda}|^2.
\]
\end{lemma}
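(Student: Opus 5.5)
The key idea is to treat the accumulated source difference $k^{i,\lambda}$ as part of the backward unknown, so that $\Delta A^{i,\lambda}$ never appears squared inside a $dt$-integral. I would set
\[
\tilde y_t^i:=y_t^{i,\lambda}+k_t^{i,\lambda}.
\]
Subtracting the particle equation \eqref{eq:particleB} and the selected limiting equation \eqref{eq:BWD} on the $i$th coordinate gives
\[
\tilde y_t^i=\Delta\zeta^i+k_T^{i,\lambda}+\int_t^T\Delta f_s^i\,ds-(M_T^i-M_t^i),
\]
where $M^i$ is the product martingale built from $\Delta Z^{i,j,\lambda}$ and $\Delta U^{i,j,\lambda}$, as in the proof of Theorem~\ref{thm:jointuniform}. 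Since $\tilde y^i$ has no source drift, the source enters only through the terminal value $\Delta\zeta^i+k_T^{i,\lambda}$. It also enters through the generator difference, because $f$ is evaluated at $y^{i,\lambda}=\tilde y^i-k^{i,\lambda}$.

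Next I would bound the generator. By \ref{HB},
\[
|\Delta f_s^i|\le L_B\bigl(|\Delta X_s^i|+W_2(\mu_s^N,\mu_s^P)+|\tilde y_s^i|+|k_s^{i,\lambda}|+|\Delta Z_s^{i,i}|+\|\Delta U_s^{i,i}\|_\Pi+w_s\bigr),
\qquad w_s=W_2(\eta_s^{N,\lambda},\eta_s^P).
\]
I would then apply the jump Itô formula to $e^{\gamma t}|\tilde y_t^i|^2$. The justification is verbatim that of Step~4 of Theorem~\ref{thm:jointuniform}: $H^1$ product martingales, the positive uncompensated $N^j$ energy, and localization. This yields the energy identity with $\tilde y$ in place of $\Delta Y$. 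Young's inequality absorbs half of $Q^{i,\lambda}$ and the $|\tilde y|^2$ terms, provided $\gamma$ is chosen large.

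The remaining right-hand terms are of four kinds:
\begin{itemize}
\item $\E|\Delta\zeta^i|^2$ and $\E\int|\Delta X^i|^2+W_2^2(\mu^N,\mu^P)$, which are $\le C\tau_d(N)$ by \eqref{eq:terminal} and Theorem~\ref{thm:Fchaos};
\item $\E|k_T^{i,\lambda}|^2+\E\int|k_s^{i,\lambda}|^2ds\le (1+T)\,\E\sup_t|k_t^{i,\lambda}|^2$;
\item the law term $w_s^2$.
\end{itemize}
For the law term I would write $\E w_s^2\le 2D_Y(s)+2\E W_2^2(\bar\eta^N_s,\eta_s^P)$. Lemma~\ref{lem:moments} and \eqref{eq:FG} in dimension one bound the second piece by $CN^{-1/2}$. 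For the first piece, $D_Y(s)\le 2N^{-1}\sum_i\E|\tilde y_s^i|^2+2N^{-1}\sum_i\E|k_s^{i,\lambda}|^2$. After averaging over $i$, the $\tilde y$ part is absorbed by taking $\gamma$ larger, and the $k$ part goes into $R_K(P)$. This gives
\[
\sup_t N^{-1}\sum_i\E|\tilde y_t^i|^2+N^{-1}\sum_i\E\int_0^TQ_s^{i,\lambda}ds\le C\{\tau_d(N)+N^{-1/2}+R_K(P)\}.
\]

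For the supremum, I would use the integral form together with Doob's inequality and $\E\sup|M^i|^2\le4\E\int Q^{i,\lambda}$, exactly as in Step~5 of Theorem~\ref{thm:jointuniform}. This bounds $N^{-1}\sum_i\E\sup_t|\tilde y_t^i|^2$ by the same quantity. Finally $\sup_t|y^{i,\lambda}_t|^2\le2\sup_t|\tilde y_t^i|^2+2\sup_t|k_t^{i,\lambda}|^2$ gives \eqref{eq:cumulative-stability}. All constants depend only on $L_B$, $T$, $\underline a$ and the moment bounds, so they are independent of $P$, $N$ and $\lambda$.

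The main obstacle is making sure the shift by $k^{i,\lambda}$ does not reintroduce $\Delta A^{i,\lambda}$. In this formulation it does not, because the source drift of $\tilde y^i$ is identically zero. The law coupling must therefore be performed on $y=\tilde y-k$ and not on $\tilde y$, which is why the bound has the form $R_K(P)$ rather than an $L^2(dt)$ source norm.
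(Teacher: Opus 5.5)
Your proposal is correct and follows the paper's own proof essentially step for step. It shifts to $v^i=y^{i,\lambda}+k^{i,\lambda}$ so the source leaves the drift, and performs the law coupling on $y=v-k$, yielding $\E\Wtwo^2(\eta_s^{N,\lambda},\eta_s^P)\le 4V(s)+4H(s)+2\E\Wtwo^2(\bar\eta_s^N,\eta_s^P)$; it then uses the weighted jump It\^o energy estimate with absorption, Doob's inequality for the supremum, and finally $|y|^2\le2|v|^2+2|k|^2$.
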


\begin{proof}
Set
\[
v_t^i=y_t^{i,\lambda}+k_t^{i,\lambda},
\qquad
\zeta^i=g(X_T^{i,N},\mu_T^N)-g(\bar X_T^i,\mu_T^P),
\]
and let $G^i$ denote the difference of the two generators $f$, excluding
the source $A$. If
\[
M_t^i=\sum_j\int_0^t\Delta Z_s^{i,j,\lambda}\,dB_s^j+
\sum_j\int_0^t\int_E\Delta U_s^{i,j,\lambda}(e)\,
\widetilde N^j(ds,de),
\]
subtracting the two backward equations gives exactly
\begin{equation}
v_t^i=\zeta^i+k_T^{i,\lambda}
+\int_t^TG_s^i\,ds-(M_T^i-M_t^i).
\label{eq:v-cumulative}
\end{equation}
The law entering $f$ remains the law of $Y$, not that of $Y+\mathsf K$.

Let
\[
V(s)=\frac1N\sum_i\E|v_s^i|^2,\qquad
H(s)=\frac1N\sum_i\E|k_s^{i,\lambda}|^2,\qquad
\overline Q(s)=\frac1N\sum_i\E Q_s^{i,\lambda}.
\]
Introducing the empirical measure
$\bar\eta_s^N=N^{-1}\sum_i\delta_{\bar Y_s^i}$ and using the atomic
coupling,
\[
\E\Wtwo^2(\eta_s^{N,\lambda},\eta_s^P)
\le4V(s)+4H(s)+2\E\Wtwo^2(\bar\eta_s^N,\eta_s^P).
\]
The moments of order $q>4$ give
$\sup_{P,s}\E\Wtwo^2(\bar\eta_s^N,\eta_s^P)\le CN^{-1/2}$.
The Lipschitz property of $f$ then implies
\[
\frac1N\sum_i\E|G_s^i|^2
\le C_g\{F(s)+V(s)+H(s)+\overline Q(s)+CN^{-1/2}\},
\]
where
\[
F(s)=\frac1N\sum_i\E|X_{s-}^{i,N}-\bar X_{s-}^i|^2+
\E\Wtwo^2(\mu_{s-}^N,\mu_{s-}^P).
\]
By Young's inequality, with $\varepsilon=(2C_g)^{-1}$,
\[
2\frac1N\sum_i\E[v_s^iG_s^i]
\le(2C_g+\tfrac12)V(s)+\tfrac12\overline Q(s)
+C\{F(s)+H(s)+N^{-1/2}\}.
\]

Apply Itô's formula to $e^{\gamma t}|v_t^i|^2$ with
$\gamma=2C_g+2$. After localization, compensation and averaging over $i$,
the martingale terms have zero expectation. The justification follows from
BDG and Cauchy--Schwarz:
\[
\E\!\left[\sup_{t\le T}|v_t^i|[M^i]_T^{1/2}\right]
\le(\E\sup_t|v_t^i|^2)^{1/2}(\E[M^i]_T)^{1/2}<\infty.
\]
The quadratic jump remainder is kept positive before expectation; its
expectation is the Poisson energy. After absorption, we obtain
\[
\sup_tV(t)+\int_0^T\{V(s)+\overline Q(s)\}\,ds
\le C\{\tau_d(N)+N^{-1/2}+R_K(P)\}.
\]
Finally, \eqref{eq:v-cumulative}, Cauchy--Schwarz in time and Doob's
inequality give the same bound for
$N^{-1}\sum_i\E\sup_t|v_t^i|^2$. Since
$|y_t^{i,\lambda}|^2\le2|v_t^i|^2+2|k_t^{i,\lambda}|^2$,
\eqref{eq:cumulative-stability} follows.
\end{proof}

\begin{theorem}[Joint rate strengthened by quadratic occupation]
\label{thm:jointquadratic}
Assume \ref{HF}, \ref{HB}, \ref{HD}, \ref{HqJ}, \ref{Hprod} and
\ref{Hocc2}, and consider the ALA prototype
\eqref{eq:ALAphi}--\eqref{eq:ALAK}. For $0<\lambda\le1$, the error
$\mathcal D_{N,\lambda}^{\rm reg}$ defined in \eqref{eq:regerrorfull}
satisfies
\begin{equation}
\boxed{\;
\mathcal D_{N,\lambda}^{\rm reg}
\le C\left\{
\tau_d(N)^{\theta/(\theta+1)}
+\lambda^2+\lambda^{2\theta}
\right\},
\;}
\label{eq:jointquadratic}
\end{equation}
where $C$ is independent of $P$, $N$, and $\lambda$.

For the selected particle system, which is well posed by
Proposition~\ref{prop:directwell}, the estimate at $\lambda=0$ is obtained
directly from the selected-source version of the cumulative comparison:
the resolvent displacement term is absent, while the occupation and bad
coupling terms are unchanged. Consequently,
\begin{equation}
\mathcal D_{N,0}
\le C\tau_d(N)^{\theta/(\theta+1)}.
\label{eq:jointquadratic0}
\end{equation}
In particular, for every deterministic sequence $\lambda_N\downarrow0$,
\[
\mathcal D_{N,\lambda_N}^{\rm reg}\longrightarrow0
\qquad\text{as }N\to\infty.
\]
The convergence is strong in the quadratic norms entering
\eqref{eq:regerrorfull}, uniformly in the sense of the supremum over
$P\in\Pcal$; no simultaneous almost-sure convergence under all models is
asserted.
\end{theorem}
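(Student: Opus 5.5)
The plan is to avoid estimating the instantaneous source error in $L^2(dt\otimes P)$. Instead I would bound the accumulated difference $k^{i,\lambda}=\mathsf K^{i,N,\lambda}-\bar{\mathsf K}^i$ in $\mathbb S^2$ and feed it into Lemma~\ref{lem:cumulative-stability}. That lemma already gives
\[
\frac1N\sum_i\E\Big[\sup_t|y_t^{i,\lambda}|^2+\int_0^TQ_t^{i,\lambda}dt\Big]\le C\{\tau_d(N)+N^{-1/2}+R_K(P)\}.
\]
Theorem~\ref{thm:Fchaos} controls the forward term by $C\tau_d(N)$, and $N^{-1/2}\le C\tau_d(N)$. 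So everything reduces to showing
\[
\sup_P R_K(P)\le C\{\tau_d(N)^{\theta/(\theta+1)}+\lambda^2+\lambda^{2\theta}\},
\]
which then also supplies the $\mathsf K$-component of $\mathcal D^{\rm reg}_{N,\lambda}$.

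\emph{Pathwise bound for $k$.} Pathwise, $\sup_t|k_t^{i,\lambda}|\le|\varrho|\int_0^T|\Delta A_t^{i,\lambda}|dt$. I would apply the linear (not squared) pointwise inequality \eqref{eq:cumulative-point} with $(x',\mu';x,\mu)=(X_{t-}^{i,N},\mu_{t-}^N;\bar X_{t-}^i,\mu_{t-}^P)$. Integrating in time gives five pieces:
\[
\lambda\int_0^T|A^0(X_{t-}^{i,N},\mu_{t-}^N)|dt,\quad \int_0^T|\Delta X_{t-}^i|dt,\quad \int_0^T\Wtwo(\mu_{t-}^N,\mu_{t-}^P)dt,
\]
\[
O_i:=\int_0^T\one_{\{\operatorname{dist}(\bar X_{t-}^i,\Sigma)\le M_0\lambda+r\}}dt,\qquad B_i:=\int_0^T\one_{\{|\Delta X_{t-}^i|\ge r\}}dt.
\]
I would then square, using $(\sum_{m=1}^5a_m)^2\le5\sum_m a_m^2$, apply Cauchy--Schwarz in time to the first three pieces, and take expectations.
\begin{itemize}
\item The first piece gives $C\lambda^2$, by the growth of $A^0$ and the particle moments \eqref{eq:XNmom}.
\item The second and third give $C\tau_d(N)$, by \eqref{eq:Fchaos} and \eqref{eq:mulaw}.
\item For the occupation piece, each $\bar X^i$ has the law of $X^P$, so \ref{Hocc2} yields $\E O_i^2\le C(M_0\lambda+r)^{2\theta}\le C(\lambda^{2\theta}+r^{2\theta})$. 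For widths larger than $1$, \ref{Hocc2} extends trivially because $O_i\le T$.
\item For the bad-coupling piece, $B_i^2\le TB_i$ and Markov's inequality give $\frac1N\sum_i\E B_i^2\le Tr^{-2}\frac1N\sum_i\E\int_0^T|\Delta X_{t-}^i|^2dt\le C\tau_d(N)r^{-2}$.
\end{itemize}
Balancing $r^{2\theta}=\tau r^{-2}$ with $r=\tau^{1/(2\theta+2)}$ (for $\tau=\tau_d(N)\le1$; small $N$ are absorbed by moments) gives $\tau^{\theta/(\theta+1)}$. Because $\theta\le1$, $\tau\le\tau^{\theta/(\theta+1)}$, which yields the claimed bound on $R_K$. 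Combined with the reduction above, this gives \eqref{eq:jointquadratic}.

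\emph{The case $\lambda=0$ and the limit.} For \eqref{eq:jointquadratic0}, the same argument applies to the selected particle system of Proposition~\ref{prop:directwell}. I would use the $\lambda=0$ version of \eqref{eq:cumulative-point}, which has no resolvent term and occupation width $r$ only. The proof of Lemma~\ref{lem:cumulative-stability} carries over verbatim with $A^0$ in place of $A^\lambda$: it uses only the Lipschitz property of $f$, the backward moments \eqref{eq:Ymom} at $\lambda=0$, and the energy identity. Neither of these steps involves a $\lambda^{-1}$ constant. The final claim, $\mathcal D^{\rm reg}_{N,\lambda_N}\to0$, is immediate since every term on the right of \eqref{eq:jointquadratic} vanishes as $N\to\infty$ and $\lambda_N\downarrow0$.

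\emph{Main obstacle.} The delicate point is the occupation piece. The square of the time integral must be controlled, not merely its mean, which is exactly why \ref{Hocc2} is needed in place of \ref{HoccD}. This is also where the gain comes from: by working with the accumulated source, the bad-coupling piece enters as $B_i^2\le TB_i$, costing only $r^{-2}\tau$ against the occupation term $r^{2\theta}$, instead of being balanced against $r^{\theta}$ as in Theorem~\ref{thm:jointuniform}. Care is also needed to evaluate the occupation indicator on the limiting copies $\bar X^i$, through the transfer built into \eqref{eq:cumulative-point}, so that no occupation estimate for the particles is required.
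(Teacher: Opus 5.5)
Your proposal is correct and follows the paper's proof essentially step for step. Like you, the paper bounds $\sup_t|k_t^{i,\lambda}|$ pathwise by integrating the linear inequality of Lemma~\ref{lem:cumulative-point}, handles the resolvent, state and law pieces by Cauchy--Schwarz in time, the occupation piece by \ref{Hocc2} applied to the copies $\bar X^i$, and the bad-coupling piece by Markov's inequality, then takes $r=\tau_d(N)^{1/(2\theta+2)}$, concludes through Lemma~\ref{lem:cumulative-stability}, and treats $\lambda=0$ the same way. The differences are cosmetic: you use $B_i^2\le TB_i$ where the paper uses $B_i(r)\le T\one_{\{D_i\ge r\}}$, and $\tau\le\tau^{\theta/(\theta+1)}$ actually holds for every $\theta>0$ once $\tau\le1$, while the restriction $\theta\le1$ is what gives $\lambda^2\le\lambda^{2\theta}$.
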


\begin{proof}
Fix $P\in\Pcal$, $i\in\{1,\ldots,N\}$ and write
$\E=\E^{P^{\otimes N}}$. Set
\[
D_i=\sup_{t\le T}|X_t^{i,N}-\bar X_t^i|,
\quad
O_i(R)=\int_0^T
\one_{\{\operatorname{dist}(\bar X_{t-}^i,\Sigma)\le R\}}dt,
\]
\[
B_i(r)=\int_0^T
\one_{\{|X_{t-}^{i,N}-\bar X_{t-}^i|\ge r\}}dt,
\]
and
\[
L_i=\int_0^T\!\left[
\lambda|A^0(X_{t-}^{i,N},\mu_{t-}^N)|
+|X_{t-}^{i,N}-\bar X_{t-}^i|
+\Wtwo(\mu_{t-}^N,\mu_{t-}^P)\right]dt.
\]
Lemma~\ref{lem:cumulative-point}, integrated from $0$ to $t$, gives
\begin{equation}
\sup_{t\le T}|k_t^{i,\lambda}|
\le C|\varrho|\{L_i+O_i(M_0\lambda+r)+B_i(r)\}.
\label{eq:kpath}
\end{equation}

By Cauchy--Schwarz in time, \eqref{eq:A0growth}, the uniform moments,
Theorem~\ref{thm:Fchaos}, and \eqref{eq:mulaw},
\begin{equation}
\sup_P\frac1N\sum_i\E L_i^2
\le C\{\lambda^2+\tau_d(N)\}.
\label{eq:L2cumulative}
\end{equation}
Each copy $\bar X^i$ has under $P^{\otimes N}$ the same law as the
limiting forward process under $P$. Assumption \ref{Hocc2}, extended to
radii larger than $1$ by increasing the constant, implies
\begin{equation}
\E O_i(M_0\lambda+r)^2
\le C(M_0\lambda+r)^{2\theta}
\le C\{\lambda^{2\theta}+r^{2\theta}\}.
\label{eq:O2cumulative}
\end{equation}
On the other hand,
$B_i(r)\le T\one_{\{D_i\ge r\}}$; by Markov's inequality and the forward
rate,
\begin{equation}
\sup_P\frac1N\sum_i\E B_i(r)^2
\le C\tau_d(N)r^{-2}.
\label{eq:B2cumulative}
\end{equation}
No independence between $D_i$ and $O_i$ is used.

Squaring \eqref{eq:kpath} and using
$(a+b+c)^2\le3(a^2+b^2+c^2)$ gives
\begin{equation}
\sup_{P\in\Pcal}R_K(P)
\le C\{\tau_d(N)+\lambda^2+\lambda^{2\theta}
+r^{2\theta}+\tau_d(N)r^{-2}\}.
\label{eq:RKquadratic}
\end{equation}
For $0<\tau_d(N)\le1$, choose
\[
r_N=\tau_d(N)^{1/(2\theta+2)}.
\]
Then
\[
r_N^{2\theta}
=\tau_d(N)r_N^{-2}
=\tau_d(N)^{\theta/(\theta+1)},
\qquad
\tau_d(N)\le\tau_d(N)^{\theta/(\theta+1)}.
\]
The remaining values of $N$ are absorbed into the constant using the
uniform moment bounds. Consequently,
\begin{equation}
\sup_PR_K(P)
\le C\left\{
\tau_d(N)^{\theta/(\theta+1)}
+\lambda^2+\lambda^{2\theta}\right\}.
\label{eq:RKoptimized}
\end{equation}

Lemma~\ref{lem:cumulative-stability}, the fact that
$N^{-1/2}\le C\tau_d(N)$, and the forward bound then give
\eqref{eq:jointquadratic}. We now spell out the selected case
$\lambda=0$.  In Lemma~\ref{lem:cumulative-point}, the Yosida displacement
$\lambda|A^0(X_{t-}^{i,N},\mu_{t-}^N)|$ disappears, so that the analogue of
$L_i$ contains only
\[
|X_{t-}^{i,N}-\bar X_{t-}^i|
+\Wtwo(\mu_{t-}^N,\mu_{t-}^P).
\]
Hence \eqref{eq:L2cumulative} becomes
\[
\sup_P\frac1N\sum_i\E L_i^2\le C\tau_d(N),
\]
whereas \eqref{eq:O2cumulative} is replaced by
$C r^{2\theta}$ and \eqref{eq:B2cumulative} is unchanged.  Therefore
\[
\sup_PR_K^{(0)}(P)
\le C\{\tau_d(N)+r^{2\theta}+\tau_d(N)r^{-2}\}.
\]
Choosing again $r=\tau_d(N)^{1/(2\theta+2)}$ gives
$\sup_PR_K^{(0)}(P)\le C\tau_d(N)^{\theta/(\theta+1)}$.  The selected
particle system is well posed by Proposition~\ref{prop:directwell}, and
Lemma~\ref{lem:cumulative-stability} with the selected sources then yields
\eqref{eq:jointquadratic0}.  No limiting argument
$\lambda\downarrow0$ is used in this step.
Finally, if $\lambda_N\to0$, each of the three terms on the right-hand side
of \eqref{eq:jointquadratic} tends to zero. Since every component of
\eqref{eq:regerrorfull} is nonnegative, convergence in each announced norm
follows directly.
\end{proof}

In particular, when $\theta=1$,
\[
\mathcal D_{N,\lambda}^{\rm reg}
\le C\{\tau_d(N)^{1/2}+\lambda^2\};
\]
for $d<4$ this is $C\{N^{-1/4}+\lambda^2\}$.  These are squared errors.

\begin{corollary}[Independent choice of the regularization parameter]
\label{cor:sharpjoint}
Under the assumptions of Theorem~\ref{thm:jointuniform}, every sequence
$\lambda_N\downarrow0$ satisfies
$\mathcal D_{N,\lambda_N}^{\rm reg}\to0$.
With $\gamma_*:=\min\{2,\theta\}$, any choice such that
$\lambda_N^{\gamma_*}\le C\tau_d(N)^{a_\theta}$ preserves the bound
$C\tau_d(N)^{a_\theta}$. If $\theta=1$, in particular,
\[
\mathcal D_{N,\lambda}^{\rm reg}\le C\{\tau_d(N)^{1/3}+\lambda\}.
\]
\end{corollary}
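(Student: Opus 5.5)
This corollary follows directly from \eqref{eq:jointuniform} in Theorem~\ref{thm:jointuniform}, applied along a sequence $\lambda=\lambda_N$. That theorem's constant $C$ is independent of $P$, $N$ and $\lambda\in(0,1]$, so it can be applied with $\lambda=\lambda_N$ for each $N$. Since $\lambda_N\downarrow0$, we may discard finitely many indices and assume $\lambda_N\le1$. The rate obtained for those finitely many $N$ does not matter for convergence. If the quantitative bound is wanted for all $N$, the remaining indices are absorbed by the uniform quadratic moments of the particle and limiting systems (Lemma~\ref{lem:moments} and Theorem~\ref{thm:well}), exactly as in the proofs above.

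\emph{Convergence.} We have $\mathcal D_{N,\lambda_N}^{\rm reg}\le C\{\tau_d(N)^{a_\theta}+\lambda_N^2+\lambda_N^\theta\}$. Each of the three terms tends to zero: $\tau_d(N)\to0$ by \eqref{eq:tau}, and $a_\theta>0$, $\theta>0$. No coupling between $\lambda_N$ and $N$ is needed. This is the point of the corollary, in contrast with the condition $(1+\lambda_N^{-2})\tau_d(N)\to0$ required in Theorem~\ref{thm:diagonal}.

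\emph{Preserved rate.} For $0<\lambda\le1$ and $\gamma_*=\min\{2,\theta\}$, both $\lambda^2\le\lambda^{\gamma_*}$ and $\lambda^\theta\le\lambda^{\gamma_*}$ hold, because $\lambda\le1$ and both exponents are at least $\gamma_*$. Hence $\lambda^2+\lambda^\theta\le2\lambda^{\gamma_*}$. If $\lambda_N^{\gamma_*}\le C\tau_d(N)^{a_\theta}$, the right-hand side of \eqref{eq:jointuniform} is therefore at most $C'\tau_d(N)^{a_\theta}$, with $C'$ still independent of $P$ and $N$.

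\emph{The case $\theta=1$.} Then $a_\theta=1/3$ and $\lambda^2+\lambda\le2\lambda$ for $\lambda\le1$, which gives $\mathcal D_{N,\lambda}^{\rm reg}\le C\{\tau_d(N)^{1/3}+\lambda\}$.

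There is no real obstacle here. The only care needed is to restrict to $\lambda_N\le1$, so that the theorem applies and the power comparisons are valid.
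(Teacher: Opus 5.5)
Your proposal is correct and follows the paper's proof: both apply \eqref{eq:jointuniform}, use the elementary bound $\lambda^2+\lambda^\theta\le2\lambda^{\gamma_*}$ for $0<\lambda\le1$, and use $\tau_d(N)\to0$. Your explicit restriction to the indices with $\lambda_N\le1$ only makes precise a point the paper leaves implicit.
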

\begin{proof}
For $0<\lambda\le1$,
$\lambda^2+\lambda^\theta\le2\lambda^{\gamma_*}$.
The three assertions follow from \eqref{eq:jointuniform} and
$\tau_d(N)\to0$.
\end{proof}

The decomposition through a regularized limiting copy gives
\[
C\{(1+\lambda^{-2})\tau_d(N)+\lambda^{\gamma_*}\}.
\]
The factor $\lambda^{-2}$ comes from quadratic control of the source using
the Lipschitz constant of $A^\lambda$. For $\theta=1$,
\[
\lambda=\tau_d(N)^{1/3}
\quad\Longrightarrow\quad
\lambda^{-2}\tau_d(N)=\lambda=\tau_d(N)^{1/3}.
\]
The optimized exponent is therefore unchanged. The new bound removes the
constraint $\lambda_N^{-2}\tau_d(N)\to0$ for the continuous-time
approximation; it does not address time-discretization error. Its target
remains the selected limiting solution. For general operators, the rates
in Proposition~\ref{prop:Yrate} retain their own assumptions.

\paragraph{Necessity of the order-$\lambda$ term in the prototype.}
Take $d=1$, $\varrho=1$, $b=\sigma=\beta=0$ and $X_t=\xi$, where $\xi$
is $\mathcal F_0$-measurable with law
$\mu(dx)=\tfrac12\one_{[-1,1]}(x)dx$. Then
\[
\E\int_0^T\one_{\{|X_{t-}|\le r\}}dt=Tr,
\qquad 0<r\le1.
\]
The kernel \eqref{eq:ALAK} satisfies $F_\mu(0)=0$ by symmetry and
\[
F_\mu'(x)=\frac\eta2\int_{-1}^1
\frac{dy}{(1+(x-y)^2)^{3/2}}>0.
\]
Differentiation under the integral sign is permitted because the
derivative is bounded by $\eta/2$ on the integration interval. Hence
$F_\mu(x)\ge0$ for $x\ge0$. If
$0<\lambda\le\min\{1,2/\kappa_+\}$ and
$0<x\le\lambda\kappa_+/2$, then
\[
x/\lambda\in[-\kappa_-,\kappa_+]=A(0,\mu),\qquad
J_\lambda(x,\mu)=0,\qquad A^\lambda(x,\mu)=x/\lambda\le\kappa_+/2.
\]
On the other hand,
$A^0(x,\mu)=\alpha_Ax+F_\mu(x)+\kappa_+\ge\kappa_+$.
Consequently,
\[
\E|A^\lambda(\xi,\mu)-A^0(\xi,\mu)|^2
\ge\frac12\int_0^{\lambda\kappa_+/2}\frac{\kappa_+^2}{4}dx
=\frac{\kappa_+^3}{16}\lambda.
\]
For $f=g=0$ and $a\in\{0,\lambda\}$, the solutions are
\[
Y_t^a=(T-t)A^a(\xi,\mu),\qquad
\mathsf K_t^a=tA^a(\xi,\mu),\qquad Z^a=U^a=0.
\]
They satisfy the integral formulation because
$Y_t^a=\mathsf K_T^a-\mathsf K_t^a$. Therefore,
\[
\E\sup_{t\le T}|Y_t^\lambda-Y_t^0|^2
=T^2\E|A^\lambda(\xi,\mu)-A^0(\xi,\mu)|^2
\ge\frac{T^2\kappa_+^3}{16}\lambda.
\]
The term $\lambda$ cannot be replaced uniformly by $o(\lambda)$ in this
class. This example does not provide a particle lower bound and does not
impose effective jump activity.

\subsection{Prototype consequences under continuous nondegeneracy}

\begin{corollary}[Prototype rate under continuous nondegeneracy]
Under \ref{HF}, \ref{HB}, \ref{HD}, \ref{HqJ}, \ref{Hprod}, and
\ref{HellJ}, for the ALA prototype,
\[
\mathcal D_N\le C\tau_d(N)^{1/3},\qquad
\mathcal D_{N,\lambda}^{\rm reg}\le C\{\tau_d(N)^{1/3}+\lambda\},
\quad 0<\lambda\le1.
\]
The constants are independent of $P,N,\lambda$.
\end{corollary}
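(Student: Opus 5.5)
The plan is to reduce the corollary to results already proved, by checking that \ref{HellJ} supplies the occupation hypothesis \ref{HoccD} with $\theta=1$. Theorem~\ref{thm:occupation-jumps} applies with $\alpha\in\mathcal A^2$; this is available because \ref{HqJ} gives $\alpha\in\mathcal A^q\subset\mathcal A^2$. Under \ref{HF}, \ref{HD} and \ref{HellJ}, it gives
\[
\sup_P\E^P\int_0^T\one_{\{\operatorname{dist}(X_{t-}^P,\Sigma)\le r\}}dt\le Cr,\qquad 0<r\le1 .
\]
This is exactly \ref{HoccD} with $\theta=1$ and $C_{\rm occ}=C$. That constant depends only on $T$, the structural constants, $c_*$, $d$ and the uniform second moments from \eqref{eq:Fsecond}, so it is uniform in $P$ and in particular does not depend on $N$ or $\lambda$.

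For the selected system, all hypotheses of Theorem~\ref{thm:directchaos} are then in force: \ref{HF}, \ref{HB}, \ref{HD}, \ref{HqJ}, \ref{Hprod} and \ref{HoccD}. With $\theta=1$ the exponent is $\theta/(\theta+2)=1/3$, so \eqref{eq:directchaos} yields $\mathcal D_N\le C\tau_d(N)^{1/3}$.

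For the regularized system, Theorem~\ref{thm:jointuniform} holds under the same assumptions. With $a_\theta=1/3$, it gives
\[
\mathcal D^{\rm reg}_{N,\lambda}\le C\{\tau_d(N)^{1/3}+\lambda^2+\lambda\}.
\]
Since $0<\lambda\le1$, we have $\lambda^2\le\lambda$, which gives the stated bound. Equivalently, one can quote Corollary~\ref{cor:sharpjoint} in the case $\theta=1$.

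The only point needing care is that the constant in Theorem~\ref{thm:occupation-jumps} feeds into the constants of Theorems~\ref{thm:directchaos} and~\ref{thm:jointuniform} only through $C_{\rm occ}$. This follows from how $C$ is defined in the paper: it may depend on the occupation parameters but never on $P$, $N$ or $\lambda$. I see no further obstacle.
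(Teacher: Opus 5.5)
Your proposal is correct and follows the paper's proof essentially step for step: Theorem~\ref{thm:occupation-jumps} verifies \ref{HoccD} with $\theta=1$, Theorem~\ref{thm:directchaos} then gives the selected bound, and Theorem~\ref{thm:jointuniform} together with $\lambda^2\le\lambda$ on $(0,1]$ gives the regularized bound. You also spell out two details the paper leaves implicit, and both are correct: $\mathcal A^q\subset\mathcal A^2$ follows by H\"older on $[0,T]\times\Omega$, and the occupation constant is uniform in $P$, $N$ and $\lambda$.
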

\begin{proof}
Theorem~\ref{thm:occupation-jumps} gives
$\sup_P\E^P\int_0^T\one_{\{\operatorname{dist}(X_{s-}^P,\Sigma)\le r\}}ds
\le Cr$ : it verifies \ref{HoccD} with $\theta=1$.
Theorem~\ref{thm:directchaos} therefore gives the first bound.
Theorem~\ref{thm:jointuniform} gives
$\mathcal D_{N,\lambda}^{\rm reg}\le
C\{\tau_d(N)^{1/3}+\lambda^2+\lambda\}$; since
$\lambda^2\le\lambda$ on $(0,1]$, the second bound follows.
This corollary uses the structure of the prototype; the preceding facewise
rates remain intended for general operators.
\end{proof}

\paragraph{Brownian--Poisson example with law dependence.}
Take $d=m=r$, $D=\R^d$, $\Pi=\ell\vartheta$, where $\ell>0$ and
$\vartheta$ is a probability measure supported on $0<|e|\le1$. Choose
\[
b(t,x,\mu,\alpha)=-x+\int y\mu(dy)+\alpha,
\qquad \sigma=I_d,\qquad \beta(t,x,\mu,\alpha,e)=e.
\]
For fixed vectors $v,w\in\R^d$, a scalar $c$, and a function
$h\in L^2_\Pi$, set
\begin{align*}
g(x,\mu)&=\langle v,x\rangle,\\
f(t,x,\mu,y,z,u,\eta,\alpha)
&=cy+\langle v,x\rangle+zw+\int_Eu(e)h(e)\Pi(de)+\int a\eta(da).
\end{align*}
All law dependencies are Lipschitz in $W_2$, since
$|\int y\mu(dy)-\int y\nu(dy)|\le W_2(\mu,\nu)$.
Cauchy--Schwarz gives
$|\int(u-u')h\,d\Pi|\le\|h\|_\Pi\|u-u'\|_\Pi$.
Finally, $\int|\beta|^q d\Pi\le\ell$ and
$(\sigma a^P\sigma^\top)_{kk}\ge\lambda_{\min}(\underline a)$.
With data and controls satisfying \ref{HqJ} and \ref{Hprod}, the ALA
prototype therefore satisfies the assumptions of the preceding corollary.
This example includes a nonzero jump term and law dependence in both
equations.

\section{Occupation generated by a purely discontinuous component}
\label{sec:purejump}

Condition \ref{HellJ} uses the continuous diffusion. We now verify
\ref{HoccD} through a different mechanism, for an affine class with additive
noise. The result below includes $\sigma=0$, and its constants do not
deteriorate when an additive diffusion tends to zero.

\begin{hypothesis}[Truncated stable small jumps and affine dynamics]
\label{HtruncJ}
Take $m=r=d$, $D=\R^d$, $\upsilon\in(1,2)$, and
$c_1,\ldots,c_d>0$. With $(\mathbf e_j)_{j\le d}$ denoting the canonical
basis, the common Lévy measure is defined, for every nonnegative function
$v$, by
\begin{equation}
\int_Ev(e)\Pi(de)=\sum_{j=1}^dc_j
\int_{0<|z|\le1}v(z\mathbf e_j)|z|^{-1-\upsilon}dz.
\label{eq:truncatedPi}
\end{equation}
Under each $P$, the data $\xi$, $B$, and $N$ are mutually independent;
$N$ is a Poisson random measure and $B$ is a continuous Gaussian martingale
with independent increments and deterministic covariance
$\int_0^ta_s^Pds$, satisfying \eqref{eq:qv}. The filtration is the augmented
filtration generated by these data, and the representation properties of the
framework and \ref{Hprod} are maintained. The common initial law has a
moment of order $q>4$.

Fix $\kappa\ge0$, a deterministic map
$H:\mathcal P_2(\R^d)\to\R^d$ such that
$|H(\mu)-H(\nu)|\le L_H\Wtwo(\mu,\nu)$, and a deterministic control
$h\in L^q([0,T];\R^d)$. For $\varepsilon\in[0,1]$, consider
\begin{align}
X_t^{\varepsilon,P}={}&\xi+
\int_0^t[-\kappa X_{s-}^{\varepsilon,P}+H(\mu_s^{\varepsilon,P})+h_s]ds
+\varepsilon B_t+L_t^P,\label{eq:affinelevy}\\
L_t^P={}&\int_0^t\int_Ee\widetilde N^P(ds,de),
\qquad \mu_t^{\varepsilon,P}=\Lcal^P(X_t^{\varepsilon,P}).
\nonumber
\end{align}
The symbol $\varepsilon$ denotes only the Brownian amplitude, whereas
$\upsilon$ denotes the small-jump index. The backward coefficients satisfy
\ref{HB}, and the operator is the ALA prototype.
\end{hypothesis}
The compensator remains fixed throughout the family. Variation in
$\varepsilon$ is a variation of coefficients with common constants, not a
variation of the compensator. A family of deterministic covariances may be
non-dominated as in the initial framework; this non-domination plays no role
in the density proof.

\begin{theorem}[Marginal densities and occupation without continuous ellipticity]
\label{thm:levyoccupation}
Under \ref{HtruncJ}, each coordinate of $X_t^{\varepsilon,P}$, $t>0$,
has a density $p_{t,j}^{\varepsilon,P}$ satisfying
\begin{align}
\sup_{\varepsilon\in[0,1],P,j}\|p_{t,j}^{\varepsilon,P}\|_\infty
&\le C_T(1+t^{-1/\upsilon}),\qquad 0<t\le T,
\label{eq:levydensity}\\
\sup_{\varepsilon\in[0,1],P}\E^P\int_0^T
\one_{\{\operatorname{dist}(X_{t-}^{\varepsilon,P},\Sigma)\le r\}}dt
&\le C_T r,\qquad 0<r\le1.
\label{eq:levyocc}
\end{align}
The constants depend on $T,d,\kappa,\upsilon$, and the $c_j$, but not on
the Brownian amplitude, the covariance $a^P$, the deterministic shift, or
the initial law.
\end{theorem}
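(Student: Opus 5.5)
The plan is to exploit the affine, additive structure of \eqref{eq:affinelevy}. First freeze the law: by Theorem~\ref{thm:Fwell} the flow $t\mapsto\mu_t^{\varepsilon,P}$ is deterministic and continuous, so $\beta_s:=H(\mu_s^{\varepsilon,P})+h_s$ is a deterministic function in $L^1([0,T])$. Variation of constants then gives, coordinatewise,
\[
X_t^{\varepsilon,P,j}=e^{-\kappa t}\xi_j+\int_0^te^{-\kappa(t-s)}\beta_s^j\,ds
+\varepsilon\int_0^te^{-\kappa(t-s)}dB_s^j+\int_0^te^{-\kappa(t-s)}dL_s^{P,j}.
\]
Under \ref{HtruncJ} the four terms are independent: $\xi$, $B$ and $N$ are independent, the second term is deterministic, the Brownian term is Gaussian because $a^P$ is deterministic, and the last term is an infinitely divisible variable. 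Moreover, by \eqref{eq:truncatedPi} the Lévy measure is carried by the coordinate axes, so the $j$th coordinate of $L^P$ is a one-dimensional truncated $\upsilon$-stable Lévy process with Lévy density $c_j|z|^{-1-\upsilon}\one_{\{0<|z|\le1\}}$.

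Next I bound densities. Convolving an arbitrary law with an independent variable having a bounded density yields a density with the same sup-norm bound. It is therefore enough to control $\|\cdot\|_\infty$ of the density of $S_t^j:=\int_0^te^{-\kappa(t-s)}dL_s^{P,j}$, uniformly in everything except $t$. This also explains why $\varepsilon$, $a^P$, the shift and the initial law do not enter the constants. For the bound I use Fourier inversion, $\|p\|_\infty\le(2\pi)^{-1}\int|\widehat p(u)|du$. The Lévy–Khintchine formula for the Wiener-type integral gives
\[
|\E e^{iuS_t^j}|=\exp\Bigl(-c_j\int_0^t\int_{0<|z|\le1}\bigl(1-\cos(ue^{-\kappa(t-s)}z)\bigr)|z|^{-1-\upsilon}dz\,ds\Bigr).
\]
For $|v|\ge1$ the inner integral is at least $c_\upsilon|v|^\upsilon$: substitute $w=vz$, use $|w|\le|v|$, and note that $\int_{|w|\le1}(1-\cos w)|w|^{-1-\upsilon}dw>0$. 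Since $e^{-\kappa(t-s)}\ge e^{-\kappa T}$, the exponent is at least $c\,t|u|^\upsilon$ for $|u|\ge e^{\kappa T}$. Splitting the inversion integral at $|u|=e^{\kappa T}$ gives
\[
\int|\widehat p|\,du\le 2e^{\kappa T}+\int_{\R}e^{-ct|u|^\upsilon}du=C_T(1+t^{-1/\upsilon}),
\]
which is \eqref{eq:levydensity}. This integrability of $\widehat p$ also guarantees that the density exists for every $t>0$.

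Finally, \eqref{eq:levyocc} follows from Proposition~\ref{prop:density}, because $\int_0^T(1+t^{-1/\upsilon})dt<\infty$ when $\upsilon>1$. Left limits are harmless, since $X_{t-}=X_t$ a.s. at deterministic times by Lemma~\ref{lem:lawcontinuous}; Tonelli then transfers the statement to the time integral. The main obstacle is the lower bound on the Lévy exponent. It must be uniform in $u$ large and hold with the truncation at $|z|\le1$ and the decaying weights $e^{-\kappa(t-s)}$. I would first try the direct substitution described above; if the truncation causes trouble near $|v|\approx1$, I would instead restrict $z$ to the range $|z|\le1/|v|$ and use $1-\cos w\ge w^2/4$ for $|w|\le1$.
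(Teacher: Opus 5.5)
Your proposal is correct and follows the paper's proof essentially step for step. The shared steps are the variation-of-constants representation with independent initial, Gaussian and jump parts, and the Fourier lower bound on the truncated stable exponent via the substitution $w=vz$ with the split at $|u|=e^{\kappa T}$. They also include the convolution sup-norm bound and the union bound with Tonelli, which is exactly Proposition~\ref{prop:density}. The paper additionally checks \ref{HF} and \ref{HqJ} for the affine coefficients, and it justifies the L\'evy--Khintchine formula and the Fourier inversion by truncation and Gaussian smoothing, whereas you invoke these as standard facts.
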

\begin{proof}
\emph{Step 1: construction of the integrals and moment bounds.}
For every $p>\upsilon$, the measure \eqref{eq:truncatedPi} satisfies
\[
\int_E|e|^p\Pi(de)=\frac{2\sum_jc_j}{p-\upsilon}<\infty,
\qquad \Pi(E)=\infty.
\]
In particular, the noise is square-integrable, has infinite activity, and
has jump moments of order $q$. The integral defining $L$ is the limit in
$\mathbb S^2$ of the integrals over $|e|>a$, as $a\downarrow0$, since
Doob's inequality and the isometry give
\[
\E\sup_{s\le T}|L_s-L_s^{(a)}|^2
\le4T\int_{|e|\le a}|e|^2\Pi(de)
=\frac{8T\sum_jc_j}{2-\upsilon}a^{2-\upsilon}.
\]
The growth of $H$ follows from
$|H(\mu)|\le|H(\delta_0)|+L_HM_2(\mu)$.
Hypotheses \ref{HF} and \ref{HqJ} are therefore satisfied by
$b(t,x,\mu,\alpha)=-\kappa x+H(\mu)+\alpha$, $\sigma=\varepsilon I_d$, $\beta(e)=e$,
with the control $\alpha=h$ and common constants for
$0\le\varepsilon\le1$. In particular,
$b(t,0,\delta_0,0)=H(\delta_0)$ is bounded independently of $t$; the control $h$ satisfies the integrated
condition in \ref{HqJ}. Theorem~\ref{thm:Fwell} constructs
$X^{\varepsilon,P}$ and gives
\[
\sup_{\varepsilon,P}\E^P\sup_{t\le T}|X_t^{\varepsilon,P}|^q<\infty.
\]

\emph{Step 2: explicit integral representation.}
Apply the product rule to $e^{\kappa t}X_t^{\varepsilon,P}$. Since the
factor is deterministic, continuous, and of finite variation, it creates no
covariation term and no additional jump term. We obtain
\begin{align}
X_t^{\varepsilon,P}
={}&e^{-\kappa t}\xi+m_t^{\varepsilon,P}
+\varepsilon\int_0^te^{-\kappa(t-s)}dB_s
+V_t^P,\label{eq:affineexplicit}\\
m_t^{\varepsilon,P}
={}&\int_0^te^{-\kappa(t-s)}[H(\mu_s^{\varepsilon,P})+h_s]ds,
\qquad
V_t^P=\int_0^t\int_Ee^{-\kappa(t-s)}e\widetilde N^P(ds,de).
\nonumber
\end{align}
The term $m_t^{\varepsilon,P}$ is deterministic: the unconditional law
$\mu_s^{\varepsilon,P}$ is deterministic for each fixed model. The three
random terms in \eqref{eq:affineexplicit} are independent. This independence
is an assumption of \ref{HtruncJ}; it is not deduced merely from
orthogonality of the martingales.

\emph{Step 3: Fourier damping by the small jumps.}
For the $j$th coordinate of $L$, the real characteristic exponent is
\[
\psi_j(u)=2c_j\int_0^1(1-\cos(uz))z^{-1-\upsilon}dz.
\]
This formula first follows from that of the truncated compound Poisson
process. Passing to the limit $a\downarrow0$ is justified by
$1-\cos(uz)\le u^2z^2/2$ and by $L^2$ convergence of the truncated noise.
With
\[
b_j:=2c_j\int_0^1(1-\cos v)v^{-1-\upsilon}dv>0,
\]
the integral is finite because $1-\cos v\le v^2/2$ and $\upsilon<2$.
For $|u|\ge1$, the change of variable $v=|u|z$ gives
\[
\psi_j(u)=2c_j|u|^\upsilon
\int_0^{|u|}(1-\cos v)v^{-1-\upsilon}dv
\ge b_j|u|^\upsilon.
\]
The characteristic function of $V_t^{P,j}$ is
\[
\widehat p_{V,t,j}(u)=
\exp\left(-\int_0^t\psi_j(e^{-\kappa(t-s)}u)ds\right).
\]
To justify this identity, write it first for the truncated Poisson integral
and then pass to the limit using the domination
$|u|^2e^{-2\kappa(t-s)}z^2/2$ under $ds\otimes\Pi$ and the $L^2$ convergence of the integrals. If
$R_T=e^{\kappa T}$ and $|u|\ge R_T$, then
\[
\int_0^t\psi_j(e^{-\kappa(t-s)}u)ds
\ge b_je^{-\upsilon\kappa T}t|u|^\upsilon.
\]
With $b_*=(\min_jb_j)e^{-\upsilon\kappa T}>0$, it follows that
\begin{align}
\int_\R|\widehat p_{V,t,j}(u)|du
&\le2R_T+2\int_0^\infty e^{-b_*tu^\upsilon}du\nonumber\\
&=2R_T+2(b_*t)^{-1/\upsilon}
\int_0^\infty e^{-v^\upsilon}dv.
\label{eq:Fourierbound}
\end{align}
The final integral is finite. Fourier inversion yields a bounded continuous
density $p_{V,t,j}$, with
$\|p_{V,t,j}\|_\infty\le(2\pi)^{-1}
\|\widehat p_{V,t,j}\|_{L^1}$.
This inversion can be justified by first convolving the law of $V$ with a
Gaussian distribution of variance $a>0$: its Fourier integrand is
$\widehat p_{V,t,j}(u)e^{-au^2/2}$, dominated by the integrable bound in \eqref{eq:Fourierbound}. The densities
converge uniformly as $a\downarrow0$. For every continuous compactly
supported function, uniform convergence of the densities together with weak
convergence of the convolutions identifies this limit as the density of $V$.

\emph{Step 4: convolution and occupation.}
The law of the $j$th coordinate in \eqref{eq:affineexplicit} is the
convolution of $p_{V,t,j}$ with a probability measure, followed by a
translation. For every bounded density $p$ and probability measure $\nu$,
$\|p*\nu\|_\infty\le\|p\|_\infty$, by integrating $|p(x-y)|\le\|p\|_\infty$. Thus
\eqref{eq:Fourierbound} proves \eqref{eq:levydensity}, including when
$\varepsilon=0$.

At every deterministic time $t$, $X_t=X_{t-}$ almost surely, by the
mean-square continuity established in the initial framework. The union bound
and \eqref{eq:levydensity} give
\[
P(\operatorname{dist}(X_{t-}^{\varepsilon,P},\Sigma)\le r)
\le\sum_j\int_{-r}^rp_{t,j}^{\varepsilon,P}(x)dx
\le2dC_T r(1+t^{-1/\upsilon}).
\]
Tonelli's theorem applies to this nonnegative integrand. Since
$\upsilon>1$,
\[
\int_0^T(1+t^{-1/\upsilon})dt
=T+\frac{\upsilon}{\upsilon-1}T^{1-1/\upsilon}<\infty.
\]
This proves \eqref{eq:levyocc}. The initial law need not have a density,
even if it is supported on an interface.
\end{proof}

\begin{proposition}[Second-moment occupation in the affine class]
\label{prop:affineocc2}
Under \ref{HtruncJ}, there exists $C<\infty$, independent of
$\varepsilon\in[0,1]$, $P\in\Pcal$, $s\in[0,T]$, and $r\in(0,1]$, such
that, for every $P\in\Pcal$ and every deterministic $s\in[0,T]$,
\[
\E^P\left[\int_s^T
\one_{\{\operatorname{dist}(X_{t-}^{\varepsilon,P},\Sigma)\le r\}}dt
\,\middle|\,\mathcal F_s^P\right]\le Cr
\quad P\text{-a.s.}
\]
Consequently, \ref{Hocc2} holds with $\theta=1$, uniformly in
$\varepsilon$ and $P$.
\end{proposition}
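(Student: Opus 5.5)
The plan is to reduce the conditional occupation bound to a conditional density bound for the coordinates of $X_t^{\varepsilon,P}$ given $\mathcal F_s^P$. Once that bound holds uniformly, Lemma~\ref{lem:conditional-occ2} with $C_1=Cr$ and $\theta=1$ turns it into \ref{Hocc2}. Fix $P$, $\varepsilon$ and $s$, and write $X=X^{\varepsilon,P}$. For $t>s$, apply the product rule to $e^{\kappa t}X_t$ on $[s,t]$, exactly as in Step~2 of the proof of Theorem~\ref{thm:levyoccupation}. This gives
\[
X_t=e^{-\kappa(t-s)}X_s+\int_s^te^{-\kappa(t-u)}[H(\mu_u)+h_u]du
+\varepsilon\int_s^te^{-\kappa(t-u)}dB_u+V_{s,t},
\]
where $V_{s,t}=\int_s^t\int_Ee^{-\kappa(t-u)}e\,\widetilde N^P(du,de)$. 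The drift term is deterministic, because the law flow is deterministic for a fixed model, and $X_s$ is $\mathcal F_s^P$-measurable.

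Under \ref{HtruncJ}, the increments of $B$ and $N$ on $(s,t]$ are independent of $\mathcal F_s^P$. Here $B$ is Gaussian with independent increments and deterministic covariance, $N$ is Poisson, and the filtration is the augmentation of independent data; augmentation does not destroy independence from future increments. Hence, conditionally on $\mathcal F_s^P$, the $j$th coordinate of $X_t$ is a translate of the convolution of the law of $V_{s,t}^j$ with the law of the Brownian term. Step~3 of Theorem~\ref{thm:levyoccupation}, with $t$ replaced by $t-s$, gives the characteristic function $\exp(-\int_0^{t-s}\psi_j(e^{-\kappa v}u)dv)$. The same Fourier damping argument then yields a density bounded by $C_T(1+(t-s)^{-1/\upsilon})$, with a constant independent of $s$, $\varepsilon$, $P$ and the conditioning value. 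Because $\|p*\nu\|_\infty\le\|p\|_\infty$, the random shift $e^{-\kappa(t-s)}X_s$ and the Gaussian part do not spoil the bound.

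To state this rigorously, I would write the conditional probability as $\Phi(X_s)$, where
\[
\Phi(x)=P\bigl(\operatorname{dist}(e^{-\kappa(t-s)}x+m+G+V,\Sigma)\le r\bigr),
\]
with $(G,V)$ independent of $\mathcal F_s^P$; the freezing lemma justifies this representation. The union bound over coordinates gives
\[
P\bigl(\operatorname{dist}(X_{t-},\Sigma)\le r\mid\mathcal F_s^P\bigr)\le 2dC_Tr\bigl(1+(t-s)^{-1/\upsilon}\bigr)\quad\text{a.s.},
\]
where $X_{t-}=X_t$ a.s. at deterministic $t$ by Lemma~\ref{lem:lawcontinuous}. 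Integrating in $t$ over $(s,T]$ uses the conditional Fubini theorem for the nonnegative integrand, choosing a jointly measurable version of the conditional probabilities. Since $\upsilon>1$, we get $\int_s^T(1+(t-s)^{-1/\upsilon})dt\le T+\frac{\upsilon}{\upsilon-1}T^{1-1/\upsilon}$, which gives the claimed bound $Cr$.

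The main obstacle is this measure-theoretic justification: the independence of future increments from the augmented $\mathcal F_s^P$, and a null set that is uniform in $t$ for the integrated conditional expectation. For the latter, I would apply conditional Tonelli, $\E[\int_s^TI_t\,dt\mid\mathcal F_s]=\int_s^T\E[I_t\mid\mathcal F_s]dt$, using a regular version obtained from the product structure $\Phi(X_s)$ itself. This avoids selecting versions for each $t$ separately.
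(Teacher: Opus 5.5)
Your proposal is correct and follows essentially the same route as the paper: you solve the affine equation from time $s$, use the independence of the future Brownian and Poisson increments from $\mathcal F_s^P$ to obtain a conditional density bound $C\{1+(t-s)^{-1/\upsilon}\}$ from the truncated-stable Fourier damping, integrate with conditional Tonelli, and conclude with Lemma~\ref{lem:conditional-occ2}. Two small remarks: in that lemma you want $C_1=C$ rather than $C_1=Cr$, since the lemma already carries the factor $r^\theta$; and the only differences from the paper are cosmetic, namely that you absorb the Gaussian part through $\|p*\nu\|_\infty\le\|p\|_\infty$ instead of discarding its characteristic-function factor of modulus at most one, and that you make the freezing-lemma representation $\Phi(X_s)$ explicit.
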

\begin{proof}
Fix $P\in\Pcal$, $\varepsilon\in[0,1]$, and a deterministic
$s\in[0,T]$. Solving the affine equation from time $s$ gives, for
$t\ge s$,
\[
X_t^{\varepsilon,P}
=e^{-\kappa(t-s)}X_s^{\varepsilon,P}
+m_{s,t}^{\varepsilon,P}
+\varepsilon\int_s^t e^{-\kappa(t-u)}\,dB_u
+\int_s^t\int_E e^{-\kappa(t-u)}e\,\widetilde N^P(du,de),
\]
where
\[
m_{s,t}^{\varepsilon,P}
=\int_s^t e^{-\kappa(t-u)}
\{H(\mu_u^{\varepsilon,P})+h_u\}\,du
\]
is deterministic under the fixed model $P$.  By \ref{HtruncJ}, the
increments of $B$ and $N$ on $(s,t]$ are independent of
$\mathcal F_s^P$, and the two future noise terms are independent of each
other.  Hence, conditionally on $\mathcal F_s^P$, the first two terms above
form only a translation, while the conditional characteristic function of
the future stochastic increment factors into its Gaussian and jump parts.

For the $j$th coordinate, discard the Gaussian factor, whose modulus is at
most one.  The jump factor has modulus
\[
\exp\{-\Psi_{s,t,j}^P(u)\},
\]
where, by symmetry of the truncated stable measure,
\[
\Psi_{s,t,j}^P(u)
=2c_j\int_s^t\int_0^1
\{1-\cos(u e^{-\kappa(t-v)}z)\}z^{-1-\upsilon}\,dz\,dv .
\]
Since $e^{-\kappa T}\le e^{-\kappa(t-v)}\le1$, the same change of variables
as in Theorem~\ref{thm:levyoccupation} yields constants $c_0,C_0>0$,
depending only on the common structural parameters, such that
\[
\Psi_{s,t,j}^P(u)\ge c_0(t-s)|u|^\upsilon
\qquad\text{for }|u|\ge C_0 .
\]
Therefore the conditional characteristic function is integrable and
Fourier inversion gives a conditional density
$p_{s,t,j}^{\varepsilon,P}(\cdot,\omega)$ satisfying
\[
\|p_{s,t,j}^{\varepsilon,P}(\cdot,\omega)\|_\infty
\le C\{1+(t-s)^{-1/\upsilon}\},
\qquad t>s,
\]
for $P$-almost every $\omega$, with $C$ independent of
$P,\varepsilon,s,t$, and $j$.  The random
$\mathcal F_s^P$-measurable translation does not affect this bound.

Since
$\{\operatorname{dist}(x,\Sigma)\le r\}\subset
\bigcup_{j=1}^d\{|x_j|\le r\}$, the union bound gives
\[
P\!\left(
\operatorname{dist}(X_{t-}^{\varepsilon,P},\Sigma)\le r
\,\middle|\,\mathcal F_s^P\right)
\le Cr\{1+(t-s)^{-1/\upsilon}\}
\quad P\text{-a.s.}
\]
for $dt$-almost every $t>s$; replacing $X_t$ by $X_{t-}$ does not change
the time integral. Conditional Tonelli and $\upsilon>1$ now yield
\[
\E^P\left[\int_s^T
\one_{\{\operatorname{dist}(X_{t-}^{\varepsilon,P},\Sigma)\le r\}}dt
\,\middle|\,\mathcal F_s^P\right]
\le Cr\int_s^T\{1+(t-s)^{-1/\upsilon}\}\,dt
\le C_T r .
\]
The constant is uniform in the displayed parameters.  Applying
Lemma~\ref{lem:conditional-occ2} to the occupation indicator gives
\[
\E^P\left(
\int_0^T
\one_{\{\operatorname{dist}(X_{t-}^{\varepsilon,P},\Sigma)\le r\}}dt
\right)^2
\le C r^2,
\]
which is \ref{Hocc2} with $\theta=1$.
\end{proof}

\begin{corollary}[Strengthened propagation and regularization in the affine class]
\label{cor:purejumpchaos2}
Under \ref{HtruncJ}, \ref{HB}, \ref{HD}, \ref{Hprod}, and the common
moment conditions with $q>4$,
\[
\sup_{\varepsilon\in[0,1]}\mathcal D_N^\varepsilon
\le C\tau_d(N)^{1/2},
\qquad
\sup_{\varepsilon\in[0,1]}\mathcal D_{N,\lambda}^{\rm reg,\varepsilon}
\le C\{\tau_d(N)^{1/2}+\lambda^2\}.
\]
Thus the choice $\lambda_N=\tau_d(N)^{1/4}$ balances the two terms. In
dimension $d<4$ the bound is $CN^{-1/4}$; in dimension $d=4$ it is
$C\{N^{-1/2}\log(1+N)\}^{1/2}$; and in dimension $d>4$ it is $CN^{-1/d}$.
\end{corollary}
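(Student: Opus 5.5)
The plan is to obtain the corollary from Theorem~\ref{thm:jointquadratic} with $\theta=1$. We check its hypotheses uniformly in the Brownian amplitude $\varepsilon\in[0,1]$, and then substitute the definition \eqref{eq:tau} of $\tau_d(N)$.

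\emph{Hypotheses.} Step~1 of the proof of Theorem~\ref{thm:levyoccupation} shows that the affine coefficients
\[
b=-\kappa x+H(\mu)+\alpha,\qquad \sigma=\varepsilon I_d,\qquad \beta(e)=e
\]
satisfy \ref{HF} and \ref{HqJ}, with constants common to all $\varepsilon\in[0,1]$ and all $P\in\Pcal$. There $\alpha=h$ is deterministic, so it is trivially product-compatible. The remaining hypotheses are assumed directly: \ref{HB}, \ref{HD} (for the ALA prototype, measurability and the growth bound \eqref{eq:A0growth} of $A^0$ have already been established), and \ref{Hprod}. Proposition~\ref{prop:affineocc2} supplies \ref{Hocc2} with $\theta=1$, and its constant $C_{\rm occ,2}$ is independent of $\varepsilon$ and $P$.

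\emph{Uniformity of the constant.} The point to check carefully is that the constant $C$ in Theorem~\ref{thm:jointquadratic} does not degenerate as $\varepsilon\downarrow0$. That theorem, together with Lemma~\ref{lem:cumulative-stability}, Theorem~\ref{thm:Fchaos} and Lemma~\ref{lem:moments}, involves only the following quantities:
\begin{itemize}
\item the structural constants $L_F,C_F,L_B,C_B$, the ellipticity bounds $\underline a,\overline a$, and the parameters of the prototype;
\item the uniform $q$-moments of the forward and backward states;
\item $C_{\rm occ,2}$.
\end{itemize}
None of them requires nondegeneracy of $\sigma$. The condition \ref{HellJ} is never used, because occupation now comes from the small jumps. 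Enlarging the family to $\{(\varepsilon,P)\}$ therefore keeps every constant uniform. We may then take the supremum over $\varepsilon$ to obtain
\[
\sup_{\varepsilon}\mathcal D_{N,\lambda}^{\rm reg,\varepsilon}\le C\{\tau_d(N)^{1/2}+\lambda^2\},
\]
since with $\theta=1$ the term $\lambda^{2\theta}$ equals $\lambda^2$. The selected estimate \eqref{eq:jointquadratic0} gives in the same way
\[
\sup_{\varepsilon}\mathcal D_N^\varepsilon\le C\tau_d(N)^{1/2}.
\]

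\emph{Choice of $\lambda_N$ and explicit rates.} Taking $\lambda_N=\tau_d(N)^{1/4}$ gives $\lambda_N^2=\tau_d(N)^{1/2}$, so the two terms balance. Inserting \eqref{eq:tau} then yields:
\begin{itemize}
\item $N^{-1/4}$ for $d<4$;
\item $\{N^{-1/2}\log(1+N)\}^{1/2}$ for $d=4$;
\item $(N^{-2/d})^{1/2}=N^{-1/d}$ for $d>4$.
\end{itemize}
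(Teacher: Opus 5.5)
Your proposal is correct and follows the paper's proof. Like the paper, you verify \ref{Hocc2} with $\theta=1$ via Proposition~\ref{prop:affineocc2}, apply Theorem~\ref{thm:jointquadratic} (with \eqref{eq:jointquadratic0} for the selected case $\lambda=0$), and substitute \eqref{eq:tau}; your explicit check that every constant is uniform in $\varepsilon$ usefully spells out what the paper leaves implicit, but note that $\varepsilon$ changes the coefficient $\sigma$ rather than the model. The theorem should therefore be applied for each fixed $\varepsilon$ with common structural constants, not to an ``enlarged'' model family $\{(\varepsilon,P)\}$.
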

\begin{proof}
Proposition~\ref{prop:affineocc2} verifies \ref{Hocc2} with $\theta=1$.
It suffices to apply Theorem~\ref{thm:jointquadratic} and then
\eqref{eq:tau}. The case $\lambda=0$ is included through
\eqref{eq:jointquadratic0}.
\end{proof}

\begin{corollary}[Direct propagation and regularization without diffusion]
\label{cor:purejumpchaos}
Under \ref{HtruncJ}, \ref{HB}, \ref{HD}, \ref{Hprod}, and the common
moment conditions with $q>4$, the argument based only on the first moment of
occupation gives, for the selected particle systems associated with
\eqref{eq:affinelevy},
\begin{equation}
\sup_{\varepsilon\in[0,1]}\mathcal D_N^\varepsilon
\le C\tau_d(N)^{1/3}.
\label{eq:purejumpchaos}
\end{equation}
For the regularized systems, the choice $\lambda_N=\tau_d(N)^{1/3}$
gives the same quadratic upper bound uniformly in $\varepsilon$. In
dimension $d<4$, it is $CN^{-1/6}$; in dimension $d=4$,
$C(N^{-1/2}\log(1+N))^{1/3}$; and in dimension $d>4$, $CN^{-2/(3d)}$.
\end{corollary}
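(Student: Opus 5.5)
The proof will be a direct verification of the hypotheses of Theorem~\ref{thm:directchaos} and Theorem~\ref{thm:jointuniform}, followed by substitution of $\theta=1$ and a choice of $\lambda_N$.

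\emph{Step 1: the occupation hypothesis.} By Theorem~\ref{thm:levyoccupation}, estimate \eqref{eq:levyocc} holds uniformly in $\varepsilon\in[0,1]$ and $P$. This is exactly \ref{HoccD} with $\theta=1$. The constant $C_{\rm occ}$ depends only on $T,d,\kappa,\upsilon,(c_j)$.

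\emph{Step 2: the structural hypotheses.} Step~1 of the proof of Theorem~\ref{thm:levyoccupation} shows that \ref{HF} and \ref{HqJ} hold with constants common to all $\varepsilon\in[0,1]$. The relevant data are $b=-\kappa x+H(\mu)+\alpha$, $\sigma=\varepsilon I_d$, $\beta(e)=e$, the deterministic control $\alpha=h\in L^q$, and the initial law with a $q$-th moment. A deterministic control is product-compatible, so \ref{Hprod} holds as well. The ALA prototype satisfies \ref{HD}.

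It remains to check that every constant entering Theorems~\ref{thm:Fchaos}, \ref{thm:directchaos} and \ref{thm:jointuniform} is uniform in $\varepsilon$. These constants are the Lipschitz and growth constants, the bound $\lambda_{\max}(\overline a)$, the $q$-moments from \eqref{eq:Fq} and \eqref{eq:Ymom}, and $C_{\rm occ}$. Each is uniform because the $\varepsilon$-dependence enters only through $\sigma$, with $\|\sigma\|\le 1$. The cleanest way to record this is to treat $\varepsilon$ as an additional model index: the enlarged family $\{(P,\varepsilon)\}$ then satisfies all the hypotheses with common constants. This uniformity check is the only point requiring care, and it consists of bookkeeping only.

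\emph{Step 3: the selected system.} Theorem~\ref{thm:directchaos} with $\theta=1$ gives $\theta/(\theta+2)=1/3$. Hence $\mathcal D_N^\varepsilon\le C\tau_d(N)^{1/3}$ uniformly in $\varepsilon$, which is \eqref{eq:purejumpchaos}.

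\emph{Step 4: the regularized system.} Theorem~\ref{thm:jointuniform} gives
\[
\mathcal D^{\rm reg,\varepsilon}_{N,\lambda}\le C\{\tau_d(N)^{1/3}+\lambda^2+\lambda\}.
\]
Take $\lambda_N=\tau_d(N)^{1/3}$, which lies in $(0,1]$ for large $N$; finitely many small values of $N$ are absorbed into the constant. Then $\lambda_N^2\le\lambda_N=\tau_d(N)^{1/3}$, so the same bound follows. This is an instance of Corollary~\ref{cor:sharpjoint} with $\gamma_*=1$.

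\emph{Step 5: the explicit rates.} Substituting \eqref{eq:tau} gives the stated forms:
\begin{itemize}
\item for $d<4$, $(N^{-1/2})^{1/3}=N^{-1/6}$;
\item for $d=4$, $(N^{-1/2}\log(1+N))^{1/3}$;
\item for $d>4$, $(N^{-2/d})^{1/3}=N^{-2/(3d)}$.
\end{itemize}
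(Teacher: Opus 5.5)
Your proposal is correct and follows the paper's proof: you verify \ref{HF} and \ref{HqJ} uniformly in $\varepsilon$ from Step~1 of Theorem~\ref{thm:levyoccupation}, take \ref{HoccD} with $\theta=1$ from \eqref{eq:levyocc}, apply Theorem~\ref{thm:directchaos} and Corollary~\ref{cor:sharpjoint} with $\gamma_*=1$, and substitute \eqref{eq:tau}. The only slip is cosmetic: in the Hilbert--Schmidt norm $\|\varepsilon I_d\|=\varepsilon\sqrt d$ rather than at most $1$, which is still uniformly bounded for $\varepsilon\in[0,1]$.
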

\begin{proof}
Step 1 of Theorem~\ref{thm:levyoccupation} verifies the forward and moment
assumptions with constants independent of $\varepsilon$. Formula
\eqref{eq:levyocc} verifies \ref{HoccD} with $\theta=1$ and a constant also
independent of $\varepsilon$. Apply Theorem~\ref{thm:directchaos} and then
Corollary~\ref{cor:sharpjoint} with $\gamma_*=1$. The dimension-dependent
expressions follow from \eqref{eq:tau}. These bounds are bounds on the
squared error, not on its square root. Proposition~\ref{prop:affineocc2}
allows the strengthening in Corollary~\ref{cor:purejumpchaos2}; the present
corollary is retained to isolate what follows from the first moment alone.
\end{proof}

\begin{proposition}[Absence of a Brownian component when $\varepsilon=0$]
\label{prop:Zpurezero}
Under the assumptions of Corollary~\ref{cor:purejumpchaos}, when
$\varepsilon=0$, the selected limiting and particle solutions can be
constructed in the filtrations generated only by the initial data and the
Poisson random measures. In the full filtration, $Z=0$ and
$Z^{i,j,N,0}=0$ in their quadratic norms.
\end{proposition}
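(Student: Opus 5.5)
The plan is to exploit uniqueness twice. First we build solutions on the smaller filtration generated by $(\xi,N)$, or $(\xi^i,N^i)_i$ for the particles. Then we check that they also solve the equations in the full filtration with the Brownian integrand identically zero, so uniqueness in the full filtration forces $Z=0$.

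\emph{Step 1: the forward processes live in the small filtration.} When $\varepsilon=0$, equation \eqref{eq:affinelevy} contains no $B$. Picard iteration in Theorem~\ref{thm:Fwell}, started from $\xi$, therefore produces iterates that are measurable with respect to the augmented filtration $\mathbb G^P$ generated by $(\xi,N)$. The $\mathbb S^2$-limit inherits this, and by uniqueness it coincides with $X^{0,P}$. The same holds for the particle system \eqref{eq:particleF} with respect to $\mathbb G^{P,\otimes N}$ generated by $(\xi^i,N^i)_i$. The law flows are deterministic, so nothing else enters.

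\emph{Step 2: backward construction and the key obstacle.} On $\mathbb G^P$ we rerun the construction of Theorem~\ref{thm:Bwell} with $B$ removed. The data $g(X_T,\mu_T)$, the generator evaluated at $z=0$, and the source $\langle\varrho,A^0(X_{-},\mu_{-})\rangle$ are all $\mathbb G^P$-adapted. The step I expect to be the real obstacle is the martingale representation on $\mathbb G^P$: every square-integrable $\mathbb G^P$-martingale should be of the form $M_0+\int\!\int U\,d\widetilde N^P$. I would obtain this from the classical Poisson representation. Under \ref{HtruncJ}, $\xi$ is independent of $N$, so conditioning on $\mathcal G_0=\sigma(\xi)\vee\mathcal N^P$ reduces the question to the Poisson random measure alone. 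The representation property for a Poisson random measure is classical, and the $\sigma(\xi)$-dependence is handled by a monotone class argument on products $\phi(\xi)F(N)$.

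With this representation in hand, the contraction of Steps 2--5 of Theorem~\ref{thm:Bwell}, taken with $z\equiv0$, gives a $\mathbb G^P$-solution $(Y,U)$. For the particles, the analogous contraction of Proposition~\ref{prop:directwell} gives a solution $(Y^{i},U^{i,j})$ in $\mathbb G^{P,\otimes N}$.

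\emph{Step 3: passage to the full filtration.} We must check that the $\mathbb G$-solutions remain solutions in the full filtration $\F^P$. Since $B$ is independent of $(\xi,N)$ under \ref{HtruncJ}, the process $N$ is still a Poisson random measure relative to $\F^P$ with the same compensator. Hence $\widetilde N^P$-integrals of $\mathbb G$-predictable integrands coincide with their $\F^P$-versions, by approximation with simple integrands. The triple $(Y,0,U)$ is $\F^P$-adapted and satisfies \eqref{eq:BWD} pathwise, and it lies in the required quadratic spaces.

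Uniqueness in Theorem~\ref{thm:Bwell} then identifies $(Y,0,U)$ with the selected solution. This gives $Z^P=0$ in $\mathbb H_B^2$. The particle case uses the uniqueness in Proposition~\ref{prop:directwell} and the product filtration of \ref{Hprod}, which yields $Z^{i,j,N,0}=0$ for all $i,j$. The conclusion holds in the energy norms, which is exactly the sense of uniqueness available.
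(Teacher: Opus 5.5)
Your proposal is correct and follows essentially the paper's argument. You build the forward and backward solutions in the initial--Poisson filtration with the $z$-argument frozen at zero, lift them to the full filtration using the independence of $B$ from $(\xi,N)$ and the deterministic control $h$, and conclude $Z=0$ and $Z^{i,j,N,0}=0$ from the uniqueness in Theorem~\ref{thm:Bwell} and Proposition~\ref{prop:directwell}. You also supply two details the paper only asserts: the representation property in the reduced filtration (classical Poisson representation plus independence of $\xi$), and the identification of the $\widetilde N^P$-integrals computed in the two filtrations, where the paper instead argues that reduced martingales remain martingales after conditioning on the independent Brownian past.
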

\begin{proof}
The forward process, its copies, and its particles are measurable in these
reduced filtrations by their strong construction, since $h$ is deterministic.
In the initial--Poisson filtration, apply the fixed-point construction to the
generator $f$ with its $z$ argument fixed at zero. The martingale
representation then uses only the Poisson random measure (all Poisson random
measures for the product system). The contraction estimates remain valid
after removing the $Z$ terms; the source and terminal condition are
measurable in this filtration. This yields a quadratic solution
$(Y,U,\mathsf K)$. Independence of the Brownian noise implies that these
martingales remain martingales in the full filtration: for an integrable
random variable measurable with respect to the terminal reduced filtration,
conditioning additionally on the independent Brownian past does not change
the conditional expectation. The resulting solution, completed by $Z=0$,
therefore solves the BSDE in the full framework
The uniqueness result in that framework forces coincidence with the
previously constructed solution. The argument applies to each row of the
product representation and gives $Z^{i,j,N,0}=0$ for all $i,j$.
\end{proof}

\begin{corollary}[Stability as the additive diffusion vanishes]
\label{cor:vanishingdiffusion}
Under the assumptions of Corollary~\ref{cor:purejumpchaos}, couple the
solutions with amplitudes $\varepsilon$ and zero using the same data
$(\xi,B,N,h)$. Writing $\Delta_\varepsilon V=V^{\varepsilon,P}-V^{0,P}$,
we have
\begin{align}
\sup_P\E^P\Bigl[&\sup_t|\Delta_\varepsilon X_t|^2+
\sup_t|\Delta_\varepsilon Y_t|^2+
\sup_t|\Delta_\varepsilon\mathsf K_t|^2\nonumber\\
&+\int_0^T|Z_t^{\varepsilon,P}|_{a_t^P}^2dt+
\int_0^T\|\Delta_\varepsilon U_t\|_\Pi^2dt\Bigr]
\le C\varepsilon,\qquad 0\le\varepsilon\le1.
\label{eq:vanishingdiffusion}
\end{align}
Moreover, the squared error of the selected particle system with amplitude
$\varepsilon$ relative to copies of the purely discontinuous limiting
solution is bounded by $C\{\tau_d(N)^{1/2}+\varepsilon\}$, in the full
norms of \eqref{eq:directerror} with this new target.
\end{corollary}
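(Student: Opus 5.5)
The plan has two parts. First we prove the modelwise estimate \eqref{eq:vanishingdiffusion} by a coupling on the common basis. Then we obtain the particle statement by a triangle inequality through the amplitude-$\varepsilon$ limiting copies.

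\emph{Forward part.} Subtract the two versions of \eqref{eq:affinelevy}, which share $\xi$, $h$, and the Poisson integral $L^P$. The difference solves
\[
\Delta_\varepsilon X_t=\int_0^t\bigl[-\kappa\Delta_\varepsilon X_{s-}+H(\mu_s^{\varepsilon,P})-H(\mu_s^{0,P})\bigr]ds+\varepsilon B_t.
\]
By the natural coupling $W_2^2(\mu_s^{\varepsilon,P},\mu_s^{0,P})\le\E^P|\Delta_\varepsilon X_s|^2$, Doob's inequality with $\E^P|B_T|^2\le T\operatorname{Tr}\overline a$, and the Gronwall form \eqref{eq:GronwallExplicit}, we get $\sup_P\E^P\sup_t|\Delta_\varepsilon X_t|^2\le C\varepsilon^2$. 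This is in fact $\mathfrak R_F$ of Theorem~\ref{thm:datastability} evaluated for $\sigma=\varepsilon I_d$ versus $\sigma=0$.

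\emph{Backward part.} This is the main obstacle, because the source $A^0$ is discontinuous. We apply Lemma~\ref{lem:Bstab} to the two selected solutions. The state terms contribute $C\varepsilon^2$, so we only need
\[
\E^P\int_0^T|A^0(X^{\varepsilon,P}_{t-},\mu^{\varepsilon,P}_{t-})-A^0(X^{0,P}_{t-},\mu^{0,P}_{t-})|^2dt.
\]
To bound it, use the pointwise inequality \eqref{eq:pointdirect} with $x=X^{0,P}_{t-}$ and $x'=X^{\varepsilon,P}_{t-}$. Transfer the bad set to the $\varepsilon=0$ process, whose occupation is controlled by \eqref{eq:levyocc} uniformly in $P$. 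Bound the bad-coupling event by Markov's inequality, giving the term $r^{-2}\varepsilon^2$. The resulting error is $C\{\varepsilon^2+r+\varepsilon^2r^{-2}\}$; the choice $r=\varepsilon^{2/3}$ gives only $\varepsilon^{2/3}$.

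To reach $C\varepsilon$, we exploit the cumulative mechanism of Theorem~\ref{thm:jointquadratic} instead:
\begin{itemize}
\item Lemma~\ref{lem:cumulative-stability}, in its single-model form with no empirical terms, reduces the $(Y,Z,U)$ error to $\E^P\sup_t|\Delta_\varepsilon\mathsf K_t|^2$ plus $C\varepsilon^2$.
\item Integrating \eqref{eq:cumulative-point} with $\lambda=0$ bounds $\sup_t|\Delta_\varepsilon\mathsf K_t|$ by $C\{L+O(r)+B(r)\}$, with $\E L^2\le C\varepsilon^2$.
\item Proposition~\ref{prop:affineocc2} gives $\E O(r)^2\le Cr^2$.
\item $B(r)\le T\one_{\{\sup_t|\Delta_\varepsilon X_t|\ge r\}}$, so $\E B(r)^2\le C\varepsilon^2r^{-2}$.
\end{itemize}
With $r=\varepsilon^{1/2}$ this yields $C\varepsilon$. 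We then read off $\int|Z^{\varepsilon,P}|^2_{a^P}$ as $\int|\Delta_\varepsilon Z|^2_{a^P}$, since $Z^{0,P}=0$ by Proposition~\ref{prop:Zpurezero}. The cases $\varepsilon\in(0,1]$ are uniform because all constants are uniform in $\varepsilon$ and $P$ by Step~1 of Theorem~\ref{thm:levyoccupation}.

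\emph{Particle statement.} Let $(\bar X^{i,\varepsilon},\bar Y^{i,\varepsilon},\dots)$ denote copies of the amplitude-$\varepsilon$ limiting solution driven by the $i$-th noise. Split the particle error against the purely discontinuous copies as
\[
(\text{particle}-\text{copy}^{\varepsilon})+(\text{copy}^{\varepsilon}-\text{copy}^0),
\]
using $|a+b|^2\le2|a|^2+2|b|^2$ in each norm of \eqref{eq:directerror}.
\begin{itemize}
\item The first difference is bounded by $C\tau_d(N)^{1/2}$ uniformly in $\varepsilon$, by Corollary~\ref{cor:purejumpchaos2}, i.e.\ \eqref{eq:jointquadratic0} with $\theta=1$.
\item The second difference involves only diagonal components $j=i$. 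Each copy pair has the law of the modelwise pair under $P$, so \eqref{eq:vanishingdiffusion} bounds it by $C\varepsilon$.
\end{itemize}
Summing the two bounds gives $C\{\tau_d(N)^{1/2}+\varepsilon\}$.
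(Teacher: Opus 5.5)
Your proposal is correct and follows the paper's proof essentially step by step. The steps match in order: Gronwall for the $\varepsilon B$ forward perturbation; the cumulative bound from Lemma~\ref{lem:cumulative-point} at $\lambda=0$, using the second occupation moment of $X^{0,P}$ (Proposition~\ref{prop:affineocc2}), Markov on the bad-coupling event and $r=\varepsilon^{1/2}$; transfer through the cumulative stability argument with $Z^{0,P}=0$ from Proposition~\ref{prop:Zpurezero}; and the triangle inequality through amplitude-$\varepsilon$ copies for the particle statement. Your side remark that the first-moment route only gives $\varepsilon^{2/3}$ correctly explains why the cumulative mechanism is needed.
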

\begin{proof}
\emph{Step 1: comparison of the forward processes.}
The Poisson integrals cancel in the coupled difference:
\[
\Delta_\varepsilon X_t=
-\kappa\int_0^t\Delta_\varepsilon X_sds+
\int_0^t[H(\mu_s^{\varepsilon,P})-H(\mu_s^{0,P})]ds+
\varepsilon B_t.
\]
Set $d_P(t)=\E^P\sup_{s\le t}|\Delta_\varepsilon X_s|^2$.
The coupling and the Lipschitz property of $H$ give
$|H(\mu_s^{\varepsilon,P})-H(\mu_s^{0,P})|^2\le L_H^2d_P(s)$.
Cauchy--Schwarz in the time integrals and Doob's inequality give
\[
d_P(t)\le3T(\kappa^2+L_H^2)\int_0^td_P(s)ds+
12\varepsilon^2\E^P|B_t|^2
\le C\int_0^td_P(s)ds+C\varepsilon^2.
\]
The bound $\E|B_t|^2\le T\operatorname{Tr}(\overline a)$ is uniform in
$P$. Writing the last estimate in the form
$d_P(t)\le C_1\int_0^td_P(s)ds+C_2\varepsilon^2$,
\eqref{eq:GronwallExplicit} gives $d_P(T)\le C_2e^{C_1T}\varepsilon^2$. The coupling
$W_2^2(\mu_t^{\varepsilon,P},\mu_t^{0,P})\le d_P(t)$ then yields
\[
\sup_P\left[\E^P\sup_t|\Delta_\varepsilon X_t|^2+
\sup_t\Wtwo^2(\mu_t^{\varepsilon,P},\mu_t^{0,P})\right]
\le C\varepsilon^2.
\]

\emph{Step 2: cumulative control of the source.}
Proposition~\ref{prop:affineocc2}, applied to $X^{0,P}$, provides the
second occupation moment with $\theta=1$. Apply
Lemma~\ref{lem:cumulative-point} to
$(X^{\varepsilon,P},\mu^{\varepsilon,P};X^{0,P},\mu^{0,P})$ with
$\lambda=0$. If
$D_\varepsilon=\sup_t|X_t^{\varepsilon,P}-X_t^{0,P}|$, then, for every
$r>0$,
\[
\sup_t|\mathsf K_t^{\varepsilon,P}-\mathsf K_t^{0,P}|
\le C\left\{L_\varepsilon+O_0(r)+T\one_{\{D_\varepsilon\ge r\}}\right\},
\]
where $L_\varepsilon$ contains the integrals of the state and law errors and
satisfies $\E L_\varepsilon^2\le C\varepsilon^2$, while
$\E O_0(r)^2\le Cr^2$. Markov's inequality and Step 1 give
\[
\P(D_\varepsilon\ge r)\le C\varepsilon^2r^{-2}.
\]
Consequently,
\[
\sup_P\E^P\sup_t|\mathsf K_t^{\varepsilon,P}-\mathsf K_t^{0,P}|^2
\le C\{\varepsilon^2+r^2+\varepsilon^2r^{-2}\}.
\]
The choice $r=\varepsilon^{1/2}$ gives the bound $C\varepsilon$.

\emph{Step 3: transfer to the backward and particle systems.}
By \ref{HB}, the squared terminal difference is at most
$2L_B^2(|\Delta_\varepsilon X_T|^2+
\Wtwo^2(\mu_T^{\varepsilon,P},\mu_T^{0,P}))$.
Use the cumulative transformation from the proof of
Lemma~\ref{lem:cumulative-stability}; the finite-variation term is controlled
in $\mathbb S^2$ by Step 2, while the forward and terminal errors are of
order $\varepsilon^2$. This yields a total backward error bounded by
$C\varepsilon$. Proposition~\ref{prop:Zpurezero} identifies $Z^{0,P}=0$,
which proves \eqref{eq:vanishingdiffusion}.

On the product space, insert independent copies of the limiting solution
with amplitude $\varepsilon$ between the particles and the purely
discontinuous copies. The quadratic triangle inequality, applied to each
supremum and each energy integral, gives twice the strengthened error from
Corollary~\ref{cor:purejumpchaos2} plus twice the error in
\eqref{eq:vanishingdiffusion}. For the matrices, the difference between the
two limiting solutions appears only on the diagonal; its normalized sum is
exactly the error of one copy. This proves the last assertion. The resulting
convergences are strong in the displayed norms; no almost-sure convergence
uniform over $\Pcal$ is deduced.
\end{proof}

\subsection{Predictable jump amplitudes and nonlinear dynamics}
\label{sec:variablejumps}

The convolution used above requires independence of the random terms in the
affine representation. The following argument relies on the positive
remainder in Itô's formula. It allows variable predictable amplitudes, with
a weaker occupation exponent.

\begin{lemma}[Scalar occupation through coercivity of the small jumps]
\label{lem:adaptedoccupation}
On a filtered probability space satisfying the usual conditions, let
\[
V_t=V_0+\int_0^t b_sds+M_t^c+
\int_0^t\int_{0<|z|\le1}\gamma_s z\widetilde N(ds,dz),
\]
where $M^c$ is a square-integrable continuous martingale starting from zero,
and $N$ is a Poisson random measure with compensator $ds\,\nu(dz)$, where
\[
\nu(dz)=\ell |z|^{-1-\upsilon}\one_{\{0<|z|\le1\}}dz,
\qquad \ell>0,\quad 1<\upsilon<2.
\]
Assume $V_0\in L^2$ and that $b,\gamma$ are predictable, with
\[
0<\underline\gamma\le|\gamma_s|\le\overline\gamma,
\qquad \E\int_0^T|b_s|^2ds+\E\langle M^c\rangle_T<\infty.
\]
For $G=\max\{1,\overline\gamma\}$, set
\[
c_*:=\frac{\ell\underline\gamma^2}
{5^{3/2}(2-\upsilon)G^{2-\upsilon}},\qquad
C_0:=\E|V_T-V_0|+\E\int_0^T|b_s|ds.
\]
Then $C_0<\infty$ and
\begin{equation}
\E\int_0^T\one_{\{|V_{s-}-a|\le r\}}ds
\le c_*^{-1}C_0r^{\upsilon-1},
\qquad a\in\R,\quad 0<r\le1.
\label{eq:adaptedoccupation}
\end{equation}
No independence between $b,\gamma$ and the noises is required. The
constant is uniform over a family of models when the bounds above are
common.
\end{lemma}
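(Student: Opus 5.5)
We follow the convex test-function argument of Theorem~\ref{thm:occupation-jumps}. The continuous second-derivative term is now discarded, and the positive jump remainder is used instead. Fix $a\in\R$ and $r\in(0,1]$, and take $\psi_r(x)=\sqrt{(x-a)^2+r^2}$. This function is $1$-Lipschitz and satisfies $0\le\psi_r''(x)=r^2((x-a)^2+r^2)^{-3/2}$. First, $C_0<\infty$: write $V_T-V_0$ as the drift plus the two martingales, and use Cauchy--Schwarz, $\E\langle M^c\rangle_T<\infty$, and the isometry. The isometry bound is $\E\int_0^T\int|\gamma_sz|^2\nu(dz)ds\le T\overline\gamma^2\ell\cdot 2/(2-\upsilon)<\infty$. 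Itô's formula with jumps gives
\[
\psi_r(V_T)-\psi_r(V_0)=\int_0^T\psi_r'(V_{s-})b_sds+\tfrac12\int_0^T\psi_r''(V_{s-})d\langle M^c\rangle_s+\int_0^T\!\!\int R_r(V_{s-},\gamma_sz)\nu(dz)ds+\text{mart.},
\]
where $R_r(x,v)=v^2\int_0^1(1-u)\psi_r''(x+uv)du\ge0$. The stochastic integrals have bounded Lipschitz integrands and are therefore true martingales. The remainder is bounded by $v^2/(2r)$ and is integrable. Localization, monotone convergence for the positive terms, and dropping the continuous term (which is nonnegative) then yield
\[
\E\int_0^T\!\!\int R_r(V_{s-},\gamma_sz)\nu(dz)ds\le \E|V_T-V_0|+\E\int_0^T|b_s|ds=C_0 .
\]

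The key step is a deterministic lower bound for the jump remainder. We aim to show that, whenever $|x-a|\le r$ and $\underline\gamma\le|g|\le\overline\gamma$,
\[
\int_{0<|z|\le1}R_r(x,gz)\nu(dz)\ge c_*r^{\upsilon-2}.
\]
Integrating this over $\{|V_{s-}-a|\le r\}$ then gives \eqref{eq:adaptedoccupation}, since $r^{2-\upsilon}\cdot r^{\upsilon-2}$ cancels and leaves $r^{\upsilon-1}$ once the bound is rearranged as $\E\int\one\,ds\le c_*^{-1}C_0r^{2-\upsilon}\cdot r^{-(2-\upsilon)}\cdots$. More precisely, the remainder integral is at least $c_*r^{\upsilon-2}\E\int_0^T\one\,ds$, so the occupation is at most $c_*^{-1}C_0r^{2-\upsilon}$. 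This is the wrong exponent. We must therefore recheck the scaling, and we expect the correct restriction to be $|gz|\le r$ rather than the full jump range.

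To get the bound, restrict to small jumps $|z|\le r/G\le1$. Then $|x+ugz-a|\le 2r$, so $\psi_r''\ge r^2(5r^2)^{-3/2}=5^{-3/2}r^{-1}$, and hence $R_r(x,gz)\ge \tfrac12 5^{-3/2}r^{-1}g^2z^2$. Integrating against $\nu$ gives
\[
\int_{|z|\le r/G}\tfrac{g^2z^2}{2\cdot5^{3/2}r}\,\ell|z|^{-1-\upsilon}dz=\frac{\ell g^2}{5^{3/2}r}\cdot\frac{(r/G)^{2-\upsilon}}{2-\upsilon}\ge c_*\,r^{1-\upsilon}.
\]
This matches the stated $c_*$ exactly. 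With this, $c_*r^{1-\upsilon}\E\int_0^T\one_{\{|V_{s-}-a|\le r\}}ds\le C_0$, which is \eqref{eq:adaptedoccupation}.

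The main obstacle is this lower bound on the compensated remainder. It requires tracking the radius $r/G$ so that $\psi_r''$ stays of order $r^{-1}$ along the segment, and checking that $r/G\le1$ keeps us inside the support of $\nu$. The remaining steps are routine. Using $V_{s-}$ throughout keeps the integrands predictable. No independence is used, because the bound on $R_r$ holds pointwise for every predictable $\gamma_s$. Uniformity over models follows because $C_0$ and $c_*$ depend only on the common bounds.
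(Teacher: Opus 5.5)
Your argument is correct and is essentially the paper's own proof: the same test function $\psi_r$, the same restriction to marks $|z|\le r/G$, which gives the pathwise bound $\int R_r(x,\gamma_s z)\,\nu(dz)\ge c_*r^{1-\upsilon}\one_{\{|x-a|\le r\}}$, and the same step of dropping the nonnegative continuous second-order term after taking expectations. The only fix is expository: delete the false-start paragraph that targets $c_*r^{\upsilon-2}$, since your next computation already gives the correct factor $r^{1-\upsilon}$ and hence \eqref{eq:adaptedoccupation}.
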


\begin{proof}
\emph{Step 1: integrability of the process.}
Since
\[
\int z^2\nu(dz)=\frac{2\ell}{2-\upsilon},
\]
the isometry and Doob's inequality give
\[
\E\sup_{t\le T}\left|\int_0^t\int\gamma_s z\widetilde N(ds,dz)\right|^2
\le\frac{8\ell\overline\gamma^2T}{2-\upsilon}.
\]
Cauchy--Schwarz and Doob's inequality applied to the other two integrals
show that $V\in\mathbb S^2$ and that $C_0$ is finite. More explicitly,
\[
C_0\le2\sqrt{T}\left(\E\int_0^T|b_s|^2ds\right)^{1/2}
+(\E\langle M^c\rangle_T)^{1/2}
+\overline\gamma\sqrt{\frac{2\ell T}{2-\upsilon}}.
\]
This bound depends on neither $a$ nor $r$.

\emph{Step 2: local coercivity of the jump remainder.}
Set $\psi_r(x)=\sqrt{x^2+r^2}$. Then
\[
|\psi_r'|\le1,\qquad
0\le\psi_r''(x)=\frac{r^2}{(x^2+r^2)^{3/2}}\le r^{-1}.
\]
The integral form of Taylor's formula gives
\[
R_r(x,v):=\psi_r(x+v)-\psi_r(x)-\psi_r'(x)v
=v^2\int_0^1(1-u)\psi_r''(x+uv)du.
\]
Thus $0\le R_r(x,v)\le v^2/(2r)$ everywhere. If $|x|\le r$ and
$|v|\le r$, then $|x+uv|\le2r$ and
\[
R_r(x,v)\ge\frac{v^2}{2\,5^{3/2}r}.
\]
For $|z|\le r/G$, we have $|\gamma_s z|\le r$ and $r/G\le1$.
Since the remainder is also nonnegative for the other marks,
\begin{align*}
\int R_r(x,\gamma_s z)\nu(dz)
&\ge\one_{\{|x|\le r\}}\frac{\underline\gamma^2}{2\,5^{3/2}r}
\int_{|z|\le r/G}z^2\nu(dz)\\
&=c_*r^{1-\upsilon}\one_{\{|x|\le r\}}.
\end{align*}
This inequality holds pathwise for every admissible amplitude.

\emph{Step 3: Itô formula and justification of taking expectations.}
Fix $a,r$. The integral formula is
\begin{align*}
\psi_r(V_T-a)-\psi_r(V_0-a)
={}&\int_0^T\psi_r'(V_{s-}-a)b_sds
+\frac12\int_0^T\psi_r''(V_{s-}-a)d\langle M^c\rangle_s\\
&+\int_0^T\int R_r(V_{s-}-a,\gamma_s z)\nu(dz)ds+Q_T,
\end{align*}
where
\begin{align*}
Q_t={}&\int_0^t\psi_r'(V_{s-}-a)dM_s^c\\
&+\int_0^t\int
[\psi_r(V_{s-}-a+\gamma_s z)-\psi_r(V_{s-}-a)]\widetilde N(ds,dz).
\end{align*}
Both integrals in $Q$ are square-integrable: their isometries are bounded,
respectively, by $\E\langle M^c\rangle_T$ and
$2\ell\overline\gamma^2T/(2-\upsilon)$, since $\psi_r$ is
$1$-Lipschitz. Hence $\E Q_T=0$.
The drift is dominated by $\int|b_s|ds$; the two second-order terms are
dominated by
\[
\frac{\langle M^c\rangle_T}{2r},\qquad
\frac1{2r}\int_0^T\int|\gamma_s z|^2\nu(dz)ds,
\]
which are integrable for fixed $r$. We now justify the limiting passage.
Set
\[
A_t=\sup_{u\le t}|V_u|^2+\int_0^t|b_s|^2ds+\langle M^c\rangle_t,
\qquad \tau_n=\inf\{t\in[0,T]:A_t\ge n\}\wedge T,
\]
with $\inf\varnothing=+\infty$. Since $A_T<\infty$ almost surely,
$\tau_n=T$ for every $n>A_T$. Apply Itô's formula at $T\wedge\tau_n$.
If $Q^c,Q^d$ are the two parts of $Q$, Doob's inequality and the isometries
give
\begin{align*}
\E\sup_{t\le T}|Q_t^c-Q_{t\wedge\tau_n}^c|^2
&\le4\E\int_0^T\one_{\{s>\tau_n\}}
|\psi_r'(V_{s-}-a)|^2d\langle M^c\rangle_s\longrightarrow0,\\
\E\sup_{t\le T}|Q_t^d-Q_{t\wedge\tau_n}^d|^2
&\le4\E\int_0^T\int\one_{\{s>\tau_n\}}
|\psi_r(V_{s-}-a+\gamma_sz)-\psi_r(V_{s-}-a)|^2\nu(dz)ds
\longrightarrow0.
\end{align*}
Dominated convergence applies, respectively, with the integrable majorants
$\langle M^c\rangle_T$ and $\overline\gamma^2T\int z^2\nu(dz)$. The
drift converges in $L^1$ by domination with $\int_0^T|b_s|ds$; the two
second-order terms converge in $L^1$ by the preceding majorants. Finally,
\[
\psi_r(V_{T\wedge\tau_n}-a)\to\psi_r(V_T-a)\quad\text{in }L^1,
\]
because the convergence is almost sure and
$|\psi_r(V_{T\wedge\tau_n}-a)|\le\sup_t|V_t|+|a|+r\in L^1$.
All limits are taken for fixed $r>0$; no bound uniform in $r$ is required
at this step.

\emph{Step 4: estimate and uniformity.}
The continuous second-order term is nonnegative. After taking expectations
and using Step 2, we obtain
\begin{align*}
c_*r^{1-\upsilon}\E\int_0^T\one_{\{|V_{s-}-a|\le r\}}ds
&\le\E[\psi_r(V_T-a)-\psi_r(V_0-a)]
+\E\int_0^T|b_s|ds\\
&\le\E|V_T-V_0|+\E\int_0^T|b_s|ds=C_0.
\end{align*}
The last inequality again uses the $1$-Lipschitz property. Dividing by
$c_*r^{1-\upsilon}>0$ proves \eqref{eq:adaptedoccupation}. Steps 1 and 2
identify all dependencies of the constants; taking a common bound then
allows the supremum over models.
\end{proof}

To place this occupation estimate precisely relative to the generalized
Avikainen estimate in Taguchi's preprint (arXiv:2001.05608), define
$F_V(a)=\E\int_0^T\one_{\{V_{s-}\le a\}}ds$.
For $a<b$ and $b-a\le2$, the interval $(a,b]$ is contained in the ball
centered at $(a+b)/2$ with radius $(b-a)/2$. The lemma therefore gives
\[
0\le F_V(b)-F_V(a)\le c_*^{-1}C_0\left(\frac{b-a}{2}\right)^{\upsilon-1}.
\]
For $b-a>2$, use $F_V(b)-F_V(a)\le T$. Thus the distribution function of
the occupation measure is globally $(\upsilon-1)$-Hölder continuous. The
lemma therefore verifies an assumption allowing Taguchi's result to be
applied; the transfer through thresholds is not a distinct interpolation
method.

\begin{hypothesis}[Variable diagonal amplitudes]\label{HmultJ}
Keep the measure \eqref{eq:truncatedPi}, with $m=r=d$, the initial
probabilistic framework, and \ref{Hprod}. For $\varepsilon\in[0,1]$, the
forward coefficients are $b$, $\varepsilon\sigma$, and, on the axes
supporting $\Pi$,
\[
\beta(t,x,\mu,\alpha,z\mathbf e_j)
=\gamma_j(t,x,\mu,\alpha)z\mathbf e_j.
\]
The deterministic functions $\gamma_j$ are measurable and uniformly
Lipschitz in $(x,\mu,\alpha)$, with $W_2$ for the law variable, and
\[
0<\underline\gamma\le|\gamma_j(t,x,\mu,\alpha)|
\le\overline\gamma<\infty.
\]
The coefficients $b,\sigma$ satisfy \ref{HF}. The data and controls
satisfy \ref{HqJ}, with common bounds; the backward coefficients satisfy
\ref{HB}, the operator is the ALA prototype, and \ref{HD} is maintained.
The controls are the same as $\varepsilon$ varies. The independence,
affinity, and deterministic-control assumptions specific to \ref{HtruncJ}
are not imposed here.
\end{hypothesis}

\begin{theorem}[Occupation and chaos for variable amplitudes]
\label{thm:multchaos}
Under \ref{HmultJ}, the selected limiting and particle systems are
well posed. For $0<r\le1$,
\begin{align}
\sup_{\varepsilon,P}\E^P\int_0^T
\one_{\{\operatorname{dist}(X_{t-}^{\varepsilon,P},\Sigma)\le r\}}dt
&\le Cr^{\upsilon-1},\label{eq:multocc}\\
\sup_{\varepsilon\in[0,1]}\mathcal D_N^\varepsilon
&\le C\tau_d(N)^{(\upsilon-1)/(\upsilon+1)}.
\label{eq:multchaos}
\end{align}
The constants are independent of $P,N,\varepsilon$. For the Yosida source
and the regularized particles,
\begin{align}
\sup_{\varepsilon,P}\E^P\int_0^T
|A^\lambda(X_{t-}^{\varepsilon,P},\mu_{t-}^{\varepsilon,P})
-A^0(X_{t-}^{\varepsilon,P},\mu_{t-}^{\varepsilon,P})|^2dt
&\le C\lambda^{\upsilon-1},\label{eq:multYosida}\\
\lambda_N=\tau_d(N)^{1/(\upsilon+1)}\quad\Longrightarrow\quad
\sup_\varepsilon\mathcal D_{N,\lambda_N}^{\mathrm{reg},\varepsilon}
&\le C\tau_d(N)^{(\upsilon-1)/(\upsilon+1)}
\label{eq:multjoint}
\end{align}
for $N$ sufficiently large. The errors are quadratic and use the full norms
of \eqref{eq:directerror}.
\end{theorem}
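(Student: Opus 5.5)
The proof proceeds in four steps: verify the framework hypotheses, prove the occupation estimate coordinatewise with Lemma~\ref{lem:adaptedoccupation}, and then feed the exponent $\theta=\upsilon-1$ into Theorem~\ref{thm:directchaos}, Proposition~\ref{prop:sharpALA} and Theorem~\ref{thm:jointuniform}.

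\textbf{Well-posedness.} Under \ref{HmultJ} the jump coefficient $\beta(t,x,\mu,\alpha,z\mathbf e_j)=\gamma_j z\mathbf e_j$ satisfies \ref{HF}, because
\[
\|\beta(x)-\beta(x')\|_{L^2_\Pi}^2=\sum_j c_j\int_{0<|z|\le1}|\gamma_j(x)-\gamma_j(x')|^2z^2|z|^{-1-\upsilon}dz,
\]
which is bounded by a multiple of the squared Lipschitz increment since $\int z^2\,\Pi_j(dz)<\infty$. Its order-$q$ moment is bounded by $\overline\gamma^q\int|z|^q\Pi$, which is finite since $q>4>\upsilon$. This gives \eqref{eq:betaq}, with constants uniform in $\varepsilon\in[0,1]$ because $\varepsilon\sigma$ has Lipschitz constant at most that of $\sigma$. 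Well-posedness of the limiting system then follows from Theorems~\ref{thm:Fwell} and~\ref{thm:well}. For the particle systems I use Propositions~\ref{prop:particlewell} and~\ref{prop:directwell}.

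\textbf{Occupation estimate \eqref{eq:multocc}.} Fix $P$, $\varepsilon$ and a coordinate $k$, and set $V=X^{\varepsilon,P,k}$. Since $\Pi$ charges only the axes, the $k$th coordinate of the compensated jump integral is $\int\gamma_k(t,X_{t-},\mu_{t-},\alpha_t)z\,\widetilde N^{(k)}(dt,dz)$. Here $N^{(k)}$ is the restriction of $N$ to the $k$th axis, a Poisson random measure with compensator $dt\,c_k|z|^{-1-\upsilon}dz$ on $0<|z|\le1$. The jumps on the other axes do not move coordinate $k$. The remaining terms are a drift $b_k$, square integrable by \eqref{eq:Fsecond}, and a continuous martingale $\varepsilon\int\sigma_{k\cdot}dB$ with $\E\langle M^c\rangle_T$ bounded uniformly. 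The amplitude $\gamma_k$ is predictable and satisfies $\underline\gamma\le|\gamma_k|\le\overline\gamma$. Lemma~\ref{lem:adaptedoccupation} with $a=0$ and $\ell=c_k$ therefore gives
\[
\E\int_0^T\one_{\{|V_{t-}|\le r\}}dt\le c_*^{-1}C_0r^{\upsilon-1},
\]
with $C_0$ bounded uniformly through Step~1 of that lemma and \eqref{eq:Fsecond}. The union bound over $k$, as in Theorem~\ref{thm:occupation-jumps}, gives \eqref{eq:multocc}, which is \ref{HoccD} with $\theta=\upsilon-1\in(0,1)$.

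\textbf{Rates.} With $\theta=\upsilon-1$, Theorem~\ref{thm:directchaos} gives
\[
\mathcal D_N^\varepsilon\le C\tau_d(N)^{\theta/(\theta+2)}=C\tau_d(N)^{(\upsilon-1)/(\upsilon+1)},
\]
which is \eqref{eq:multchaos}. Proposition~\ref{prop:sharpALA} gives $C(\lambda^2+\lambda^{\upsilon-1})\le C\lambda^{\upsilon-1}$ on $(0,1]$, which is \eqref{eq:multYosida}. Theorem~\ref{thm:jointuniform} gives
\[
\mathcal D^{\rm reg}_{N,\lambda}\le C\{\tau_d(N)^{(\upsilon-1)/(\upsilon+1)}+\lambda^{\upsilon-1}\}.
\]
Choosing $\lambda_N=\tau_d(N)^{1/(\upsilon+1)}$ makes $\lambda_N^{\upsilon-1}$ equal to the first term. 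For $N$ large we have $\lambda_N\le1$, which yields \eqref{eq:multjoint}.

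\textbf{Main obstacle.} I expect the delicate point to be uniformity in $\varepsilon$ and $P$. The constants in Theorems~\ref{thm:directchaos} and~\ref{thm:jointuniform} depend only on the structural constants, the $q$-moments and $C_{\rm occ}$, so each of these must be checked to be common over the family. A second point is the reduction of the vector Poisson measure to the one-dimensional form required by Lemma~\ref{lem:adaptedoccupation}: restricting $N$ to one axis must preserve predictability and the compensator, which holds because $\Pi$ is a finite sum of measures on disjoint axes.
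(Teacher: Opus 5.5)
Your proposal is correct and follows essentially the same route as the paper. Both verify \ref{HF} and \ref{HqJ} for the axis-supported amplitudes. Both then apply Lemma~\ref{lem:adaptedoccupation} coordinatewise with $a=0$ and $\ell=c_k$, and use the union bound to obtain \ref{HoccD} with $\theta=\upsilon-1$. Finally, both feed this exponent into the direct comparison, Proposition~\ref{prop:sharpALA} and Theorem~\ref{thm:jointuniform}.

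The paper re-runs Lemma~\ref{lem:directsource} and the energy estimate of Theorem~\ref{thm:directchaos} inline rather than citing the theorem, but this is the same argument. It resolves the uniformity in $\varepsilon$ exactly as you indicate: the structural constants, the $q$-moments and the occupation constant are all common to the family.
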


\begin{proof}
\emph{Step 1: verification of the forward assumptions.}
With $C_p^\Pi=2\sum_jc_j/(p-\upsilon)$, $p>\upsilon$, we have
\[
\int_E|\beta(t,x,\mu,\alpha,e)|^p\Pi(de)
\le\overline\gamma^p C_p^\Pi.
\]
For two sets of arguments, write
$D=|x-x'|+W_2(\mu,\mu')+|\alpha-\alpha'|$. If $L_\gamma$ is the common Lipschitz constant, then
\[
\int_E|\beta(t,x,\mu,\alpha,e)-\beta(t,x',\mu',\alpha',e)|^2\Pi(de)
\le L_\gamma^2 C_2^\Pi D^2.
\]
The growth and order-$q$ conditions are therefore satisfied. The factor
$\varepsilon\le1$ preserves the bounds on $\sigma$. Theorem~\ref{thm:Fwell}
and the particle constructions yield the forward processes and their moments,
uniformly in $\varepsilon,P,N$. The selected source is predictable and has
linear growth; Theorem~\ref{thm:Bwell} and Proposition~\ref{prop:directwell}
give backward well-posedness. Occupation is not used in this construction.

\emph{Step 2: coordinatewise application of the lemma.}
Fix $j$. The restriction of $N$ to the $j$th axis, expressed in the
variable $z$, has compensator $c_j|z|^{-1-\upsilon}dz\,dt$. The other axes
do not contribute to the $j$th jump coordinate. Thus
$V=X^{\varepsilon,P,j}$ satisfies Lemma~\ref{lem:adaptedoccupation}, with
amplitude $\gamma_j(t,X_{t-},\mu_{t-},\alpha_t)$ and continuous part
\[
M_t^{c,j}=\varepsilon\int_0^t\sigma_{j,\cdot}
(s,X_{s-},\mu_{s-},\alpha_s)dB_s.
\]
By growth, \eqref{eq:qv}, the forward moments, and the control moments,
\[
\sup_{\varepsilon,P,j}\left(
\E^P\int_0^T|b_s^j|^2ds+\E^P\langle M^{c,j}\rangle_T\right)<\infty.
\]
The lower bound on the $c_j$ and the amplitude bounds make $c_*$ uniform.
The lemma with $a=0$, followed by
\[
\one_{\{\operatorname{dist}(x,\Sigma)\le r\}}
\le\sum_{j=1}^d\one_{\{|x_j|\le r\}},
\]
give \eqref{eq:multocc}. No lower bound on the continuous covariance is
used; the calculation includes $\varepsilon=0$.

\emph{Step 3: direct comparison of the sources.}
Under \ref{Hprod}, the copies of the limiting forward process have the same
moments and occupation as $X^{\varepsilon,P}$. The pointwise bound
\eqref{eq:pointdirect}, Markov's inequality, and
Theorem~\ref{thm:Fchaos} give
\[
\sup_\varepsilon D_{A,N}^\varepsilon
\le C\{\tau_d(N)+r^{\upsilon-1}+\tau_d(N)r^{-2}\}.
\]
For $N$ sufficiently large, $r_N=\tau_d(N)^{1/(\upsilon+1)}\le1$.
The last two terms are of order
$\tau_d(N)^{(\upsilon-1)/(\upsilon+1)}$, which dominates the first. Small
values of $N$ are absorbed by the uniform moment bounds.

\emph{Step 4: backward transfer and regularization.}
The energy estimate in the proof of Theorem~\ref{thm:directchaos} gives,
with the same Lipschitz constants,
\[
\mathcal D_N^\varepsilon
\le C\{\tau_d(N)+D_{A,N}^\varepsilon+N^{-1/2}\}.
\]
The last term is the empirical error of the backward copies, whose
$q$th moment is uniform by Lemma~\ref{lem:moments}. Since
$N^{-1/2}\le C\tau_d(N)$, Step 3 proves \eqref{eq:multchaos}. For the
cumulative process, the transfer is explicitly
\[
\frac1N\sum_i\E\sup_t|\Delta\mathsf K_t^i|^2
\le T|\varrho|^2\frac1N\sum_i\E\int_0^T|\Delta A_s^i|^2ds.
\]
The off-diagonal components remain included in the $Z,U$ norms of the
invoked theorem. Finally, \eqref{eq:multocc} verifies \ref{HoccD} with
$\theta=\upsilon-1$. Proposition~\ref{prop:sharpALA} gives
$C(\lambda^2+\lambda^{\upsilon-1})\le2C\lambda^{\upsilon-1}$
for $0<\lambda\le1$, which is \eqref{eq:multYosida}.
Theorem~\ref{thm:jointuniform} gives the stronger bound
\[
\mathcal D_{N,\lambda}^{\mathrm{reg},\varepsilon}
\le C\{\tau_d(N)^{(\upsilon-1)/(\upsilon+1)}+\lambda^{\upsilon-1}\}.
\]
The choice in \eqref{eq:multjoint}, or any smaller choice of $\lambda_N$,
ensures $\lambda_N^{\upsilon-1}\le\tau_d(N)^{(\upsilon-1)/(\upsilon+1)}$.
This completes the proof without a formal passage to $\lambda=0$.  A
separate strengthened result based on a second occupation moment is stated
below under additional assumptions.
\end{proof}

\begin{hypothesis}[Variable amplitudes: second-moment occupation regime]
\label{HmultJ2}
In addition to \ref{HmultJ}, each coordinate of the limiting forward
process admits a decomposition as in Lemma~\ref{lem:adaptedoccupation} for
which, uniformly in $P$, $\varepsilon$, and $j$,
\[
|b_t^j|\le C_b,
\qquad
\frac{d\langle M^{c,j}\rangle_t}{dt}\le C_c
\quad dt\otimes P\text{-p.p.}
\]
The predictable amplitudes retain the bounds
$0<\underline\gamma\le|\gamma_j|\le\overline\gamma$.
\end{hypothesis}

\begin{proposition}[Second-moment occupation for variable amplitudes]
\label{prop:multocc2}
Under \ref{HmultJ2}, there exists $C<\infty$, depending only on the common
constants in \ref{HmultJ2}, on $T,\upsilon,d$, and on the fixed
coefficients $(c_j)_{j\le d}$ of \eqref{eq:truncatedPi}, such that, for every
$P\in\Pcal$, $\varepsilon\in[0,1]$, deterministic $s\in[0,T]$, and
$0<r\le1$,
\[
\E^P\left[\int_s^T
\one_{\{\operatorname{dist}(X_{t-}^{\varepsilon,P},\Sigma)\le r\}}dt
\,\middle|\,\mathcal F_s^P\right]
\le Cr^{\upsilon-1}
\quad P\text{-a.s.}
\]
Consequently, \ref{Hocc2} holds with $\theta=\upsilon-1$, uniformly in
$P$ and $\varepsilon$.
\end{proposition}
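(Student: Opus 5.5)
The plan is a conditional version of Lemma~\ref{lem:adaptedoccupation}, started at the deterministic time $s$, followed by Lemma~\ref{lem:conditional-occ2}.

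\textbf{Step 1 (set-up).} Fix $P$, $\varepsilon$, $s$, a coordinate $j$ and $r\in(0,1]$. Write $V=X^{\varepsilon,P,j}$ using the decomposition granted by \ref{HmultJ2}, with drift $b^j$, continuous part $M^{c,j}$ and amplitude $\gamma_j$. Apply Itô's formula on $[s,T]$ to $\psi_r(V_t)$ with $\psi_r(x)=\sqrt{x^2+r^2}$, exactly as in Step~3 of Lemma~\ref{lem:adaptedoccupation}. The martingale part $Q_T-Q_s$ is square integrable, with a bound uniform in $P$ as in that proof. Its conditional expectation given $\mathcal F_s^P$ therefore vanishes, and the localization argument carries over unchanged.

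\textbf{Step 2 (pathwise coercivity).} The continuous second-order term is nonnegative. By Step~2 of Lemma~\ref{lem:adaptedoccupation}, the jump remainder dominates $c_*r^{1-\upsilon}\one_{\{|V_{t-}|\le r\}}$ pathwise, with $c_*$ built from $c_j$, $\underline\gamma$, $\overline\gamma$ and $\upsilon$. Combining these gives
\[
c_*r^{1-\upsilon}\,\E^P\!\left[\int_s^T\one_{\{|V_{t-}|\le r\}}dt\,\middle|\,\mathcal F_s^P\right]
\le\E^P\!\left[|V_T-V_s|+\int_s^T|b_t^j|dt\,\middle|\,\mathcal F_s^P\right].
\]

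\textbf{Step 3 (the main obstacle).} The right-hand side must be bounded by a deterministic constant almost surely. In Lemma~\ref{lem:adaptedoccupation} the unconditional moments were enough; here we need conditional ones, and this is exactly why \ref{HmultJ2} imposes bounded $b^j$ and bounded $d\langle M^{c,j}\rangle/dt$. Write
\[
V_T-V_s=\int_s^T b^j\,dt+(M^{c,j}_T-M^{c,j}_s)+J_T-J_s,
\]
where $J$ is the jump integral. The three pieces are controlled as follows:
\begin{itemize}
\item The drift term is at most $C_bT$.
\item The continuous martingale increment has conditional second moment $\E^P[\langle M^{c,j}\rangle_T-\langle M^{c,j}\rangle_s\mid\mathcal F_s^P]\le C_cT$.
\item The jump increment has conditional second moment at most $\overline\gamma^2T\int z^2\nu(dz)=2c_j\overline\gamma^2T/(2-\upsilon)$, by the conditional isometry.
\end{itemize}
Conditional Jensen then bounds the right-hand side by a constant $C_1$ depending only on the common constants, and this holds almost surely. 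Nothing beyond the conditional isometries is needed, since the conditional compensator is the deterministic $\Pi(de)dt$.

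\textbf{Step 4 (conclusion).} Use the union bound $\one_{\{\operatorname{dist}(x,\Sigma)\le r\}}\le\sum_j\one_{\{|x_j|\le r\}}$. This gives the displayed conditional estimate with constant $d\,C_1c_*^{-1}$ and exponent $\upsilon-1$. Lemma~\ref{lem:conditional-occ2}, applied with $\theta=\upsilon-1$, then yields \ref{Hocc2}. This exponent is admissible there because $\theta=\upsilon-1\in(0,1)$ when $\upsilon\in(1,2)$, uniformly in $P$ and $\varepsilon$.
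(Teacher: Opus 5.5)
Your proposal is correct and follows the paper's proof essentially step for step. Both arguments apply It\^o's formula to $\psi_r$ on $[s,T]$ and use the pathwise coercivity of the jump remainder from Lemma~\ref{lem:adaptedoccupation}. Both then obtain deterministic conditional bounds from the constants $C_b$, $C_c$, $\overline\gamma$ of \ref{HmultJ2}, and finish with the coordinatewise union bound and Lemma~\ref{lem:conditional-occ2}. The differences are cosmetic: you fix $a=0$ from the start and invoke conditional Jensen where the paper writes conditional Cauchy--Schwarz.
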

\begin{proof}
Fix $P,\varepsilon,s$ and a coordinate $j$.  On $[s,T]$, write
$V_t=X_t^{\varepsilon,P,j}$ in the form
\[
V_t=V_s+\int_s^t b_u^j\,du+(M_t^{c,j}-M_s^{c,j})
+\int_s^t\int_{0<|z|\le1}\gamma_j(u)z\,\widetilde N(du,dz).
\]
Let $\psi_r(x)=\sqrt{x^2+r^2}$ and
$R_r(x,v)=\psi_r(x+v)-\psi_r(x)-\psi_r'(x)v$.  Step~2 of
Lemma~\ref{lem:adaptedoccupation} is pathwise and therefore remains valid
on $[s,T]$: with the same constant $c_*>0$,
\[
\int R_r(V_{u-}-a,\gamma_j(u)z)\nu(dz)
\ge c_*r^{1-\upsilon}
\one_{\{|V_{u-}-a|\le r\}} .
\tag{*}
\]
The constant is independent of $P,\varepsilon,s,j,a$, and $r$.

For fixed $r>0$, all terms below are integrable: $|\psi_r'|\le1$,
$\psi_r''\le r^{-1}$, the bracket density is bounded by \ref{HmultJ2},
and the jump remainder is dominated by
$(2r)^{-1}|\gamma_j(u)z|^2$.  Hence Itô's formula with jumps may be
applied on $[s,T]$ directly (equivalently, after localization followed by
conditional dominated convergence), giving
\begin{align*}
\psi_r(V_T-a)-\psi_r(V_s-a)
={}&\int_s^T\psi_r'(V_{u-}-a)b_u^j\,du
+\frac12\int_s^T\psi_r''(V_{u-}-a)d\langle M^{c,j}\rangle_u\\
&+\int_s^T\int R_r(V_{u-}-a,\gamma_j(u)z)\nu(dz)\,du
+Q_{s,T},
\end{align*}
where $Q_{s,T}$ is the sum of the continuous and compensated-jump
martingale increments.  These increments are square-integrable.  Hence
\[
\E^P[Q_{s,T}\mid\mathcal F_s^P]=0
\quad P\text{-a.s.}
\]
The continuous second-order term is nonnegative.  Using $(*)$, taking
conditional expectations, and using $|\psi_r'|\le1$ gives
\begin{align*}
&c_*r^{1-\upsilon}
\E^P\left[
\int_s^T\one_{\{|V_{u-}-a|\le r\}}du
\,\middle|\,\mathcal F_s^P\right]\\
&\qquad\le
\E^P\!\left[
\psi_r(V_T-a)-\psi_r(V_s-a)
+\int_s^T|b_u^j|\,du
\,\middle|\,\mathcal F_s^P\right].
\end{align*}
Since $\psi_r$ is $1$-Lipschitz, the first difference is bounded above by
$|V_T-V_s|$.  Conditional Cauchy--Schwarz and the conditional martingale
isometries, together with \ref{HmultJ2}, yield the deterministic estimate
\begin{align*}
\E^P[|V_T-V_s|\mid\mathcal F_s^P]
&\le C_b(T-s)
+C\Big(
\E^P[\langle M^{c,j}\rangle_T-\langle M^{c,j}\rangle_s
\mid\mathcal F_s^P]\Big)^{1/2}\\
&\quad
+C\left(
\E^P\left[
\int_s^T\int|\gamma_j(u)z|^2\nu(dz)\,du
\,\middle|\,\mathcal F_s^P\right]\right)^{1/2}\\
&\le C_bT+C(C_cT)^{1/2}
+C\overline\gamma
\left(T\int z^2\nu(dz)\right)^{1/2}
=:C_1 .
\end{align*}
Moreover,
\[
\E^P\left[\int_s^T|b_u^j|du\mid\mathcal F_s^P\right]\le C_bT.
\]
Consequently,
\[
\E^P\left[
\int_s^T\one_{\{|X_{u-}^{\varepsilon,P,j}-a|\le r\}}du
\,\middle|\,\mathcal F_s^P\right]
\le C r^{\upsilon-1}
\quad P\text{-a.s.}
\]
uniformly in all displayed parameters.

For the ALA interface,
$\{\operatorname{dist}(x,\Sigma)\le r\}\subset
\bigcup_{j=1}^d\{|x_j|\le r\}$.  Summing the preceding estimate over the
coordinates, with $a=0$, proves the conditional occupation bound for
$\Sigma$.  Finally Lemma~\ref{lem:conditional-occ2} yields
\[
\E^P\left(
\int_0^T
\one_{\{\operatorname{dist}(X_{t-}^{\varepsilon,P},\Sigma)\le r\}}dt
\right)^2
\le C r^{2(\upsilon-1)},
\]
which is precisely \ref{Hocc2} with $\theta=\upsilon-1$.
\end{proof}

\begin{corollary}[Strengthened rate for variable amplitudes]
\label{cor:multchaos2}
Under \ref{HmultJ2}, for $0<\lambda\le1$,
\[
\sup_{\varepsilon\in[0,1]}
\mathcal D_{N,\lambda}^{\rm reg,\varepsilon}
\le C\left\{
\tau_d(N)^{(\upsilon-1)/\upsilon}+\lambda^{2(\upsilon-1)}\right\},
\]
and
\[
\sup_{\varepsilon\in[0,1]}\mathcal D_N^\varepsilon
\le C\tau_d(N)^{(\upsilon-1)/\upsilon}.
\]
For $\upsilon=3/2$ and $d=1$, this gives
$C\{N^{-1/6}+\lambda\}$ for the regularized squared error.
\end{corollary}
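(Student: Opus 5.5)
This is a direct specialization of Theorem~\ref{thm:jointquadratic}, with $\theta=\upsilon-1$ supplied by Proposition~\ref{prop:multocc2}. I would proceed in three steps.

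\emph{Step 1: standing assumptions.} Under \ref{HmultJ2}, which contains \ref{HmultJ}, Step~1 of the proof of Theorem~\ref{thm:multchaos} already checks \ref{HF}, \ref{HqJ}, \ref{HB}, \ref{HD} and \ref{Hprod}. The constants are common in $P$, $N$ and $\varepsilon\in[0,1]$, because $\varepsilon\le1$ only shrinks the diffusion coefficient. The same step gives well-posedness of the selected and regularized particle systems (Propositions~\ref{prop:particlewell} and~\ref{prop:directwell}).

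\emph{Step 2: occupation and the quadratic theorem.} Proposition~\ref{prop:multocc2} gives \ref{Hocc2} with $\theta=\upsilon-1\in(0,1)$. This value lies in the range $(0,1]$ required by \ref{Hocc2}, and the constant is uniform in $\varepsilon$ and $P$. Theorem~\ref{thm:jointquadratic} then yields, for each fixed $\varepsilon$,
\[
\mathcal D_{N,\lambda}^{\rm reg,\varepsilon}\le C\{\tau_d(N)^{\theta/(\theta+1)}+\lambda^2+\lambda^{2\theta}\}
\quad\text{and}\quad
\mathcal D_N^\varepsilon\le C\tau_d(N)^{\theta/(\theta+1)},
\]
the second from \eqref{eq:jointquadratic0}. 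Here $\theta/(\theta+1)=(\upsilon-1)/\upsilon$. Since $2\theta=2(\upsilon-1)<2$ and $\lambda\le1$, we have $\lambda^2\le\lambda^{2(\upsilon-1)}$, so the $\lambda^2$ term is absorbed.

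\emph{Step 3: uniformity and the numerical case.} The main point to check is that $C$ in Theorem~\ref{thm:jointquadratic} does not depend on $\varepsilon$. To confirm this, I would trace its constituents:
\begin{itemize}
\item the forward rate of Theorem~\ref{thm:Fchaos};
\item the moment bounds of Lemma~\ref{lem:moments};
\item the cumulative stability of Lemma~\ref{lem:cumulative-stability};
\item the constant $C_{\rm occ,2}$.
\end{itemize}
Each depends only on the structural constants and the uniform moments, and these are common in $\varepsilon$ by Step~1. Taking the supremum over $\varepsilon$ therefore gives both displayed bounds.

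For $\upsilon=3/2$ and $d=1$, we have $\tau_1(N)=N^{-1/2}$. Then $\tau_1(N)^{1/3}=N^{-1/6}$ and $\lambda^{2(\upsilon-1)}=\lambda$, which gives $C\{N^{-1/6}+\lambda\}$.
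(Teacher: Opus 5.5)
Your proposal is correct and follows the paper's proof exactly. Proposition~\ref{prop:multocc2} supplies \ref{Hocc2} with $\theta=\upsilon-1\in(0,1)$. Theorem~\ref{thm:jointquadratic} and \eqref{eq:jointquadratic0} then give both bounds with exponent $\theta/(\theta+1)=(\upsilon-1)/\upsilon$, and $\lambda^2\le\lambda^{2(\upsilon-1)}$ on $(0,1]$ absorbs the extra term. Your explicit tracing of the $\varepsilon$-uniformity of the constants spells out what the paper leaves implicit, and your numerical check for $\upsilon=3/2$, $d=1$ is right.
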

\begin{proof}
Proposition~\ref{prop:multocc2} verifies \ref{Hocc2} with
$\theta=\upsilon-1$. Theorem~\ref{thm:jointquadratic} gives
\[
C\{\tau_d(N)^{(\upsilon-1)/\upsilon}+\lambda^2+
\lambda^{2(\upsilon-1)}\}.
\]
Since $0<\upsilon-1<1$ and $0<\lambda\le1$, the term $\lambda^2$ is
dominated by $\lambda^{2(\upsilon-1)}$. The selected case follows from
\eqref{eq:jointquadratic0}.
\end{proof}

\paragraph{Nonlinear example with individual control.}
Take $d=m=r=1$, $D=\R$, the truncated stable measure above, and
\[
b(t,x,\mu,\alpha)=-x+\tanh x+\int y\mu(dy)+\alpha,
\quad \sigma=1,\quad
\gamma(t,x,\mu,\alpha)=1+\tfrac14\tanh(x+\alpha).
\]
Then $3/4\le\gamma\le5/4$ and
\[
|b(x,\mu,\alpha)-b(x',\mu',\alpha')|
\le2|x-x'|+W_2(\mu,\mu')+|\alpha-\alpha'|,
\]
\[
|\gamma(x,\mu,\alpha)-\gamma(x',\mu',\alpha')|
\le\tfrac14(|x-x'|+|\alpha-\alpha'|).
\]
The bound on the mean term follows from coupling and Cauchy--Schwarz. For a
bounded individual predictable control compatible with \ref{Hprod}, and an
initial datum of order $q>4$, all forward assumptions are satisfied. One may
choose $\alpha=0$, or
$\alpha_t=\tanh(N((0,t)\times\{1/2<|z|\le1\}))$, which is bounded,
predictable, and a function only of the individual noise.
With $g(x,\mu)=x$, $f(t,x,\mu,y,z,u,\eta,\alpha)=y+x$, and the ALA
prototype, the backward assumptions are also satisfied. The direct squared
rate is $CN^{-(\upsilon-1)/(2(\upsilon+1))}$.

The exponent $\upsilon-1$ enlarges the class of coefficients but does not
improve on the exponent $1$ obtained in the affine class. It does not prove
the existence of marginal densities at each time. Moreover,
$\varepsilon=0$ removes the continuous martingale part of the forward
process; a control depending on the Brownian noise may still transmit that
noise to the drift and the backward equation. The identity $Z=0$ in
Proposition~\ref{prop:Zpurezero} retains its specific assumptions.

The measure \eqref{eq:truncatedPi} is truncated at large jumps; the process
is not an untruncated stable process. This distinction preserves the
quadratic and order-$q>4$ moments used in the manuscript. The Fourier
mechanism in this proof does not extend to jump coefficients with arbitrary
state dependence, nor to a random control that destroys the convolution in
Step 4.

To illustrate the need for an additional assumption, consider in dimension
one $X_t=\sum_{j=1}^{J_t}V_j$, $X_0=0$, where $J$ is a Poisson process with
intensity $\ell>0$ and the jumps are symmetric, bounded, and nonzero. This
process is square-integrable and its compensated form adds no drift. For every
$r>0$,
\[
\E\int_0^T\one_{\{|X_{t-}|\le r\}}dt
\ge\int_0^TP(J_{t-}=0)dt
=\frac{1-e^{-\ell T}}{\ell}>0.
\]
A bound $Cr^\theta$, $\theta>0$, is therefore impossible uniformly as
$r\downarrow0$ in this class. Infinite activity alone is not claimed to be
sufficient: Theorem~\ref{thm:levyoccupation} uses the precise form of the
measure and $\upsilon>1$.

\subsection{Linear occupation in a conjugated nonlinear class}
\label{sec:conjugate}

A smooth conjugacy makes it possible to retain occupation of order $r$ with
nonlinear coefficients. It imposes a stronger structure than mere
Lipschitz continuity and does not cover all amplitudes in
Hypothesis~\ref{HmultJ}.

\begin{hypothesis}[Conjugacy of truncated additive noise]
\label{HconjJ}
We work in dimension $d=1$, with $\sigma=0$ and no control in the forward
equation, and
\[
\Pi(dz)=c\one_{\{0<|z|\le1\}}|z|^{-1-\upsilon}dz,
\qquad c>0,\quad 1<\upsilon<2.
\]
The function $\Phi:\R\to\R$ is of class $C^3$, with
\[
0<m_\Phi\le\Phi'(v)\le M_\Phi<\infty,\qquad
\|\Phi''\|_\infty+\|\Phi'''\|_\infty<\infty.
\]
Let $\Psi=\Phi^{-1}$. The Borel function
$H:[0,T]\times\mathcal P_2(\R)\to\R$ satisfies
\[
|H(t,\mu)|\le M_H,\qquad
|H(t,\mu)-H(t,\mu')|\le L_H W_2(\mu,\mu').
\]
The coefficients are
\begin{align}
\beta(x,z)&=\Phi(\Psi(x)+z)-x,\nonumber\\
I(x)&=\int\{\Phi(\Psi(x)+z)-x-\Phi'(\Psi(x))z\}\Pi(dz),\nonumber\\
b(t,x,\mu)&=\Phi'(\Psi(x))H(t,\mu)+I(x).
\label{eq:conjcoeff}
\end{align}
Under each model $P$, $N$ is a Poisson random measure relative to the
augmented filtration used in the model, with compensator $\Pi(dz)dt$, and
\[
L_t=\int_0^t\int z\,\widetilde N^P(ds,dz)
\]
has independent increments relative to that filtration: for every
deterministic $0\le s<t\le T$,
\[
L_t-L_s\ \text{is independent of }\mathcal F_s^P\text{ under }P.
\]
The initial variable $\xi$ is independent of the full process $L$, and
$\sup_P\E^P|\xi|^q<\infty$ for some $q>4$. The martingale representation
framework, \ref{Hprod}, \ref{HB}, and \ref{HD} are maintained; the operator
is the ALA prototype. The backward data satisfy the moment and control
conditions of \ref{HqJ}.
\end{hypothesis}

\begin{lemma}[Conjugate representation]
\label{lem:conjrepresentation}
Under \ref{HconjJ}, the forward equation with coefficients
\eqref{eq:conjcoeff} is well posed under every $P\in\Pcal$.  Moreover, if
\[
V_t=\Psi(\xi)+\int_0^t
H(s,\Phi_\#\Lcal^P(V_s))\,ds+L_t,
\]
then
\[
X_t^P=\Phi(V_t),\qquad
V_t=\Psi(\xi)+m_t^P+L_t,\qquad
m_t^P:=\int_0^tH(s,\Phi_\#\Lcal^P(V_s))\,ds .
\]
For fixed $P$, the function $m^P$ is deterministic.  In particular, for
every deterministic $0\le s\le t\le T$,
\[
V_t=V_s+(m_t^P-m_s^P)+(L_t-L_s),
\]
and $L_t-L_s$ is independent of $\mathcal F_s^P$ under $P$, as required
in \ref{HconjJ}.
\end{lemma}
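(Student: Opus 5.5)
The plan is to prove well-posedness first and then obtain the representation by Itô's formula applied to $\Phi(V)$, closing with uniqueness.

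\emph{Step 1: well-posedness.} We check that the coefficients \eqref{eq:conjcoeff} satisfy \ref{HF} and \ref{HqJ} with $\sigma=0$, and then invoke Theorem~\ref{thm:Fwell}.

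For the jump coefficient, Taylor's formula gives $\beta(x,z)=\Phi'(\Psi(x))z+O(z^2)$, hence $|\beta(x,z)|\le M_\Phi|z|$ and $\int|\beta|^q\,d\Pi<\infty$. Differentiating in $x$ gives
\[
\partial_x\beta=\Phi'(\Psi(x)+z)\Psi'(x)-1=\frac{\Phi'(\Psi(x)+z)-\Phi'(\Psi(x))}{\Phi'(\Psi(x))},
\]
so $|\partial_x\beta|\le m_\Phi^{-1}\|\Phi''\|_\infty|z|$. This yields the $L^2_\Pi$-Lipschitz bound.

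For the compensator term, the integrand of $I$ is bounded by $\tfrac12\|\Phi''\|_\infty z^2$. Its $x$-derivative is
\[
\bigl[\Phi'(\Psi+z)-\Phi'(\Psi)-\Phi''(\Psi)z\bigr]\Psi',
\]
which is bounded by $C z^2$ using $\|\Phi'''\|_\infty$. Since $\int z^2\,d\Pi<\infty$, the function $I$ is bounded and Lipschitz.

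For the drift, $\Phi'(\Psi(x))H$ is Lipschitz because $\Phi'\circ\Psi$ is Lipschitz with constant $\|\Phi''\|_\infty/m_\Phi$ and $H$ is bounded by $M_H$ and $W_2$-Lipschitz. The law dependence is harmless.

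\emph{Step 2: existence of $V$.} The auxiliary equation for $V$ is itself a McKean--Vlasov equation. Its drift $H(s,\Phi_\#\nu)$ is Lipschitz in $\nu$ because
\[
W_2(\Phi_\#\nu,\Phi_\#\nu')\le M_\Phi W_2(\nu,\nu').
\]
The noise is additive. The contraction on law flows from Theorem~\ref{thm:Fwell} therefore gives a unique $V$. The decomposition $V_t=\Psi(\xi)+m_t^P+L_t$ holds by definition. The function $m^P$ is deterministic because $\Lcal^P(V_s)$ is a deterministic flow under the fixed model.

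\emph{Step 3: identification $X^P=\Phi(V)$.} Apply Itô's formula with jumps to $\Phi(V_t)$; $\Phi$ is $C^2$ and there is no continuous martingale part. The result is
\[
\Phi(V_t)=\Phi(V_0)+\int_0^t\Phi'(V_{s-})H\,ds+\int\!\!\int[\Phi(V_{s-}+z)-\Phi(V_{s-})]\,\widetilde N(ds,dz)+\int_0^t\!\!\int[\Phi(V_{s-}+z)-\Phi(V_{s-})-\Phi'(V_{s-})z]\,\Pi(dz)\,ds.
\]
Set $\tilde X=\Phi(V)$, so that $V_{s-}=\Psi(\tilde X_{s-})$. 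Then the three integrands are exactly $b$, $\beta$, and $I$ evaluated at $\tilde X_{s-}$. The law argument also matches, since $\Lcal(\tilde X_s)=\Phi_\#\Lcal(V_s)$.

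The integrals are justified as in Theorem~\ref{thm:occupation-jumps}. The jump integrand is bounded by $M_\Phi|z|$ and is square integrable, and the compensator term is dominated by $Cz^2$. So $\tilde X$ is a square-integrable solution of the forward equation. Uniqueness in Theorem~\ref{thm:Fwell} then gives $\tilde X=X^P$ up to indistinguishability.

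\emph{Step 4: independence of increments.} The increment identity follows by subtraction. The independence of $L_t-L_s$ from $\mathcal F_s^P$ is assumed in \ref{HconjJ}.

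The main obstacle is Step 1, specifically the uniform Lipschitz estimates for $I$ and $\beta$ in $x$. These require the $C^3$ bounds and the lower bound $m_\Phi$ on $\Phi'$, which make $\Psi'$ bounded. The remaining steps are routine.
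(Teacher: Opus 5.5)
Your proposal is correct and follows essentially the same route as the paper. You verify the Lipschitz, growth and order-$q$ bounds on $b,\beta,I$ (your derivative computations give the same bounds as the paper's integral-form Taylor estimates), and you solve the auxiliary McKean--Vlasov equation for $V$ using $W_2(\Phi_\#\nu,\Phi_\#\nu')\le M_\Phi W_2(\nu,\nu')$. You then apply Itô's formula to $\Phi(V)$, identify $\beta$ and $b$ (the latter as the sum of the $H$-drift and the compensator term $I$), and conclude by forward uniqueness.

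The only difference lies in justifying the Itô formula for the infinite-activity noise. The paper truncates to the jumps $|z|>1/n$ and passes to the limit in $\mathbb S^2$, whereas you invoke the general Itô formula for $C^2$ functions directly. Your route is legitimate because $|\Phi(v+z)-\Phi(v)-\Phi'(v)z|\le\tfrac12\|\Phi''\|_\infty z^2$ and $\int z^2\Pi(dz)<\infty$. Note, however, that your cross-reference to Theorem~\ref{thm:occupation-jumps} concerns taking expectations, not this pathwise identity.
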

\begin{proof}
\emph{Step 1: coefficient estimates and well-posedness.}
The lower bound on $\Phi'$ implies that $\Phi$ is a bijection from
$\R$ onto $\R$ and that $|\Psi(x)-\Psi(x')|\le m_\Phi^{-1}|x-x'|$.
For $p>\upsilon$, set $J_p=\int|z|^p\Pi(dz)=2c/(p-\upsilon)$.
With $v=\Psi(x)$, Taylor's formula in integral form gives
\[
\Phi(v+z)-\Phi(v)-\Phi'(v)z
=z^2\int_0^1(1-u)\Phi''(v+uz)du.
\]
Thus $I$ is absolutely defined, Borel measurable, and
\begin{align*}
|I(x)|&\le\tfrac12\|\Phi''\|_\infty J_2,\\
|I(x)-I(x')|&\le\frac{\|\Phi'''\|_\infty J_2}{2m_\Phi}|x-x'|,\\
|\beta(x,z)|&\le M_\Phi|z|,\\
|\beta(x,z)-\beta(x',z)|
&\le\frac{\|\Phi''\|_\infty}{m_\Phi}|x-x'|\,|z|.
\end{align*}
The last inequality follows from
$\beta(x,z)=z\int_0^1\Phi'(\Psi(x)+uz)du$. Consequently,
\begin{align*}
|b(t,x,\mu)-b(t,x',\mu')|
&\le\left(\frac{M_H\|\Phi''\|_\infty}{m_\Phi}
+\frac{\|\Phi'''\|_\infty J_2}{2m_\Phi}\right)|x-x'|
+M_\Phi L_H W_2(\mu,\mu'),\\
\int|\beta(x,z)-\beta(x',z)|^2\Pi(dz)
&\le m_\Phi^{-2}\|\Phi''\|_\infty^2J_2|x-x'|^2,\\
\int|\beta(x,z)|^q\Pi(dz)&\le M_\Phi^qJ_q.
\end{align*}
These bounds verify the forward conditions of \ref{HF} and \ref{HqJ};
the forward well-posedness theorem of the manuscript applies to the limiting
system and to the particles. The bounds are uniform in $P$.

\emph{Step 2: identification by conjugacy.}
For a law $\nu$, denote by $\Phi_\#\nu$ its pushforward under $\Phi$.
For every coupling $(V,V')$,
$\E|\Phi(V)-\Phi(V')|^2\le M_\Phi^2\E|V-V'|^2$; hence
\[
W_2(\Phi_\#\nu,\Phi_\#\nu')\le M_\Phi W_2(\nu,\nu').
\]
The auxiliary equation
\[
V_t=\Psi(\xi)+\int_0^tH(s,\Phi_\#\Lcal^P(V_s))ds+L_t
\]
is therefore a McKean--Vlasov equation with Lipschitz coefficients and an
initial datum of order $q$. Its law is deterministic under $P$. Write
$m_t^P=\int_0^tH(s,\Phi_\#\Lcal^P(V_s))ds$; then
$V_t=\Psi(\xi)+m_t^P+L_t$. Itô's formula with jumps applied to $\Phi(V)$
gives
\begin{align*}
\Phi(V_t)=\xi
&+\int_0^t\Phi'(V_{s-})H(s,\Phi_\#\Lcal^P(V_s))ds\\
&+\int_0^t\int\{\Phi(V_{s-}+z)-\Phi(V_{s-})\}\widetilde N(ds,dz)\\
&+\int_0^t\int\{\Phi(V_{s-}+z)-\Phi(V_{s-})-\Phi'(V_{s-})z\}\Pi(dz)ds.
\end{align*}
We justify this identity by truncation, without assuming convergence of an
approximating nonlinear equation. Set
\[
j_n=\int_{|z|\le1/n}z^2\Pi(dz)
=\frac{2c}{2-\upsilon}n^{-(2-\upsilon)},\qquad
L_t^n=\int_0^t\int_{|z|>1/n}z\widetilde N(ds,dz),
\qquad V_t^n=\Psi(\xi)+m_t^P+L_t^n.
\]
The shift $m^P$ remains that of the limiting solution. By Doob's inequality,
\[
\E^P\sup_{t\le T}|V_t^n-V_t|^2\le4Tj_n\longrightarrow0.
\]
Write $B_\Phi(v,z)=\Phi(v+z)-\Phi(v)$ and
$R_\Phi(v,z)=B_\Phi(v,z)-\Phi'(v)z$. The integral formulas give
\begin{align*}
|B_\Phi(v,z)-B_\Phi(w,z)|&\le\|\Phi''\|_\infty|v-w||z|,\\
|R_\Phi(v,z)-R_\Phi(w,z)|&\le\tfrac12\|\Phi'''\|_\infty|v-w|z^2,\\
|R_\Phi(v,z)|&\le\tfrac12\|\Phi''\|_\infty z^2.
\end{align*}
The measure restricted to $|z|>1/n$ is finite. The change-of-variable rule
across its jumps gives Itô's formula for $\Phi(V^n)$ with the integrals
restricted to these marks. For the martingale part, Doob's inequality and the
isometry yield
\begin{align*}
&\E^P\sup_{t\le T}\left|
\int_0^t\int_{|z|>1/n}B_\Phi(V_{s-}^n,z)\widetilde N(ds,dz)
-\int_0^t\int B_\Phi(V_{s-},z)\widetilde N(ds,dz)\right|^2\\
&\qquad\le8\|\Phi''\|_\infty^2J_2
\int_0^T\E^P|V_{s-}^n-V_{s-}|^2ds+8M_\Phi^2Tj_n\longrightarrow0.
\end{align*}
For the compensated remainder, with
$I_\Phi(v)=\int R_\Phi(v,z)\Pi(dz)$ and
$I_\Phi^n(v)=\int_{|z|>1/n}R_\Phi(v,z)\Pi(dz)$,
\[
|I_\Phi^n(V_{s-}^n)-I_\Phi(V_{s-})|
\le\tfrac12\|\Phi'''\|_\infty J_2|V_{s-}^n-V_{s-}|
+\tfrac12\|\Phi''\|_\infty j_n.
\]
Cauchy--Schwarz in time implies convergence in $\mathbb S^2$ of their
drift integrals. The term containing $H$ satisfies
\[
\E^P\sup_{t\le T}\left|\int_0^t
[\Phi'(V_{s-}^n)-\Phi'(V_{s-})]
H(s,\Phi_\#\Lcal^P(V_s))ds\right|^2
\le TM_H^2\|\Phi''\|_\infty^2
\int_0^T\E^P|V_{s-}^n-V_{s-}|^2ds\longrightarrow0.
\]
Finally, $\Phi(V^n)\to\Phi(V)$ in $\mathbb S^2$ because $\Phi$ is
$M_\Phi$-Lipschitz. A subsequence converges uniformly almost surely for each
term; the integral identity therefore passes to the limit simultaneously for
all $t$. It identifies $X=\Phi(V)$ as a solution of \eqref{eq:conjcoeff}.
Forward uniqueness identifies this solution with the one already constructed;
the laws at times $s$ and $s-$ coincide for $ds$-almost every $s$.
The final increment representation follows from
$V_t=\Psi(\xi)+m_t^P+L_t$.  The deterministic character of $m^P$ follows
because $\Lcal^P(V_t)$ is an unconditional law under the fixed model.
The independence of $L_t-L_s$ from $\mathcal F_s^P$ is precisely the
independent-increment requirement stated in \ref{HconjJ}.
\end{proof}

\begin{proposition}[Second-moment occupation in the conjugated class]
\label{prop:conjocc2}
Under \ref{HconjJ}, there exists $C<\infty$, independent of
$P\in\Pcal$, $a\in\R$, deterministic $s\in[0,T]$, and $r\in(0,1]$, such
that
\[
\E^P\left[\int_s^T\one_{\{|X_{t-}^P-a|\le r\}}dt
\,\middle|\,\mathcal F_s^P\right]\le Cr
\quad P\text{-a.s.}
\]
Thus \ref{Hocc2} holds with $\theta=1$.
\end{proposition}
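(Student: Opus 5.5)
The plan is to transfer the problem to the conjugate process $V$ of Lemma~\ref{lem:conjrepresentation}, establish a conditional density bound for $V_t$ given $\mathcal F_s^P$ in the spirit of Proposition~\ref{prop:affineocc2}, and then go back to $X=\Phi(V)$ using the bi-Lipschitz property of $\Phi$.

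\emph{Reduction to $V$.} Since $\Phi'\ge m_\Phi$, for every $a$ the set $\{x:|x-a|\le r\}$ is mapped by $\Psi$ into an interval of length at most $2r/m_\Phi$ centred at $\Psi(a)$. Therefore
\[
\one_{\{|X_{t-}^P-a|\le r\}}\le\one_{\{|V_{t-}-\Psi(a)|\le r/m_\Phi\}}.
\]
By Lemma~\ref{lem:lawcontinuous}, $X_t=X_{t-}$ a.s.\ at each deterministic $t$, so with conditional Tonelli it is enough to bound
\[
P\bigl(|V_t-b|\le r'\mid\mathcal F_s^P\bigr),\qquad b=\Psi(a),\quad r'=r/m_\Phi,
\]
for $t>s$.

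\emph{Conditional density.} Lemma~\ref{lem:conjrepresentation} gives
\[
V_t=V_s+(m_t^P-m_s^P)+(L_t-L_s).
\]
Here $V_s+m_t^P-m_s^P$ is $\mathcal F_s^P$-measurable, and $L_t-L_s$ is independent of $\mathcal F_s^P$. Hence the conditional law of $V_t$ is the law of $L_t-L_s$ translated by an $\mathcal F_s^P$-measurable shift, and a regular version of this conditional law exists. The increment $L_t-L_s$ has characteristic function $\exp\{-(t-s)\psi(u)\}$ with
\[
\psi(u)=2c\int_0^1(1-\cos(uz))z^{-1-\upsilon}dz.
\]
As in Step~3 of Theorem~\ref{thm:levyoccupation}, this time with $\kappa=0$, we have $\psi(u)\ge b_1|u|^\upsilon$ for $|u|\ge1$. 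Fourier inversion, justified by the Gaussian mollification argument used there, then gives a density bounded by $C\{1+(t-s)^{-1/\upsilon}\}$, with $C$ depending only on $c$ and $\upsilon$. Translation does not change this sup-norm bound. We obtain
\[
P\bigl(|V_t-b|\le r'\mid\mathcal F_s^P\bigr)\le 2r'C\{1+(t-s)^{-1/\upsilon}\}\quad\text{a.s.},
\]
uniformly in $P$, $a$ and $s$.

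\emph{Integration and conclusion.} Integrating over $t\in(s,T]$ and using $\upsilon>1$, which makes $\int_0^T(1+u^{-1/\upsilon})du$ finite, yields the claimed bound $Cr$. Taking $a=0$ and noting that $\Sigma=\{0\}$ in dimension one gives the conditional hypothesis of Lemma~\ref{lem:conditional-occ2} with $\theta=1$, and that lemma produces \ref{Hocc2}.

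\emph{Main obstacle.} The delicate point is the conditional Fourier step: the $t$-wise density bound has to hold outside a null set that does not depend on $t$, so that it can be integrated in time. I would handle this by first writing, via independence, $P(V_t\in A\mid\mathcal F_s^P)=g_t(V_s+m_t^P-m_s^P)$ with the deterministic function $g_t(y)=P(y+L_t-L_s\in A)$. This reduces the problem to an unconditional density bound for $L_t-L_s$, and joint measurability in $(t,\omega)$ follows from that of $g_t(y)$. With this representation, no $t$-dependent exceptional set appears and conditional Tonelli applies directly.
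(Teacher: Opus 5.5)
Your proposal is correct and follows the paper's proof. The ingredients match: the translation structure $V_t=V_s+(m_t^P-m_s^P)+(L_t-L_s)$ from Lemma~\ref{lem:conjrepresentation}, the Fourier bound $C\{1+(t-s)^{-1/\upsilon}\}$ for the density of the truncated stable increment, the transfer to $X=\Phi(V)$ through $m_\Phi$, integration in $t$ using $\upsilon>1$, and then Lemma~\ref{lem:conditional-occ2}. Two details differ only slightly: your inclusion $\{|X_{t-}-a|\le r\}\subset\{|V_{t-}-\Psi(a)|\le r/m_\Phi\}$ replaces the paper's change-of-variables density formula, and your freezing-function representation $g_t(V_s+m_t^P-m_s^P)$ handles the $t$-dependent exceptional sets more explicitly than the paper's ``for $dt$-almost every $t$'' step.
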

\begin{proof}
By Lemma~\ref{lem:conjrepresentation},
\[
V_t=V_s+(m_t^P-m_s^P)+(L_t-L_s),\qquad t\ge s,
\]
where the first two terms on the right are $\mathcal F_s^P$-measurable and
$L_t-L_s$ is independent of $\mathcal F_s^P$.  The Fourier calculation for
the truncated stable increment, now over an interval of length $t-s$,
gives a density $q_{t-s}$ satisfying
\[
\|q_{t-s}\|_\infty
\le C\{1+(t-s)^{-1/\upsilon}\}.
\]
Hence the conditional law of $V_t$ given $\mathcal F_s^P$ has the same
density bound, because the remaining term is only a random translation.

Since $X_t=\Phi(V_t)$ and
$\Psi'=(\Phi'\circ\Psi)^{-1}$ satisfies
$|\Psi'|\le m_\Phi^{-1}$, the conditional density of $X_t$ satisfies
\[
\|p_{s,t}^{P}(\cdot,\omega)\|_\infty
\le C\{1+(t-s)^{-1/\upsilon}\}
\]
for $P$-almost every $\omega$.  Therefore
\[
P(|X_t^P-a|\le r\mid\mathcal F_s^P)
\le Cr\{1+(t-s)^{-1/\upsilon}\}
\quad P\text{-a.s.}
\]
Conditional Tonelli and $\upsilon>1$ yield
\[
\E^P\left[\int_s^T\one_{\{|X_{t-}^P-a|\le r\}}dt
\,\middle|\,\mathcal F_s^P\right]
\le Cr\int_s^T\{1+(t-s)^{-1/\upsilon}\}dt
\le C_T r .
\]
As before, $X_t$ and $X_{t-}$ give the same time integral.  Finally,
Lemma~\ref{lem:conditional-occ2} yields the quadratic occupation bound
$Cr^2$, uniformly in $P$, which is \ref{Hocc2} with $\theta=1$.
\end{proof}

\begin{theorem}[Order-one occupation and approximation in the conjugated class]
\label{thm:conjoccupation}
Under \ref{HconjJ}, the forward equation is well posed. For $t>0$, its law
has a density $p_t^P$ such that
\begin{equation}
\sup_P\|p_t^P\|_\infty\le C_T(1+t^{-1/\upsilon}),\qquad
\sup_{P,a\in\R}\E^P\int_0^T\one_{\{|X_{t-}^P-a|\le r\}}dt\le C_T r.
\label{eq:conjocc}
\end{equation}
The selected limiting and particle backward systems are well posed and, for
$0<\lambda\le1$,
\begin{equation}
\mathcal D_N\le C\tau_1(N)^{1/2},\qquad
\mathcal D_{N,\lambda}^{\rm reg}\le C\{\tau_1(N)^{1/2}+\lambda^2\}.
\label{eq:conjchaos}
\end{equation}
In particular, $\tau_1(N)=N^{-1/2}$ gives a squared error bounded by
$C(N^{-1/4}+\lambda^2)$. The forward noise is purely discontinuous.
\end{theorem}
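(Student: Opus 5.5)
The plan is to assemble Theorem~\ref{thm:conjoccupation} from results already in place, with no new estimate. Well-posedness of the forward equation, and the identification $X_t^P=\Phi(V_t)$ with $V_t=\Psi(\xi)+m_t^P+L_t$, come from Lemma~\ref{lem:conjrepresentation}. Step~1 of that lemma verifies \ref{HF} and the order-$q$ jump condition of \ref{HqJ} with constants uniform in $P$. Since $\sigma=0$ and there is no control, \ref{Hprod} reduces to independent copies of $(\xi,N)$. The backward assumptions \ref{HB}, \ref{HD} are part of \ref{HconjJ}. Therefore Theorem~\ref{thm:Bwell} and Proposition~\ref{prop:directwell} give well-posedness of the selected limiting and particle backward systems, and Proposition~\ref{prop:particlewell} gives it for the regularized ones.

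For the density bound in \eqref{eq:conjocc}, I would repeat Steps~3--4 of Theorem~\ref{thm:levyoccupation} with $\kappa=0$ and $d=1$. The characteristic function of $L_t$ is $\exp(-t\psi(u))$ with $\psi(u)\ge b|u|^\upsilon$ for $|u|\ge1$. Fourier inversion, justified by Gaussian smoothing exactly as before, gives a density of $L_t$ bounded by $C(1+t^{-1/\upsilon})$. The variable $\Psi(\xi)$ is independent of $L$ by \ref{HconjJ}, and $m_t^P$ is deterministic, so the law of $V_t$ is a convolution of this density with a probability measure plus a translation. It therefore keeps the same bound. Passing to $X_t=\Phi(V_t)$ multiplies the density by $|\Psi'|\le m_\Phi^{-1}$, since $p_X(x)=p_V(\Psi(x))\Psi'(x)$. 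The occupation bound then follows as in Proposition~\ref{prop:density}: bound $P(|X_{t-}-a|\le r)\le 2r\|p_t^P\|_\infty$ (using $X_t=X_{t-}$ a.s.\ by Lemma~\ref{lem:lawcontinuous}), integrate in $t$, and use $\upsilon>1$ so that $t^{-1/\upsilon}$ is integrable. Taking $s=0$ in Proposition~\ref{prop:conjocc2} gives the same conclusion directly.

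For \eqref{eq:conjchaos}, Proposition~\ref{prop:conjocc2} gives \ref{Hocc2} with $\theta=1$ and $a=0$, noting that $\Sigma=\{0\}$ when $d=1$. The estimate \eqref{eq:jointquadratic} of Theorem~\ref{thm:jointquadratic} with $\theta=1$ then yields $C\{\tau_1(N)^{1/2}+\lambda^2+\lambda^2\}$. The selected case $\mathcal D_N\le C\tau_1(N)^{1/2}$ is \eqref{eq:jointquadratic0}. Finally, substituting $\tau_1(N)=N^{-1/2}$ from \eqref{eq:tau} gives the stated $C(N^{-1/4}+\lambda^2)$.

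The only point needing care is checking that Theorem~\ref{thm:jointquadratic} applies. Its hypotheses include the moment bounds of Lemma~\ref{lem:moments}, and these need the $q$-moment of $\xi$ and $\int|\beta|^q\,d\Pi\le M_\Phi^qJ_q$. Both are provided by \ref{HconjJ} and Lemma~\ref{lem:conjrepresentation}, and the absence of a Brownian coefficient is harmless. I expect no genuine obstacle beyond this bookkeeping.
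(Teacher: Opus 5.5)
Your proposal is correct and follows the paper's proof essentially step for step. You use Lemma~\ref{lem:conjrepresentation} for well-posedness and the representation $X^P=\Phi(V)$, then the Fourier bound for $L_t$, convolved with the independent law of $\Psi(\xi)$ and shifted by the deterministic $m_t^P$, then the change of variables with $|\Psi'|\le m_\Phi^{-1}$ and Tonelli for the occupation bound, and finally Proposition~\ref{prop:conjocc2} with Theorem~\ref{thm:jointquadratic} at $\theta=1$ (including \eqref{eq:jointquadratic0} for $\lambda=0$). Your references to Proposition~\ref{prop:particlewell} for the regularized particles and to Lemma~\ref{lem:lawcontinuous} for replacing $X_t$ by $X_{t-}$ only make explicit what the paper leaves implicit or handles by the pathwise countable-jumps argument.
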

\begin{proof}
By Lemma~\ref{lem:conjrepresentation}, the forward equation is well posed,
$X^P=\Phi(V)$, and $V_t=\Psi(\xi)+m_t^P+L_t$ with deterministic $m^P$.

\emph{Step 1: Fourier bound for the additive noise.}
Symmetry gives
\[
\E e^{iuL_t}=e^{-t\psi(u)},\qquad
\psi(u)=2c\int_0^1(1-\cos(uz))z^{-1-\upsilon}dz.
\]
For $|u|\ge1$, the change of variable $w=|u|z$ yields
\[
\psi(u)=2c|u|^\upsilon\int_0^{|u|}(1-\cos w)w^{-1-\upsilon}dw
\ge c_*|u|^\upsilon,
\quad c_*=2c\int_0^1(1-\cos w)w^{-1-\upsilon}dw>0.
\]
The integral defining $c_*$ is finite because $\upsilon<2$. Hence
\[
\int_\R e^{-t\psi(u)}du
\le2+2\int_1^\infty e^{-tc_*u^\upsilon}du
\le C(1+t^{-1/\upsilon}).
\]
Fourier inversion yields a density $q_t$ of $L_t$, with
$\|q_t\|_\infty\le(2\pi)^{-1}\int e^{-t\psi(u)}du$.
Independence of $\Psi(\xi)$ and $L$, together with the deterministic
character of $m_t^P$, gives the density of $V_t$:
\[
p_{V,t}^P(v)=\int q_t(v-m_t^P-w)\Lcal^P(\Psi(\xi))(dw),
\qquad \|p_{V,t}^P\|_\infty\le C(1+t^{-1/\upsilon}).
\]
This convolution is used only for the limiting solution, not for the
particles whose empirical drift is random.

\emph{Step 2: return to the state variable and occupation.}
The change of variable $x=\Phi(v)$ gives
\[
p_t^P(x)=p_{V,t}^P(\Psi(x))\Psi'(x),\qquad
\|p_t^P\|_\infty\le m_\Phi^{-1}C(1+t^{-1/\upsilon}).
\]
By Tonelli, for every $a\in\R$ and $r>0$,
\begin{align*}
\E^P\int_0^T\one_{\{|X_t^P-a|\le r\}}dt
&=\int_0^T\int_{a-r}^{a+r}p_t^P(x)dxdt\\
&\le 2Cr\int_0^T(1+t^{-1/\upsilon})dt\\
&=2Cr\left(T+\frac{\upsilon}{\upsilon-1}T^{1-1/\upsilon}\right).
\end{align*}
For every càdlàg path, replacing $X_t$ by $X_{t-}$ does not change the time
integral; Tonelli gives the same identity after taking expectations. This
proves \eqref{eq:conjocc}, uniformly in $P$ and $a$.

\emph{Step 3: application to the backward system.}
In the scalar prototype, $\Sigma=\{0\}$. The preceding bound verifies
\ref{HoccD} with $\theta=1$, while Proposition~\ref{prop:conjocc2} verifies
\ref{Hocc2} with $\theta=1$. The growth of $A^0$ and the forward moments
ensure $\E\int|A^0(X_{s-},\mu_{s-})|^2ds<\infty$, and the same property
holds for the particles. The constructions in Theorem~\ref{thm:Bwell} and
Proposition~\ref{prop:directwell} therefore give existence and uniqueness
with the full martingale representation. Theorem~\ref{thm:jointquadratic}
and its case $\lambda=0$ apply and yield \eqref{eq:conjchaos}, including the
norm of $\mathsf K$.
\end{proof}

\paragraph{Explicit nonlinear example.}
Take $\Phi(v)=v+\delta\sin v$, $0<\delta<1$, and
$H(t,\mu)=h_0\int\tanh y\,\mu(dy)$. Then
$m_\Phi=1-\delta$, $M_\Phi=1+\delta$, $M_H\le|h_0|$ and
$L_H\le|h_0|$. With $v=\Psi(x)$ and
$\psi(1)=\int(1-\cos z)\Pi(dz)$, symmetry of $\Pi$ gives
\[
\beta(x,z)=z+\delta\{\sin(v+z)-\sin v\},\qquad
b(t,x,\mu)=(1+\delta\cos v)H(t,\mu)-\delta\psi(1)\sin v.
\]
These coefficients depend nonlinearly on the state; for $z\ne0$ in the
support of $\Pi$, the amplitude $\beta(x,z)$ is not constant in $x$. With
$g(x,\mu)=x$, $f(t,x,\mu,y,z,u,\eta,\alpha)=x+y$, and the ALA prototype,
we obtain a concrete system covered by the theorem.

The conjugacy procedure and Fourier inversion are classical. This result
provides an explicit occupation criterion in a nonlinear class; it is not a
new general heat-kernel estimate. Occupation of order $r$ for arbitrary
nonlinear coefficients satisfying only \ref{HF} remains beyond its scope.

\section{Scope, limitations, and conclusion}

The term ``monotone source'' refers to the vector-valued maximal monotone
operator $A(\cdot,\mu)$.  Its scalar projection
$\langle\varrho,A^0(x,\mu)\rangle$ is not asserted to be monotone in $x$ and
does not act on the backward variable.  Monotonicity is used for the
resolvent, the domination $|A^\lambda|\le |A^0|$, and graph closure.

The jump compensator $\Pi(de)dt$ is common to all models.  This makes
$\mathbb H_\Pi^2$ and the Poisson isometry uniform in $P$ and permits the
product construction.  A model-dependent compensator would require
additional comparability assumptions or a different quasi-sure framework.
Likewise, the presence of jumps alone does not imply the interface occupation
assumptions: compound Poisson laws may retain atoms.  The general results
therefore remain subject to the occupation hypotheses stated in each
theorem.  Section~8 verifies them for affine truncated-stable noise,
predictable nondegenerate amplitudes, and a conjugated scalar nonlinear
class; no claim is made for arbitrary nonlinear coefficients.

The two principal quantitative levels are
\[
\mathcal D_{N,\lambda}^{\rm reg}
\le C\{\tau_d(N)^{\theta/(\theta+2)}+\lambda^2+\lambda^\theta\}
\]
under first-moment occupation, and
\[
\mathcal D_{N,\lambda}^{\rm reg}
\le C\{\tau_d(N)^{\theta/(\theta+1)}
+\lambda^2+\lambda^{2\theta}\}
\]
under the stronger second-moment condition.  The improvement comes from
controlling the cumulative source itself; occupation is required only for
the limiting forward process.  For $\theta=1$ this changes the particle
exponent from $1/3$ to $1/2$ and the cumulative regularization term from
$\lambda$ to $\lambda^2$.  For predictable stable-like amplitudes the
corresponding second-moment rate is
$\tau_d(N)^{(\upsilon-1)/\upsilon}+\lambda^{2(\upsilon-1)}$.

The results are modelwise and establish neither quasi-sure aggregation nor
optimality of the rates.  Natural extensions include model-dependent jump
compensators and occupation estimates of order $r$ for broader nonlinear
jump dynamics. Density and heat-kernel methods such as
\cite{FournierPrintems2010,ChenZhang2016}, together with jump-FBSDE
discretization techniques such as \cite{BouchardElie2008}, provide natural
starting points for related extensions, but these questions require tools
beyond the arguments developed here.

\section*{Statements and Declarations}

\paragraph{Competing interests.}
The authors declare that they have no competing interests.

\paragraph{Funding.}
This research received no specific grant from any funding agency in the public, commercial, or not-for-profit sectors.

\paragraph{Use of generative artificial intelligence.}
Generative artificial-intelligence tools assisted with drafting,
reorganization, and the exposition of calculations. The authors reviewed the
resulting text and remain responsible for validating the arguments and
references.

\paragraph{Data availability.}
This work is theoretical and does not rely on any dataset.

\end{document}